\numberwithin{equation}{section}
\theoremstyle{plain}
\newtheorem{theorem}{Theorem}[section]
\newtheorem{proposition}[theorem]{Proposition}
\newtheorem{lemma}[theorem]{Lemma}
\newtheorem{corollary}[theorem]{Corollary}
\newtheorem{claim}[theorem]{Claim}
\theoremstyle{remark}
\newtheorem{remark}[theorem]{Remark}
\theoremstyle{definition}
\newtheorem{definition}[theorem]{Definition}
\newcommand{\eps}{\varepsilon}
\providecommand{\abs}[1]{\lvert #1\rvert}
\providecommand{\norm}[1]{\lvert\lvert #1\rvert\rvert}
\newcommand{\measurerestr}{ \,\raisebox{-.127ex}{\reflectbox{\rotatebox[origin=br]{-90}{$\lnot$}}}\, }
\begin{document}
\title{Spectral quantization  for ancient asymptotically cylindrical flows}
\author{Wenkui Du, Jingze Zhu}
\begin{abstract}
 We study ancient mean curvature flows in $\mathbb{R}^{n+1}$  whose tangent flow at $-\infty$ is a shrinking cylinder $\mathbb{R}^{k}\times S^{n-k}(\sqrt{2(n-k)|t|})$, where $1\leq k\leq n-1$. We prove  that the cylindrical profile function $u$ of these flows have the asymptotics  $u(y,\omega,\tau)= (y^\top Qy -2\textrm{tr}(Q))/|\tau| + o(|\tau|^{-1})$ as $\tau\to -\infty$,  where the cylindrical matrix $Q$ is a constant symmetric $k\times k$ matrix whose eigenvalues are quantized to be either 0 or $-\frac{\sqrt{2(n-k)}}{4}$.  Compared with the bubble-sheet quantization theorem in $\mathbb{R}^{4}$ obtained by Haslhofer and the first author, this theorem has full generality in the sense of removing noncollapsing condition and being valid for all dimensions. In addition, we establish symmetry improvement theorem which generalizes the corresponding results of Brendle-Choi and the second author to all dimensions. Finally, we give some geometric applications of the two theorems. In particular, we obtain the asymptotics, compactness and $\textrm{O}(n-k+1)$ symmetry of $k$-ovals in $\mathbb{R}^{n+1}$ which are ancient noncollapsed
flows in $\mathbb{R}^{n+1}$ satisfying full rank condition that $\textrm{rk}(Q)=k$, and we also obtain the classification of ancient noncollapsed flows in $\mathbb{R}^{n+1}$ satisfying vanishing rank condition that $\textrm{rk}(Q)=0$.
\end{abstract}
\maketitle
\tableofcontents
\section{Introduction}
Mean curvature flow $\mathcal{M}=\{M_{t}\}_{t\in [0, T)}\subset \mathbb{R}^{n+1}$  is a family of embedded hypersurfaces evolving by mean curvature vector $\mathbf{H}$,
\begin{equation}
   (\partial_{t}x)^{\perp}=\mathbf{H} \quad\quad (x\in M_{t}).
\end{equation}
Singularity formation usually happens under mean curvature flow when the second fundamental form $A$ blows up at the first singular time $T$, i.e,
\begin{equation}
    \lim_{t\to T}\max_{M_{t}}|A|=\infty.
\end{equation}
In order to apply mean curvature flow to solve geometric and topological problems, one needs to classify its singularity models. The singularity models are obtained from magnifying the original flow via rescaling by a sequence of factors going to infinity and passing to a blowup limit. Any such blowup limit is an ancient solution, i.e. a solution that is defined for all sufficiently negative times.

The most important singularity models are  generalized shrinking cylinders. It has been verified by Colding-Minicozzi \cite{CM_generic} that these generalized cylinders are the only stable shrinkers  and they are conjectured to be generic singularities of mean curvature flow under initial data perturbation. Besides these generalized cylinders, the  generalized ancient asymptotically cylindrical  flows  perturbed from these generalized cylinders can also be potential singularity models. These flows have  $\mathbb{R}^{k}\times S^{n-k}$ as tangent flow at $-\infty$, where $0\leq k\leq n$ (see Definition \ref{def asymptotical cylindrical} for more details).
If the tangent flow at $-\infty$ is either sphere $S^{n}$ or static hyperplane $\mathbb{R}^{n}$, by Huisken's monontonicity formula \cite{Huisken_monotonicity} ancient asymptotically cylindrical  flows have to be sphere $S^{n}$  or static hyperplane $\mathbb{R}^{n}$ respectively. Therefore, in the whole paper we will only consider ancient asymptotically cylindrical  flows with cylinder $\mathbb{R}^{k}\times S^{n-k}$ as tangent flow at $-\infty$, where $1\leq k\leq n-1$.

Ancient asymptotically cylindrical  flows also play important roles in theory of mean convex mean curvature flows pioneered by White \cite{White_size, White_nature}. For  mean convex mean curvature flows, all the blowup singularity models are ancient noncollapsed flows and they are ancient asymptotically cylindrical  flows. Here, an ancient noncollapsed flow means that it is mean-convex (mean curvature $H\geq 0$ on the flow) and there is an $\alpha>0$ so that every space-time point $(p, t)$ on the flow admits interior and exterior balls of radius at least $\alpha/H(p, t)$, c.f. \cite{ShengWang,Andrews_noncollapsing,HaslhoferKleiner_meanconvex, Brendle_inscribed, HK_inscribed}.  While for ancient  flows whose tangent flow at $-\infty$ is given by a neck $\mathbb{R}\times {S}^{n-1}$, or which are uniformly two-convex noncollapsed,  or which are  rotationally symmetric,  a complete classification of such
flows has been obtained recently in a sequence of works \cite{CHH, CHHW, BC1, BC2, ADS1, ADS2, DH_ovals}, the full classification of general ancient asymptotically cylindrical  flows whose tangent flow at $-\infty$ is $\mathbb{R}^{k}\times {S}^{n-k}$ ($2\leq k\leq n-1$) is still widely open.

To deal with  the challenges discussed above when $k\geq 2$, we need the following three key ingredients:
\begin{itemize}
    \item  Lojasiewicz inequality,
     \item  symmetry improvement theorem,
    \item  spectral quantization theorem.
\end{itemize}
The Lojasiewicz inequality established by Colding-Minicozzi in \cite{CM_uniqueness} implies the uniqueness of the cylindrical tangent flow at $-\infty$ for ancient asympototically cylindrical flows. Therefore we can write the renormalized flow as a smooth graph over a fixed cylinder in a ball with a quantitatively large  radius. Recently, Brendle-Choi \cite{BC1, BC2} and Zhu \cite{Zhu, Zhu2} established symmetry improvement theorem  in  $\mathbb{R}^{n+1}$  for ancient noncollapsed solutions in  $\mathbb{R}^{n+1}$ whose tangent flow at $-\infty$ is a neck $\mathbb{R}\times S^{n-1}$ or a bubble-sheet $\mathbb{R}^{2}\times S^{n-2}$. The symmetry improvement theorem  asserts that almost cylindrical regions and almost cap regions become more symmetric along the  directions of spherical factor in the cylindrical tangent flow at $-\infty$  under the evolution.
 Very recently,  Haslhofer and the first author \cite{DH_hearing_shape} established the spectral quantization theorem for ancient noncollapsed solutions in  $\mathbb{R}^4$ whose tangent flow at $-\infty$ is a bubble-sheet $\mathbb{R}^{2}\times S^{1}$. This theorem provides the  non-upward quadratic bending sharp asymptotics of the flow in the central region over the bubble-sheet.
 Based on above works, the classification of all ancient noncollapsed solutions in  $\mathbb{R}^4$  had significant progress in \cite{DH_hearing_shape, CHH_translator, oval_classification_r4}.

In the present paper, we aim to prove spectral quantization theorem  and symmetry improvement theorem for  general  ancient asympototically cylindrical  flows in all dimensions, which are neither a priori convex as in \cite{DH_hearing_shape}  nor a priori  rotationally symmetric as the classification result in \cite{DH_ovals}. Then we give some important applications of these two theorems in the classification of general ancient noncollapsed mean curvature flows.

\subsection{Main results}\label{main results} To describe our main results in detail,
we first introduce some concepts related to Brakke flow  \cite[Def 6.2-6.3]{Ilmanen_book}, which is a weak notion of mean curvature flow. An $n$ dimensional ancient integral Brakke flow is given by a family of
Radon measures  $\mathcal{M}=\{\mu_{t}\}_{t\in (-\infty, T_{\textrm{ext}}]}$ in $\mathbb{R}^{n+1}$ that is integer $n$-rectifiable for almost all times before some time $T_{\textrm{ext}}\leq \infty$. Namely there is some $\mathbb{Z}$-valued multiplicity function $\Theta$ and an $n$-dimensional rectifiable set $M_{t}$, such that
\begin{equation}
    \mu_{t}=\Theta\mathcal{H}^{n}\measurerestr M_{t},
\end{equation}
and satisfies
\begin{equation}
    \frac{d}{dt}\int \Psi d\mu_{t}\leq \int\left(-\Psi |\mathbf{H}|^2+\nabla \Psi \cdot \mathbf{H}\right) d\mu_{t}
\end{equation}
for all test functions $\Psi\in C^{1}_{c}(\mathbb{R}^{n+1}, \mathbb{R}_{+})$. Here, $\frac{d}{dt}$ denotes the limsup of difference quotients, and $\mathbf{H}$ denotes
the mean curvature vector of the associated varifold $V_{\mu_{t}}$, which is defined via the first variation formula and exists almost everywhere at almost all times. The integral on the right hand side is interpreted as $-\infty$ whenever it does not make sense literally.

Then,  for the above  ancient integral Brakke flow $\mathcal{M}$, given a sequence of positive numbers $\lambda_{j}\to 0$ let $\mathcal{D}_{\lambda_{j}}(\mathcal{M})=\{\mu^{\lambda_{j}}_{t}\}_{t\in (-\infty, \lambda^2_{j} T_{\textrm{ext}}]}$ be the Brakke flows that are obtained from $\mathcal{M}$ by the parabolically rescaling $\mathcal{D}_{\lambda_{j}}(x, t)= (\lambda_{j} x, \lambda_{j}^2 t)$, where the rescaling is defined by $\mu^{\lambda_{j}}_{t}(A)=\lambda_j^n \mu_{\lambda^{-2}_{j} t}(\lambda^{-1}_{j}A)$ for any  $A\subset \mathbb{R}^{n+1}$. By Brakke's compactness theorem {\cite[Thm 7.1]{Ilm94}} and  Huisken’s monotonicity formula \cite[Thm 3.1]{Huisken_monotonicity},  we can always pass to a subsequential limit  to obtain an ancient integral Brakke flow $\mathcal{M}^{\infty}=\{\mu^{\infty}_{t}\}$, where the convergence is in the weak convergence sense of of Brakke flows \cite[Thm 7.1]{Ilm94}.  However, if the subsequential limit happens to be smooth with multiplicity one, then by the local regularity theorem \cite{White_regularity} the convergence is actually smooth. In addition, any such limit is called as a tangent flow at $-\infty$ and is self-similarly shrinking. As we have discussed before, the most important self-similarly shrinking solutions of mean curvature flow are self-shrinking  cylinders $\mathbb{R}^{k}\times S^{n-k}(\sqrt{2(n-k)|t|})$, where $1\leq k\leq n-1$. Now, we give the formal definition of ancient asymptotically cylindrical flow.
\begin{definition}\label{def asymptotical cylindrical}
An  ancient integral Brakke flow  $\mathcal{M}=\{\mu_{t}\}_{t\in (-\infty, T_{\textrm{ext}}]}$ in $\mathbb{R}^{n+1}$ is an ancient asymptotically cylindrical flow if some of its tangent flow at $-\infty$ along some sequence of positive numbers $\lambda_{j}\to 0$ under suitable coordinates is a  shrinking cylindrical flow:
\begin{equation}\label{subsequence convergence}
\mathcal{M}^{\infty}=\lim_{j\to \infty}\mathcal{D}_{\lambda_{j}}(\mathcal{M})=\{\mathbb{R}^{k}\times S^{n-k}(\sqrt{2(n-k)|t|})\},
\end{equation}
for some $1\leq k\leq n-1$.
\end{definition}

By local regularity theorem \cite{White_regularity} and uniqueness of cylindrical tangent flow \cite{CM_uniqueness},  the  convergence \eqref{subsequence convergence} in Definition \ref{def asymptotical cylindrical} is smooth and entails full convergence. In particular the $\mathbb{R}^{k}$ factor in the limit is unique. Hence, the renormalized mean curvature flow $\bar M_\tau = e^{\frac{\tau}{2}}  M_{-e^{-\tau}}$ of ancient asymptotically cylindrical flow $M_{t}$ in suitable coordinates
 converges to the  following cylinder as $\tau\to -\infty$
\begin{equation}\label{bubble-sheet_tangent_intro}
\lim_{\tau\to -\infty}\bar{M}_\tau=\Gamma:=\mathbb{R}^{k}\times S^{n-k}(\sqrt{2(n-k)}),
\end{equation}
for some $1\leq k\leq n-1$.

Now, we can write $\bar{M}_\tau$ as a graph of a function $u(\cdot,\tau)$ over $\Gamma\cap B_{\rho(\tau)}$,
\begin{equation}\label{graph_over_cylinder}
\left\{ (y, \sqrt{2(n-k)} \omega)+ u(y,  \omega,\tau)\nu(y, \sqrt{2(n-k)} \omega) :  |y|\leq \rho(\tau) \right\} \subset \bar M_\tau\, ,
\end{equation}
where we suitably choose graphical radius $\rho(\tau)\to \infty$ as $\tau\to -\infty$ and we denote by $\nu$ the outwards unit normal of $\Gamma$.  Our following main theorem describes the asymptotic behaviour of the cylindrical profile function $u(y,\omega,\tau)$.
\begin{theorem}[spectral quantization theorem]\label{spectral theorem}
For any ancient asymptotically cylindrical flow $\mathcal{M}=\{M_t\}$ in $\mathbb{R}^{n+1}$ whose tangent flow at $-\infty$ is given by the cylindrical flow $\mathbb{R}^{k}\times S^{n-k}(\sqrt{2(n-k)|t|})$ for some $1\leq k\leq n-1$, the cylindrical profile function $u$ satisfies the following sharp asymptotics
 \begin{equation}\label{main_thm_ancient_intro}
\lim_{\tau\to -\infty} \Big\|\,
|\tau| u(y, \omega,\tau)- y^\top Qy +2\mathrm{tr}(Q)\, \Big\|_{C^{p}(B_{R})} = 0
\end{equation}
for all $R<\infty$ and all integers $p$, where $Q$ is a constant symmetric $k\times k$-matrix whose eigenvalues are quantized to be either 0 or $-\frac{\sqrt{2(n-k)}}{4}$.
\end{theorem}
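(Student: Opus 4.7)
The plan is to combine a Merle--Zaag type ODE alternative on the spectral decomposition of the linearized cylindrical flow with the symmetry improvement theorem proved elsewhere in the paper, and then derive the algebraic quantization for $Q$ from the quadratic self-interaction on the neutral eigenspace. I would first rewrite the renormalized flow as a parabolic equation $\partial_\tau u = \mathcal{L}u + \mathcal{Q}(u)$ on $\Gamma = \mathbb{R}^k\times S^{n-k}(\sqrt{2(n-k)})$, where
\begin{equation*}
\mathcal{L} = \partial_y^2 + \tfrac{1}{2(n-k)}\Delta_{S^{n-k}} - \tfrac{1}{2}y\cdot\nabla_y + 1
\end{equation*}
is the linearized Ornstein--Uhlenbeck operator on the cylinder and $\mathcal{Q}(u)$ collects the higher-order terms. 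In Gaussian-weighted $L^2$, $\mathcal{L}$ has pure point spectrum with eigenvalues $1 - \tfrac{m}{2} - \tfrac{\ell(\ell+n-k-1)}{2(n-k)}$ indexed by Hermite degree $m\geq 0$ in $y$ and spherical degree $\ell\geq 0$ in $\omega$. Hence the unstable subspace is spanned by $\{1, y_i, \omega_a\}$, the neutral kernel by the Hermite-quadratic modes $y_iy_j - 2\delta_{ij}$ and the mixed modes $y_i\omega_a$, and all remaining eigenmodes are strictly stable.

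\textbf{Merle--Zaag alternative.} Decomposing $u = u_+ + u_0 + u_-$ spectrally and testing against a Gaussian cutoff localized to $\Gamma\cap B_{\rho(\tau)}$, I would establish the standard Merle--Zaag inequalities
\begin{equation*}
\tfrac{d}{d\tau}\|u_+\|^2 \geq \kappa\|u_+\|^2 - C\|u\|^3, \qquad \tfrac{d}{d\tau}\|u_-\|^2 \leq -\kappa\|u_-\|^2 + C\|u\|^3,
\end{equation*}
with matching quadratic control on $u_0$, producing three dominance alternatives as $\tau\to -\infty$. The unstable alternative is eliminated by a gauge reduction: the constant is absorbed by a time translation, the $y_i$ modes by a spatial translation in $\mathbb{R}^k$, and the first spherical harmonic modes $\omega_a$ are killed by the symmetry improvement theorem applied on expanding balls. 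The stable alternative is ruled out by a Lojasiewicz-type bootstrap: if $u$ decayed strictly faster than $|\tau|^{-1}$ in the weighted norm, the discrete spectral gap of $\mathcal{L}$ together with weighted parabolic estimates on the growing graphical region would iteratively upgrade the decay to arbitrary polynomial order, forcing $u\equiv 0$ and contradicting that $\mathcal{M}$ is not the cylinder itself.

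\textbf{Quantization on the neutral mode.} With the neutral alternative established, a further application of the symmetry improvement theorem removes the projections onto the mixed modes $y_i\omega_a$ (which depend nontrivially on $\omega$), leaving $u_0(y,\omega,\tau) = y^\top A(\tau) y - 2\,\mathrm{tr}(A(\tau))$ for some symmetric $k\times k$ matrix $A(\tau)\to 0$. Projecting the evolution equation onto the $(2,0)$-Hermite eigenspace and computing the quadratic self-interaction through the extrinsic geometry of $\Gamma$ produces the matrix ODE
\begin{equation*}
\dot A = -\tfrac{4}{\sqrt{2(n-k)}}\, A^2 + O(|A|^3).
\end{equation*}
The ansatz $A(\tau) = Q/|\tau| + o(|\tau|^{-1})$ reduces this to the algebraic identity $Q + \tfrac{4}{\sqrt{2(n-k)}} Q^2 = 0$, whose symmetric solutions simultaneously diagonalize with eigenvalues in $\{0, -\sqrt{2(n-k)}/4\}$. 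Standard parabolic interior regularity finally upgrades the weighted $L^2$ asymptotic to the claimed $C^p(B_R)$ convergence for every $R$ and $p$.

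\textbf{Main obstacle.} I expect the hardest step to be ruling out the stable alternative in the absence of both convexity and rotational symmetry. In the convex bubble-sheet setting of \cite{DH_hearing_shape} and the rotationally symmetric ovals of \cite{DH_ovals}, substantial additional geometric structure (mean convexity, one-dimensional reductions) streamlines this iteration; here one must argue by a purely parabolic bootstrap against the discrete spectral gap of $\mathcal{L}$, while simultaneously controlling the graphical radius $\rho(\tau)$ via local regularity and the symmetry improvement theorem so that the Gaussian weight captures the relevant geometric information uniformly in $\tau$.
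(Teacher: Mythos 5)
Your high-level outline tracks the paper's strategy at the start — spectral decomposition under the Ornstein--Uhlenbeck operator, a Merle--Zaag type alternative, symmetry improvement to kill the $y_i\omega_a$ modes, and a quadratic ODE for the neutral coefficients — but there are two substantive errors and one large missing step.

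First, the unstable alternative cannot be eliminated by a gauge reduction. In the paper it is a genuine outcome: when the unstable mode dominates, one integrates $\dot U_+\geq \tfrac12 U_+$ to get exponential decay of $\hat u$ and concludes that the theorem holds with $Q=0$. This is the $\mathrm{rk}(Q)=0$ case, which is not empty — for noncollapsed solutions it corresponds to $\mathbb{R}^{k-1}$ times a round bowl soliton (Theorem \ref{Rk=0}). Absorbing the $\{1,y_i\}$ modes by a space-time translation and $\{\omega_a\}$ by symmetry improvement does not eliminate the alternative; it normalizes the axes but leaves a flow that is simply not cylinder-asymptotic at rate $|\tau|^{-1}$. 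Likewise, the "stable alternative" you propose to rule out by a Lojasiewicz bootstrap is not one of the options: Merle--Zaag already produces a strict dichotomy between neutral and unstable dominance (Proposition \ref{mz.ode.fine.bubble-sheet}). The Lojasiewicz inequality \emph{is} used, but to prove the graphical-radius lower bound $\rho(\tau)\geq|\tau|^\gamma$ (Proposition \ref{radius_lower_bound}), not to exclude a stable regime.

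Second, and more seriously, the final step is circular. You posit $A(\tau)=Q/|\tau|+o(|\tau|^{-1})$ and then read off the quadratic identity $Q+\beta_{n,k}^{-1}Q^2=0$, but establishing that $A(\tau)$ has this form with a \emph{constant} symmetric matrix $Q$ is the heart of the theorem. The ODE $\dot A=-\beta_{n,k}^{-1}A^2+E$ does not by itself force the eigenframe $R(\tau)$ of $A(\tau)$ to converge; a priori it could spiral. The paper proves constancy through three non-trivial ingredients you skip entirely: (i) a quantitative Merle--Zaag estimate (error $\lesssim\rho^{-1}U_0$ rather than $o(U_0)$) and the graphical radius bound, which together upgrade your $O(|A|^3)$ error to the integrable $O(|\tau|^{-2-\gamma/2})$ estimate in \eqref{A_ode2}; (ii) a diagonalization argument following Filippas--Liu, with the almost-non-upward-quadratic-bending continuity claim showing $\max_i\lambda_i+\varepsilon\min_i\lambda_i<0$, which in turn gives $c/|\tau|\leq|A|\leq C/|\tau|$ (Lemma \ref{uul}); and (iii) an inductive analysis of the symmetric functions $A_{\gamma_\ell}$ of the eigenvalues, solved as first-order linear ODEs, to show the frame $R(\tau)\to R_0$ is a constant rotation. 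With only a cubic remainder $O(|A|^3)\sim|\tau|^{-3}$ the first two survive, but without the $\rho^{-1/2}$ gain the error is not integrable against $\tau^{-1}$ at the rate needed in step (iii), so the constancy of $Q$ does not follow. You would need to supply these arguments to close the proof.
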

This theorem gives the quantitative deviation of the renormalized flow from round cylinder. This theorem also improves previous results \cite{DH_hearing_shape, DH_no_rotation, DH_ovals, CHH, CHHW} in the following ways: (1)  It removes the noncollapsing condition in \cite{DH_hearing_shape, DH_no_rotation}. The hypersurfaces have  a convex non-upward quadratic bending shape over the $\mathbb{R}^k$-factor although no convexity is assumed; (2) It removes the $\textrm{SO}(n-k+1)$ symmetry along the directions of spherical factor in the tangent flow at $-\infty$ in \cite{DH_ovals}. The hypersurfaces become asymptotically $\mathbb{Z}_{2}^{k}\times \mathrm{O}(n-k+1)$-symmetric and have quantized bending asympotitics, where the bending coefficients are quantized to be either $0$ or $-\frac{\sqrt{2(n-k)}}{4}$; (3) It generalizes the results of asymptotics in \cite{DH_hearing_shape, DH_no_rotation, CHH, CHHW} to all dimensions and for all types of cylinders as tangent flow at $-\infty$.

We remark that forwards in time related quantization behaviour, of course with the opposite sign, has been observed by Filippas-Liu \cite{FilippasLiu} for singularities of multidimensional semilinear heat equations, by Zhou \cite{Zhou_dynamics} for certain cylindrical singularities under mean curvature flow and by Sun-Xue \cite{SX} for studying generic isolatedness of cylindrical singularities of mean curvature flow. Our purposes and methods of proving this theorem are very different from these works.

In addition, we also obtain the  general form of symmetry improvement theorem (cylindrical symmetry improvement and cap improvement) for ancient asymptotically cylindrical flows compared with \cite{Zhu2, Zhu, BC1, BC2, ADS2}.  To describe the symmetry improvement theorem, we first give the following definitions. For every space-time point $(\bar{x}, \bar{t})$ and every number $L>0$, we let
\begin{equation}
\hat{\mathcal{P}}(\bar{x}, \bar{t}, L, L^2)=B_{{g}(\bar{t})}(\bar{x}, H(\bar{x}, \bar{t})^{-1}L)\times [\bar{t}-H(\bar{x}, \bar{t})^{-2}L^2, \bar{t}]
\end{equation}
be the normalized (intrinsic) parabolic neighborhood of $(\bar{x}, \bar{t})$, where ${g}(\bar{t})$ is the metric on  $M_{\bar{t}}$ and the mean curvature $H(\bar{x},\bar{t})>0$.
\begin{definition}[$\varepsilon$-close to cylinder]\label{e-close cylinder}
 Let $\mathcal{M}=\{M_t\}$ be a mean curvature flow. We say that a space-time point $(\bar{x}, \bar{t})\in \mathcal{M}$  is $\varepsilon$-close to a cylinder $\mathbb{R}^{k}\times S^{n-k}$ (or  lies on a $\varepsilon$-cylinder $\mathbb{R}^{k}\times S^{n-k}$) if the normalized parabolic neighborhood $\hat{\mathcal{P}}(\bar{x}, \bar{t}, 100n^{5/2}, 100^2n^2)$ is $\varepsilon$-close (in $C^{10}$ norm) to a family of shrinking cylinders $\mathbb{R}^{k}\times S^{n-k}$ after rescaling by the mean curvature $H(\bar{x}, \bar{t})$.
\end{definition}
\begin{definition}\label{e_symmetric}
  Let $\mathcal{M}=\{M_t\}$ be a mean curvature flow and $(\bar{x}, \bar{t})\in\mathcal{M}$ be   a space-time point such that the mean curvature is positive in
$\hat{\mathcal{P}}(\bar{x}, \bar{t}, 100n^{5/2}, 100^2n^5)$.
  We say that  $(\bar{x}, \bar{t})$ is $(\varepsilon, n-k)$-symmetric for some $1\leq k\leq n-1$  if there exists a  normalized set of rotation vector fields
  $\mathcal{K} = \{K_{\alpha} : 1 \leq \alpha \leq \frac{(n-k+1)(n-k)}{2}\}$ such that the conditions
 \begin{itemize}
     \item  $\max_{\alpha}|\left<K_{\alpha}, \nu\right> |H \leq \varepsilon$,
     \item  $\max_{\alpha}|K_{\alpha}|H \leq 5n$
 \end{itemize}
 hold in  $\hat{\mathcal{P}}(\bar{x}, \bar{t}, 100n^{5/2}, 100^2n^5)$, where the vector fields are defined by
  \begin{align}
    K_{\alpha}=SJ_{\alpha}S^{-1}(x-q)\quad S\in \mathrm{O}(n+1)\quad q\in\mathbb{R}^{n+1},
  \end{align}
 \begin{align}\label{J, Jbar}
        J_{\alpha}  =\begin{bmatrix}
   0 & 0\\
    0 &  \bar{J}_{\alpha}
    \end{bmatrix}\in so(n+1),
\end{align}
and $\{\bar{J}_{\alpha}: 1\leq\alpha\leq \frac{(n-k+1)(n-k)}{2}\}$ is an
  orthonormal basis of $so(n-k+1)\subset so(n+1)$.
\end{definition}
Then we state the symmetry improvement theorem (cylindrical symmetry improvement theorem and cap improvement theorem) below.
\begin{theorem}[symmetry improvement theorem]\label{Symmetry improvement intro}
There exist constants $L_0<\infty$ large enough and $\varepsilon_0>0$ small enough depending only on dimension $n$  and satisfying the following properties. Suppose that $\mathcal{M}=\{M_t\}$ is a mean curvature flow  and $(\bar{x}, \bar{t})\in \mathcal{M}$ is a space-time point.
If at least one of the following hold
\begin{itemize}
    \item either  every point $(y, s)\in \hat{\mathcal{P}}(\bar{x},\bar{t},L_0, L_0^2)$  is $\varepsilon_0$-close to a  cylinder $\mathbb{R}^{k}\times S^{n-k}$,
    \item or $\hat{\mathcal{P}}(\bar{x},\bar{t},L_0, L_0^2)$ is $\varepsilon_0$-close to a piece of some $\mathbb{R}^{k-1}$ times $(n-k+1)$ dimensional round bowl soliton after rescaling such that $H(\bar{x},\bar{t})=1$,
\end{itemize}
and  every point in the parabolic neighborhood $\hat{\mathcal{P}}(\bar{x},\bar{t},L_0, L_0^2)$ is $(\varepsilon, n-k)$-symmetric, where $0< \varepsilon \leq \varepsilon_0$, then $(\bar{x}, \bar{t})$
is $(\frac{\varepsilon}{2}, n-k)$-symmetric.
\end{theorem}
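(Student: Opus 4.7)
My plan is to run a contradiction/blow-up argument in the spirit of Brendle--Choi, combined with a spectral analysis of the Jacobi operator on the model (cylinder or bowl). Suppose the theorem fails. Then there is a sequence of flows $\mathcal{M}^j$, space-time points $(\bar x_j,\bar t_j)$, normalized rotation fields $\mathcal{K}^j=\{K_\alpha^j\}$, and $\varepsilon_j\leq \varepsilon_0$ such that every point in $\hat{\mathcal{P}}(\bar x_j,\bar t_j,L_0,L_0^2)$ is $\varepsilon_j$-symmetric relative to $\mathcal{K}^j$ and is $\varepsilon_0$-close to the model (cylinder or bowl), yet $(\bar x_j,\bar t_j)$ fails to be $(\varepsilon_j/2,n-k)$-symmetric with respect to \emph{any} normalized rotation set. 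After parabolically rescaling so that $H(\bar x_j,\bar t_j)=1$, local regularity gives smooth subsequential convergence of $\mathcal{M}^j$ to the shrinking cylinder $\Gamma=\mathbb{R}^k\times S^{n-k}$ (or to the rotationally symmetric bowl), and of each $K_\alpha^j$ to a rotation vector field $K_\alpha^\infty$ on the model.

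The functions $f_\alpha^j := \langle K_\alpha^j,\nu\rangle H$ (normalized so that $\sup_{\hat{\mathcal{P}}(L_0,L_0^2)}\max_\alpha |f_\alpha^j|\leq \varepsilon_j$) satisfy the Jacobi equation $(\partial_t-\Delta-|A|^2)\langle K,\nu\rangle=0$, and hence a uniform linear parabolic PDE under the normalization. Dividing by $\varepsilon_j$ and passing to a subsequence, standard Schauder estimates yield a bounded limit $w_\alpha$ which solves the linearized equation on the model. On the cylinder $\Gamma$ this is
\begin{equation}
\partial_\tau w_\alpha = \Delta_\Gamma w_\alpha + \tfrac{1}{2(n-k)}w_\alpha,
\end{equation}
(in rescaled variables), whose eigenmodes split under the separation $\mathbb{R}^k\times S^{n-k}$; the only modes compatible with boundedness on a large parabolic neighborhood are the finitely many \emph{trivial} ones generated by the Killing fields of $\mathbb{R}^{n+1}$ restricted to $\Gamma$. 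Concretely these trivial modes correspond exactly to: (i) translating the center $q$ of the rotation fields, (ii) infinitesimally rotating the basis $S\in\mathrm{O}(n+1)$, and (iii) relabeling within $\{\bar J_\alpha\}$.

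The core step is to show that each trivial mode appearing in $w_\alpha$ can be absorbed by modifying the representative $(S^j,q^j)$ of $\mathcal{K}^j$: I will construct, for each $j$, new parameters $(\tilde S^j,\tilde q^j)$ differing from $(S^j,q^j)$ by an amount $O(\varepsilon_j)$ so that the corresponding $\tilde K_\alpha^j$ are still normalized and $\tilde f_\alpha^j$ has trivial modes removed. Passing to the limit, the rescaled $\tilde w_\alpha$ then lies in the orthogonal complement of the kernel of the Jacobi operator restricted to bounded solutions. For such solutions, a parabolic $L^2$-to-$L^\infty$ estimate combined with the positive spectral gap of the linearized operator on $B_{L_0}\times[-L_0^2,0]$ gives a decay estimate of the form $\sup_{\hat{\mathcal{P}}(1,1)}|\tilde w_\alpha|\leq \tfrac{1}{4}\sup_{\hat{\mathcal{P}}(L_0,L_0^2)}|\tilde w_\alpha|$ once $L_0$ is chosen large enough. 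Undoing the normalization yields, for large $j$, that $(\bar x_j,\bar t_j)$ is $(\varepsilon_j/2,n-k)$-symmetric relative to $\tilde{\mathcal{K}}^j$, contradicting the choice of the sequence.

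The main obstacle, I expect, is the careful bookkeeping in step (iii): one must verify that the adjustments $(\tilde S^j,\tilde q^j)$ stay inside the class of \emph{normalized} rotation sets (so the $|K_\alpha|H\leq 5n$ bound survives) and that the kernel-projection procedure is uniform in $j$. A secondary subtlety is the bowl case: here the linearized operator has a slightly different structure because the bowl is a translating, not shrinking, soliton, and one must use the bowl's own $\mathrm{O}(n-k+1)$ symmetry to identify which modes are trivial. The argument is analogous, but the spectral analysis is performed on the bowl (as in Brendle--Choi and \cite{Zhu2}) rather than on the cylinder; both separate eigenvalue problems admit the required positive gap after quotienting by the trivial modes, which is what makes the $\tfrac{1}{2}$ improvement possible and in fact allows any fixed contraction factor upon enlarging $L_0$.
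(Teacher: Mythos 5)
Your proposal is a compactness/contradiction argument in the Brendle--Choi style, whereas the paper gives a direct quantitative proof for the cylinder (Theorem \ref{Symmetry improvement I}) via explicit Fourier-mode analysis plus heat-kernel estimates, and handles the cap (Theorem \ref{cap improvement theorem}) by \emph{iterating} the cylinder improvement combined with a barrier/maximum-principle argument. The two routes are genuinely different, and there are gaps in yours.

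First, there is a tension in your blow-up setup for the cylinder case. You keep $L_0$ fixed when building the contradiction sequence, but you then invoke a Liouville-type classification (``the only modes compatible with boundedness on a large parabolic neighborhood are the finitely many trivial ones''). On a \emph{fixed} compact parabolic slab every mode is bounded; the Liouville statement you want only has content if the domains exhaust the cylinder, i.e. if $L_0=L_0^j\to\infty$ is built into the sequence. You could make this consistent (one standard fix is to show the theorem for \emph{some} large $L_0$ by contradiction with $L_0^j\to\infty$), but as written the logic does not close. Second, and more substantively, your ``spectral gap + $L^2$-to-$L^\infty$'' decay does not by itself treat the neutral spherical mode $\lambda_m=n-k$ correctly. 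For these modes the projected function $v_m$ satisfies a pure heat equation in $(z,t)$; solutions do not decay on a fixed domain, they become nearly affine in $z$ (the paper proves $|\partial^2_{z}v_m|\lesssim L_0^{-1}\varepsilon$ and then absorbs the affine part by adjusting $(S,q)$, cf.\ \eqref{Heat equation solution formula b}--\eqref{approximate A0}). Your item (i)--(iii) hints at this absorption but folds it into an alleged gap estimate; the gap estimate as stated would be false for that mode, so the absorption must come first.

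Third, the bowl case is where the proposal diverges most sharply from what is actually done and where the argument as proposed is most at risk. You say ``the spectral analysis is performed on the bowl'' and claim both eigenvalue problems have a positive gap after quotienting by trivial modes. The bowl is a translating (not shrinking) soliton; its Jacobi operator does not come with the clean self-adjoint Ornstein--Uhlenbeck structure that makes the cylinder computation tractable, and no such spectral analysis appears in the paper. Instead the paper's Theorem \ref{cap improvement theorem} iterates the cylindrical improvement in the region at distance $\geq 2^{j/100}\Lambda_1$ from the tip to gain the factor $2^{-j}$, and then runs a maximum-principle argument with the weight $e^{-\Phi+\lambda(t-\bar t)}(H-\mu)^{-1}$ (see \eqref{EQNforf}--\eqref{maxf}) to propagate the smallness into the cap. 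Without that structural device (or an actual bowl-Jacobi spectral theorem, which would be a substantial new input), your cap case does not go through.

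In summary: your cylinder argument is in the right conceptual family but needs (a) a coherent treatment of the $L_0\to\infty$ limit or an explicit decay lemma on a fixed slab, and (b) a separate affine-mode absorption for $\lambda_m=n-k$ before any ``gap'' argument can apply. Your bowl argument is not correct as stated; replacing the claimed spectral analysis with an iteration of the cylinder improvement plus a tip barrier (as the paper does) is, as far as I can see, not optional.
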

\begin{remark}\label{sym remark}
Theorem \ref{Symmetry improvement intro} (symmetry improvement theorem) also holds if we replace the normalized intrinsic parabolic neighborhood  by the following Euclidean parabolic neighborhood with possibly larger  $L_0$ and smaller $\varepsilon_0$ still depending on dimension $n$ in the form of  as used in \cite[Sec 2]{ADS2}
\begin{equation}
    P(\bar{x},\bar{t}, H(\bar{x},\bar{t})^{-1}L_0, H(\bar{x},\bar{t})^{-2}L_0^2)=B(\bar{x}, H(\bar{x}, \bar{t})^{-1}L_{0})\times [\bar{t}-H(\bar{x}, \bar{t})^{-2}L_{0}^2, \bar{t}],
\end{equation}
and similarly if we replace the intrinsic parabolic neighborhood in Definition \ref{e_symmetric} by Euclidean parabolic neighborhood.
\end{remark}
 Theorem \ref{spectral theorem} (spectral quantization theorem) and Theorem \ref{Symmetry improvement intro} (symmetry improvement theorem) are  important ingredients in the  classification of  general ancient asymptotically cylindrical flows or ancient noncollapsed mean curvature flows in $\mathbb{R}^{n+1}$ with arbitrary cylinder $\mathbb{R}^{k}\times S^{n-k}$ as tangent flow at $-\infty$. The classification in the general cases where $1\leq \textrm{rk}(Q)\leq k-1$ seems currently out of reach. In this paper, we focus on the discussion on the extremal rank cases: $\textrm{rk}(Q)=k$ (non-degenerate case) and $\textrm{rk}(Q)=0$ (fully-degenerate case). To this end,  we first give the following definition.
\begin{definition}
A $k$-oval in $\mathbb{R}^{n+1}$ is an ancient noncollapsed flow whose tangent flow at $-\infty$ is $\mathbb{R}^{k}\times S^{n-k}(\sqrt{2(n-k)|t|})$   for some $1\leq k\leq n-1$  and  whose cylindrical matrix $Q$ satisfies the full rank condition that $\textrm{rk}(Q)=k$ in Theorem \ref{spectral theorem}.
\end{definition}
This concept of $k$-ovals in $\mathbb{R}^{n+1}$ generalizes the compact case where the neutral mode is dominant for ancient asymptotically neck flows in \cite{CHH, CHHW} and the concept of bubble-sheet ovals in $\mathbb{R}^{4}$ in \cite{oval_classification_r4}. For $k$-ovals in $\mathbb{R}^{n+1}$ (non-degenerate case), we have the following result.
\begin{theorem}[asymptotics, compactness and symmetry of $k$-ovals in $\mathbb{R}^{n+1}$]\label{compact_symmetry_oval}
Every $k$-oval in $\mathbb{R}^{n+1}$ is compact and $\mathrm{O}(n-k+1)$ symmetric and has the same sharp asymptotics as the unique $ \mathrm{O}(k)\times \mathrm{O}(n-k+1)$ symmetric oval \cite{White_nature, HaslhoferHershkovits_ancient, DH_ovals} up to time shift and parabolic rescaling:
\begin{itemize}
    \item Parabolic region: Given any $R>0$, the cylindrical profile function $u$ for $\tau\to -\infty$ satisfies
    \begin{equation*}
     u(y,\omega,\tau)=\frac{\sqrt{2(n-k)}}{4}\frac{|y|^2-2k}{\tau} +o(|\tau|^{-1})
    \end{equation*}
        uniformly for $|y|\leq R$.
      \item Intermediate region: Let $\bar{u}(z,\omega, \tau)=u(|\tau|^{\frac{1}{2}}z,\omega, \tau)+\sqrt{2(n-k)}$, we have
\begin{equation*}
    \lim_{\tau\rightarrow -\infty}\bar{u}(z,\omega, \tau)=\sqrt{(n-k)(2-|z|^2)}
\end{equation*}
    uniformly on every compact subset of $\{(z, \omega): |z|<\sqrt{2}, \omega \in S^{n-k}\}$.
    \item Tip region:  Set $\lambda(s)=\sqrt{|s|^{-1}\log |s|}$, and fix any unit directional vector $\vartheta\in S^{k-1}\cap (\mathbb{R}^{r}\times \{0\})$. Let $p_{s}\in M_s$ be the point that maximizes $\langle p, \vartheta\rangle$ among all $p\in M_s$. Then as $s\to -\infty$, the rescaled flows
    \begin{equation*}
        {\widetilde{M}}^{s}_{t}=\lambda(s)\cdot(M_{s+\lambda(s)^{-2}t}-p_{s})
    \end{equation*}
    converge to $\mathbb{R}^{k-1}\times N_{t}$, where $N_{t}$ is the $(n+1-k)$-dimensional round translating bowl  in $\mathbb{R}^{n+1-k}$ with speed $1/\sqrt{2}$.
\end{itemize}
\end{theorem}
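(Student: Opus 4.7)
The plan is to combine the spectral quantization Theorem~\ref{spectral theorem} with the symmetry improvement Theorem~\ref{Symmetry improvement intro} to reduce a general $k$-oval to the $\mathrm{O}(n-k+1)$-symmetric setting, and then carry out the matched asymptotic analysis familiar from \cite{ADS1, ADS2, DH_ovals}. First, under the full rank hypothesis the quantization $\{0,-\tfrac{\sqrt{2(n-k)}}{4}\}$ in Theorem~\ref{spectral theorem} forces $Q=-\tfrac{\sqrt{2(n-k)}}{4}I_k$. Substituting into \eqref{main_thm_ancient_intro} gives $y^\top Q y-2\mathrm{tr}(Q)=-\tfrac{\sqrt{2(n-k)}}{4}(|y|^2-2k)$, and using $\tau=-|\tau|$ this yields the stated parabolic expansion for $u$. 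The leading profile is quadratically decreasing along every direction of $\mathbb{R}^k$, so the renormalized cylinder radius $\sqrt{2(n-k)}+u$ closes off by $|y|\sim 2\sqrt{|\tau|}$; an ADS-type barrier argument together with Huisken's monotonicity formula then produces a uniform diameter bound on each $\bar M_\tau$, hence compactness of each $M_t$.

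Next I would bootstrap the approximate symmetry in the cylindrical region to exact $\mathrm{O}(n-k+1)$-symmetry. Because the leading parabolic asymptotic is $\omega$-independent, for every $\eps>0$ and every $\tau$ sufficiently negative, every space-time point in a large normalized parabolic neighborhood is simultaneously $\eps$-close to $\mathbb{R}^k\times S^{n-k}$ and $(\eps,n-k)$-symmetric relative to the canonical rotation vector fields of that cylinder. Iterating the cylindrical branch of Theorem~\ref{Symmetry improvement intro} promotes this to exact $\mathrm{O}(n-k+1)$-symmetry throughout the cylindrical region. For the caps, rescaling at a tip yields an ancient noncollapsed translator; the parabolic and intermediate analyses (see below) ensure that this translator is $\eps$-close to $\mathbb{R}^{k-1}$ times a round $(n-k+1)$-dimensional bowl, triggering the cap-improvement branch of Theorem~\ref{Symmetry improvement intro}. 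A standard continuity argument across the intermediate region stitches the two improvements together into global $\mathrm{O}(n-k+1)$-symmetry.

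With symmetry, compactness, and the parabolic asymptotic in hand, the remaining intermediate and tip region asymptotics follow by matched asymptotic expansion. In the intermediate region, setting $z=y/|\tau|^{1/2}$ reduces the renormalized equation to a self-similar ODE whose only solution matching the parabolic asymptotic at $z=0$ is $\sqrt{(n-k)(2-|z|^2)}$, and the parabolic quadratic profile furnishes sharp upper and lower barriers. In the tip region, the scale $\lambda(s)=\sqrt{|s|^{-1}\log|s|}$ is forced by matching with the intermediate profile near $|z|=\sqrt{2}$, and the resulting subsequential blowup limit is an $\mathrm{O}(n-k+1)$-symmetric ancient noncollapsed translator of speed $1/\sqrt{2}$ admitting an $\mathbb{R}^{k-1}$-splitting; a Brendle--Choi type uniqueness argument generalizing \cite{BC1, BC2} identifies it with $\mathbb{R}^{k-1}\times N_t$.

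The principal obstacle will be the apparent circularity between the symmetry improvement step and the tip asymptotic analysis: the cap branch of Theorem~\ref{Symmetry improvement intro} requires proximity to a round bowl, which is produced from the tip rescaling, which in turn benefits from the symmetry of the underlying flow. I would resolve this by an iteration in $\eps\to 0$, alternately sharpening the bowl approximation at the tip and upgrading the partial symmetry, and only closing the matched expansion uniformly in $\omega$ once the exact $\mathrm{O}(n-k+1)$-symmetry has been secured.
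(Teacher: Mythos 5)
Your plan puts exact $\mathrm{O}(n-k+1)$-symmetry \emph{before} the intermediate and tip asymptotics, and you correctly flag that this creates a circularity with the cap branch of Theorem~\ref{Symmetry improvement intro}. The paper avoids that circularity by ordering the steps differently: it first proves all three regions of the sharp asymptotics, and only afterwards proves exact symmetry. The intermediate region only requires the \emph{almost}-spherical-symmetry estimate $|\nabla_{S^{n-k}}u|\le e^{-\eta|\tau|^{1/2}}$ on $\{|y|\le\sqrt{(2-\delta)|\tau|}\}$ (Claim~\ref{lemma_almost_symm_further}), and this is obtained by iterating only the \emph{cylindrical} improvement branch (every point there is on an $\eps$-cylinder by the global convergence theorem \cite{HaslhoferKleiner_meanconvex} plus uniform $(k+1)$-convexity) --- no cap improvement, hence no tip analysis, is invoked at that stage. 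Your proposed resolution, an alternating iteration in $\eps$ between bowl-proximity at the tip and symmetry, is not substantiated and is unlikely to close without some additional structural input.

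The other genuine gap is in the tip region argument. You assert that the tip blowup is ``an ancient noncollapsed translator admitting an $\mathbb{R}^{k-1}$-splitting'' and then appeal to a Brendle--Choi uniqueness, but the $\mathbb{R}^{k-1}$-splitting is precisely what needs to be proved and it does not follow from matching or symmetry alone. The paper establishes it by showing (via convexity and the intermediate asymptotics) that the tip blowup limit contains the $k$-dimensional $\mathrm{O}(k-1)$-symmetric convex cone $\mathcal{C}^\infty_k$, and then applies \cite[Thm 1.4]{DH_blowdown} inductively, together with uniform $(k+1)$-convexity, to force the limit to split off $k-1$ lines; only then does two-convexity and \cite{BC2} identify the cross-section as the round bowl, and the translation speed $1/\sqrt 2$ comes from Hamilton's Harnack inequality giving $H(p_t)/\sqrt{|t|^{-1}\log|t|}\to 1/\sqrt2$. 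Without the blowdown theorem (or a replacement of it) the tip argument does not close, and your ``self-similar ODE'' sketch for the intermediate region similarly needs the inner ADS and outer KM barrier comparisons and the convexity/negative-definiteness input of \eqref{negative_div}--\eqref{positive_div} and \eqref{negative definite}, which a pure matching argument does not supply.
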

This theorem generalizes the result \cite[Thm 1.4]{DH_hearing_shape} to all dimensions and removes the condition of rotational symmetry  along the directions of spherical factor  in the unique tangent flow at $-\infty$ in \cite[Thm1.4]{DH_ovals}.
Using this theorem together with Theorem \ref{spectral theorem} and the uniqueness result of $\mathrm{SO}(k)\times \mathrm{SO}(n-k+1)$ symmetric ancient ovals in \cite{DH_ovals}, we obtain the following direct corollary.
\begin{corollary}[classification of partially rotational symmetric solutions]\label{symmetr classification}
For any ancient noncollapsed flow $\mathcal{M}=\{M_t\}$ in $\mathbb{R}^{n+1}$
 with   the cylindrical flow $\mathbb{R}^{k}\times S^{n-k}(\sqrt{2(n-k)|t|})$  as its tangent flow at $-\infty$ for some $1\leq k\leq n-1$ and $\mathrm{SO}(k)$ symmetry on $\mathbb{R}^{k}$ coordinates,  it is (up to time shift and parabolically dilation) either  $\mathbb{R}^{k}\times S^{n-k}(\sqrt{2(n-k)|t|})$ or the unique $\mathrm{O}(k)\times \mathrm{O}(n-k+1)$ symmetric oval, which has the same tangent flow at $-\infty$ and was constructed by White \cite{White_nature} and Haslhofer-Hershkovits \cite{HaslhoferHershkovits_ancient}.
\end{corollary}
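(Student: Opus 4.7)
The plan is to combine the three inputs named in the statement --- Theorem \ref{spectral theorem} (quantization of the cylindrical matrix $Q$), Theorem \ref{compact_symmetry_oval} (compactness, symmetry, asymptotics of $k$-ovals), and the uniqueness of $\mathrm{SO}(k)\times\mathrm{SO}(n-k+1)$-symmetric ancient ovals from \cite{DH_ovals} --- with one elementary representation-theoretic observation that upgrades the assumed $\mathrm{SO}(k)$-symmetry of $\mathcal{M}$ into rigidity of $Q$.

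Step 1 (symmetry constrains $Q$). The rescaled flow $\bar M_\tau$ inherits the $\mathrm{SO}(k)$-symmetry on the $\mathbb{R}^k$-factor of $\Gamma$, so the cylindrical profile $u(y,\omega,\tau)$ satisfies $u(Ry,\omega,\tau)=u(y,\omega,\tau)$ for every $R\in\mathrm{SO}(k)$. The leading-order ansatz $y^\top Q y - 2\mathrm{tr}(Q)$ of $|\tau|u$ in Theorem \ref{spectral theorem} is uniquely determined by $u$, so this quadratic form must itself be $\mathrm{SO}(k)$-invariant. Since $\mathrm{SO}(k)$ acts irreducibly on $\mathbb{R}^k$ for $k\geq 2$ (and trivially when $k=1$), the only $\mathrm{SO}(k)$-invariant symmetric bilinear forms on $\mathbb{R}^k$ are multiples of the identity, hence $Q=\lambda I_k$ for some scalar $\lambda$. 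The eigenvalue quantization of Theorem \ref{spectral theorem} then leaves only two possibilities, $\lambda=0$ or $\lambda=-\tfrac{\sqrt{2(n-k)}}{4}$.

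Step 2 (non-degenerate case $\lambda=-\sqrt{2(n-k)}/4$). Here $\mathrm{rk}(Q)=k$, so $\mathcal{M}$ is a $k$-oval in the sense of the definition preceding Theorem \ref{compact_symmetry_oval}. That theorem then yields compactness of $\mathcal{M}$ together with full $\mathrm{O}(n-k+1)$-symmetry along the spherical directions. Combined with the hypothesis, $\mathcal{M}$ is $\mathrm{SO}(k)\times \mathrm{O}(n-k+1)$-symmetric, in particular $\mathrm{SO}(k)\times\mathrm{SO}(n-k+1)$-symmetric, and noncollapsed with cylindrical tangent flow $\mathbb{R}^{k}\times S^{n-k}(\sqrt{2(n-k)|t|})$ at $-\infty$. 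These are precisely the hypotheses of the uniqueness theorem for $\mathrm{SO}(k)\times\mathrm{SO}(n-k+1)$-symmetric ancient ovals of Du-Haslhofer \cite{DH_ovals}, which identifies $\mathcal{M}$, up to time translation and parabolic dilation, with the Haslhofer-Hershkovits oval of \cite{White_nature, HaslhoferHershkovits_ancient}.

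Step 3 (degenerate case $\lambda=0$). Here $Q=0$, so $\mathcal{M}$ falls in the vanishing-rank regime $\mathrm{rk}(Q)=0$ whose classification is the final main theorem advertised in the abstract. Invoking that classification forces $\mathcal{M}$ to be the round shrinking cylinder $\mathbb{R}^{k}\times S^{n-k}(\sqrt{2(n-k)|t|})$ itself, up to rigid motion and time shift.

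The main conceptual content is Step 1: once one justifies that the leading-order expansion of $u$ faithfully records the symmetries of $\mathcal{M}$, the quantization theorem collapses the set of possible $Q$ for an $\mathrm{SO}(k)$-symmetric flow from a $\binom{k+1}{2}$-parameter family of quantized matrices to exactly two choices, each of which is handled by an existing classification. I do not anticipate an essential obstacle beyond verifying that the $\mathrm{SO}(k)$-action and the graphical representation \eqref{graph_over_cylinder} over the $\mathrm{SO}(k)$-invariant cylinder $\Gamma$ are compatible, which is immediate from uniqueness of the tangent flow and from choosing the $\mathbb{R}^k$-factor to be the canonical one.
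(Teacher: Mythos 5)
Your Step~1 and Step~2 are correct and match the paper's argument; Step~1 in particular makes explicit the Schur-lemma observation that the paper only states implicitly, namely that $\mathrm{SO}(k)$-invariance of the profile function pins down $Q$ to a scalar multiple of the identity, leaving only $\mathrm{rk}(Q)\in\{0,k\}$.

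However, Step~3 as written contains a genuine gap. Theorem~\ref{Rk=0} does \emph{not} ``force $\mathcal{M}$ to be the round shrinking cylinder'': its conclusion is a dichotomy, either the cylinder $\mathbb{R}^k\times S^{n-k}$ \emph{or} $\mathbb{R}^{k-1}$ times the $(n-k+1)$-dimensional round bowl soliton translating in a direction orthogonal to the $\mathbb{R}^{k-1}$-factor. To finish you must rule out the second alternative, and that is exactly where the $\mathrm{SO}(k)$-hypothesis is needed: the bowl-times-plane solution selects a distinguished nonzero translation direction inside $\mathbb{R}^k$, which cannot be invariant under the full $\mathrm{SO}(k)$-action when $k\geq 2$. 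This one-line observation must be added for the argument to close. The paper itself avoids invoking Theorem~\ref{Rk=0} at all in this case and instead argues more economically: assuming $\mathcal{M}$ is not cylindrical, $\mathrm{rk}(Q)=0$ means the unstable mode dominates, so by the fine cylindrical theorem \cite[Thm 6.4]{DH_blowdown} the flow carries a nonvanishing fine cylindrical vector $(a_1,\dots,a_k)\in\mathbb{R}^k\setminus\{0\}$; such a vector already breaks $\mathrm{SO}(k)$-symmetry, yielding an immediate contradiction without ever mentioning the bowl. Your route via Theorem~\ref{Rk=0} is viable, but it uses a heavier tool and, as currently stated, asserts more than that theorem delivers.
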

 This corollary relaxes the larger rotational symmetry condition in \cite{DH_ovals}\footnote{In \cite{DH_ovals}, Haslhofer and the first author classified all $\mathrm{SO}k)\times \mathrm{SO}(n-k+1)$ symmetric ancient noncollapsed solutions (up to time shift and parabolically dilation) into $S^{n}$, $S^{k-1}\times \mathbb{R}^{n-k+1}$, $S^{k}\times \mathbb{R}^{n-k}$, the unique $\mathrm{SO}(k)\times \mathrm{SO}(n-k+1)$ symmetric oval with $S^{k-1}\times \mathbb{R}^{n-k+1}$ as tangent flow at $-\infty$, the unique $\mathrm{SO}(k)\times \mathrm{SO}(n-k+1)$ symmetric oval with $\mathbb{R}^{k}\times S^{n-k}$ as tangent flow at $-\infty$.} by removing the condition of rotational symmetry  along the directions of spherical factor  in the unique tangent flow at $-\infty$.

If $\textrm{rk}(Q)=0$ (fully-degenerate case) which is equivalent to  that unstable mode is dominant (see Section \ref{sec_quant_thm} for details), we obtain the following result.
\begin{theorem}[fully-degenerate case]\label{Rk=0}
Let $\mathcal{M}=\{M_t\}$ be an ancient noncollapsed mean curvature flow in $\mathbb{R}^{n+1}$ whose tangent flow at $-\infty$ is given by $\mathbb{R}^{k}\times S^{n-k}(\sqrt{2(n-k)|t|})$. If $\mathrm{rk}(Q)=0$ (unstable mode is dominant), then $\mathcal{M}=\{M_t\}$ is either a round shrinking cylinder $\mathbb{R}^k\times S^{n-k}$ or $\mathbb{R}^{k-1}$ times  $(n-k+1)$ dimensional round bowl soliton whose translating direction is orthogonal to $\mathbb{R}^{k-1}$ factor.
\end{theorem}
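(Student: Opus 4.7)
First, Theorem \ref{spectral theorem} together with the hypothesis $\mathrm{rk}(Q)=0$ forces $Q=0$, so
\[
\lim_{\tau \to -\infty} \bigl\| \,|\tau|\,u(\cdot,\tau)\, \bigr\|_{C^p(B_R)} = 0
\]
for every $R<\infty$ and $p \in \mathbb{N}$. Thus the quadratic (neutral) Hermite component of $u$ decays strictly faster than $|\tau|^{-1}$. Projecting $u$ onto the unstable, neutral, and stable eigenspaces of the Ornstein--Uhlenbeck operator on $\Gamma$ and invoking the Merle--Zaag ODE dichotomy, the only remaining alternative is that the unstable spectrum dominates. After the usual coordinate normalizations (translating the origin in $\mathbb{R}^{n+1}$ to cancel the first spherical-harmonic unstable modes, and rotating to eliminate the tilt modes coming from $p=1,\ell=1$), this yields an exponential decay estimate $\|u(\cdot,\tau)\|_{L^2(\Gamma\cap B_R)}\le C_R\, e^{\lambda\tau}$ for some $\lambda>0$.

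Second, since $\mathcal{M}$ is $\varepsilon$-close to the cylinder $\Gamma$ on graphical balls of radius $\rho(\tau)\to\infty$ as $\tau\to-\infty$, every space-time point $(\bar x,\bar t)\in\mathcal{M}$ with $\bar t$ sufficiently negative lies on an $\varepsilon$-cylinder and is $(\varepsilon,n-k)$-symmetric with respect to the obvious rotation vector fields of $\Gamma$, with $\varepsilon\to 0$ as $\bar t\to-\infty$. A standard diagonal/compactness argument combined with the iterative application of Theorem \ref{Symmetry improvement intro} then propagates the symmetry improvement to every scale, upgrading it to exact $\mathrm{O}(n-k+1)$-symmetry of $\mathcal{M}$ about a fixed $k$-plane $\mathbb{R}^k\subset\mathbb{R}^{n+1}$, in the same spirit as the arguments of Brendle--Choi and Zhu.

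Third, the symmetric flow is parametrized by a scalar profile $r=f(y,t)$ with $y\in\mathbb{R}^k$ and $r\ge 0$ denoting the distance from the rotation axis. If $f$ is independent of $y$, then the noncollapsing together with the prescribed cylindrical tangent flow forces $\mathcal{M}=\mathbb{R}^k\times S^{n-k}(\sqrt{2(n-k)|t|})$. Otherwise, parabolic rescaling at the tip combined with Brakke compactness, the local regularity theorem, and $\alpha$-noncollapsing yields a subsequential limit that is an $\mathrm{O}(n-k+1)$-symmetric noncollapsed ancient translator whose tangent flow at $-\infty$ is $\mathbb{R}^{k-1}\times\mathbb{R}\times S^{n-k}$. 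The classification of noncollapsed translators with this symmetry and tangent structure (Brendle, Haslhofer) identifies the limit as $\mathbb{R}^{k-1}\times(\mathrm{bowl})$, with the translation direction inside $\mathbb{R}^{n-k+1}$ pinned down by the dominant unstable mode from the first step.

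Finally, uniqueness of the tangent flow at spatial infinity of $\mathcal{M}$, together with the already-established $\mathrm{O}(n-k+1)$-symmetry, propagates this product structure to the whole flow, yielding $\mathcal{M}=\mathbb{R}^{k-1}\times(\mathrm{bowl})$. The main obstacle is precisely this last propagation step: while the unstable-mode analysis fixes the translation direction and the $\mathrm{O}(n-k+1)$-symmetry rigidifies the spherical cross-section, rigorously showing that the limiting tip structure extends to a global splitting of $\mathcal{M}$ requires either a moving-plane / Brendle--Choi type rigidity on the reduced $(k+1)$-dimensional flow in the quotient half-space $\mathbb{R}^k\times[0,\infty)$, or a Lojasiewicz-type uniqueness for the bowl product; I expect this to be the technically heaviest part of the argument.
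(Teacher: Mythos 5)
Your proposal departs substantially from the paper's argument, and it has a genuine gap precisely where the work has to be done. The paper proves Theorem~\ref{Rk=0} by induction on $k$: the key input is the \emph{fine cylindrical theorem} of Du--Haslhofer (\cite[Thm 6.4]{DH_blowdown}), which attaches to any unstable-mode-dominant flow a universal nonvanishing vector $(a_1,\dots,a_k)$ that is invariant under spatial translation of the base point. The inductive step is a pure contradiction argument. For a noncompact solution that does not split a line, one walks to infinity along the two boundary rays of the blowdown cone in a $2$-plane, producing two \emph{translated} (not rescaled) sublimits; each splits a line, so the induction hypothesis identifies each as $\mathbb{R}^{k-1}\times(\text{bowl})$, and since they share the same fine cylindrical vector and translating direction they must coincide, which forces a degenerate cross-section and a contradiction with strict convexity of the cone. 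Compact solutions are ruled out by the analogous two-endpoint argument. No rotational-symmetry upgrading and no translator classification at the tip enters the proof at all.

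Your route tries instead to first bootstrap the $(\varepsilon,n-k)$-symmetry from Theorem~\ref{Symmetry improvement intro} into exact $\mathrm{O}(n-k+1)$-symmetry, then blow up at a tip and propagate the bowl-product structure globally. Two serious problems. First, iterating the symmetry improvement requires verifying its hypothesis: every point in the relevant parabolic neighborhoods must be $\varepsilon_0$-close either to a cylinder $\mathbb{R}^k\times S^{n-k}$ or to a piece of $\mathbb{R}^{k-1}$ times a bowl. In the paper that structural dichotomy is only established in the $\mathrm{rk}(Q)=k$ ovals case (Theorem~\ref{compact_symmetry_oval}), where the full parabolic/intermediate/tip asymptotics are available; for $\mathrm{rk}(Q)=0$ you have no a priori control near the tip region, so the hypothesis of the cylindrical/cap improvement theorems cannot be checked and the claimed "upgrade to exact $\mathrm{O}(n-k+1)$-symmetry" is unjustified. (Your opening clause that "every space-time point $(\bar x,\bar t)$ with $\bar t$ sufficiently negative lies on an $\varepsilon$-cylinder" is simply false for noncompact solutions: there are always cap regions.) Second, you concede that passing from a local $\mathbb{R}^{k-1}\times(\text{bowl})$ structure near a tip to a global splitting of $\mathcal{M}$ is the "technically heaviest part" and you offer only unworked sketches. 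That is not a loose end but the content of the theorem. The paper circumvents exactly this: by never rescaling and by exploiting the invariance of the fine cylindrical vector, it identifies two translated sublimits outright and gets a contradiction without any Lojasiewicz or moving-plane argument for translators.
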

This theorem improves the results in \cite[Thm 1.10]{CHH_wing} and \cite[Cor 7.5]{DH_blowdown} by removing the noncompactness condition and uniformly three-convexity condition, and generalizes \cite[Thm 1.2]{DH_hearing_shape} to all dimensions. As an application, we can improve our previous result in \cite{DH_blowdown}. We first recall the following definition of blowdown of  the ancient noncollapsed
flow $M_{t}=\partial K_{t}$.
\begin{definition}[blowdown]\label{def_blowdown}
Given any time $t_0$, the \emph{blowdown} of $K_{t_0}$ is defined by
\begin{equation}
\check{K}_{t_0}:=\lim_{\lambda\to 0} \lambda\cdot K_{t_0}.
\end{equation}
\end{definition}
Then we have the following corollary of Theorem \ref{Rk=0}.
\begin{corollary}[blowdown in unstable mode case]\label{blowdown in unstable mode}
Let $M_t=\partial K_{t}$ be a strictly convex ancient noncollapsed mean curvature flow in $\mathbb{R}^{n+1}$ with $\textrm{rk}(Q)=0$ (unstable mode is dominant). Then $M_{t}$ is the $n$ dimensional
round bowl soliton, and in particular  its tangent flow at $-\infty$ is a neck cylinder $\mathbb{R}\times S^{n-1}(\sqrt{2(n-1)|t|})$ and for any time $t_{0}$ the blowdown set $\check{K}_{t_{0}}=\lim_{\lambda\to 0} \lambda K_{t_{0}}$ is the same half line and in particular
\begin{equation}
    \text{dim}\check{K}_{t_{0}}=1.
\end{equation}
\end{corollary}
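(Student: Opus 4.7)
The plan is to derive this corollary as a short consequence of Theorem \ref{Rk=0} (the fully-degenerate case classification), using strict convexity to eliminate all possibilities but the bowl, and then computing the blowdown of the bowl directly.

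First, I would apply Theorem \ref{Rk=0} to $M_t=\partial K_t$: since $\mathrm{rk}(Q)=0$, the flow must be either a round shrinking cylinder $\mathbb{R}^{k}\times S^{n-k}(\sqrt{2(n-k)|t|})$ or an $\mathbb{R}^{k-1}$-factor times an $(n-k+1)$-dimensional round translating bowl (with translation direction orthogonal to the $\mathbb{R}^{k-1}$ factor). Now I invoke strict convexity to prune this list. Any round shrinking cylinder $\mathbb{R}^{k}\times S^{n-k}$ has $k\geq 1$ flat directions, so its second fundamental form has nontrivial kernel; it cannot be strictly convex. Similarly, the product solution $\mathbb{R}^{k-1}\times \mathrm{Bowl}^{n-k+1}$ has $k-1$ flat directions and is strictly convex only when $k=1$. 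Hence the only alternative left is $k=1$ and $M_t$ is the $n$-dimensional round translating bowl soliton, with tangent flow at $-\infty$ equal to $\mathbb{R}\times S^{n-1}(\sqrt{2(n-1)|t|})$, as claimed.

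The remaining task is to identify the blowdown of $K_{t_0}$ for the $n$-dimensional bowl. Up to rigid motion, I may assume the translation direction is $e_{n+1}$ and the bowl's tip at time $t_0$ lies at the origin. The convex region $K_{t_0}\subset\mathbb{R}^{n+1}$ satisfies the familiar paraboloid asymptotics: its cross-section at height $x_{n+1}=h\gg 1$ is (asymptotically) a round ball of radius $r(h)\sim \sqrt{2(n-1)h}$, while $K_{t_0}\cap\{x_{n+1}\leq 0\}=\{0\}$ only at the tip, and $K_{t_0}\subset \{x_{n+1}\geq -C\}$ for some $C<\infty$ from strict convexity and the translating equation. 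Under the dilation $x\mapsto \lambda x$, a point $(x',h)\in\partial K_{t_0}$ is sent to $(\lambda x',\lambda h)$, which at fixed new height $H=\lambda h$ lies at distance $\lambda\, r(H/\lambda)\sim \sqrt{2(n-1)\lambda H}\to 0$ from the $e_{n+1}$-axis. The same estimate shows Hausdorff convergence of $\lambda\cdot K_{t_0}$ to the closed half-line $\{te_{n+1}:t\geq 0\}$ as $\lambda\to 0$. This limit is independent of $t_0$ since the bowl is a translator, so $\check{K}_{t_0}$ is the same half-line for every $t_0$, and in particular $\dim \check{K}_{t_0}=1$.

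The only nontrivial step is the elimination of cylinders and products, which really is immediate from strict convexity, so I expect no serious obstacle here beyond being careful with the asymptotics of the bowl when computing the blowdown limit; this is classical and follows, e.g., from the explicit Clutterbuck-Schn\"urer-Schulze-type asymptotics for the rotationally symmetric bowl.
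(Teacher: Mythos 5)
Your proposal is correct and follows essentially the same route as the paper: invoke Theorem \ref{Rk=0}, use strict convexity to rule out the cylinder and any product with $k\geq 2$ (all of which have flat directions), and then read off the blowdown of the $n$-dimensional bowl from its paraboloid asymptotics. The paper states this very tersely, while you spell out the elimination of cases and the blowdown computation in more detail, but the underlying argument is the same.
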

Under $\textrm{rk}(Q)=0$ (unstable mode is dominant) assumption, this corollary  improves \cite[Thm 7.1]{DH_blowdown} by removing the uniformly three-convexity condition and reducing the upper bound of the blowdown dimension of strictly convex noncollapsed flows  in \cite[Thm 1.2]{DH_blowdown} from $n-2$ to $1$.

\subsection{Organization of the paper}
We organize the paper as follows:

In Section \ref{sec_fine_bubble_sheet}, we set up the fine cylindrical  analysis. Specifically, since $\bar{M}_\tau$ moves by renormalized mean curvature flow, the evolution of the cylindrical  profile function $u(\cdot,\tau)$ over cylinder $\Gamma=\mathbb{R}^{k}\times S^{n-k}(\sqrt{2(n-k)})$, as defined in \eqref{graph_over_cylinder}, is governed by the Ornstein-Uhlenbeck type operator
\begin{equation}
\mathcal L=\triangle_{\mathbb{R}^{k}}
+\frac{1}{2(n-k)}\triangle_{S^{n-k}}-\frac{1}{2}y\cdot\nabla_{\mathbb{R}^{k}}+1\, .
\end{equation}
This operator has the following unstable eigenfunctions, namely
\begin{align}\label{basis_hplus_intro}
1, y_1, y_{2}, \dots, y_{n+1}\, ,
\end{align}
and following neutral eigenfunctions, namely
\begin{align}\label{basis_hneutral_intro}
y^2_{1}-2,\,\, \dots,\,\, y^2_{k}-2,\,\, 2y_{1}y_{2},\,\, 2y_{1}y_{3}, \,\,\dots,\,\, 2y_{k-1}y_{k},
\end{align}
and
\begin{align}\label{basis_hneutral_intro2}
y_{1}y_{k+1},\,\, y_{2}y_{k+1},\,\,\dots\,\,y_{k}y_{n+1},
\end{align}
where $y_{1}, \dots, y_{n+1}$  denote the restriction of the Euclidean coordinate functions to the cylinder $\Gamma$. All the other eigenfunctions are stable. These eigenfunctions form an orthornomal basis of
the Hilbert space $\mathcal{H}$ of Gaussian $L^2$ functions on cylinder $\Gamma$ and give the following decomposition according to sign of modes
\begin{equation}
\mathcal H = \mathcal{H}_+\oplus \mathcal{H}_0\oplus \mathcal{H}_-.
\end{equation}
Let  $P_+, P_0, P_-$ be the orthogonal projections to $\mathcal{H}_+,\mathcal{H}_0,\mathcal{H}_-$, respectively and
\begin{align}
U_+(\tau) := \|P_+ \hat{u}(\cdot,\tau)\|_{\mathcal{H}}^2, \quad
U_0(\tau) := \|P_0 \hat{u}(\cdot,\tau)\|_{\mathcal{H}}^2,\label{def_U_PNM} \quad
U_-(\tau) := \|P_- \hat{u}(\cdot,\tau)\|_{\mathcal{H}}^2.
\end{align}
Then we improve the Merle-Zaag alternative estimates from \cite{MZ}  to a more quantitative version adapted to graphical radius $\rho$ as in \cite[Lem 2.1]{DH_no_rotation}: for $\tau\to -\infty$ we have either
\begin{equation}
    U_0+U_{-}\leq C\rho^{-1}U_+ \quad\quad \text{(unstable mode is dominant)},
\end{equation}
or
\begin{equation}\label{quantitative zero mode}
    U_-+U_{+}\leq C\rho^{-1}U_0 \quad\quad \text{(neutral mode is dominant)}.
\end{equation}
This improved Merle-Zaag estimates will be used for improved error estimates of evolution equation of profile function $u$, which  is an essential ingredient to show that the cylindrical matrix $Q$ in Theorem \ref{spectral theorem} is a constant matrix.

In the case where unstable mode is dominant, it is not hard to see that this is equivalent to $\textrm{rk}(Q)=0$ and the function $u$ decays exponentially, and thus the spectral quantization theorem holds with $Q=0$. Hence, we can focus on the case where neutral mode is dominant. Besides using the Lojasiewicz inequality from Colding-Minicozzi \cite{CM_uniqueness} and  rotated ADS-shrinkers as inner barriers \cite{ADS1, DH_blowdown},   we also need to use  KM-shrinkers as outer barriers \cite{KM}  and asymptotic slope estimates which are key ingredients for overcoming the difficulty of the lack of convexity or noncollapsing condition. Using these ingredients, we can show that the graphical radius
\begin{equation}\label{graphical radius>gamma_intro}
\rho(\tau)\geq |\tau|^{\gamma}
\end{equation}
for some $\gamma>0$, is an admissible graphical radius, i.e.
\begin{equation}\label{small_graph_intro}
\|u(\cdot,\tau)\|_{C^4(\Gamma \cap B_{2\rho(\tau)}(0))} \leq  \rho(\tau)^{-2}.
\end{equation}
\indent We use the cylindrical symmetry improvement theorem to show that the central region of the hypersurfaces is almost $\mathrm{O}(n-k+1)$-symmetric,  specifically that there is some $\eta>0$ such that for all $\tau\ll 0$ we have
\begin{equation}\label{eq_alm_symm_intro}
  \sup_{|y|\leq \rho(\tau)}   |\nabla_{S^{n-k}}(y,\omega,\tau)|\leq e^{-\eta |\tau|^{\gamma}} \, .
\end{equation}
This has an important consequence that out of the eigenfunctions listed in \eqref{basis_hplus_intro}-\eqref{basis_hneutral_intro2} only the the eigenfunctions \eqref{basis_hneutral_intro} can be dominant. Moreover, it also implies that if we perform the Taylor expansion to the evolution equation of the truncated cylindrical function
\begin{equation}
\hat{u}(y, \omega,\tau):=u(y, \omega, \tau) \chi\left(\frac{|y|}{\rho(\tau)}\right) \, ,
\end{equation}
where $\chi$ is a suitable cutoff function, then up to second order we have
\begin{equation}\label{sec_taylor_intro}
 \partial_{\tau}\hat{u}=\mathcal{L}\hat{u}-\frac{1}{2\sqrt{2(n-k)}}\hat{u}^2+\hat{E},
\end{equation}
where the error term   $\hat{E}$ satisfies
\begin{equation}
  \left\langle |\hat{E}|, 1+|y|^2\right\rangle_{\mathcal{H}}\leq C|\tau|^{-\gamma}\|\hat{u} \|_{\mathcal{H}}^2+ e^{-\eta|\tau|^{\gamma}/10}.
\end{equation}

In Section \ref{sec_quant_thm}, we prove Theorem \ref{spectral theorem} (spectral  quantization theorem)  without the noncollapsing or convexity assumption as in \cite[Prop 3.3]{DH_hearing_shape}.  To this end, we consider the expansion
\begin{equation}\label{expansion_123_intro}
 \hat{u} = \sum_{m=1}^{k}\alpha_{mm}(y^2_{m}-2)+\sum_{1\leq i<j\leq k} 2\alpha_{ij}y_{i}y_{j}+\hat{w}\, ,
 \end{equation}
 where the remainder term $\hat{w}$ is controllable thanks to the assumption that the neutral eigenfunctions from \eqref{basis_hneutral_intro} are dominant and thanks to the almost circular symmetry from \eqref{eq_alm_symm_intro}.
Taking also into account \eqref{sec_taylor_intro} and the improved quantitative Merle-Zaag estimates, we then show that the symmetric $k\times k$ spectral coefficients matrix $A(\tau)=(\alpha_{ij}(\tau))$ evolves by the following equation.
\begin{equation}\label{odes0_intro}
 \dot{A}(\tau)=-\beta_{n, k}^{-1} A^{2}(\tau)+O(|\tau|^{-\gamma/2}|A|^2 + e^{-\eta|\tau|^{\gamma}/10}),
\end{equation}
where
\begin{equation}
    \beta_{n, k}=\frac{\sqrt{2(n-k)}}{4},
\end{equation}
and the error estimates is improved compared with \cite[Prop 3.1]{DH_hearing_shape}. Then Theorem \ref{spectral theorem} is reduced to the problem of solving finite dimensional ODE dynamical system.

Motivated by \cite{FilippasLiu}, we will apply diagonalization argument to analyze the above coupled ODE dynamical system. Notice that the eigenvalues $\{\lambda_{i}(\tau)\}_{i=1}^{k}$ of $A(\tau)$ under suitable arrangement are continuously differentiable in time by \cite[Sec 2, Thm 6.8]{kato}.  These eigenvalues satisfy the following  ODE system.
\begin{equation}
    \frac{d}{d\tau}\lambda_{i}=-\beta^{-1}_{n,k}\lambda_{i}^2+o(\sum_{i=1}^{k}\lambda_{i}^2+e^{-\eta|\tau|^{\gamma}/10}) \quad 1\leq i\leq k.
\end{equation}
 Then we use the information from graphical radius and carefully carry out a continuity argument to show that
the error terms with exponential decay  in the above ODE system can be absorbed, and we prove an almost non-upward quadratic bending estimates
\begin{equation}
   Q(\tau)= \max\limits_{1\leq i\leq k}  \lambda_i(\tau) + \varepsilon \min\limits_{1\leq i\leq k} \lambda_i(\tau)<0
\end{equation}
for $\varepsilon>0$ small and $\tau$ negative enough, which means the positive eigenvalues cannot be dominant. Based on these facts, we obtained
\begin{equation}
\frac{c}{|\tau|}\leq \sum_{i=1}^{k}|\lambda_{i}|\leq \frac{C}{|\tau|}.
\end{equation}
Putting this into \eqref{odes0_intro} and using again the digonalization method, we obtain
\begin{equation}
    \frac{d}{d\tau}\lambda_{i}+\beta^{-1}_{n,k}\lambda_{i}^2= O(|\tau|^{-2-\frac{\gamma}{2}}) \quad 1\leq i\leq k,
\end{equation}
where the power  $-2-\frac{\gamma}{2}$  in the error term guarantees the integrability and is used to show that $Q$ is a constant matrix in Theorem \ref{spectral theorem} (spectral  quantization theorem). This is implied by our previous discussion on quantitative Merle-Zaag estimates \eqref{quantitative zero mode} and graphical radius estimates \eqref{graphical radius>gamma_intro} from Lojasiewicz inequality. Finally, by translating back to the discussion for spectral matrix $A$, this  completes  the proof of Theorem \ref{spectral theorem} (spectral  quantization theorem).

In Section \ref{proof_symmetry_improvement}, we will prove the general version of  symmetry improvement theorem in Theorem \ref{Symmetry improvement intro} as in \cite{Zhu2, Zhu, BC1, BC2}. In the cylindrical symmetry improvement case, this is obtained by   estimating Fourier modes of the linearized parabolic Jacobi equation for $\langle \bar{K}, \nu\rangle$ on any  cylinder $\mathbb{R}^k\times S^{n-k}$ ($1\leq k\leq n-1$) and adjusting rotational axes according to the estimates, where $\bar{K}$ is chosen from some normalized set of   rotation vector fields $\bar{\mathcal{K}} = \{\bar{K}_{\alpha}: 1 \leq \alpha \leq \frac{(n-k+1)(n-k)}{2}\}$ and $\nu$ is the normal vector field along the flow. Then we establish cap improvement theorem via iterating cylindrical symmetry improvement theorem, carefully setting up suitable barrier functions along $\mathbb{R}^{k-1}$ factor and applying maximum principle as in \cite[Thm 3.12]{Zhu2, Zhu, BC1, BC2}. The main differences  include more involved  estimates for higher dimensional parabolic equation, axes adjustment and Lemma \ref{Lin Alg Cylinder A}-Lemma \ref{VF close Bowl times lines}.

In Section \ref{rkksection}, we discuss the  ancient noncollapsed flows in $\mathbb{R}^{n+1}$ with full rank $k$ cylindrical matrix $Q$, which are called as $k$-ovals in $\mathbb{R}^{n+1}$.  We obtain the unique sharp asymptotics, compactness and $\mathrm{O}(n-k+1)$ symmetry of  $k$-ovals in $\mathbb{R}^{n+1}$. We first prove  that they have the same unique sharp asymptotics as the $\textrm{SO}(k)\times \textrm{SO}(n-k+1)$ symmetric ancient ovals in \cite[Thm 1.4]{DH_ovals}. The parabolic region sharp asymptotics follows from Theorem \ref{spectral theorem}. The intermediate region sharp asymptotics follows from barrier argument, extended almost symmetry estimates, convexity and method of characteristics.   For obtaining tip asymptotics we showed that  $\mathrm{O}(k-1)$ rotational symmetric $k$ dimensional convex cone $\mathcal{C}^{\infty}_{k}=\{(x, 0)\in \mathbb{R}^{k}\times\mathbb{R}^{n-k+1}: \{x_{1}^2\geq x^2_{2}+,\dots+x^2_{k}, x_{1}\leq 0\}$ is contained in the tip blowup limits of the $k$-ovals in $\mathbb{R}^{n+1}$. This guarantees that we can inductively apply the key ingredient \cite[Thm 1.4]{DH_blowdown} to show that the tip asymtotics looks   $\mathbb{R}^{n+1-k}$ times $(k-1)$ dimensional round bowl soliton. Finally, the $\mathrm{O}(n-k+1)$ symmetry  in Theorem \ref{compact_symmetry_oval} (asymptotics, compactness and symmetry of $k$-ovals in $\mathbb{R}^{n+1}$) follows from sharp asymptotics, convexity, cylindrical symmetry improvement and cap improvement as in \cite[Thm 6.1, Thm 6.7]{DH_hearing_shape}. In the end, we give the proof of Corollary \ref{symmetr classification}.

In Section \ref{rk0section}, we inductively prove Theorem \ref{Rk=0} (fully degenerate  case or equivalently dominant unstable mode case) according to the number of $\mathbb{R}$ factors in the tangent flow at $-\infty$. We assume the solution is not cylindrical. Then we  prove that Theorem \ref{Rk=0} holds for noncompact solutions by contradiction argument. More precisely, we blow up along two directions and obtain two limit flows which split off one line, then we use    the structure of $\mathbb{R}^{k-2}$ times the $(n-k+1)$ dimensional round bowl soliton from the induction assumption and the fine cylindrical theorem in \cite[Thm 6.4]{DH_blowdown} to derive a contradiction. Finally, we exclude the possibility of compact solutions. This completes the proof of Theorem \ref{Rk=0}. In  the end of this section, we give the proof of Corollary \ref{blowdown in unstable mode}.

\bigskip

\noindent\textbf{Acknowledgments.}
We appreciate the communication with Professor Robert Haslhofer. The first author has been supported by the NSERC Discovery Grant and the Sloan Research Fellowship of Professor Robert Haslhofer.
\bigskip
\section{Fine cylindrical analysis}\label{sec_fine_bubble_sheet}

\subsection{Basic cylindrical setup}
Let $M_t$ be an ancient asymptotically cylindrical flow in $\mathbb{R}^{n+1}$. Then its tangent flow at $-\infty$ in suitable coordinates is
\begin{equation}\label{bubble-sheet_tangent}
\lim_{\lambda \rightarrow 0} \lambda M_{\lambda^{-2}t}=\mathbb{R}^{k}\times S^{n-k}(\sqrt{2(n-k)|t|}) \quad\quad (1\leq k\leq n-1).
\end{equation}
In other words, the renormalized mean curvature flow,
\begin{equation}
\bar M_\tau = e^{\frac{\tau}{2}}  M_{-e^{-\tau}},
\end{equation}
converges as $\tau\to -\infty$ to the cylinder
\begin{equation}
\Gamma=\mathbb{R}^{k}\times S^{n-k}(\sqrt{2(n-k)}).
\end{equation}
Assume further that $M_t$ is not a round shrinking cylinder.
Let us fix some admissible graphical radius function $\rho(\tau)$ for $\tau\ll 0$, namely a positive function satisfying
\begin{equation}\label{univ_fns}
\lim_{\tau \to -\infty} \rho(\tau)=\infty, \quad \textrm{and}\quad  -\rho(\tau) \leq 5\rho'(\tau) \leq 0,
\end{equation}
so that $\bar M_\tau$ can be written as a graph of a function $u(\cdot,\tau)$ over $\Gamma\cap B_{2\rho(\tau)}$ with the estimate
\begin{equation}\label{small_graph}
\|u(\cdot,\tau)\|_{C^4(\Gamma \cap B_{2\rho(\tau)}(0))} \leq  \rho(\tau)^{-2}.
\end{equation}
Since $\bar{M}_\tau$ moves by renormalized mean curvature flow, the cylindrical function $u$ evolves by
 \begin{equation}
      \partial_{\tau}u=\mathcal{L}u+E,
 \end{equation}
where $\mathcal L$ is the Ornstein-Uhlenbeck  operator on $\Gamma=\mathbb{R}^{k}\times S^{n-k}(\sqrt{2(n-k)})$ explicitly given  by
\begin{equation}
\mathcal L=\mathcal L=\triangle_{\mathbb{R}^{k}}
+\frac{1}{2(n-k)}\triangle_{S^{n-k}}-\frac{1}{2}y\cdot\nabla_{\mathbb{R}^{k}}+1,
\end{equation}
and the error term $E$  satisfies the pointwise estimate
\begin{equation}\label{error_first_untrunc}
    |E|\leq C\rho^{-2}(|u|+|\nabla u|)
\end{equation}
thanks to \eqref{small_graph}.

\noindent Denote by $\mathcal{H}$ the Hilbert space of Gaussian $L^2$ functions on $\Gamma$, where
\begin{equation}\label{def_norm}
\langle f, g\rangle_{\mathcal{H}}= \frac{1}{(4\pi)^{n/2}} \int_{\Gamma}  f(q)g(q) e^{-\frac{|q|^2}{4}} \, dq\, .
\end{equation}
We also fix a nonnegative smooth cutoff function $\chi$ satisfying $\chi(s)=1$ for $|s| \leq \tfrac{1}{2}$ and $\chi(s)=0$ for $|s| \geq 1$, and consider the truncated function
\begin{equation}
\hat{u}(y,\omega,\tau):=u(y,\omega,\tau) \chi\left(\frac{|y|}{\rho(\tau)}\right).
\end{equation}
Then, we have the following proposition.
\begin{proposition}[{truncated evolution, cf. \cite[Proposition 2.1]{DH_hearing_shape}}]\label{prop_trunc_bubble}
The truncated cylindrical function $\hat{u}$ satisfies
\begin{equation}\label{eq_truncated}
\left\|(\partial_\tau -\mathcal{L})\hat{u} \right\|_{\mathcal{H}}\leq C\rho^{-1} \| \hat{u} \|_{\mathcal{H}}.
\end{equation}
\end{proposition}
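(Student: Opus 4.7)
The plan is to expand $(\partial_\tau - \mathcal{L})\hat{u}$ via the Leibniz rule, substitute the equation $\partial_\tau u - \mathcal{L}u = E$ for the untruncated profile, and estimate each of the resulting pieces separately. Setting $\chi_\rho(y,\tau) := \chi(|y|/\rho(\tau))$ and noting that the spherical Laplacian and the constant part of $\mathcal{L}$ commute with multiplication by $\chi_\rho$ (since $\chi_\rho$ depends only on $|y|$), a direct computation gives
\[
(\partial_\tau - \mathcal{L})\hat{u} \;=\; \chi_\rho E \;+\; u\,\partial_\tau\chi_\rho \;-\; 2\,\nabla_{\mathbb{R}^k}u\cdot\nabla_{\mathbb{R}^k}\chi_\rho \;-\; u\,\bigl(\Delta_{\mathbb{R}^k}\chi_\rho - \tfrac{1}{2}\,y\cdot\nabla_{\mathbb{R}^k}\chi_\rho\bigr).
\]

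The first, bulk, term is controlled via $|E|\le C\rho^{-2}(|u|+|\nabla u|)$, which yields $\|\chi_\rho E\|_{\mathcal{H}}\le C\rho^{-2}\|\hat{u}\|_{\mathcal{H}} + C\rho^{-2}\|\chi_\rho\nabla u\|_{\mathcal{H}}$. The other three terms all carry a derivative of $\chi_\rho$ and are therefore supported in the annulus $\{\rho/2\le|y|\le\rho\}$; on that set the admissibility conditions $|\rho'|\le\rho/5$ force $|\partial_\tau\chi_\rho|\le C$, $|\nabla\chi_\rho|\le C\rho^{-1}$, $|\Delta\chi_\rho|\le C\rho^{-2}$, $|y\cdot\nabla\chi_\rho|\le C$, and combined with the a priori estimate $\|u\|_{C^4(\Gamma\cap B_{2\rho})}\le\rho^{-2}$ and the Gaussian decay $e^{-|y|^2/4}\le e^{-\rho^2/16}$, each such term has $\mathcal{H}$-norm of order $e^{-c\rho^2}$, easily absorbed in $C\rho^{-1}\|\hat{u}\|_{\mathcal{H}}$ in any regime where $\|\hat{u}\|_{\mathcal{H}}$ is at most polynomially small in $\rho$ (the relevant regime thanks to the Lojasiewicz inequality of Colding--Minicozzi).

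The main obstacle is reducing $\|\chi_\rho\nabla u\|_{\mathcal{H}}$ to $\|\hat{u}\|_{\mathcal{H}}$, since $\chi_\rho\nabla u$ is not literally $\nabla\hat{u}$. The cleanest route is the identity $\chi_\rho\nabla u = \nabla\hat{u} - u\,\nabla\chi_\rho$: the first summand falls to a Gaussian-weighted Poincar\'e-type inequality (integration by parts against $e^{-|y|^2/4}$, using that $\mathcal{L}$ is the drift Laplacian of that measure, so that $\langle \nabla\hat{u},\nabla\hat{u}\rangle_{\mathcal{H}}\le\langle\hat{u},\hat{u}\rangle_{\mathcal{H}}+\langle\hat{u},\mathcal{L}\hat{u}\rangle_{\mathcal{H}}$ up to lower-order zero-order contributions), and the second is again annulus-localized and exponentially small by the argument above. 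Summing all contributions then yields the announced bound $\|(\partial_\tau - \mathcal{L})\hat{u}\|_{\mathcal{H}}\le C\rho^{-1}\|\hat{u}\|_{\mathcal{H}}$.
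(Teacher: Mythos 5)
Your Leibniz expansion of $(\partial_\tau - \mathcal{L})\hat{u}$ and the pointwise bounds on the cutoff derivatives are correct. But the central step — bounding $\|\chi_\rho\nabla u\|_{\mathcal{H}}$ by $\|\hat{u}\|_{\mathcal{H}}$ — does not follow from the argument you give. Integration by parts against the Gaussian weight gives the \emph{identity} $\|\nabla\hat{u}\|^2_{\mathcal{H},*}=\|\hat{u}\|^2_{\mathcal{H}}-\langle\mathcal{L}\hat{u},\hat{u}\rangle_{\mathcal{H}}$ (note the minus sign, opposite to what you wrote), and since $\mathcal{L}$ has spectrum unbounded from below, $\langle\mathcal{L}\hat{u},\hat{u}\rangle_{\mathcal{H}}$ can be arbitrarily negative, so this identity yields no one-sided bound. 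In other words, $\|\nabla\hat{u}\|_{\mathcal{H}}\lesssim\|\hat{u}\|_{\mathcal{H}}$ is an \emph{inverse} Poincar\'e inequality, which is false for general Gaussian-$L^2$ functions; it holds here only because $u$ solves a parabolic equation, and the genuine proof is a parabolic energy argument — integrate the energy identity $\tfrac12\tfrac{d}{d\tau}\|\hat{u}\|^2_{\mathcal{H}}=\langle\mathcal{L}\hat{u},\hat{u}\rangle_{\mathcal{H}}+\langle(\partial_\tau-\mathcal{L})\hat{u},\hat{u}\rangle_{\mathcal{H}}$ over a unit time interval and then run a time-weighted second step, which is precisely the content of the inverse Poincar\'e / weighted $L^2$-estimate \cite[Prop.\ 4.1]{DH_blowdown} that the paper invokes throughout Section~\ref{sec_fine_bubble_sheet}. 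A purely elliptic integration-by-parts step cannot close this.

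Your absorption of the annulus-supported cutoff terms is also misplaced. You invoke the Colding--Minicozzi Lojasiewicz inequality to ensure $\|\hat{u}\|_{\mathcal{H}}$ is at most polynomially small in $\rho$, but in the paper's logical order Proposition~\ref{radius_lower_bound} (the Lojasiewicz graphical-radius estimate) comes \emph{after} Proposition~\ref{prop_trunc_bubble}, is proved under the neutral-mode-dominant case of the Merle--Zaag system that is itself built on Proposition~\ref{prop_trunc_bubble}, and in any case gives a lower bound on $\rho(\tau)$ in terms of $|\tau|$, not a lower bound on $\|\hat{u}\|_{\mathcal{H}}$ in terms of $\rho$. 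The annulus terms are instead absorbed via the same inverse Poincar\'e / weighted $L^2$-estimate, which controls the contribution from $\rho/2\leq|y|\leq\rho$ by a negative power of $\rho$ times $\|\hat{u}\|_{\mathcal{H}}^2$, uniformly and without any auxiliary assumption on the size of $\|\hat{u}\|_{\mathcal{H}}$. (Note also that the paper itself offers no proof of this proposition; it defers entirely to \cite[Prop.\ 2.1]{DH_hearing_shape}.)
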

Analyzing the spectrum of $\mathcal L$, we can decompose our Hilbert space as
\begin{equation}
\mathcal H = \mathcal{H}_+\oplus \mathcal{H}_0\oplus \mathcal{H}_-,
\end{equation}
where the unstable space $\mathcal{H}_+$ is the eigen-space of positive eigenvalues of $\mathcal{L}$ and is spanned by
\begin{align}\label{basis_hplus}
1, y_1, y_{2}, \dots, y_{n+1}\, ,
\end{align}
 the neutral space $\mathcal{H}_0$  is the eigen-space of zero eigenvalue of $\mathcal{L}$ and is spanned by
\begin{align}\label{basis_hneutral}
y^2_{1}-2,\,\, \dots,\,\, y^2_{k}-2,\,\, 2y_{1}y_{2},\,\, 2y_{1}y_{3}, \,\,\dots,\,\, 2y_{k-1}y_{k},
\end{align}
and
\begin{align}\label{basis_hneutral2}
y_{1}y_{k+1},\,\, y_{2}y_{k+1},\,\,\dots\,\,y_{k}y_{n+1}
\end{align}
and the stable space $\mathcal{H}_0$  is the eigen-space of all negative eigenvalues of $\mathcal{L}$.

Then, we consider the functions
\begin{equation}
U_{\pm}(\tau):= \|P_{\pm} \hat{u}(\cdot,\tau)\|_{\mathcal{H}}^2, \qquad U_0(\tau):= \|P_0 \hat{u}(\cdot,\tau)\|_{\mathcal{H}}^2\, ,
\end{equation}
where ${P}_{\pm}$ and ${P}_0$ denote the orthogonal projections to $\mathcal{H}_{\pm},\mathcal{H}_0$, respectively. Since the positive eigenvalues of $\mathcal{L}$ have  $\frac{1}{2}$ as lower bound and negative eigenvalues of $\mathcal{L}$ have $-\frac{1}{2}$ as upper bound, it is not difficult to see that they satisfy
\begin{align}\label{U_PNM_system}
 \dot{U}_+ &\geq U_+ - C_{0}\rho^{-1} \, (U_+ + U_0 + U_-), \nonumber\\
 | \dot{U}_0  | &\leq C_{0}\rho^{-1} \, (U_+ + U_0 + U_-), \\
 \dot{U}_- &\leq -U_- + C_{0}\rho^{-1} \, (U_+ + U_0 + U_-). \nonumber
\end{align}
Using this  we improve the Merle-Zaag alternative estimates from \cite{MZ}.
\begin{proposition}[{quantitative Merle-Zaag alternative, c.f \cite[Lem 2.1]{DH_no_rotation}}]\label{mz.ode.fine.bubble-sheet}
For $\tau\to -\infty$, either the neutral mode is dominant, i.e.
\begin{equation}\label{case_1}
U_-+U_{+}\leq C\rho^{-1}U_0,
\end{equation}
or the unstable mode is dominant, i.e.
\begin{equation}\label{case_2}
U_-+U_0\leq C\rho^{-1}U_+.
\end{equation}
\end{proposition}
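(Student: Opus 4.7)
The plan is to refine the qualitative Merle--Zaag alternative \cite{MZ} to a quantitative one with explicit rate $O(\epsilon)$, where $\epsilon(\tau):=C_0\rho(\tau)^{-1}$; the key additional input is the slow-variation bound $|\rho'|\leq \rho/5$ from \eqref{univ_fns}, which gives $|\dot\epsilon/\epsilon|\leq 1/5$ and hence ensures that $\epsilon$ varies by at most a bounded factor on any unit time interval. First I would eliminate the stable mode: from the third inequality of \eqref{U_PNM_system}, for $\tau\ll 0$ with $\epsilon\leq 1/2$ one has $\dot U_-\leq -\tfrac12 U_-+\epsilon(U_++U_0)$, and multiplying by $e^{\tau/2}$ and integrating from $-\infty$ (where $U_-\to 0$ since the tangent flow at $-\infty$ is the round cylinder), while noting that the ODE system \eqref{U_PNM_system} keeps each of $U_\pm,U_0$ within a bounded multiplicative factor on any unit interval, I obtain
\begin{equation*}
U_-(\tau)\leq C\epsilon(\tau)\bigl(U_+(\tau)+U_0(\tau)\bigr).
\end{equation*}
Substituting back into \eqref{U_PNM_system} reduces the problem to the two-variable coupled system
\begin{equation*}
\dot U_+\geq U_+-C\epsilon(U_++U_0),\qquad |\dot U_0|\leq C\epsilon(U_++U_0).
\end{equation*}

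Next I analyze the ratio $\phi:=U_+/U_0$, which satisfies $\dot\phi/\phi\geq 1-C\epsilon(\phi+\phi^{-1}+1)$. There exist constants $c_0,c_1>0$, depending only on $C$, such that on the active annulus $\mathcal{A}(\tau):=\{\phi:c_0\epsilon(\tau)\leq\phi\leq c_1\epsilon(\tau)^{-1}\}$ the right-hand side is $\geq \tfrac12$, so $\phi$ grows at multiplicative rate $\geq e^{1/2}$ per unit time there. Using $|\dot\epsilon/\epsilon|\leq 1/5$ to compare the drift of $\phi$ at each boundary with the drift of the moving boundary itself, I conclude that $\{\phi>c_0\epsilon\}$ and $\{\phi>c_1\epsilon^{-1}\}$ are each forward-invariant in $\tau$. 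Forward invariance of $\{\phi>c_0\epsilon\}$ yields the dichotomy: either (a) $\phi\leq c_0\epsilon$ for all $\tau\ll 0$ (the \emph{neutral-dominant} case, which together with the bound on $U_-$ gives \eqref{case_1}), or (b) $\phi>c_0\epsilon$ for all $\tau$. In case (b), either $\phi>c_1\epsilon^{-1}$ for all $\tau\ll 0$ already, or there is a sequence $\tau_j\to-\infty$ with $\phi(\tau_j)\in\mathcal{A}(\tau_j)$; in the latter situation the exponential-growth bound on $\mathcal{A}$ forces $\phi(\tau_j+T_j)>c_1\epsilon(\tau_j+T_j)^{-1}$ within a finite waiting time $T_j$ satisfying $\tau_j+T_j\to -\infty$, and forward invariance of $\{\phi>c_1\epsilon^{-1}\}$ then propagates this inequality forward, delivering the \emph{unstable-dominant} bound \eqref{case_2}.

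The main technical obstacle is the time-dependence of the band boundaries $c_0\epsilon(\tau)$ and $c_1\epsilon(\tau)^{-1}$: to establish the forward-invariance statements one must verify that the drift of $\phi$ at each boundary dominates the drift of the moving boundary itself, which is exactly what the slow-variation bound $|\rho'/\rho|\leq 1/5$ built into the definition of admissible graphical radius \eqref{univ_fns} guarantees. This is the source of the quantitative refinement over the qualitative alternative of \cite{MZ}.
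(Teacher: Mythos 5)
Your route via the ratio $\phi = U_+/U_0$ with a two-sided band $\mathcal{A}(\tau)$ and forward-invariance at each moving boundary is a genuinely different packaging from the paper's argument. The paper first establishes $U_- \leq 2\bar\varepsilon(U_+ + U_0)$ by a barrier argument on $f := U_- - 2\bar\varepsilon(U_+ + U_0)$ (showing $\dot f \leq 0$ whenever $f>0$ and using $f\to 0$ at $-\infty$), then works with a single threshold function $g := 8\bar\varepsilon U_0 - U_+$, proves $\dot g < 0$ at zeros of $g$, and in the unstable case integrates the system directly to get $U_0 \leq 80\bar\varepsilon U_+$. Your phase-line picture with exponential transit across the annulus and the waiting-time estimate is cleaner conceptually, and your observation that $|\rho'/\rho|\leq 1/5$ is exactly what makes the band boundaries slow enough is the right mechanism (it is also what the paper uses via $\dot{\bar\varepsilon}\in(0,\bar\varepsilon/5)$). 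Modulo constants, your forward-invariance computations and the transit-time estimate $\tau_j + T_j \to -\infty$ check out.

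However, your first step — eliminating $U_-$ — has a real gap. You multiply $\dot U_- \leq -\tfrac12 U_- + \epsilon(U_++U_0)$ by $e^{\tau/2}$, integrate from $-\infty$, and then invoke that ``the ODE system \eqref{U_PNM_system} keeps each of $U_\pm, U_0$ within a bounded multiplicative factor on any unit interval'' to conclude $U_-(\tau) \leq C\epsilon(\tau)(U_+(\tau)+U_0(\tau))$. That unit-interval claim does not follow from \eqref{U_PNM_system}, which supplies only one-sided bounds: there is a lower bound on $\dot U_+$ but no upper bound, and an upper bound on $\dot U_-$ but no lower bound. In particular, when $U_+$ happens to be much smaller than $U_0 + U_-$, the inequality $\dot U_+ \geq U_+ - C\rho^{-1}(U_++U_0+U_-)$ allows $\dot U_+$ to be large and positive, so going backward $U_+$ may drop through many orders of magnitude in a unit interval; without a priori control on $U_-$ relative to $U_++U_0$ — which is precisely what you are trying to prove — the integral $\int_{-\infty}^\tau e^{s/2}\epsilon(s)(U_++U_0)(s)\,ds$ cannot be compared to its boundary integrand. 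The paper avoids this circularity by the barrier argument on $f$: the computation of $\dot f$ uses only the one-sided bounds available in \eqref{U_PNM_system}, and together with $\dot{\bar\varepsilon}>0$ and $f\to 0$ it gives $f\leq 0$ directly. You should replace your integration step with that barrier argument (or a variant); after that, your annulus dynamics for $\phi = U_+/U_0$ go through.
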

\begin{proof}
We adapt the argument in \cite{MZ}, but with some improvement to obtain a better decay. Specifically, here we work with the time-dependent function $\bar{\varepsilon}(\tau)=C_{0}\rho(\tau)^{-1}$, where $C_0$ is the constant from \eqref{U_PNM_system}.  Then $\dot{\bar{\varepsilon}}=-\dot{\rho}\rho^{-1}{\bar{\varepsilon}}\in (0, \frac{\bar{\bar{\varepsilon}}}{5})$ by definition of admissible graphical radius.   By possibly decreasing $\tau_\ast$, we may assume that $\bar{\varepsilon}<1/100$. We can also assume that the constant $C_{0}=1$ in \eqref{U_PNM_system} by rescaling in time. We will first show that
\begin{equation}\label{-leq0+}
    U_{-}\leq 2{\bar{\varepsilon}}(\tau)(U_{0}+U_{+}).
\end{equation}
Indeed, if at some time $\bar{\tau}\leq \tau_{0}$ the quantity $f:=U_{-}-2{\bar{\varepsilon}}(U_{+}+U_{0})$ was positive, then at this time we would have
\begin{equation}
    \dot{f}\leq -U_{-}+{\bar{\varepsilon}}(1+4{\bar{\varepsilon}})\left(1+\frac{1}{2{\bar{\varepsilon}}}\right)U_{-}-2\dot{{\bar{\varepsilon}}}(U_{+}+U_{0})\leq 0,
\end{equation}
which would imply that $ f(\tau)\geq f(\bar{\tau})>0$
for all $\tau\leq \bar{\tau}$, contradicting with $\lim_{\tau\rightarrow -\infty}f(\tau)=0$. This proves \eqref{-leq0+}.

To conclude the proof, we consider $g(\tau)=8{\bar{\varepsilon}}(\tau)U_{0}-U_{+}$. Then,  two cases can happen:
(1) there are $\tau_{i}\to -\infty$ such that $g(\tau_{i})\geq 0$ or (2) $g(\tau)<0$ for $\tau$ negative enough.  Now, we compute $\dot{g}(\tau)$ at time $\tau$ such that $g(\tau)=0$. Direct computation and \eqref{U_PNM_system} imply that if $\tau$ is negative enough we have
\begin{align}
    \dot{g}(\tau)&\leq 16{\bar{\varepsilon}}^2(U_{0}+U_{+})-\frac{1}{2}U_{+}+2{\bar{\varepsilon}} (U_{+}+U_{0})+8\dot{{\bar{\varepsilon}}} U_{0}\\\nonumber
    &\leq-U_{+}(\tau)(\frac{1}{4}-4{\bar{\varepsilon}}-16{\bar{\varepsilon}}^2-1/5)<0.
\end{align}
This implies that in the case (1) there is some $\tau_{*}\ll 0$ negative enough such that
\begin{equation}\label{case2}
    U_{+}<8{\bar{\varepsilon}}(\tau)U_{0}.
\end{equation}
Now, we discuss case (2). By  \eqref{U_PNM_system} and \eqref{-leq0+}, we have
\begin{equation}\label{two inequalities}
    \dot{U}_{+}\geq \frac{1}{2}U_{+}-2{\bar{\varepsilon}} U_{0}\quad |\dot{U}_{0}|\leq 2{\bar{\varepsilon}}(U_{0}+U_{+}).
\end{equation}
This and \eqref{case2} imply
\begin{equation}\label{U0varepsilon14}
    \dot{U}_{+}\geq \frac{1}{4}U_{+}=\frac{1}{4{\bar{\varepsilon}}}({\bar{\varepsilon}} U_{+})\quad \dot{U}_{0}\leq (2{\bar{\varepsilon}}+\frac{1}{4})U_{+}.
\end{equation}
Hence
\begin{equation}\label{U+geq14}
   {U}_{+}(\tau)\geq \int_{-\infty}^{\tau}\frac{1}{4}U_{+}(s)ds,
\end{equation}
and by monotonicity of ${\bar{\varepsilon}}$ for $\tau'< \tau\ll 0$
\begin{align}
    U_{+}(\tau)-U_{+}(\tau')\geq \frac{1}{4{\bar{\varepsilon}}(\tau)} \int_{\tau'}^{\tau}{\bar{\varepsilon}}(s) U_{+}(s)ds.
\end{align}
Sending $\tau'\to -\infty$ we have
\begin{align}\label{epsillonU+}
    U_{+}(\tau)\geq \frac{1}{4{\bar{\varepsilon}}(\tau)} \int_{-\infty}^{\tau}{\bar{\varepsilon}}(s) U_{+}(s)ds.
\end{align}
Then by \eqref{U0varepsilon14} \eqref{U+geq14} and \eqref{epsillonU+} we have
\begin{equation}
    U_{0}\leq U_{+}+2\int_{-\infty}^{\tau}{\bar{\varepsilon}}(s)U_{+}(s)ds\leq(1+8{\bar{\varepsilon}})U_{+}.
\end{equation}
Inserting this into the second inequality of \eqref{two inequalities}, we have
\begin{align}
    \dot{U}_{0}\leq 2{\bar{\varepsilon}}(2+8{\bar{\varepsilon}})U_{+}\leq 20{\bar{\varepsilon}} U_{+}.
\end{align}
Integrating this and using \eqref{epsillonU+} we obtain that
\begin{equation}\label{U0<eU+}
    U_{0}\leq 80{\bar{\varepsilon}} U_{+}
\end{equation}
holds in case (2).
Finally, \eqref{-leq0+} and \eqref{U0<eU+} imply \eqref{case_1}, \eqref{-leq0+} and \eqref{case2} imply \eqref{case_2}. This completes the proof of the Proposition \ref{mz.ode.fine.bubble-sheet}.
\end{proof}
In the end of this subsection, we remark that for any other choice of admissible graphical radius the same mode stays dominant.
\subsection{Barrier construction and asymptotic slope}
In this subsection, we consider some basic barrier construction and asymptotic slope of ancient asymptotically cylindrical flows. We first  list some general facts about barriers, which we will use in later sections. By \cite[Section 4]{ADS1} and \cite{KM} there is some $L>1$ such that  for every $a\geq L, b>0$ there are  ADS-shrinkers and KM-shrinkers in $\mathbb{R}^{n+2-k}$:
\begin{align}\label{ads_shrinker}
{\Sigma}_a &= \{\textrm{surface of revolution with profile } r=u_a(y_1), 0\leq y_1 \leq a\},\\
\tilde{{\Sigma}}_b &= \{ \textrm{surface of revolution with profile } r=\tilde{u}_b(y_1), 0\leq y_1 <\infty\}.\nonumber
\end{align}
Here, the parameter $a$ captures where the  concave functions $u_a$ meet the $y_1$-axis, namely $u_a(a)=0$, and the parameter $b$ is the asymptotic slope of the convex functions $\tilde{u}_b$, namely $\lim_{x_1\to \infty} \tilde{u}_b'(x_1)=b$.  In \cite[Section 3]{DH_blowdown} the 2d ADS-shrinkers and KM-shrinkers have been $\eta$-shifted ($\eta>0$) and rotated to construct the  hypersurfaces
\begin{align}\label{rotated_barrier}
&\Gamma^{\eta}_a=\{(y, y'')\in  \mathbb{R}^{k}\times \mathbb{R}^{n+1-k}:  (|y|-\eta, y'') \in {\Sigma}_a \},\\
&\tilde{\Gamma}_{b}^\eta=\{(y, y'')\in  \mathbb{R}^{k}\times \mathbb{R}^{n+1-k}:  (|y|-\eta, y'')\in \tilde{\Sigma}_{b}\},\nonumber
\end{align}
where we denote
\begin{equation}
    y=(y_{1},\dots, y_{k})\quad y''=(y_{k+1}, \dots, y_{n+1}).
\end{equation}
By construction the hypersurfaces $\Gamma^{\eta}_a$ and $\tilde{\Gamma}^{\eta}_{b}$ have the following foliation property.
\begin{lemma}[{foliation lemma, \cite[Lem 3.3]{DH_blowdown}}]\label{foli_lemma}
There exist $\delta>0$ and $L_1>2$ such that the hypersurfaces ${\Gamma}^{\eta}_a$, ${\tilde{\Gamma}}^{\eta}_b$, and the cylinder ${\Gamma}:=\mathbb{R}^{k}\times S^{n-k}(\sqrt{2(n-k)})$ foliate the domain
\begin{equation*}
{\Omega}:=\left\{(y, y'') \in \mathbb{R}^{k}\times \mathbb{R}^{n-k+1}:  |y|\geq L+1,\, |y''|^2 \leq 2(n-k)+\delta\right\}.
\end{equation*}
Moreover, denoting by $\nu_{\mathrm{fol}}$ the outward unit normal vector field of this foliation, we have
\begin{equation}\label{negative_div}
\mathrm{div}(\nu_{\mathrm{fol}}e^{-|x|^2/4})\leq 0\;\;\;\textrm{inside the cylinder},
\end{equation}
and
\begin{equation}\label{positive_div}
\mathrm{div}(\nu_{\mathrm{fol}}e^{-|x|^2/4})\geq 0\;\;\;\textrm{outside the cylinder}.
\end{equation}
\end{lemma}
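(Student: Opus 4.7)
The plan is to construct the foliation of $\Omega$ by stitching together the rotated ADS-family $\{\Gamma^\eta_a\}_{a\geq L}$ on the inside of the round cylinder $\Gamma$ and the rotated KM-family $\{\tilde\Gamma^\eta_b\}_{b>0}$ on the outside, with $\Gamma$ itself serving as the joining leaf, and then to verify the Gaussian-weighted divergence inequalities leaf by leaf.

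For the foliation property, I would first recall from \cite{ADS1, KM} that the 2d concave ADS-profiles $u_a$ and the convex KM-profiles $\tilde u_b$ depend monotonically on their shooting parameters: as $a$ increases the ADS-profile sweeps upward pointwise, approaching the constant value $\sqrt{2(n-k)}$ as $a\to\infty$, and as $b$ decreases to $0$ the KM-profile approaches the same horizontal asymptote from above. After the $\eta$-shift and rotation in the $y$-variables, the families $\Gamma^\eta_a$ and $\tilde\Gamma^\eta_b$ inherit this monotonicity in $|y''|$ at each fixed $|y|\geq L+1$. Taking $L_1$ large enough that the profiles are defined on $[L+1-\eta,\infty)$, and $\delta$ small enough that the KM-profiles actually cover every level with $|y''|^2 \leq 2(n-k)+\delta$ for $|y|\geq L+1$, every point of $\Omega$ then lies on exactly one leaf, and smooth dependence on the parameter upgrades this to a smooth foliation across the cylinder.

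For the divergence inequalities, a direct computation using $\nabla e^{-|x|^2/4} = -\tfrac{1}{2}x\, e^{-|x|^2/4}$ together with the standard identity $\mathrm{div}_{\mathbb{R}^{n+1}}(\nu_{\mathrm{fol}}) = H_{\mathrm{fol}}$ yields
\begin{equation*}
\mathrm{div}\bigl(\nu_{\mathrm{fol}}\, e^{-|x|^2/4}\bigr) = \bigl(H_{\mathrm{fol}}(x) - \tfrac{1}{2}\langle x,\nu_{\mathrm{fol}}(x)\rangle\bigr)\, e^{-|x|^2/4}.
\end{equation*}
The cylinder is a genuine self-shrinker, so the right-hand side vanishes there. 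On a leaf of either family, the 2d shrinker identity $H_\Sigma = \tfrac{1}{2}\langle z,\nu_\Sigma\rangle$ holds at the unshifted base point $z=(|y|-\eta, y'')$; I would then expand the defect at $x=(y,y'')$ into two pieces: (i) the shift $x-z = \eta\, e_r$ with $e_r = y/|y|$ contributes a term of order $\eta$ to $\langle x,\nu\rangle$ whose sign is fixed by the direction of $\nu_{\mathrm{fol}}$ and is favourable on both sides of $\Gamma$, and (ii) replacing the 2d shrinker by its rotated version in $\mathbb{R}^{n+1}$ adds the $k-1$ principal curvatures of the sphere $\{|y|=\mathrm{const}\}\times \{y''\}$ to $H$, producing a correction of order $1/|y|$.

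The main obstacle is calibrating $\eta$, $L_1$, and $\delta$ so that the favourable $O(\eta)$ shift term dominates the $O(1/|y|)$ rotation correction uniformly on every leaf and on both sides of the cylinder, and so that the strict inequalities survive out to $|y''|^2 = 2(n-k)+\delta$, where the KM-leaves have small mutual spacing and the orientation of $\nu_{\mathrm{fol}}$ reverses its relative position with respect to $\Gamma$. The natural order is to fix $\eta$ small, then choose $L_1$ so that $1/L_1 \ll \eta$, and finally choose $\delta$ small enough that $|y''|$ stays in the range where the 2d estimates from \cite{ADS1, KM} apply with parameter-independent constants. A final check that the two families match continuously across $\Gamma$ with a well-defined outward $\nu_{\mathrm{fol}}$ then closes the argument.
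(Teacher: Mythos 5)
Your plan is the right one and, as far as I can tell, matches the argument used to establish this lemma in \cite{DH_blowdown}: foliate by the one-parameter families using monotonicity in $a$ and $b$, compute $\mathrm{div}(\nu_{\mathrm{fol}}\,e^{-|x|^2/4}) = \bigl(H_{\mathrm{fol}} - \tfrac12\langle x,\nu_{\mathrm{fol}}\rangle\bigr)e^{-|x|^2/4}$, use the 2d shrinker identity at $z=(|y|-\eta,y'')$, and then control the two corrections. The one place you assert rather than verify is the sign of the shift term; it is worth noting that the two corrections combine into a single clean factorization. Writing $r=|y|$ and $u=u_a$ (or $\tilde u_b$), the outward normal is $\nu = \bigl(-u'\,\hat{y},\,\hat{y}''\bigr)/\sqrt{1+(u')^2}$; the shift changes $\langle \cdot,\nu\rangle$ by $-u'\eta/\sqrt{1+(u')^2}$, and each of the $k-1$ extra principal curvatures from the $S^{k-1}$ rotation equals $-u'/\bigl(r\sqrt{1+(u')^2}\bigr)$. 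Adding these and invoking the 2d shrinker identity gives
\begin{equation*}
H_{\mathrm{fol}}(x) - \tfrac12\langle x,\nu_{\mathrm{fol}}(x)\rangle \;=\; \frac{-u'(r-\eta)}{\sqrt{1+(u'(r-\eta))^2}}\Bigl(\frac{k-1}{r}-\frac{\eta}{2}\Bigr),
\end{equation*}
so for $r\geq 2(k-1)/\eta$ the bracket is nonpositive, and the sign of the whole expression is governed by $-u'$, which is $\geq 0$ for the concave ADS profiles (giving $\leq 0$ inside) and $\leq 0$ for the convex KM profiles (giving $\geq 0$ outside). This makes your claim that the shift is ``favourable on both sides'' precise, and shows that the only calibration needed is $\eta > 0$ fixed and $L$ large enough that $L+1 \geq 2(k-1)/\eta$, together with $\delta$ small enough that the KM profiles with small $b$ still cover the slab $|y''|^2\leq 2(n-k)+\delta$ for $|y|\geq L+1$. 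With this computation spelled out your proposal is complete.
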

As a corollary of this lemma, the rotated ADS shrinkers act as inner barriers and rotated KM shrinkers act as outer barriers for the renormalized mean curvature flow.
\begin{corollary}[{inner-outer barriers, \cite[Corollary 3.4]{DH_blowdown}}]\label{emma_inner_barrier}
We consider compact domains $\{K_{\tau}\}_{\tau\in [\tau_1,\tau_2]}$, whose boundary evolves by renormalized mean curvature flow. If ${\Gamma}^{\eta}_a$ is contained in $K_{\tau}$ ($\tilde{\Gamma}^{\eta}_b$ is in the closure of $K^{c}_{\tau}$) for every $\tau\in [\tau_1,\tau_2]$, and $\partial K_{\tau}\cap {\Gamma}^{\eta}_a=\emptyset$ ($\partial K_{\tau}\cap \tilde{\Gamma}^{\eta}_b=\emptyset$) for all $\tau<\tau_2$, then
\begin{equation}
\partial K_{\tau_2}\cap {\Gamma}^{\eta}_a\subseteq \partial {\Gamma}_a \quad(\partial K_{\tau_2}\cap {\Gamma}^{\eta}_b\subseteq \partial {\Gamma}_b).
\end{equation}
\end{corollary}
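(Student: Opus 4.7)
This corollary is a standard first-touching (avoidance) maximum principle statement, in which the foliation from Lemma \ref{foli_lemma} plays the role of a barrier for the rescaled mean curvature flow. I will sketch only the inner barrier case, as the outer case is entirely symmetric.

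The plan is as follows. Suppose for contradiction that there is an interior touching point $p\in \partial K_{\tau_2}\cap (\Gamma^{\eta}_a \setminus \partial\Gamma_a)$. Since $p$ lies in the open region $\Omega$ of Lemma \ref{foli_lemma}, a full neighborhood $U$ of $p$ in $\mathbb{R}^{n+1}$ is smoothly foliated by the nearby leaves $\{\Gamma^{\eta}_{a'}\}$ with unit normal field $\nu_{\mathrm{fol}}$. Parameterize $\partial K_\tau$ locally as a normal graph over $\Gamma^{\eta}_a$,
\begin{equation*}
\partial K_\tau\cap U = \{q + \phi(q,\tau)\,\nu_{\mathrm{fol}}(q) : q\in U_a\subset \Gamma^{\eta}_a\},
\end{equation*}
oriented so that $\phi>0$ corresponds to the outside of $\Gamma^{\eta}_a$ with respect to the foliation. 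The hypotheses $\Gamma^{\eta}_a\subset K_\tau$ and $\partial K_\tau\cap\Gamma^{\eta}_a=\emptyset$ for $\tau<\tau_2$ force $\phi>0$ on $U_a$ for all $\tau<\tau_2$, while $\phi(p,\tau_2)=0$.

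Next I would convert the foliation inequality of Lemma \ref{foli_lemma} into a pointwise barrier statement. Using the classical identity $\mathrm{div}(\nu_{\mathrm{fol}}\,e^{-|x|^2/4})=-e^{-|x|^2/4}\bigl(H+\tfrac{x\cdot\nu_{\mathrm{fol}}}{2}\bigr)$, the inequality $\mathrm{div}(\nu_{\mathrm{fol}}\,e^{-|x|^2/4})\leq 0$ inside the cylinder is equivalent to $H+\tfrac{x\cdot\nu_{\mathrm{fol}}}{2}\geq 0$ on every leaf, so each leaf is a supersolution with respect to the rescaled mean curvature flow velocity $H+\tfrac{x\cdot\nu}{2}$. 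Since $\partial K_\tau$ is a genuine solution of the rescaled flow, linearizing the rescaled velocity around $\Gamma^{\eta}_a$ gives a parabolic differential inequality of the form $\partial_\tau\phi\geq a^{ij}\nabla_{ij}\phi + b^{i}\nabla_{i}\phi + c\phi$ with smooth coefficients on $U_a$, plus a nonnegative zeroth-order ``barrier excess'' coming from the foliation inequality. Because $\phi>0$ for $\tau<\tau_2$ and $\phi(p,\tau_2)=0$ at the interior point $p$, the strong maximum principle (and, if needed, the Hopf-type boundary point lemma) produces the desired contradiction.

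The only technical subtlety, and the reason the conclusion still permits touching at $\partial\Gamma_a$, is that near $\partial\Gamma_a$ one leaves the foliated region $\Omega$, so the smooth graphical parameterization and scalar parabolic comparison above break down: there is simply no neighborhood of such a point in which the one-sided barrier structure of Lemma \ref{foli_lemma} is available. The outer barrier statement for $\tilde{\Gamma}^{\eta}_b$ is obtained by the same argument, now using the reversed inequality $\mathrm{div}(\nu_{\mathrm{fol}}\,e^{-|x|^2/4})\geq 0$ outside the cylinder together with correspondingly flipped orientation conventions. Beyond careful sign-bookkeeping (reconciling the outward normal of $K_\tau$ with the foliation normal $\nu_{\mathrm{fol}}$), I do not anticipate additional difficulty, and the overall argument is essentially an application of a classical parabolic comparison principle enabled by the barrier construction already performed in Lemma \ref{foli_lemma}.
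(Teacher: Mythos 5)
Since this corollary is quoted in the paper verbatim from \cite[Cor.\ 3.4]{DH_blowdown} and is not proved here, there is no in-paper proof to compare against; I am therefore evaluating your reconstruction against the argument that the cited reference (itself adapting \cite[Lem.\ 4.7]{ADS1}) actually uses. Your proposal captures the right mechanism: translate the one-sided divergence inequality of Lemma \ref{foli_lemma} into a sign condition on the rescaled-MCF speed of each leaf, write $\partial K_\tau$ locally as a normal graph over $\Gamma^\eta_a$ near a hypothetical interior first-touching point, and conclude by the strong maximum principle. This is precisely the first-touching avoidance argument that the cited reference runs, so the route is the standard one rather than a genuinely new alternative.

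Two points are worth tightening before this could be called complete. First, your claim that any $p\in\Gamma^\eta_a\setminus\partial\Gamma_a$ "lies in the open region $\Omega$" is not literally true: $\Omega$ in Lemma \ref{foli_lemma} carries the constraint $|y|\geq L+1$, so the part of $\Gamma^\eta_a$ with $\eta\leq|y|<L+1$ (and the annular boundary region $|y''|^2$ near $2(n-k)+\delta$) is not foliated, and your "full neighborhood $U$ of $p$ smoothly foliated" step only applies in the interior of $\Omega$. The reference handles this by invoking the barrier only in the regime where $\Gamma^\eta_a\cap\{|y|\leq L+1\}$ stays uniformly inside $K_\tau$ (this is guaranteed by the graphical smallness on the fixed compact set $\{|y|\leq 2L\}$), so the first touching is forced into $\Omega$; you should either add this step or restrict the claim accordingly. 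Second, be aware of the sign bookkeeping: your stated identity $\mathrm{div}(\nu_{\mathrm{fol}}e^{-|x|^2/4})=-e^{-|x|^2/4}(H+\tfrac{x\cdot\nu_{\mathrm{fol}}}{2})$ is one convention among several; the correct statement is $\mathrm{div}(\nu e^{-|x|^2/4})=e^{-|x|^2/4}(\mathrm{div}\,\nu-\tfrac{x\cdot\nu}{2})$, with $\mathrm{div}\,\nu=\pm H$ depending on orientation, and the sign of the zeroth-order "barrier excess" in your linearized inequality for $\phi$ must be checked against the specific convention for $\nu_{\mathrm{fol}}$ chosen in Lemma \ref{foli_lemma} (pointing outward from $\Gamma$) and the side of the cylinder on which $\Gamma^\eta_a$ lies. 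Neither of these affects the logical structure of the argument, but both are the places where an incautious write-up would produce a wrong-signed comparison.
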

Then we  discuss asymptotic slope of ancient asymptotically cylindrical flows.
The following proposition is important for applying outer KM-barrier argument in later discussion.
\begin{proposition}[{asymptotic slope, cf.\cite[Cor 3.10]{CHH}}]\label{slopecor}
There is smooth function $\bar{\varphi}: \mathbb{R}_{+}\to \mathbb{R}_{+}$ with property $\lim_{\rho\to +\infty}\bar{\varphi}'(\rho)=0$ such that for each $X_{0}=(p_{0}, t_{0})\in \mathcal{M}$, the renormalized flow $\bar{M}^{X_{0}}_{\tau}=e^{\tau/2}(M_{t_{0}-e^{-\tau}}-p_{0})$ satisfies
\begin{equation}
    \bar{M}^{X_{0}}_{\tau}\subset \{(y, y'')\in \mathbb{R}^{k}\times\mathbb{R}^{n-k+1}: |y''|\leq \bar{\varphi}(|y|) \}
\end{equation}
for all $\tau\leq \mathcal{T}(X_{0})$ where $\mathcal{T}(X_{0})>-\infty$ is a constant that only depends on $X_{0}$. In particular, any potential ends must be along directions of $\mathbb{R}^{k}$ with $|y|\to +\infty$.
\end{proposition}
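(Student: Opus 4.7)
The plan is to trap the recentered renormalized flow $\bar M^{X_0}_\tau$ from outside by the rotated KM-shrinker barriers $\tilde\Gamma^{\eta}_b$ from \eqref{rotated_barrier}, run for a family of asymptotic slopes $b\downarrow 0$, and to define $\bar\varphi$ as an envelope of the KM profiles $\tilde u_b$. The two key inputs are the outer-barrier principle of Corollary \ref{emma_inner_barrier} and the smooth convergence $\bar M^{X_0}_\tau\to\Gamma$ on every compact subset of $\mathbb{R}^{n+1}$ as $\tau\to -\infty$. For the latter, the point is that translation by $p_0$ and time-shift by $t_0$ commute with parabolic rescaling up to a remainder that vanishes as $\tau\to -\infty$, so by Colding--Minicozzi uniqueness \cite{CM_uniqueness} the recentered flow still has the same cylinder $\Gamma=\mathbb{R}^k\times S^{n-k}(\sqrt{2(n-k)})$ as its unique tangent flow at $-\infty$, with smoothness of the convergence supplied by White's local regularity \cite{White_regularity}.

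Fix a small $b>0$. Since $\tilde\Gamma^{\eta}_b$ (with the fixed shift $\eta$ of Lemma \ref{foli_lemma}) is one of the outer foliating leaves of $\Omega=\{|y|\geq L+1,\,|y''|^2\leq 2(n-k)+\delta\}$, the convergence just recorded provides a time $\tau_1=\tau_1(b,X_0)$ sufficiently negative such that on a very large ball $B_{R(b)}$ the hypersurface $\bar M^{X_0}_{\tau_1}$ is a small $C^4$-graph over $\Gamma$, hence lies strictly inside $\tilde\Gamma^{\eta}_b$ within $\Omega\cap B_{R(b)}$. Corollary \ref{emma_inner_barrier} then propagates this strict inclusion forward, yielding $|y''|\leq \tilde u_b(|y|-\eta)$ on $\bar M^{X_0}_\tau\cap\Omega$ for every $\tau\in[\tau_1,\mathcal{T}(X_0)]$, where $\mathcal{T}(X_0)$ is defined as the latest time before which $\bar M^{X_0}_\tau\cap\Omega$ stays disjoint from each member of a fixed discrete family $\{\tilde\Gamma^{\eta}_{b_j}\}$ with $b_j\downarrow 0$. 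Combining with the tighter direct graphical bound for $\tau\leq \tau_1$, one obtains $|y''|\leq \tilde u_{b_j}(|y|-\eta)$ for every $j$ and every $\tau\leq \mathcal{T}(X_0)$. Defining $\bar\varphi$ as a smooth envelope of the profiles $\tilde u_{b_j}(\cdot-\eta)$ (say a mollified pointwise infimum, and equal to $\sqrt{2(n-k)}+1$ on the bounded region $|y|\leq L+1$ not covered by $\Omega$) then gives $\limsup_{\rho\to\infty}\bar\varphi'(\rho)\leq b_j$ for every $j$, hence $\lim_{\rho\to\infty}\bar\varphi'(\rho)=0$; the "ends along $\mathbb{R}^k$" conclusion follows immediately.

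The main obstacle is ensuring the global strict inclusion $\tilde\Gamma^{\eta}_b\subset\overline{K^c_{\tau_1}}$ needed to activate Corollary \ref{emma_inner_barrier}, since smooth convergence on compact sets controls the flow only on $B_{R(b)}$ while the barrier extends to infinity. This is handled by driving $\tau_1$ deep into the past so that $\bar M^{X_0}_{\tau_1}$ is uniformly close to $\Gamma$ on an arbitrarily large ball, combined with the fact that an ancient asymptotically cylindrical flow cannot develop radial bumps of macroscopic size outside $B_{R(b)}$ at such large-negative times (alternatively one runs the avoidance argument over an exhaustion by nested balls). A minor secondary subtlety is that the barrier applies only within the foliation domain $\Omega$, so the definition of $\mathcal{T}(X_0)$ must preclude the flow from escaping through the outer boundary $\{|y''|^2=2(n-k)+\delta\}$ before being caught by the barriers.
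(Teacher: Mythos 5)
The proposal takes a genuinely different route (KM-shrinker avoidance plus an envelope construction) from the paper (a blow-down compactness/contradiction argument), but the route has a genuine circular gap at precisely the point the proposal flags as "the main obstacle."

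To activate Corollary \ref{emma_inner_barrier}, you need $\tilde\Gamma^{\eta}_b$ to lie in $\overline{K^c_\tau}$ for \emph{all} $\tau$ in the relevant interval, in particular you need to know that the recentered renormalized flow does not wrap around the barrier at large $|y|$. But Colding--Minicozzi uniqueness plus White regularity only give you a graph over $\Gamma\cap B_{2\rho(\tau)}$ with $\rho(\tau)\to\infty$ as $\tau\to -\infty$; for any fixed $\tau$ near $\mathcal{T}(X_0)$ you have no control on $\bar M^{X_0}_\tau\cap\{|y|>2\rho(\tau)\}$, and you cannot take $\tau_1\to-\infty$ to fix this because you then lose the interval $[\tau_1,\mathcal{T}(X_0)]$ on which to run the avoidance. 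The patch you suggest --- that "an ancient asymptotically cylindrical flow cannot develop radial bumps of macroscopic size outside $B_{R(b)}$ at such large-negative times" --- is itself a form of the asymptotic slope statement, so the argument assumes a weakened version of its own conclusion. The exhaustion-by-nested-balls alternative has the same problem: without boundary control on the lateral boundary $\{|y|=h\}$ for all $\tau$ up to $\mathcal{T}(X_0)$ (which goes beyond the graphical radius for $\tau$ near $\mathcal{T}(X_0)$), the flow could slip around the barrier at intermediate times. This is not incidental: in the paper's own logic, Proposition \ref{slopecor} is established \emph{first} by a soft compactness argument that needs no spatial-infinity control, and is then used in the proof of Proposition \ref{prop_admiss} (the barrier estimate claim) exactly to justify that $\tilde\Gamma_b\cap\{|y|=h\}$ lies outside $\bar M_\tau\cap\{|y|=h\}$ for $h$ large, i.e.\ to close the barrier argument at infinity. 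So the dependency runs the other way.

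There is also a secondary issue with the envelope: because the profiles $\tilde u_{b_j}$ all pass near $\sqrt{2(n-k)}$ at the base and satisfy $\tilde u_{b}^2 \approx 2(n-k)+b^2 r^2$, the pointwise infimum over all $j$ collapses to a constant near $\sqrt{2(n-k)}$, which is far too strong; in reality the estimate for smaller $b_j$ only begins at more negative $\tau_1(b_j)$, so the envelope that is actually justified at a fixed $\tau$ involves only finitely many $j$ and is time-dependent, whereas the statement requires a single time-independent $\bar\varphi$.

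The paper's proof avoids both issues: assume the conclusion fails, extract points $(p_i,t_i)$ with $|p_i|/\sqrt{-t_i}\to\infty$ at which the slope is bounded below, rescale by $1/|p_i|$ about the origin, and use uniqueness of the cylindrical tangent flow to see the rescaled flows converge to a shrinking cylinder on $(-\infty,0)$. At $t=0$ the cylinder degenerates to the axis $\mathbb{R}^k\times\{0\}$, and a barrier pinch near $t=0$ forces the limiting point $q$ (with $|q|=1$) onto the axis, contradicting the assumed slope lower bound. This blow-down argument is entirely soft and requires no a priori control at spatial infinity, which is why it breaks the circle that traps the barrier-based attempt.
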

\begin{proof}
Suppose that the conclusion is not true, then there exists $\delta>0$
and a sequence of points $(p_i,t_i)$ with $p_i\in M_{t_i}$, $t_i\leq -1$ and
$\frac{|p_i|}{\sqrt{-t_i}}\rightarrow \infty$, but
\begin{equation}\label{slope claim copy}
    \sum_{i=k+1}^{n+1}x^2_{i} >  \delta \sum_{i=1}^{k}x^2_{i}
\end{equation}
holds at each $p_i$.
Now we rescale the flow around the space-time origin by the factor of $\frac{1}{|p_i|}$ to get $\mathcal{M}^i$, i.e.
\begin{align}
    \mathcal{M}^i = \mathcal{D}_{\frac{1}{|p_i|}}\mathcal{M},
\end{align}
where $(p_i,t_i)$ becomes $\mathcal{D}_{\frac{1}{|p_i|}}(p_i,t_i) = (\frac{p_i}{|p_i|}, \frac{t_i}{|p_i|^2})$.
By passing to a subsequence we may assume that $(\frac{p_i}{|p_i|}, \frac{t_i}{|p_i|^2})\rightarrow (q,0)$ with some $|q|=1$ satisfying
(\ref{slope claim copy})
Since the tangent flow of $\mathcal{M}$ at $-\infty$ is a shrinking cylinder,
$\mathcal{M}^i$ would converge smoothly to $\mathcal{M}^{\infty}$ on each compact set
of $\mathbb{R}^{n+1}\times (-\infty,0)$ and $\mathcal{M}^{\infty}$ is a shrinking cylinder on all negative times.
Using suitable barriers, this implies that  $M^i_0$ converges in Gromov-Hausdorff sense to a subset of $\mathbb{R}^k\times \{0\}$. In particular, $q\in \mathbb{R}^k\times \{0\}$.
However, this is impossible since $q$ satisfies (\ref{slope claim copy}).
\end{proof}
\subsection{Graphical radius}
Throughout this subsection we assume that the neutral mode is dominant. The goal is to construct an improved graphical radius by generalizing \cite[Section 4.3.1]{CHH} to the  cylindrical setting. To this end, we denote by $\rho_0$ the initial choice of graphical radius from the previous subsection, and consider the quantities
\begin{equation}\label{def_alphabar}
    \bar{\alpha}(\tau):=\sup_{\sigma\leq \tau}\left(\int_{\Gamma\cap \{|y|\leq L\}}{u}^2(q,\sigma)e^{-\frac{|q|^2}{4}}dq\right)^{1/2}
\end{equation}
and
\begin{equation}\label{def_beta}
    \beta(\tau):=\sup_{\sigma\leq \tau}\left(\int_{\Gamma}{u}^2(q,\sigma)\chi^2\left(\frac{|q|}{\rho_{0}(\sigma)}\right)e^{-\frac{|q|^2}{4}}dq\right)^{1/2},
\end{equation}
where $L$ is the large enough constant from the shrinker foliation \eqref{ads_shrinker}.

Using inverse Poincare inequality from \cite{DH_blowdown} as in \cite[Lem 4.17]{CHH}, we infer that
\begin{equation}\label{alpha_beta_equivalence}
    \bar{\alpha}(\tau)\leq \beta(\tau) \leq C\bar{\alpha}(\tau).
\end{equation}
Then, we let
\begin{equation}\label{graphrho}
    \rho(\tau):=\beta(\tau)^{-\frac{1}{5}},
\end{equation}
and we have:
\begin{proposition}[{admissibility, cf. \cite[Proposition 2.3]{DH_hearing_shape}}]\label{prop_admiss}
The function $\rho$ is an admissible graphical radius function, i.e. the estimates \eqref{univ_fns} and \eqref{small_graph} hold for $\tau\ll 0$.
\end{proposition}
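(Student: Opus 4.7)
The plan is to verify the three conditions for admissibility (namely $\rho(\tau)\to\infty$, the two-sided bound on $\rho'$ in \eqref{univ_fns}, and the $C^4$ smallness in \eqref{small_graph}) by exploiting the fact that, under the standing assumption that the neutral mode is dominant, $\beta(\tau)$ mimics $U_0(\tau)^{1/2}$, which satisfies very slow ODE estimates via Proposition \ref{mz.ode.fine.bubble-sheet}.

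First, I would observe that since $\bar{M}_\sigma\to \Gamma$ smoothly on compact subsets, we have $u(\cdot,\sigma)\to 0$ uniformly on $\Gamma\cap\{|y|\leq L\}$ as $\sigma\to -\infty$. Therefore $\bar{\alpha}(\tau)\to 0$, and by the equivalence \eqref{alpha_beta_equivalence} also $\beta(\tau)\to 0$, which yields $\rho(\tau)=\beta(\tau)^{-1/5}\to\infty$. Monotonicity $\rho'(\tau)\leq 0$ is free: $\beta$ is a supremum over $\sigma\leq\tau$, hence nondecreasing, so $\rho$ is nonincreasing. For the lower bound $-\rho(\tau)\leq 5\rho'(\tau)$, rewritten as $\dot\beta/\beta\leq 1$, I would argue at a time $\tau$ where the defining supremum is attained: differentiating the $L^2$-Gaussian norm and using Proposition \ref{prop_trunc_bubble} together with the Merle-Zaag alternative \eqref{case_1} yields
\begin{equation}
\frac{\dot\beta(\tau)}{\beta(\tau)}\leq C\rho(\tau)^{-1}\ll 1
\end{equation}
for $\tau$ negative enough. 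At times where the supremum is attained at some $\sigma<\tau$, $\beta$ is locally constant so the bound is trivial; this handles \eqref{univ_fns}.

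The principal obstacle, and the step that demands the quantitative choice of exponent $-1/5$, is the $C^4$ estimate \eqref{small_graph} on the shrunken ball $B_{2\rho(\tau)}$. I would establish it by interpolation between the $L^2$-control furnished by $\beta$ and the high-order control on the larger ball $B_{2\rho_0(\tau)}$. Specifically, the initial admissibility of $\rho_0$ gives a uniform $C^p$ bound on $\Gamma\cap B_{2\rho_0(\tau)}$ for any $p$ (via parabolic regularity applied to the graph equation), while \eqref{alpha_beta_equivalence} together with the definition of $\beta$ gives
\begin{equation}
\|u(\cdot,\tau)\|_{L^2(\Gamma\cap B_{\rho_0(\tau)},e^{-|q|^2/4}dq)}\leq C\beta(\tau).
\end{equation}
A standard Gaussian-weighted interpolation of the form $\|u\|_{C^4(B_{2\rho})}\leq C\|u\|_{C^p(B_{\rho_0})}^{\theta}\|u\|_{L^2(B_{\rho_0})}^{1-\theta}$, with $p$ chosen large, yields $\|u\|_{C^4(B_{2\rho(\tau)})}\leq C\beta(\tau)^{1-\theta}$ with $1-\theta$ arbitrarily close to $1$. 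Since $\rho(\tau)^{-2}=\beta(\tau)^{2/5}$ and one may take $1-\theta > 2/5$, the interpolated bound is dominated by $\rho(\tau)^{-2}$ once $\tau$ is negative enough to absorb the constant $C$, giving \eqref{small_graph}. The hard part is arranging the interpolation in the Gaussian-weighted setting on the non-compact cylinder $\Gamma$; this is handled exactly as in the bubble-sheet analogue \cite[Proposition 2.3]{DH_hearing_shape}, with the only change being the replacement of $\mathbb{R}^2\times S^1$ by $\mathbb{R}^k\times S^{n-k}(\sqrt{2(n-k)})$ in the weighted norm.
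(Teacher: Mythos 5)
Your treatment of items (1)–(3) in \eqref{univ_fns} is fine: $\rho\to\infty$ is immediate, monotonicity is free from the sup, and tracking $\dot\beta/\beta$ via Proposition~\ref{prop_trunc_bubble} together with the Merle--Zaag alternative is a reasonable variant of the paper's appeal to the weighted $L^2$-estimate from \cite[Prop 4.1]{DH_blowdown}. The interpolation argument you propose for the $C^4$ bound \eqref{small_graph}, however, does not work, and this is where the actual content of the proposition lies.

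The obstruction is the Gaussian weight. The quantity $\beta(\tau)$ controls the \emph{weighted} $L^2$ norm $\|u(\cdot,\sigma)\|_{L^2(\Gamma,\,e^{-|q|^2/4}\,dq)}$, which, because of the exponential decay of the weight, carries essentially no information about $u$ at $|y|\sim\rho(\tau)=\beta(\tau)^{-1/5}$. Converting from weighted to unweighted $L^2$ on $B_{2\rho}$ costs a factor
\begin{equation*}
\int_{B_{2\rho}}u^2\,dq\ \leq\ e^{\rho^2}\int_{B_{2\rho}}u^2 e^{-|q|^2/4}\,dq,
\end{equation*}
so any interpolation of the form $\|u\|_{C^4(B_{2\rho})}\lesssim \|u\|_{C^p}^{\theta}\,\|u\|_{L^2(B_{2\rho})}^{1-\theta}$ yields at best a bound of order $e^{(1-\theta)\rho^2/2}\beta^{1-\theta}=e^{(1-\theta)\beta^{-2/5}/2}\beta^{1-\theta}$, which diverges as $\tau\to-\infty$ for every $\theta\in(0,1)$: the exponential always beats the polynomial. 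There is no ``standard Gaussian-weighted interpolation'' that avoids this — the Gaussian $L^2$ norm simply cannot see the region $|y|\sim\beta^{-1/5}$, and no choice of exponent in $\rho=\beta^{-1/5}$ fixes it.

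The paper circumvents this with pointwise barrier arguments, which are the heart of the proof. It first establishes the claim $|u(y,\omega,\tau)|\leq C\beta(\tau)^{1/2}$ for $|y|\leq C^{-1}\beta(\tau)^{-1/4}$: the lower bound from the rotated and shifted ADS-shrinkers as inner barriers (Corollary~\ref{emma_inner_barrier}), and the upper bound from the KM-shrinkers as outer barriers. The outer barrier step crucially uses the asymptotic slope estimate (Proposition~\ref{slopecor}) to rule out the hypersurface escaping the barrier near infinity — this is necessary precisely because no convexity or noncollapsing is assumed. Only after this pointwise $C^0$ bound does one upgrade to $C^4$ by interior parabolic estimates on the unrescaled flow (as in \cite[Lem 4.16]{oval_classification_r4}). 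Your appeal to \cite[Prop 2.3]{DH_hearing_shape} as a black box is also off target: that argument also proceeds via barriers, not interpolation, and it uses convexity to obtain the upper half of the $C^0$ bound; the present paper's contribution is exactly to replace that convexity input by the KM-shrinker outer barrier together with the asymptotic slope estimate. That is the missing step.
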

\begin{proof}
Since $\bar{M}_\tau$ for $\tau\to -\infty$ converges locally uniformly to $\Gamma$, it is clear that
\begin{equation}
\lim_{\tau \to -\infty} \rho(\tau)=\infty\, .
\end{equation}
To proceed, we need the following barrier estimate in the case where no convexity condition is assumed compared with \cite[Claim 2.4]{DH_hearing_shape}.
\begin{claim}[barrier estimate]
There is a constant $C<\infty$ such that
\begin{equation}
 |u (y,\omega,\tau)| \leq C\beta(\tau)^{\frac{1}{2}}
\end{equation}
holds for $|y| \leq C^{-1}\beta(\tau)^{-\frac{1}{4}}$ and $\tau\ll 0$.
\end{claim}
\begin{proof}[Proof of claim]
Let $L$ be the fixed constant from the shrinker foliation \eqref{ads_shrinker} which can be adjusted large enough.
By standard parabolic estimates there is some constant $K<\infty$ such that for $\tau\ll 0$ we have
\begin{equation}\label{C_0.bound.y=L_0}
\sup_{|y|\leq 2L}|u(y, \omega,\tau)|\leq K \beta(\tau).
\end{equation}
Using rotated and shifted ADS-shrinker foliation in \eqref{rotated_barrier} as inner barrier as in \cite[Claim 2.4]{DH_hearing_shape}, one can obtain following lower bound barrier estimate
\begin{equation}
 u (y, \omega, \tau) \geq -C\beta(\tau)^{\frac{1}{2}}
\end{equation}
holds for $|y| \leq C^{-1}\beta(\tau)^{-\frac{1}{4}}$ and $\tau\ll 0$.

 Then, we establish the desired upper bound barrier estimate as in \cite[Prop 4.18]{CHH}. By \cite[Lem 4.11, Sec 8.8]{ADS1}, we know that for $r\geq L$ large enough the profile function $\tilde{u}_{b}(r)$ of KM-shrinker $\tilde{\Gamma}_{b}$ satisfies
\begin{equation}
    \tilde{u}_{b}'(r)=\frac{w(r)}{2r}(\tilde{u}_{b}^2(r)-2(n-k)),
\end{equation}
where $w$ is a function that satisfies
\begin{equation}
    2\leq w(r)\leq 2+\frac{16}{r^2}.
\end{equation}
This implies
\begin{equation}
    0\leq \frac{d}{dr}\log \left(\frac{\tilde{u}_{b}^{2}(r)-2(n-k)}{r^2} \right)\leq \frac{16}{r^3}.
\end{equation}
Integrating this differential inequality and using that the asymptotic slope of $\tilde{u}_{b}$ equals $b>0$, we obtain
\begin{equation}\label{ub estimates}
    2(n-k)+e^{-1}b^2r^2\leq \tilde{u}^2_{b}(r)\leq 2(n-k)+b^2r^2.
\end{equation}
Therefore, for $b>0$ small enough we have
\begin{equation}
   e^{-1}b^2L^2\leq \tilde{u}_{b}(L)^2-2(n-k)\leq 4\sqrt{n-k}(\tilde{u}_{b}(L)-\sqrt{2(n-k)}).
\end{equation}
Now, for $\hat{\tau}\ll 0$ we take $b^2=4(n-k)eL^{-2}_{0}K\beta(\hat{\tau})$ and we have
\begin{equation}
    K\beta(\hat{\tau})\leq  \tilde{u}_{b}(L)-\sqrt{2(n-k)}.
\end{equation}
Using this and \eqref{C_0.bound.y=L_0}, we have that $\tilde{\Gamma}_{b}\cap \{|y|= L\}$ is located outside of $\bar{M}_{\tau}\cap \{|y|= L\}$ for all $\tau\leq \hat{\tau}\ll 0$. Then by the vanishing asymptotic slope of $\bar{M}_{\tau}$ from Proposition \ref{slopecor} and asymptotic slope $b>0$ of $\tilde{u}_{b}$, we know that $\tilde{\Gamma}_{b}\cap \{|y|= h\}$  is located outside of $\bar{M}_{\tau}\cap \{|y|= h\}$ for $h>0$ large enough depending on $\hat{\tau}\ll 0$.  Finally, $\tilde{\Gamma}_{b}\cap \{L\leq |y|\leq h\}$  is located outside of $\bar{M}_{\tau}\cap \{L\leq |y|\leq h\}$ for $\tau$ negative enough. Hence, by avoidance principle, $\tilde{\Gamma}_{b}\cap \{|y|\geq L\}$  is located outside of $\bar{M}_{\tau}\cap \{|y|\geq L\}$ for $\tau\leq \hat{\tau}\ll 0$. By \eqref{ub estimates}, we also have
\begin{equation}
    \tilde{u}^2_{b}(1/\sqrt{b})\leq 2(n-k)+b.
\end{equation}
Using this and KM-shrinker $\tilde{\Gamma}_{b}$ as outer barrier,  we obtain
\begin{equation}
 u (y, \omega, \tau) \leq C\beta(\tau)^{\frac{1}{2}}
\end{equation}
holds for $|y| \leq C^{-1}\beta(\tau)^{-\frac{1}{4}}$ and $\tau\ll 0$. This completes the proof of the claim.
\end{proof}
Now, remembering that  $\rho(\tau)=\beta(\tau)^{-\frac{1}{5}}$, by the claim and standard interior estimates of unrescaled flow as in \cite[Lem 4.16]{oval_classification_r4} we get
\begin{equation}
\|u(\cdot,\tau)\|_{C^4(\Sigma\cap B_{2\rho(\tau)}(0))}\leq \rho(\tau)^{-2}
\end{equation}
for $\tau\ll 0$. Moreover, by definition of $\beta$ we clearly have $\dot{\rho}\leq 0$. Finally, using the assumption that the neutral eigenfunctions dominate and the weighted $L^2$-estimate from \cite[Proposition 4.1]{DH_blowdown} as in \cite[(186)]{CHH} we see that
\begin{equation}
\left|\frac{d}{d\tau} \beta^2\right| = o(\beta^2),
\end{equation}
which implies $ |\dot{\rho}|\leq \rho/5$ for $\tau \ll 0$.  This finishes the proof of proposition.
\end{proof}

\begin{proposition}[{graphical radius, c.f.  \cite[Proposition 2.4]{DH_hearing_shape}}]\label{radius_lower_bound}
There is a constant $\gamma>0$, so that $\rho(\tau)=\beta(\tau)^{-\frac{1}{5}}$ for $\tau\ll 0$ satisfies
\begin{equation}\label{gamma radius}
    \rho(\tau)\geq |\tau|^{\gamma}.
\end{equation}
\end{proposition}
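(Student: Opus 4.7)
The statement is an upgrade of the admissibility result from Proposition~\ref{prop_admiss}: since $\rho = \beta^{-1/5}$ by definition, proving $\rho(\tau) \geq |\tau|^\gamma$ is equivalent to an explicit polynomial decay $\beta(\tau) \leq C|\tau|^{-5\gamma}$ for the weighted truncated norm of $u$. My plan is to split according to the Merle-Zaag dichotomy of Proposition~\ref{mz.ode.fine.bubble-sheet}. The unstable case gives exponential decay essentially for free, while the neutral case requires polynomial input from Colding-Minicozzi's Lojasiewicz inequality combined with the inverse Poincar\'e inequality from \cite{DH_blowdown}.

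In the unstable mode dominant case, the relation $U_0 + U_- \leq C\rho^{-1} U_+$ substituted into the first line of \eqref{U_PNM_system} yields the scalar differential inequality $\dot U_+ \geq U_+(1 - C\rho^{-1})$. Since $\rho^{-1}(\tau)\to 0$ by Proposition~\ref{prop_admiss}, the prefactor is eventually $\geq 1/2$, and integrating backward from any fixed $\tau_0 \ll 0$ together with $U_+ \to 0$ gives $U_+(\tau) \leq e^{(\tau-\tau_0)/2} U_+(\tau_0)$. Since $\beta(\tau)^2 \leq (1+C\rho^{-1}) \sup_{\sigma\leq\tau} U_+(\sigma)$ in this case, $\beta$ decays exponentially and any polynomial bound is trivial.

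In the neutral mode dominant case, the Merle-Zaag inequality only yields $|\dot U_0/U_0| \leq C\rho^{-1}$, which is consistent with arbitrarily slow decay and thus by itself produces no polynomial rate. The decisive external input is the Colding-Minicozzi Lojasiewicz inequality for the cylindrical shrinker \cite{CM_uniqueness}: because $\Gamma$ is the (unique) tangent flow at $-\infty$, the Gaussian density difference $F(\bar M_\tau) - F(\Gamma)$ decays at a polynomial rate $C|\tau|^{-2p}$ for some $p=p(n,k)>0$. Standard graphical representation of $\bar M_\tau$ over $\Gamma \cap B_L$ converts this into the pointwise-in-time estimate $\int_{\Gamma\cap\{|y|\leq L\}} u^2(q,\tau) e^{-|q|^2/4} \, dq \leq C|\tau|^{-2p}$, i.e.\ $\bar\alpha(\tau) \leq C|\tau|^{-p}$. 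The inverse Poincar\'e inequality from \cite[Prop 4.1]{DH_blowdown}, already applied in \eqref{alpha_beta_equivalence}, then upgrades the local control to the truncated global norm: $\beta(\tau) \leq C\bar\alpha(\tau) \leq C|\tau|^{-p}$. Setting $\gamma = p/5$ gives the claim $\rho(\tau) = \beta(\tau)^{-1/5} \geq c|\tau|^{\gamma}$.

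The main obstacle, and the only nontrivial analytic content, is the extraction of an explicit polynomial rate from Lojasiewicz in the noncompact cylindrical setting and the propagation of that rate from the local ball $\Gamma\cap B_L$ to the truncated norm appearing in $\beta$. The propagation is exactly what the inverse Poincar\'e inequality of \cite{DH_blowdown} is engineered to do, and the polynomial rate itself is the quantitative by-product of the uniqueness proof in \cite{CM_uniqueness}; this polynomial rate is positive at $\Gamma$ because the Lojasiewicz exponent at cylindrical shrinkers is strictly positive. A clean way to organize the argument, following \cite[Prop 4.18]{CHH} and \cite[Prop 2.4]{DH_hearing_shape}, is a short bootstrap in which the weak starting estimate $\rho(\tau) \to \infty$ of Proposition~\ref{prop_admiss} is combined with the Merle-Zaag ODE inequalities and the Lojasiewicz rate to produce a uniform polynomial $\gamma > 0$ independent of the initial bootstrap parameter.
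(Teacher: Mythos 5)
Your outline has the right external ingredients --- the Colding--Minicozzi Lojasiewicz inequality for the cylinder and the inverse Poincar\'e inequality from \cite{DH_blowdown} --- but the key conversion step is not correct as stated, and this is precisely where the paper does nontrivial work.

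You assert that polynomial decay of the Gaussian density gap $F(\Gamma)-F(\bar M_\tau)\leq C|\tau|^{-2p}$ is converted, by ``standard graphical representation of $\bar M_\tau$ over $\Gamma\cap B_L$,'' into the pointwise-in-time local $L^2$ bound $\bar\alpha(\tau)\leq C|\tau|^{-p}$. This does not follow directly. By Huisken's monotonicity the gap equals the time integral $\int_{-\infty}^{\tau}\int_{\bar M_{\tau'}}|H+\tfrac{q^\perp}{2}|^2e^{-|q|^2/4}\,d\mu_{\tau'}\,d\tau'$, i.e.\ to leading order the time-integral of $\|\mathcal{L}u(\tau')\|_{\mathcal H}^2$; that controls neither the pointwise-in-time quantity $\|u(\tau)\|_{\mathcal H}$ nor even $\|\mathcal{L}u(\tau)\|_{\mathcal H}$, and since $\mathcal{L}$ has a nontrivial kernel (the neutral modes, which are exactly the dominant ones here) a bound on $\|\mathcal{L}u\|$ would not control $\|u\|$ anyway. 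The Hessian of $F$ at $\Gamma$ is indefinite, so there is no coercivity turning the static density gap into $\|u\|_{L^2}^2$. The paper bridges this exactly by also invoking the tail-decay sum $\sum_{j\geq J}\bigl(F(\bar M_{-j-1})-F(\bar M_{-j})\bigr)^{1/2}\leq C J^{-\nu}$ from \cite[Lem 6.9]{CM_uniqueness}, then applying Cauchy--Schwarz on each time slice together with the monotonicity formula to obtain the $L^1$-in-space, $L^1$-in-time estimate $\int_{-\infty}^{\tau}\int_{\bar M_{\tau'}}|H+\tfrac{q^\perp}{2}|e^{-|q|^2/4}\leq C|\tau|^{-\nu}$, and finally feeding this into the distance lemma \cite[Lem A.48]{CM_uniqueness} together with Proposition~\ref{prop_admiss} and the weighted $L^2$ estimate of \cite[Prop 4.1]{DH_blowdown} to bound $\beta$. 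Without the tail sum and the passage through $L^1$ of the shrinker quantity, your polynomial rate never reaches $\bar\alpha$ or $\beta$. (You correctly identify the last step, $\bar\alpha\to\beta$ via inverse Poincar\'e, as easy; the genuinely hard part is the one you compress into one sentence.)

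A minor redundancy: the entire subsection already assumes the neutral mode is dominant (the unstable case is disposed of separately at the start of Section~\ref{sec_quant_thm}), so the Merle--Zaag dichotomy split you set up at the outset is unnecessary for this proposition.
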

\begin{proof}
Consider the Gaussian area functional
\begin{equation}
F(M)=(4\pi)^{-n/2}\int_M e^{-\frac{|q|^2}{4}} \, .
\end{equation}
By Colding-Minicozzi \cite[Theorem 6.1, Lem 6.9]{CM_uniqueness}, there exist $\nu\in (0,1/2)$ such that
\begin{equation}
\left(F(\Gamma)-F(\bar{M}_{\tau})\right) \leq C|\tau|^{-1/\eta},
\end{equation}
and
\begin{equation}\label{tail_decay_sum}
\sum_{j=J}^{\infty} \left(F(\bar{M}_{-j-1})-F(\bar{M}_{-j})\right)^{1/2} \leq C(\nu)J^{-\nu}
\end{equation}
holds. By Cauchy-Schwarz inequality, \eqref{tail_decay_sum} and Huisken's monotonicity formula, we obtain
\begin{equation}
\int_{-\infty}^{\tau}\int_{\bar{M}_{\tau'}} \left|H(q)+\frac{q^{\perp}}{2}\right|e^{-\frac{|q|^2}{4}} d\mu_{\tau'}(q)\, d\tau' \leq C|\tau|^{-\nu}\, .
\end{equation}
This estimates, \cite[Lemma A.48]{CM_uniqueness}, Proposition \ref{prop_admiss} (admissibility) and weighted $L^2$-estimate from \cite[Prop 4.1]{DH_blowdown} imply that the assertion holds with $\gamma=\nu/20$.

\end{proof}

\subsection{Almost cylindrical symmetry}
The goal of this subsection is to prove that the spherical derivative $\nabla_{S^{n-k}}u$ decays exponentially as $\tau \to -\infty$. To show this we will generalize the arguments from \cite[Section 4.3.3]{CHH} to the cylindrical setting.

As in the previous subsection we assume that the neutral mode is dominant. Thanks to Proposition \ref{radius_lower_bound} (graphical radius) we can from now on work with the graphical radius
\begin{equation}
\rho(\tau):=|\tau|^\gamma.
\end{equation}
The (cylindrical) symmetry improvement theorem  in Theorem \ref{Symmetry improvement intro} and Remark \ref{sym remark} imply that we can find constants $\eps_0>0$ and $L_{0}<\infty$ with the following significance. Let $\varepsilon\leq\varepsilon_{0}$ and $X\in \mathcal{M}$. If every $X'\in \mathcal{M}\cap P(X,  H^{-1}(X) L_0)$ is $\varepsilon_{0}$-close
 to a cylinder and $(\varepsilon, n-k)$-symmetric (see Definition \ref{e_symmetric}),  then $X$ is $(\eps/2, n-k)$-symmetric. (for proof of Theorem \ref{Symmetry improvement intro}, see Section \ref{proof_symmetry_improvement}).
\begin{lemma}[{spherical symmetric improvement, cf.\cite[Lemma 4.23]{CHH}}]\label{improvement}
There exists a constant $\eta_{0}>0$, so that for $\tau\ll0$ all space-time points $X\in \mathcal{M}$ corresponding to points in $\bar{M}_{\tau}\cap \{|y|\leq \rho(\tau)\}$ are $(2^{-\eta_{0}\rho(\tau)}, n-k)$-symmetric.
\end{lemma}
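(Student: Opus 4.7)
The plan is to iterate Theorem~\ref{Symmetry improvement intro} on the order of $\rho(\tau)$ times, each iteration halving the asymmetry parameter. The crucial observation is that \eqref{small_graph} provides $C^{4}$-smallness of $u$ on the \emph{enlarged} ball $B_{2\rho(\tau)}$ rather than only on $B_{\rho(\tau)}$; this buffer is exactly what accommodates the spatial spreading of the parabolic neighborhoods along the iteration.

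First I would fix the relevant geometric scales. Since $H\to \sqrt{(n-k)/2}$ on the limiting cylinder, the normalized parabolic neighborhood $\hat{\mathcal P}(X, L_{0}, L_{0}^{2})$ of a graphical point corresponds in the renormalized $(y,\omega,\tau)$ coordinates to a set of fixed spatial extent $\Delta y = \Delta y(n,k,L_{0})$ in the $y$-variable and fixed time extent $\Delta\tau = \Delta\tau(n,k,L_{0})$ in the $\tau$-variable (if working intrinsically is inconvenient near the boundary of the graphical region, one invokes Remark~\ref{sym remark} and uses the Euclidean version). Pick $\eta_{0} > 0$ with $\eta_{0}\Delta y < 1$ and set $N := \lfloor \eta_{0}\rho(\tau)\rfloor$, $\tau_{j} := \tau - (N-j)\Delta\tau$, $R_{j} := \rho(\tau) + (N-j)\Delta y$ for $j = 0, 1, \dots, N$.

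For the base step at $j=0$, observe that by Proposition~\ref{prop_admiss} the graphical radius is non-increasing forward in $\tau$, so $\rho(\tau_{0})\geq \rho(\tau) = |\tau|^{\gamma}$. The bound \eqref{small_graph} together with standard interior Schauder estimates for the renormalized flow then give $\|u(\cdot,\tau_{0})\|_{C^{10}(\Gamma\cap B_{2\rho(\tau)})}\to 0$ as $\tau\to -\infty$. For $\tau$ sufficiently negative every space-time point whose image lies in this region is consequently $\varepsilon_{0}$-close to a cylinder in the sense of Definition~\ref{e-close cylinder} and, using the exact rotation vector fields of $\Gamma$ as $\mathcal K$, is $(\varepsilon_{0},n-k)$-symmetric in the sense of Definition~\ref{e_symmetric}. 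Since $R_{0} = \rho(\tau)+N\Delta y \leq 2\rho(\tau)\leq 2\rho(\tau_{0})$, the claim holds at $j=0$. For the inductive step $j\mapsto j+1$, fix $X\in\mathcal M$ whose image in $\bar M_{\tau_{j+1}}$ satisfies $|y|\leq R_{j+1}$. Then $\hat{\mathcal P}(X, L_{0}, L_{0}^{2})$ pulls back to a subset of $\{|y'|\leq R_{j+1}+\Delta y = R_{j}, \sigma\in[\tau_{j},\tau_{j+1}]\}$. By induction every point there is $(\varepsilon_{0}2^{-j},n-k)$-symmetric, and by the persistent $C^{4}$-smallness of $u$ it is $\varepsilon_{0}$-close to a cylinder. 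Theorem~\ref{Symmetry improvement intro} upgrades $X$ to $(\varepsilon_{0}2^{-j-1},n-k)$-symmetry. Reading off the conclusion at $j=N$ yields $(\varepsilon_{0}2^{-N},n-k)$-symmetry on the full disc $|y|\leq \rho(\tau)$, which is the lemma after absorbing $\varepsilon_{0}$ and $\log 2$ into $\eta_{0}$.

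The main obstacle is essentially bookkeeping: one must verify that the cylinder-closeness hypothesis of Theorem~\ref{Symmetry improvement intro} holds uniformly along the iteration (this is immediate from $\|u\|_{C^{4}(B_{2\rho})}\leq \rho^{-2}\to 0$ and the monotonicity of $\rho$ in forward time), and that the iteration does not exit the graphical region before completing the $N\asymp \rho(\tau)$ steps. The buffer factor of $2$ built into the admissible graphical radius is precisely what allows the relevant spatial region to grow from $\rho(\tau)$ up to $2\rho(\tau)$ during the iteration without leaving the domain on which $u$ is controlled. A minor subtlety is that the parabolic neighborhood and the notion of $(\varepsilon, n-k)$-symmetry in Definition~\ref{e_symmetric} are formulated for the unrescaled flow, so at each step one must translate between the renormalized coordinates and the original space-time, which is straightforward since the relevant scales on the cylinder are uniformly bounded.
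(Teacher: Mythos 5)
Your overall strategy — iterating Theorem \ref{Symmetry improvement intro} on the order of $\rho(\tau)$ times and halving the asymmetry parameter at each step, using the factor-of-two headroom in \eqref{small_graph} to absorb the spatial spreading of the parabolic neighborhoods — is precisely the paper's idea. However, the bookkeeping via the time slices $\tau_j$ creates a genuine gap in the inductive structure that the paper's formulation avoids.

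Concretely: your inductive hypothesis at step $j$ concerns points at time $\tau_j$ (or at best all times $\leq\tau_j$) with $|y|\leq R_j$. But the parabolic neighborhood $\hat{\mathcal P}(X,L_0,L_0^2)$ of a point $X$ at time $\tau_{j+1}$ sweeps through the time interval $[\tau_j,\tau_{j+1}]$, and your own description of the pullback acknowledges this. The statement "by induction every point there is $(\varepsilon_0 2^{-j},n-k)$-symmetric" is then unjustified for the sub-interval $(\tau_j,\tau_{j+1}]$: those times lie strictly after $\tau_j$ and are not covered by the step-$j$ hypothesis. This is not a cosmetic issue — the same obstruction persists if you replace "at time $\tau_j$" by "for all $\sigma\leq\tau_j$", since a point at $\sigma\in(\tau_j,\tau_{j+1}]$ is still outside the inductive domain. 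Forward iteration in time slices cannot furnish the hypothesis for the trailing end of each parabolic neighborhood.

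The paper avoids this by removing the time iteration entirely. Fix $\tau_0\leq\tau_\ast$ once, set $R:=2\rho(\tau_0)$, and formulate the $j$-th claim as: every point $(y,\sigma)$ with $|y|\leq R-2jL_0$ \emph{and $\sigma\leq\tau_0$} is $(\varepsilon_0/2^j,n-k)$-symmetric. Because the parabolic neighborhood of a point at time $\sigma\leq\tau_0$ reaches only times $\leq\sigma\leq\tau_0$, the entire time condition is automatically preserved by each application of the symmetry improvement theorem, and only the spatial radius shrinks by $O(L_0)$ at each step. After $\lfloor R/(4L_0)\rfloor$ steps one reaches the disc $|y|\leq\rho(\tau_0)$ with $\eta_0=1/(4L_0)$. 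Your base-case reasoning (monotonicity of $\rho$ forward in $\tau$ giving control on $B_{2\rho(\tau_0)}\subset B_{2\rho(\sigma)}$ for $\sigma\leq\tau_0$, and using the exact rotation fields of $\Gamma$ to obtain $(\varepsilon_0,n-k)$-symmetry) is sound and transfers directly to this cleaner formulation; dropping the $\tau_j$'s and always carrying the uniform condition $\sigma\leq\tau_0$ is the fix.
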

\begin{proof}
Let $\varepsilon_{0}>0$, $ L_0<\infty$ be the constants from cylindrical symmetric improvement in Theorem \ref{Symmetry improvement intro}. Let $\tau_\ast$ be sufficiently negative and $\tau_0\leq\tau_\ast$. We set $R:=2\rho(\tau_0)$. Let $q\in \bar{M}_{\tau}$ be a point with $|y|\leq R-2 L_0$ and $\tau \leq \tau_0$, and denote by $X\in \mathcal{M}$ the corresponding space-time point in the unrescaled flow. Using \eqref{small_graph} we see that every $X'\in \mathcal{M}\cap P(X, H^{-1}(X) L_0)$ is $\eps_0$-close to a cylinder $\mathbb{R}^{k}\times S^{n-k}$ and is $(\eps_0, n-k)$-symmetric. Hence, by Theorem \ref{Symmetry improvement intro} the space-time point $X$ is $(\eps_0/2, n-k)$-symmetric. Similarly, for any $q\in \bar{M}_{\tau}$ with $|y| \leq R-4L_0$
and $\tau\leq \tau_0$, the argument above shows that the corresponding space-time point $X\in \mathcal{M}$ is $(\eps_0/4, n-k)$-symmetric.
Iterating this $k$ times, where $k$ is the largest integer such that $2kL_0<\frac{R}{2}$, we  get that for all $q\in \bar{M}_{\tau}$ with $|y| \leq R/2$ and $\tau\leq \tau_0$ the corresponding space-time point $X\in\mathcal{M}$ is $(\eps_0/2^k, n-k)$-symmetric.  Choosing $\eta_0=1/ 4L_0$, this proves the lemma.
\end{proof}

\begin{proposition}[{almost spherical symmetry}]\label{utheta}
There exists a constant $\eta\in(0, 1/5)$, such  that for all $\tau\ll 0$ we have
\begin{equation}\label{eta small}
    |\nabla_{S^{n-k}}u|\leq e^{-\eta \rho(\tau)},
\end{equation}
whenever $|y|\leq \rho(\tau)$.
\end{proposition}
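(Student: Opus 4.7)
The plan is to combine the $(2^{-\eta_0\rho(\tau)},n-k)$-symmetry supplied by Lemma~\ref{improvement} with the basic differential-geometric identity that relates $\langle K_\alpha,\nu\rangle$ to the spherical derivative of the profile function $u$. The key observation is that when the rotation vector field $K_\alpha$ is aligned with the canonical axis $\mathbb{R}^k\times\{0\}$, a direct calculation on the graph $\bar M_\tau$ gives $\langle K_\alpha,\nu\rangle$ as an exact linear functional of $\nabla_{S^{n-k}}u$, so scalar control on $\langle K_\alpha,\nu\rangle$ transfers to scalar control on $\nabla_{S^{n-k}}u$.

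Fix $\tau\ll 0$ and a graphical point $X(y_0,\omega_0)=(y_0,(\sqrt{2(n-k)}+u(y_0,\omega_0,\tau))\omega_0)$ with $|y_0|\leq\rho(\tau)$. By Lemma~\ref{improvement} applied at the corresponding space-time point, there is a normalized set of rotation vector fields $K_\alpha=SJ_\alpha S^{-1}(\cdot-q)$ satisfying $|\langle K_\alpha,\nu\rangle|\,H\leq 2^{-\eta_0\rho(\tau)}$ and $|K_\alpha|\,H\leq 5n$ throughout the normalized parabolic neighborhood. Since $H\approx\sqrt{(n-k)/2}$ at graphical points by~\eqref{small_graph}, these bounds give $|K_\alpha(X)|\leq C$ and $|\langle K_\alpha,\nu\rangle(X)|\leq C\,2^{-\eta_0\rho(\tau)}$.

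The first real task is to show that, after absorbing the residual $\mathrm{SO}(n-k+1)$-ambiguity into the choice of the orthonormal basis $\{\bar J_\alpha\}$, the $K_\alpha$ can be taken aligned with $\mathbb{R}^k\times\{0\}$ up to an exponentially small error. Indeed, $|K_\alpha(X)|\leq C$ forces the affine $k$-plane of fixed points of $K_\alpha$ to pass within distance $C$ of $X$, while $|\langle K_\alpha,\nu\rangle|\ll 1$, combined with $\nu=(0,\omega_0)+O(\rho^{-2})$ from \eqref{small_graph}, forces this plane to be nearly tangent to $\mathbb{R}^k\times\{0\}$. A short rigidity argument of the kind used in the axis adjustments of \cite{BC1,Zhu} then yields $S=\mathrm{Id}+O(2^{-\eta_0\rho(\tau)/2})$ and $q\in\mathbb{R}^k\times\{0\}$ up to the same error. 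With this alignment, writing $F(y,y'')=|y''|-\sqrt{2(n-k)}-u(y,y''/|y''|,\tau)$ and $\nu=\nabla F/|\nabla F|$, a direct calculation gives
\begin{equation*}
\langle K_\alpha,\nu\rangle=-\frac{\langle\bar J_\alpha\omega_0,\nabla_{S^{n-k}}u\rangle}{|\nabla F|}+\mathcal{R}_\alpha,
\end{equation*}
where the remainder obeys $|\mathcal{R}_\alpha|\leq C(\rho^{-2}(|u|+|\nabla u|)+2^{-\eta_0\rho(\tau)/2})\leq C\,2^{-\eta_0\rho(\tau)/2}$, since any polynomial in $\rho$ is dominated by the exponential.

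To finish, one uses that $\bar J\mapsto \bar J\omega_0$ is a surjection from $\mathfrak{so}(n-k+1)$ onto $T_{\omega_0}S^{n-k}$ satisfying the Parseval-type identity $\sum_\alpha|\langle\bar J_\alpha\omega_0,v\rangle|^2=c_{n,k}|v|^2$ for every $v\in T_{\omega_0}S^{n-k}$, with $c_{n,k}>0$ depending only on $n,k$. Squaring the displayed identity and summing over $\alpha$ yields $|\nabla_{S^{n-k}}u(y_0,\omega_0,\tau)|\leq C\,2^{-\eta_0\rho(\tau)/2}$. Choosing any $\eta\in(0,\min\{1/5,\eta_0(\log 2)/2\})$ and taking $\tau$ sufficiently negative then gives the claimed estimate $|\nabla_{S^{n-k}}u|\leq e^{-\eta\rho(\tau)}$. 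The main technical obstacle is the axis-alignment step: the rotation vector fields provided by Lemma~\ref{improvement} are not a priori aligned with the canonical cylinder axis, and converting the scalar bounds on $|K_\alpha|$ and $|\langle K_\alpha,\nu\rangle|$ into quantitative control on the tilt of $S$ and on the offset $q$ requires a careful rigidity argument; once that is in place the rest is an elementary Parseval calculation.
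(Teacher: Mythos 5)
Your proposal correctly identifies the two ingredients — the $(2^{-\eta_0\rho(\tau)}, n-k)$-symmetry from Lemma~\ref{improvement} and the algebraic identity relating $\langle K_\alpha,\nu\rangle$ to $\nabla_{S^{n-k}}u$ for an \emph{aligned} rotation field — but the axis-alignment step, which you flag as "the main technical obstacle," is genuinely a gap, not just a technicality, and the direct rigidity argument you propose cannot close it.

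The problem is quantitative. At a fixed time $\tau$ the only a priori control on how close $\bar M_\tau$ is to the \emph{standard} cylinder $\Gamma$ is the graphical bound \eqref{small_graph}, i.e.\ $\|u\|_{C^4}\leq\rho(\tau)^{-2}$, which is only polynomial in $\rho$. So $\nu=(0,\omega_0)+O(\rho^{-2})$, and any attempt to convert the exponentially small pointwise bound $|\langle K_\alpha,\nu\rangle|H\leq 2^{-\eta_0\rho(\tau)}$ into a bound on the tilt of the axis of $K_\alpha$ relative to $\mathbb{R}^k\times\{0\}$ will necessarily lose the exponential quality: the best one can extract this way is $S=\mathrm{Id}+O(\rho^{-2})$. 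Feeding that back into your displayed identity produces $|\nabla_{S^{n-k}}u|\leq C\rho^{-2}+C2^{-\eta_0\rho}$, whose first term dominates — and that much was already known from \eqref{small_graph}, so nothing new has been proved. Your appeal to "a short rigidity argument of the kind used in the axis adjustments of \cite{BC1,Zhu}" does not rescue this: those arguments adjust the axis by an amount comparable to the symmetry defect $\varepsilon$, \emph{relative to a reference object that is already known to $O(\varepsilon)$ accuracy}; here the reference cylinder is only known to polynomial accuracy.

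The paper circumvents exactly this obstruction with a telescoping argument that your proposal omits. Rather than comparing $K_{\alpha,\tau}$ directly to the canonical axis, one compares $K_{\alpha,\tau}$ to $K_{\alpha,\tau-1}$: by Lemma~\ref{Vector Field closeness on genearalized cylinder_appendix}, two normalized sets of rotation fields that are both almost tangent to the same nearly-cylindrical piece must agree up to an $\mathrm{O}\bigl(\tfrac{(n-k+1)(n-k)}{2}\bigr)$ basis transform and an error $C\,2^{-\eta_0\rho(\tau)}$. Crucially, this lemma compares two vector fields to \emph{each other} (not to the canonical axis), so the error is genuinely exponential. Since $K_{\alpha,\tau}\to K_{\alpha,\infty}$ as $\tau\to-\infty$ with $K_{\alpha,\infty}$ exactly aligned (the flow converges to the standard cylinder), one sums the telescoping series
\begin{equation*}
|K_{\alpha,\tau}-K_{\alpha,\infty}|\leq C\sum_{m=0}^{\infty}2^{-\eta_0\rho(\tau-m)}\leq C e^{-\eta\rho(\tau)}
\end{equation*}
to obtain the exponentially accurate alignment that your rigidity argument cannot produce at a single time. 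With $K_{\alpha,\infty}=J_\alpha x$ exactly aligned, the identity $\langle K_{\alpha,\infty},\nu\rangle\sqrt{1+\cdots}=-\langle\bar J_\alpha\omega,\nabla_{S^{n-k}}u\rangle$ is exact (no remainder $\mathcal{R}_\alpha$ is needed), and the Parseval step you describe then finishes the proof. Without the telescoping/convergence step the argument cannot beat the polynomial bound already available from the graphical estimate.
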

\begin{proof}
For each $\tau\leq \tau_\ast$ choose a point $q_\tau\in \bar{M_\tau}$ with $y=0$. By Lemma \ref{improvement} (spherical symmetric improvement) the corresponding space-time point $X_\tau=(x_\tau,-e^{-\tau})\in\mathcal{M}$ in the unrescaled flow is $(2^{-\eta\rho(\tau)}, n-k)$-symmetric, i.e. there exists a normalized set of rotation vector fields $K_{\alpha, \tau}$ where $\alpha=1,\dots, \frac{(n-k+1)(n-k)}{2}$ such that
\begin{equation}
|K_{\alpha, \tau}|H\leq 5n \,\,\, \textrm{and}\,\, \, |\langle K_{\alpha, \tau}, \nu\rangle|H\leq 2^{-\eta_0\rho(\tau)}   \qquad \textrm{in}\quad P(X_\tau, 100n^{5/2}/ H(X_\tau)).
\end{equation}
By  Lemma \ref{Vector Field closeness on genearalized cylinder_appendix}, we have

\begin{equation}
   \inf_{(\omega_{\alpha\beta,\tau-1})\in \mathrm{O}(\frac{(n-k+1)(n-k)}{2})}\sup_{x\in B_{10}(x_{\tau})} |K_{\alpha, \tau}(x) -\!\!\!\! \sum_{\beta=1}^{\frac{(n-k+1)(n-k)}{2}}\!\!\!\!\!\!\!\omega_{\alpha\beta, \tau-1} K_{\beta, \tau-1}(x)|\leq C2^{-\eta_0\rho(\tau)}.
\end{equation}
Without loss of generality, we may assume that
\begin{equation}
  \sup_{x\in B_{10}(x_{\tau})} |K_{\alpha, \tau}(x) -   K_{\alpha, \tau-1}(x)|\leq C2^{-\eta_0\rho(\tau)}.
\end{equation}
for otherwise, we can replace $K_{\alpha,\tau-m}$ by $\sum_{\beta=1}^{\frac{(n-k+1)(n-k)}{2}}\tilde{\omega}_{\alpha\beta, \tau-m} K_{\beta, \tau-m}(x)$ , where $\tilde{\omega}_{\tau-m} = \Pi_{j=1}^{m}\omega_{\tau-j}$ and $\omega_{\tau-j}=(\omega_{\alpha\beta, \tau-j})\in\mathrm{O}(\frac{(n-k+1)(n-k)}{2})$.

Moreover, since the tangent-flow at $-\infty$ is a cylinder $\mathbb{R}^{k}\times S^{n-k}(\!\sqrt{2(n-k)})$, the vector fields $K_{\alpha, \tau}(x)$ converge for $\tau\to -\infty$ to $K_{\alpha, \infty}(x)=S_{\alpha, \infty}J_{\alpha}S^{-1}_{\alpha, \infty}x$ where  $S_{\alpha, \infty}\in \mathrm{O}(n-k+1)$ by Lemma \ref{Lin Alg Cylinder A}. Hence, we infer that
\begin{equation}
\sup_{x\in B_{10}(x_{\tau})} |K_{\alpha, \tau}(x) - K_{\alpha, \infty}(x)| \leq  C\sum_{m=0}^{\infty}2^{-\eta_0 \rho(\tau-m)}.
\end{equation}
Recalling that $\rho(\tau)=|\tau|^\gamma$, this sum can be safely bounded by $Ce^{-\eta \rho(\tau)}$, where $\eta:=\eta_0/10$. Similarly, for any $q\in \bar{M}_{\tau}$ with $|y|\leq \rho(\tau)$ and $\tau\leq \tau_\ast$ the corresponding space-time point $X_{(q,\tau)}$ is
$(C/2^{\eta_0 \rho(\tau)}, n-k)$-symmetric with a normalized set of rotation vector fields $\mathcal{K}_{(q,\tau)}$, we infer that
\begin{equation}
\sup_{x\in B_{10}(x_{\tau})} |K_{\alpha, (q,\tau)}(x) - K_{\alpha,\infty}(x)|\leq C\rho(\tau) 2^{-\eta_0\rho(\tau) }\leq  Ce^{-\eta \rho(\tau)}.
\end{equation}
Hence, $X_{(q,\tau)}$ is $(C/2^{\eta \rho(\tau)}, n-k)$-symmetric with respect to $\mathcal{K}_\infty=\{K_{\alpha, \infty}: 1\leq \alpha\leq \frac{(n-k)(n-k+1)}{2}\}$. Notice that
\begin{align}
    \langle K_{\alpha,\infty}, \nu\rangle\sqrt{1+ \frac{|\nabla_{S^{n-k}}u|^2}{(u+\sqrt{2(n-k)})^2} + |\nabla_{\mathbb{R}^k}u|^2} &= -\langle  K_{\alpha,\infty}\omega, \nabla_{S^{n-k}}u\rangle,
\end{align}
we conclude
\begin{equation}
|\nabla_{S^{n-k}}u| \leq C\sum_{\alpha=1}^{\frac{(n-k)(n-k+1)}{2}}H |\langle K_{\alpha,\infty}, \nu \rangle |\leq Ce^{-\eta \rho(\tau)}.
\end{equation}
Slightly decreasing $\eta$, this proves the proposition.
\end{proof}

\bigskip

\subsection{Evolution expansion}
As before, we assume that the neutral mode is dominant and work with the graphical radius $\rho(\tau)=|\tau|^\gamma$. In particular, we define $\hat{u}$ using this $\rho$.
The goal of this subsection is to prove:
\begin{proposition}[evolution expansion]\label{evolution equation u2} The function $\hat{u}$ evolves by
\begin{align}
   \partial_{\tau}\hat{u}=\mathcal{L}\hat{u}-\frac{1}{2\sqrt{2(n-k)}}\hat{u}^2 -\tfrac{|\nabla_{S^{n-k}}\hat{u}|^2}{2^{3/2}(n-k)^{3/2}}-\tfrac{\hat{u}\triangle_{S^{n-k}}\hat{u}}{\sqrt{2}(n-k)^{3/2}} +\hat{E},
\end{align}
where the error term satisfies the weighted $\mathcal{H}$-norm estimate
\begin{equation}
\left\langle |\hat{E}|, 1+|y|^2\right\rangle_{\mathcal{H}}\leq C\rho^{-1}\|\hat{u} \|_{\mathcal{H}}^2+ e^{-\eta\rho}.
\end{equation}
\end{proposition}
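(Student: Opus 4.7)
The plan is to derive the claimed expansion in two stages: first compute the untruncated pointwise evolution to quadratic order, then multiply by the cutoff and absorb the annular errors.

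Starting from the fact that $\bar{M}_\tau$ is a graph over $\Gamma=\mathbb{R}^{k}\times S^{n-k}(\sqrt{2(n-k)})$ with profile $u$, the renormalized MCF equation $\partial_\tau F \cdot \nu = -H - \tfrac{1}{2}\langle F,\nu\rangle$ becomes a quasilinear parabolic PDE for $u$. Using local coordinates $(y,\omega)$ and Taylor-expanding the normal, induced metric and second fundamental form around $u \equiv 0$, I would obtain
\begin{equation*}
\partial_\tau u = \mathcal{L} u + Q_2(u,\nabla u,\nabla^2 u) + \mathcal{R}(u,\nabla u,\nabla^2 u),
\end{equation*}
where $\mathcal{L}$ is the stated Ornstein-Uhlenbeck operator on $\Gamma$, $Q_2$ is the quadratic part of the expansion, and $\mathcal{R}$ collects all cubic and higher contributions (each term containing either at least three factors of $u,\nabla u,\nabla^2 u$, or a factor of $u$ times $|\nabla u|^2$ or $|\nabla^2 u||\nabla u|^2$, etc.). Specializing the standard quadratic expansion on a generalized cylinder of radius $R = \sqrt{2(n-k)}$ produces exactly
\begin{equation*}
Q_2 = -\frac{u^2}{2\sqrt{2(n-k)}} - \frac{|\nabla_{S^{n-k}}u|^2}{2^{3/2}(n-k)^{3/2}} - \frac{u\,\triangle_{S^{n-k}}u}{\sqrt{2}(n-k)^{3/2}}\,,
\end{equation*}
which matches the coefficients in the proposition. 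The $\mathbb{R}^k$-derivatives do not contribute at second order to this pointwise identity, since on the flat factor the expansion is linear up to order one in $u$.

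Next, I would multiply through by $\chi(|y|/\rho(\tau))$ and use the identities $\chi \cdot \mathcal{L}u = \mathcal{L}\hat{u} - [\mathcal{L},\chi]u$ and $\chi \partial_\tau u = \partial_\tau \hat{u} - u\,\chi'\,(-|y|\dot\rho/\rho^2)$, where $[\mathcal{L},\chi]u = 2\nabla_{\mathbb{R}^k}\chi \cdot \nabla_{\mathbb{R}^k}u + (\triangle_{\mathbb{R}^k}\chi - \tfrac12 y\cdot\nabla_{\mathbb{R}^k}\chi)\,u$. Replacing $\chi u^2, \chi u\triangle_{S^{n-k}}u, \chi|\nabla_{S^{n-k}}u|^2$ with the corresponding $\hat u$-expressions introduces additional annular differences of the form $(\chi - \chi^2)(\cdots)$. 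Collecting everything, I arrive at the stated identity with
\begin{equation*}
\hat E = \chi\,\mathcal{R} - [\mathcal{L},\chi]u + u\,\chi'\tfrac{|y|\dot\rho}{\rho^2} + (\text{annular quadratic differences}).
\end{equation*}

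To control $\|\hat E\|$ in the weighted $\mathcal{H}$-norm against $1+|y|^2$, I would treat these pieces separately. For $\chi\,\mathcal{R}$: inside $\operatorname{supp}(\chi)$ we have $\|u\|_{C^4} \leq \rho^{-2}$ by \eqref{small_graph}, so pointwise $|\mathcal{R}| \leq C\rho^{-2}(u^2 + |\nabla u|^2 + |\nabla^2 u|^2)$; using the weighted Poincare-type bound and the equivalence between $\|\hat u\|_\mathcal{H}$ and its weighted derivatives on support of $\chi$, this term is $\leq C\rho^{-2}\|\hat u\|_\mathcal{H}^2 \leq C\rho^{-1}\|\hat u\|_\mathcal{H}^2$. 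For the commutator and $\partial_\tau\chi$ terms, as well as for the annular quadratic differences, the support lies in $\{\rho/2 \leq |y| \leq \rho\}$; there the Gaussian weight $e^{-|y|^2/4} \leq e^{-\rho^2/16}$ gives exponential decay that beats the polynomial factors $|y|^N$ and the $C^4$-bound $\rho^{-2}$, yielding a bound of type $Ce^{-\rho^2/16}\rho^{N} \leq e^{-\eta \rho}$ for $\tau \ll 0$. Residual terms involving spherical derivatives can be bounded using Proposition \ref{utheta}, which gives $|\nabla_{S^{n-k}} u| \leq e^{-\eta\rho}$ pointwise; this yields the same $e^{-\eta\rho}$ tail.

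The main obstacle is the bookkeeping in the quadratic expansion $Q_2$: one must verify that no further quadratic terms of the form $|\nabla_{\mathbb{R}^k} u|^2$, $u\,\triangle_{\mathbb{R}^k}u$, or mixed $\mathbb{R}^k$/$S^{n-k}$ gradient products appear with nonzero coefficient, and that the precise prefactors match $1/(2R)$, $1/R^3$, and $2/R^3$ with $R = \sqrt{2(n-k)}$. A secondary subtlety is that the error estimate asks for $C\rho^{-1}$, not merely $C\rho^{-2}$; this is generous enough that the crude cubic bound suffices, but the analysis of the weighted norm against $1+|y|^2$ requires interpolating the pointwise $C^4$-bound with the $\mathcal{H}$-norm so that no extra powers of $\rho$ are lost when the $|y|^2$ weight is folded in.
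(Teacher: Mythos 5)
Your proposal takes essentially the same route as the paper's: start from the exact quasilinear graph equation for $u$ over the cylinder, extract the quadratic part with the coefficients you identified, multiply by $\chi(|y|/\rho)$, and estimate the resulting error by splitting the domain. The paper differs cosmetically in that it cites the exact evolution equation (with explicit $A_{\alpha\beta},B_{ij}$ coefficient matrices) from a prior appendix rather than re-deriving it, and it packages the cutoff and cubic terms into a single pointwise bound on $\hat E$ rather than separating a commutator $[\mathcal{L},\chi]$ term. These are presentation choices, not mathematically different approaches.

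There is, however, a concrete gap in your weight accounting. You assert
$\langle|\chi\mathcal{R}|,\, 1+|y|^2\rangle_{\mathcal{H}} \leq C\rho^{-2}\|\hat u\|_\mathcal{H}^2$
directly from the pointwise bound $|\mathcal{R}|\leq C\rho^{-2}(|u|^2+|\nabla u|^2+|\nabla^2 u|^2)$ and a weighted Poincar\'e inequality. But on $\mathrm{supp}\,\chi\subset\{|y|\leq\rho\}$ the weight $1+|y|^2$ can be as large as $C\rho^2$, so this pointwise bound only gives $|\mathcal{R}|(1+|y|^2)\leq C(|u|^2+|\nabla u|^2)$, hence after the Poincar\'e inequality merely $C\|\hat u\|_\mathcal{H}^2$ — with no gain of $\rho^{-1}$, let alone $\rho^{-2}$. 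You flag this as a ``secondary subtlety'' that ``requires interpolating\ldots so that no extra powers of $\rho$ are lost,'' but you do not supply the mechanism, and the interpolation you sketch does not produce it. The paper's mechanism is a further split at $|y|=\rho^{1/2}$: on $\{|y|\leq\rho^{1/2}\}$ one has $1+|y|^2\leq 1+\rho$, so the weight costs only one power of $\rho$ and $\rho^{-2}\cdot\rho=\rho^{-1}$ survives, giving $C\rho^{-1}\|\hat u\|_\mathcal{H}^2$ by the inverse Poincar\'e estimate; on $\{\rho^{1/2}\leq|y|\leq\rho\}$ the Gaussian factor $e^{-|y|^2/4}\leq e^{-\rho/4}$ already dominates every polynomial in $\rho$, yielding the $e^{-\eta\rho}$ tail. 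Without this intermediate split the cubic term only closes to $C\|\hat u\|_\mathcal{H}^2$, which is useless.

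A minor additional remark: your appeal to Proposition \ref{utheta} (exponential decay of $|\nabla_{S^{n-k}}u|$) is not needed here. The spherical-derivative terms appear explicitly on the left of the stated identity, not inside $\hat E$, and the paper's proof of this proposition does not invoke almost-symmetry at all.
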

\begin{proof}
We adapt the similar argument in \cite[Prop 2.8]{DH_hearing_shape}. We denote $\{\partial_{\alpha}\}_{\alpha=1,\ldots,k}$ for derivatives along the $\mathbb{R}^k$-factor, $\{\nabla_i\}_{i=k+1,\ldots,n}$ for (covariant) derivatives along the $S^{n-k}$-factor and $\nabla$ for the gradient over the cylinder $\mathbb{R}^{k}\times S^{n-k}$. Then, by \cite[Prop A.1]{DH_hearing_shape} the profile function $u$ evolves by
\begin{align}\label{v_evolution}
    \partial_{\tau}u=& \frac{A_{\alpha\beta}\partial_{\alpha}\partial_{\beta}u+B_{ij}\nabla_{i}\nabla_{j}u-2\partial_{\alpha} u  \nabla_i u \nabla_i\partial_{\alpha} u -(\sqrt{2(n-k)}+u)^{-1}|\nabla_{S^{n-k}} u|^2}{(1+|\partial u|^2)(\sqrt{2(n-k)}+u)^2+|\nabla_{S^{n-k}} u|^2}\\
    & -\frac{n-k}{\sqrt{2(n-k)}+u}+\frac{1}{2}\left(\sqrt{2(n-k)}+u- z_\alpha \partial_\alpha u\right),\nonumber
\end{align}
where
\begin{equation}\label{A}
    A_{\alpha\beta}=[(1+|\partial u|^2)(\sqrt{2(n-k)}+u)^2+|\nabla_{S^{n-k}} u|^2]\delta_{\alpha\beta}-(\sqrt{2(n-k)}+u)^{2}\partial_{\alpha}u \partial_{\beta}u,
\end{equation}
and
\begin{equation}\label{B}
    B_{ij}=(1+|\partial u|^2+(\sqrt{2(n-k)}+u)^{-2}|\nabla_{S^{n-k}} u|^2)\delta_{ij}-(\sqrt{2(n-k)}+u)^{-2}\nabla_{i}u\nabla_{j}u.
\end{equation}
This and $|\dot\rho|\leq \frac{1}{5}|\rho|$ imply
\begin{align}\label{evolution,sphere,E}
   \partial_\tau \hat{u}=\mathcal{L}  \hat{u}-\tfrac{1}{2\sqrt{2(n-k)}}  \hat{u}^2-\tfrac{1}{2^{3/2}(n-k)^{3/2}}|\nabla_{S^{n-k}}\hat{u}|^2-\tfrac{1}{ \sqrt{2}(n-k)^{3/2}} \hat{u}\triangle_{S^{n-k}}\hat{u}+\hat{E},
\end{align}
where
\begin{align}\label{est_eq}
|\hat{E}|\leq &C\chi(|u|+|\nabla u|)^2(|u|+|\nabla u|+|\nabla^2u|)\nonumber\\
&+ C |\chi'|\rho^{-1} \big(|\nabla u|+|u||y|\big)+ C |\chi''|\rho^{-2}|u|\\
&+ C \chi(1-\chi)\big(|u|^2+|\nabla u|^2+|\nabla^2u|^2\big).\nonumber
\end{align}
To bound this in the $(1+|y|^2)$-weighted $\mathcal{H}$-norm, we first observe that $\hat{E}$ is supported in the ball $\{|y|\leq \rho\}$, so in particular by the definition of graphical radius we have the inequality $|u|+|\nabla u| + |\nabla^2 u| \leq \rho^{-2}$
at our disposal.
Using this, for $|y|\leq \rho^{1/2}$ we can estimate
\begin{equation}
|\hat{E}|(1+|y|^2)\leq  C\rho^{-1} \left(|u|^2 + |\nabla u|^2\right).
\end{equation}
Together with \cite[Prop 4.1]{DH_blowdown} this implies
\begin{equation}
\int_{|y|\leq \rho^{1/2}}|\hat{E}|(1+|y|^2)\, e^{-\frac{|y|^2}{4}} \, dy\leq C\rho^{-1}\|\hat{u}\|_{\mathcal{H}}^2\, .
\end{equation}

Finally, for $|y|\geq \rho^{1/2}$ the coarse bound $|\hat{E}|(1+|y|^2) \leq C \rho^2$ yields the tail estimate
\begin{align}
\int_{\rho/2\leq |y|\leq \rho}|\hat{E}|(1+|y|^2) \, e^{-\frac{|y|^2}{4}}\, dy  \leq  e^{-\rho/5} \, .
\end{align}
This finishes the proof of the proposition.
\end{proof}

\section{Proof of the spectral quantization theorem}\label{sec_quant_thm}
In this section, we prove Theorem \ref{spectral theorem} (spectral quantization theorem). As before, we work with the graphical radius $\rho(\tau)=|\tau|^\gamma$, and in particular define $\hat{u}$ and $U_\pm,U_0$ with respect to this $\rho$. By
Proposition \ref{mz.ode.fine.bubble-sheet} (quantitative Merle-Zaag alternative) for $\tau\to -\infty$ either the neutral mode is dominant, i.e.
$U_-+U_+\leq C\rho^{-1}U_0$,
or the unstable mode is dominant, i.e.
$U_-+U_0\leq C\rho^{-1}U_+$. If the unstable mode is dominant, then by \eqref{U_PNM_system} we get
$\dot{U}_{+} \geq \tfrac{1}{2}U_{+}$  for all $\tau\ll 0$.
Integrating this differential inequality yields $U_+(\tau) \leq C e^{\tau/2}$. Thus, recalling that $U_+ = \| P_+ \hat{u}\|_{\mathcal{H}}^2$ and using again the assumption that the unstable mode is dominant we infer that $\| \hat{u} \|_{\mathcal{H}} \leq Ce^{\tau/4}$. Together with standard interpolation inequalities this implies that for any $R<\infty$ and $k<\infty$  we have
$\| u(\cdot,\tau) \|_{C^{k}(B_R)} \leq Ce^{\tau/5}$. Hence, if the the unstable mode is dominant then the conclusion of Theorem \ref{spectral theorem} holds with $Q=0$. We can thus assume from now on in this whole section that the neutral mode is dominant, i.e.
\begin{equation}\label{neu_dom}
U_-+U_+\leq C\rho^{-1}U_0.
\end{equation}
In this case, $\mathrm{rk}(Q)=0$ in  Theorem \ref{spectral theorem} cannot happen, for otherwise  $\hat{u}(\tau)/\|u(\tau)\|_{\mathcal{H}}$ will converge to $0$, which contradicts that neutral mode is dominant.
Assuming also that the flow is not a round shrinking cylinder as before, our goal is to show that the conclusion of Theorem \ref{spectral theorem} holds for some $k\times k$ constant symmetric matrix $Q$ with
\begin{equation}\label{rankgeq 1}
    \mathrm{rk}(Q)\geq 1.
\end{equation}
\subsection{Derivation of the spectral ODE system}
In this subsection, we consider the following expansion in $\mathcal{H}$-norm sense,
\begin{equation}\label{expansion_123}
 \hat{u} = \sum_{m=1}^{k}\alpha_{mm}(y^2_{m}-2)+\sum_{1\leq i<j\leq k} 2\alpha_{ij}y_{i}y_{j}+\hat{w}\, .
\end{equation}
Let
\begin{equation}\label{psi_defi}
    \psi_{mm}=y^2_{m}-2\quad \psi_{ij}=2y_{i}y_{j},
\end{equation}
and $A=(\alpha_{ij})$ be the symmetric $k\times k$ spectral coefficients matrix with
\begin{equation}\label{def_exp_coeffs}
 \alpha_{ij} = \|\psi_{ij}\|_{\mathcal{H}}^{-2} \langle  \psi_{ij},\hat{u}\rangle_{\mathcal{H}}.
\end{equation}
The first goal is to derive the following ODE system for the spectral coefficients:
\begin{proposition}[spectral ODE system]\label{odes-1}
The spectral coefficients matrix $A(\tau)$ satisfies the following  ODE system:
\begin{equation}\label{odes0}
 \dot A=- \beta_{n, k}^{-1} A^2+E,
\end{equation}
where
\begin{equation}
    \beta_{n, k}=\frac{\sqrt{2(n-k)}}{4}
\end{equation}
and
\begin{equation}
    |E|\leq C|\tau|^{-\frac{\gamma}{2}}|A|^2 + Ce^{-\eta|\tau|^{\gamma}/10}
\end{equation}
holds with $\gamma>0$  from \eqref{gamma radius} and $\eta$ from \eqref{eta small}.
\end{proposition}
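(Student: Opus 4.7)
\medskip

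The plan is to project the truncated evolution equation of Proposition \ref{evolution equation u2} onto each basis element $\psi_{ij}$ with respect to the Gaussian inner product on $\mathcal{H}$. Since $\mathcal{L}$ is self-adjoint and $\mathcal{L}\psi_{ij}=0$, the linear term drops out entirely, leaving a genuine ODE for $\alpha_{ij}(\tau)$ driven by the quadratic nonlinearity $-\hat u^2/(2\sqrt{2(n-k)})$, the two spherical-derivative nonlinearities, and the error term $\hat E$. Writing $\hat u = v+\hat w$ with $v(y)=y^\top A y-2\mathrm{tr}(A)=\sum_m\alpha_{mm}\psi_{mm}+\sum_{i<j}\alpha_{ij}\psi_{ij}$, the main contribution comes from $\langle v^2,\psi_{ij}\rangle_{\mathcal{H}}$. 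This is a finite Hermite-moment computation in the $\mathbb{R}^k$-Gaussian (the $S^{n-k}$-integration only contributes a multiplicative constant since $v$ and $\psi_{ij}$ are $\omega$-independent): expanding $v^2=(y^\top A y)^2-4\mathrm{tr}(A)\,y^\top A y+4\mathrm{tr}(A)^2$ and applying Wick's theorem to the Gaussian moments of $y_iy_jy_ky_l$ yields $\langle v^2,\psi_{ij}\rangle_{\mathcal{H}}=8\,(A^2)_{ij}\,\|\psi_{ij}\|_{\mathcal{H}}^2$, which combined with the prefactor $1/(2\sqrt{2(n-k)})$ reproduces the leading term $-\beta_{n,k}^{-1}A^2$ with $\beta_{n,k}^{-1}=4/\sqrt{2(n-k)}$.

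It remains to absorb every other contribution into the stated error. The spherical-derivative pieces $|\nabla_{S^{n-k}}\hat u|^2$ and $\hat u\,\triangle_{S^{n-k}}\hat u$ are controlled using Proposition \ref{utheta}, which gives $|\nabla_{S^{n-k}}\hat u|\le e^{-\eta\rho}$ on $\{|y|\le\rho\}$; integration by parts in the sphere factor converts the second term to a squared spherical gradient as well, and after pairing with the polynomial $\psi_{ij}$ only a $Ce^{-\eta\rho/2}$ contribution remains. The term $\langle\hat E,\psi_{ij}\rangle_{\mathcal{H}}$ is bounded directly by the $(1+|y|^2)$-weighted estimate from Proposition \ref{evolution equation u2} together with $|\psi_{ij}|\le C(1+|y|^2)$, yielding $C\rho^{-1}\|\hat u\|_{\mathcal{H}}^2+e^{-\eta\rho}\le C|\tau|^{-\gamma}|A|^2+e^{-\eta|\tau|^\gamma}$, which is strictly stronger than the claimed bound. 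The cross terms $\langle v\hat w,\psi_{ij}\rangle_{\mathcal{H}}$ and $\langle\hat w^2,\psi_{ij}\rangle_{\mathcal{H}}$ are handled by decomposing $\hat w$ orthogonally into its unstable, stable, and ``symmetry-breaking'' neutral components (the latter being the span of \eqref{basis_hneutral2}, whose eigenfunctions depend nontrivially on $\omega$): by \eqref{neu_dom} from Proposition \ref{mz.ode.fine.bubble-sheet} the stable and unstable pieces satisfy $\sqrt{U_-+U_+}\le C\rho^{-1/2}\sqrt{U_0}\le C|\tau|^{-\gamma/2}|A|$, while the symmetry-breaking neutral piece is exponentially small by Proposition \ref{utheta}. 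Using $\|v\|_{\mathcal{H}}\sim|A|$ and Cauchy--Schwarz in the $(1+|y|^2)$-weighted norm (whose polynomial weight is absorbed by Gaussian moment bounds as in \cite[Prop 4.1]{DH_blowdown}), these cross terms contribute at most $C|\tau|^{-\gamma/2}|A|^2+Ce^{-\eta|\tau|^\gamma/10}$.

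The main technical obstacle is precisely the cross term $\langle v\hat w,\psi_{ij}\rangle_{\mathcal{H}}$: since both $v$ and $\psi_{ij}$ have quadratic growth in $y$, a naive $L^2$ pairing loses the polynomial weight that one would need in the Merle-Zaag bound on $\hat w$, so one must carefully combine the weighted $L^2$-estimates from \cite[Prop 4.1]{DH_blowdown} with the three-way orthogonal splitting of $\hat w$ above, and only by harvesting the full factor $\rho^{-1/2}=|\tau|^{-\gamma/2}$ from the quantitative Merle-Zaag inequality \eqref{neu_dom} does one recover the concrete $|\tau|^{-\gamma/2}|A|^2$ tolerance required by the ODE system \eqref{odes0}. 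Once this splitting is carried out, reassembling the scalar ODEs for $\alpha_{ij}$ into the matrix equation $\dot A=-\beta_{n,k}^{-1}A^2+E$ is automatic.
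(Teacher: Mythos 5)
Your projection strategy, the use of self-adjointness and $\mathcal{L}\psi_{ij}=0$, the Wick computation giving $\langle v^2,\psi_{ij}\rangle_{\mathcal{H}}=8(A^2)_{ij}\|\psi_{ij}\|_{\mathcal{H}}^2$, the treatment of the spherical-derivative terms via Proposition~\ref{utheta}, and the bound on $\langle\hat E,\psi_{ij}\rangle_{\mathcal{H}}$ from Proposition~\ref{evolution equation u2} all match the paper's proof, and the leading-order constant $\beta_{n,k}^{-1}=4/\sqrt{2(n-k)}$ comes out correctly. The gap is exactly where you flag the ``main technical obstacle'': your handling of the cross terms $\langle v\hat w,\psi_{ij}\rangle_{\mathcal{H}}$ and $\langle \hat w^2,\psi_{ij}\rangle_{\mathcal{H}}$ does not actually close.

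Both of these require a $(1+|y|^2)$-weighted bound on $\hat w$ of the form $\langle \hat w^2, 1+|y|^2\rangle_{\mathcal{H}}\le C\rho^{-1}|A|^2 + Ce^{-\eta\rho/3}$, which the paper establishes as a separate Lemma~\ref{rotationdecay}. The proof of that lemma has three ingredients: (i) the plain $\mathcal{H}$-norm bound $\|\hat w\|_{\mathcal{H}}^2\le C\rho^{-1}|A|^2+Ce^{-\eta\rho}$ from Merle--Zaag and Proposition~\ref{utheta}; (ii) a parabolic energy/gradient estimate for $\hat w$, obtained by projecting the evolution equation to get $\partial_\tau\hat w=\mathcal{L}\hat w+g$ and integrating the resulting differential inequalities for $\int e^{\tau_*-\tau}\hat w^2 e^{-q^2/4}$ and $\int(\tau-\tau_*)|\nabla\hat w|^2 e^{-q^2/4}$; and (iii) Ecker's weighted Sobolev inequality to convert the $L^2$ bound plus gradient bound into the $(1+|y|^2)$-weighted bound. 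Your three-way orthogonal splitting of $\hat w$ into unstable, stable, and symmetry-breaking neutral components is a legitimate (in fact finer) route to ingredient~(i), but it supplies only an unweighted $\mathcal{H}$-norm bound; it says nothing about $\|\nabla\hat w\|_{\mathcal{H}}$ or about polynomial moments of $\hat w$. The invocation of ``Gaussian moment bounds as in \cite[Prop 4.1]{DH_blowdown}'' does not fill the hole: that proposition is an inverse Poincaré estimate proved for the truncated graph function $\hat u$ using the pointwise smallness of $u$ and $\nabla u$ from the graphical-radius assumption, and it does not transfer to the orthogonal remainder $\hat w$, which is not a graph function and is not pointwise small in the same way. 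Without (ii) and (iii), you cannot convert $\|\hat w\|_{\mathcal{H}}\le C\rho^{-1/2}|A|+\ldots$ into the weighted bound that Cauchy--Schwarz against $v\psi_{ij}$ actually requires, and the claimed error $C|\tau|^{-\gamma/2}|A|^2+Ce^{-\eta|\tau|^\gamma/10}$ is not established. The fix is simply to prove (or cite) the remainder estimate in the form of Lemma~\ref{rotationdecay} before the projection step.
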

Then, we have the following remainder estimates.
\begin{lemma}[{remainder estimate, cf. \cite[Lem 3.2]{DH_hearing_shape}}]\label{rotationdecay}
The remainder $\hat{w}$ satisfies the weighted $\mathcal{H}$-norm estimate
\begin{equation}\label{remainder1}
|\left\langle\hat{w}^2, 1+|y|^2 \right\rangle_{\mathcal{H}}| \leq  C\rho^{-1}|A|^2+Ce^{-\eta\rho/3}.
\end{equation}
\end{lemma}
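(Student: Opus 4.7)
The plan is to decompose $\hat{w}$ according to the spectral splitting $\mathcal{H}=\mathcal{H}_+\oplus\mathcal{H}_0\oplus\mathcal{H}_-$ and control each piece. Because the expansion \eqref{expansion_123} subtracts off exactly the projection of $\hat{u}$ onto the purely $\mathbb{R}^k$-directional neutral modes listed in \eqref{basis_hneutral}, one has $P_+\hat{w}=P_+\hat{u}$, $P_-\hat{w}=P_-\hat{u}$, and $P_0\hat{w}$ lies entirely in the span of the mixed neutral eigenfunctions \eqref{basis_hneutral2}, namely $\{y_iy_{k+\ell}\}$. By orthonormality of the $\psi_{ij}$ in \eqref{psi_defi}, one also has the identity $U_0 = c_{n,k}|A|^2 + \|P_0\hat{w}\|_{\mathcal{H}}^2$ for an explicit dimensional constant $c_{n,k}>0$.

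The first main step is to estimate $\|P_0\hat{w}\|_{\mathcal{H}}^2$ using almost cylindrical symmetry. Since on $\Gamma$ each mixed neutral mode equals $\sqrt{2(n-k)}\,y_i\omega_\ell$ with $\omega_\ell$ a first spherical harmonic on $S^{n-k}$, the orthogonality $\int_{S^{n-k}}\omega_\ell\,d\omega=0$ implies that $\langle y_iy_{k+\ell},\bar{\hat{u}}\rangle_{\mathcal{H}}=0$, where $\bar{\hat{u}}(y,\tau)$ denotes the spherical average of $\hat{u}$. Hence, by the spherical Poincar\'e inequality on $S^{n-k}$ and by Proposition \ref{utheta},
\begin{equation*}
\bigl|\langle y_iy_{k+\ell},\hat{u}\rangle_{\mathcal{H}}\bigr| \leq C\!\!\sup_{|y|\leq\rho}|\nabla_{S^{n-k}}\hat{u}|\int_{|y|\leq\rho}|y|\,e^{-|y|^2/4}dy \leq Ce^{-\eta\rho}.
\end{equation*}
Summing over the finitely many cross modes gives $\|P_0\hat{w}\|_{\mathcal{H}}^2\leq Ce^{-2\eta\rho}$, hence $U_0\leq C|A|^2+Ce^{-2\eta\rho}$.

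The second step is to estimate the unstable and stable pieces. Since we are in the neutral-dominant case, Proposition \ref{mz.ode.fine.bubble-sheet} gives $U_++U_-\leq C\rho^{-1}U_0\leq C\rho^{-1}|A|^2+Ce^{-2\eta\rho}$, and altogether $\|\hat{w}\|_{\mathcal{H}}^2\leq C\rho^{-1}|A|^2+Ce^{-2\eta\rho}$.

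The last step is to upgrade to the weighted norm $\langle \hat{w}^2,1+|y|^2\rangle_{\mathcal{H}}$. For the low-frequency components $P_+\hat{w}$ and $P_0\hat{w}$, which consist of polynomials of degree at most two in $y$, elementary Gaussian moment bounds give a comparability $\|(1+|y|^2)^{1/2}f\|_{\mathcal{H}}\leq C_{n,k}\|f\|_{\mathcal{H}}$. For the stable component $P_-\hat{u}$, which can involve arbitrarily high spectral modes, I invoke the weighted $L^2$ inverse-Poincar\'e estimate from \cite[Prop 4.1]{DH_blowdown}: using admissibility $\|u\|_{C^4(B_{2\rho})}\leq\rho^{-2}$ together with the Gaussian tail $\int_{|y|\geq\rho/2}(1+|y|^2)e^{-|y|^2/4}\leq e^{-\rho/5}$, all polynomial moments of $P_-\hat{u}$ are controlled by its bare $\mathcal{H}$-norm up to a negligible tail. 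Combining the three pieces (and absorbing polynomial losses into a slightly smaller exponent $\eta\rho/3$) yields the claimed estimate.

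The main obstacle is the last step: the stable part $P_-\hat{u}$ has no a priori polynomial-degree bound, so one cannot use spectral moment identities directly; the appeal to the flow-adapted inverse-Poincar\'e estimate of \cite[Prop 4.1]{DH_blowdown} is essential, and is what forces the small but present weakening of the exponential rate from $2\eta\rho$ to $\eta\rho/3$ in the final error term.
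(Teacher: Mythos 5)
Your spectral decomposition of $\hat{w}$ and the almost-cylindrical-symmetry bound on the mixed neutral modes are both sound, and they correctly reproduce the paper's intermediate estimate $\|\hat{w}\|_{\mathcal{H}}^2 \leq C\rho^{-1}|A|^2 + Ce^{-\eta\rho}$. The problem is in the final upgrade to the $(1+|y|^2)$-weighted norm, where the essential ingredient is missing.

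You assert that ``all polynomial moments of $P_-\hat{u}$ are controlled by its bare $\mathcal{H}$-norm up to a negligible tail,'' citing \cite[Prop 4.1]{DH_blowdown}, the admissibility bound $\|u\|_{C^4(B_{2\rho})}\leq\rho^{-2}$, and a Gaussian tail. None of these tools delivers that claim. Controlling $\langle f^2, 1+|y|^2\rangle_{\mathcal{H}}$ by $\|f\|_{\mathcal{H}}^2$ is precisely Ecker's weighted Sobolev inequality, and it requires a bound on $\|\nabla f\|_{\mathcal{H}}$; for eigenfunctions of $\mathcal{L}$ with eigenvalue $\lambda\to -\infty$, the gradient-to-$L^2$ ratio diverges, so no unconditional ``inverse Poincaré'' can hold on $\mathcal{H}_-$. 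The estimate from \cite[Prop 4.1]{DH_blowdown} controls $\|\nabla\hat{u}\|_{\mathcal{H}}$ in terms of $\|\hat{u}\|_{\mathcal{H}}$, not $\|\nabla\hat{w}\|_{\mathcal{H}}$ in terms of $\|\hat{w}\|_{\mathcal{H}}$; if you try to route through it, you end up with $\langle\hat{w}^2,1+|y|^2\rangle_{\mathcal{H}}\lesssim |A|^2$ rather than $\rho^{-1}|A|^2$, losing exactly the $\rho^{-1}$ gain the lemma is meant to deliver (and which the downstream ODE derivation in Proposition \ref{odes-1} relies on). Likewise, a naive use of the pointwise bound and tail estimates only gives $\int_{|y|\leq\rho^{1/2}}\hat{w}^2(1+|y|^2)e^{-|y|^2/4}\lesssim\rho\|\hat{w}\|_{\mathcal{H}}^2$, again off by a factor of $\rho$.

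What the paper does, and what your argument must supply, is a separate bound on $\|\nabla\hat{w}\|_{\mathcal{H}}^2$. This is obtained by projecting the evolution equation of Proposition \ref{evolution equation u2} onto the orthogonal complement of the modes in \eqref{basis_hneutral} to get $\partial_\tau\hat{w}=\mathcal{L}\hat{w}+g$ with $\|g\|_{\mathcal{H}}$ small, and then running a parabolic energy/interpolation estimate over a unit time interval (exploiting the Merle--Zaag inequalities to control $\max_{\tau'\in[\tau,\tau+1]}|A(\tau')|$) to conclude $\|\nabla\hat{w}(\tau)\|_{\mathcal{H}}^2\leq C\rho^{-1}|A|^2+Ce^{-\eta\rho/2}$. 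Only then does Ecker's weighted Sobolev inequality give the stated weighted bound with the $\rho^{-1}$ factor intact. Your proposal omits this entire gradient step, so it does not close.
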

\begin{proof}
The proof follows from Proposition \ref{evolution equation u2} and similar argument as in \cite[Lem 3.2]{DH_hearing_shape}. For the readers' convenience, we include the details. Using Proposition \ref{utheta} and $U_-+U_+\leq  C\rho^{-1}U_0$, it follows that
\begin{equation}\label{remainder1inproof}
\|\hat{w}\|_{\mathcal{H}}^2 \leq  C\rho^{-1}|A|^2 + Ce^{-\eta\rho}.
\end{equation}
Then, we project the equation from Proposition \ref{evolution equation u2} (evolution equation) to the orthogonal complement of neutral mode eigenfunctions in \eqref{basis_hneutral_intro}, and argue as above to obtain
\begin{equation}\label{eq_w}
\partial_\tau \hat{w}= \mathcal{L} \hat{w} + g,
\end{equation}
where by \eqref{evolution,sphere,E} and \eqref{eta small}
\begin{equation}\label{est_g}
\| g\|_{\mathcal{H}}\leq\frac{1}{2\sqrt{2(n-k)}}\|\hat{u}^2\|_{\mathcal{H}}+ \|\hat{E}\|_{\mathcal{H}}+Ce^{-\eta\rho}.
\end{equation}
Moreover by \eqref{est_eq}, the definition of cut-off function $\chi$ and admissible graphical radius $\rho$,  for $|y|\leq \rho^{1/2}$ we have
\begin{equation}
    |\hat{E}|\leq \rho^{-4}(|u|+|\nabla u|),
\end{equation}
and for $\rho^{1/2}\leq |y|\leq 2\rho$, we have $|\hat{E}|\leq C$. These estimates, inverse Poincare inequality  \cite[Prop 4.1]{DH_blowdown} and Gaussian tail estimates imply that
\begin{equation}
    \|\hat{E}\|_{\mathcal{H}}\leq C\rho^{-4} \|\hat{u}\|_{\mathcal{H}}+Ce^{-\eta\rho}.
\end{equation}
Using this, \eqref{est_g} and again the definition of graphical radius, we have
\begin{equation}\label{g_estimates}
   \| g\|_{\mathcal{H}}\leq  \frac{1}{2\sqrt{2(n-k)}}\rho^{-2}\|\hat{u}\|_{\mathcal{H}}+C\rho^{-4} \|\hat{u}\|_{\mathcal{H}}+Ce^{-\eta\rho}.
\end{equation}
Then, we use \eqref{remainder1inproof}, \eqref{g_estimates} and repeat the gradient estimates in \cite[Lem 3.2]{DH_hearing_shape}.
Given $\tau_\ast\ll 0$, using \eqref{eq_w} and integration by parts we compute
\begin{align}
\frac{d}{d\tau}\int {e^{\tau_\ast-\tau}} \hat{w}^2e^{-q^2/4} &= \int  {e^{\tau_\ast-\tau}}(2\hat{w}g-2|\nabla \hat{w}|^2 -\hat{w}^2)\, e^{-q^2/4}\nonumber\\
& \leq \int  {e^{\tau_\ast-\tau}}(g^2-2|\nabla \hat{w}|^2)\, e^{-q^2/4},
\end{align}
and
\begin{align}
 \frac{d}{d\tau}\int (\tau-\tau_\ast)|\nabla \hat{w}|^2 e^{-q^2/4}
 &=\int \left(|\nabla \hat{w}|^2  -2(\tau-\tau_\ast) (\mathcal{L}  w)(\mathcal{L}\hat{w}+g)\right)\, e^{-q^2/4}\nonumber\\
 &\leq\int \left(|\nabla \hat{w}|^2  +\tfrac{1}{2}(\tau-\tau_\ast) g^2)\right)\, e^{-q^2/4}\, .
\end{align}
For $\tau\in [\tau_\ast-1,\tau_\ast]$ this yields
\begin{align}
 \frac{d}{d\tau}\int \left((\tau-\tau_\ast)|\nabla \hat{w}|^2+\tfrac{e^{\tau_\ast-\tau}}{2} \hat{w}^2\right)\, e^{-q^2/4}\leq \int g^2\, e^{-q^2/4}\, .
 \end{align}
Hence, together with \eqref{remainder1inproof} and \eqref{est_g} we infer that
\begin{equation}
\|\nabla \hat{w}(\tau)\|_{\mathcal{H}}^2 = o\left(\max_{\tau' \in [\tau,\tau+1]}|A(\tau')|^2\right) + Ce^{-\eta\rho(\tau)/2}.
\end{equation}
Finally, by the Merle-Zaag ODE inequalities  \eqref{U_PNM_system} for $\tau\ll 0$ we have
\begin{equation}
\max_{\tau' \in [\tau,\tau+1]}|A(\tau')|^2\leq 2 |A(\tau)|^2\, .
\end{equation}

\begin{equation}
\|\nabla \hat{w}(\tau)\|_{\mathcal{H}}^2 \leq C\rho^{-1}|A|^2 + Ce^{-\eta\rho(\tau)/2}.
\end{equation}
Together with Ecker's weighted Sobolev inequality \cite[page 109]{Ecker_logsob}, this implies the assertion.
\end{proof}

We can now prove the main result of this subsection.

\begin{proof}[{Proof of Proposition \ref{odes-1}}]
Projecting the evolution equation of truncated profile function $\hat{u}$ in Proposition \ref{evolution equation u2} (evolution expansion) to eigenfunctions
\begin{equation}
    \psi_{ij} =
    \begin{cases}
      & y_i^2 - 2 \quad \text{if $i = j$},\\
      & 2y_iy_j \quad \text{if $i \neq j$},
    \end{cases}
\end{equation}
we have
\begin{align}\label{ak}
\dot{\alpha}_{\ell m}&=  \|\psi_{\ell m}\|_{\mathcal{H}}^{-2}\left\langle  \mathcal{L}\hat{u}-\frac{1}{2\sqrt{2(n-k)}} \hat{u}^2,\psi_{\ell m}\right\rangle_{\mathcal{H}}\\\nonumber
&+\|\psi_{\ell m}\|_{\mathcal{H}}^{-2}\left\langle  -\tfrac{1}{2^{3/2}(n-k)^{3/2}}|\nabla_{S^{n-k}}\hat{u}|^2-\tfrac{1}{\sqrt{2}(n-k)^{3/2}} \hat{u}\triangle_{S^{n-k}}\hat{u}+\hat{E},\psi_{\ell m}\right\rangle_{\mathcal{H}}.
\end{align}
Next, using $\mathcal{L}\psi_{\ell m} =0$ and integration by parts we see that
\begin{equation}
\langle \mathcal{L}\hat{u},\psi_{\ell m} \rangle_{\mathcal{H}} = 0.
\end{equation}
Moreover, using Proposition \ref{utheta} (almost circular symmetry), writing the function $\hat{u}\triangle_{S^{n-k}}\hat{u}=\text{div}_{S^{n-k}}(\hat{u}\nabla_{S^{n-k}}\hat{u})- |\nabla_{S^{n-k}}\hat{u}|^2$, and using integration by parts we can estimate
\begin{equation}
|\langle |\nabla_{S^{n-k}}\hat{u}|^2,\psi_{\ell m} \rangle_{\mathcal{H}}| + |\langle \hat{u}\triangle_{S^{n-k}}\hat{u},\psi_{\ell m} \rangle_{\mathcal{H}}|  \leq  C e^{-\eta\rho}.
\end{equation}
Furthermore, remembering the expansion \eqref{expansion_123} we compute
\begin{equation}
 \langle \hat{u}^2, \psi_{\ell m} \rangle
= \sum_{i,j = 1}^k \sum_{p,q = 1}^k \alpha_{ij}\alpha_{pq}\langle \psi_{ij}\psi_{pq}, \psi_{\ell m} \rangle+ 2\langle  \psi_{\ell m} \sum_{i,j = 1}^k \alpha_{ij}\psi_{ij}, \hat{w} \rangle + \langle \hat{w}^2, \psi_{\ell m} \rangle.
\end{equation}
By Lemma \ref{rotationdecay} (remainder estimates) we have
\begin{equation}
    2\left|\langle  \psi_{\ell m} \sum_{i,j = 1}^k \alpha_{ij}\psi_{ij}, \hat{w} \rangle\right| + \left|\langle \hat{w}^2, \psi_{\ell m} \rangle\right|\leq   C\rho^{-1/2}|A|^2+Ce^{-\eta\rho/6}.
\end{equation}
Combining the above facts we infer that
\begin{equation}\label{alpha_rough_equation}
   \Dot{\alpha}_{\ell m} = -\frac{1}{2\sqrt{2(n-k)}}\norm{\psi_{\ell m}}_\mathcal{H}^{-2}\sum_{i,j = 1}^N \sum_{p,q = 1}^N \alpha_{ij}\alpha_{pq}\langle \psi_{ij}\psi_{pq}, \psi_{\ell m} \rangle + E_{\ell m},
\end{equation}
where $|E_{\ell m}|\leq C\rho^{-1/2}|A|^2 + Ce^{-\eta\rho(\tau)/6}$.
 An elementary computation on the inner product of these neutral mode eigenfunctions shows that for the indices $i, j,p,q, \ell, m = 1, ..., k$, the only nonvanishing terms of $\langle \psi_{ij}\psi_{pq}, \psi_{\ell m} \rangle $ in \eqref{alpha_rough_equation} have the following relations:
\begin{equation}
   \langle \psi_{ii}\psi_{ii}, \psi_{ii} \rangle = 8\norm{\psi_{ii}}_\mathcal{H}^2,
\end{equation}
\begin{equation}
    \langle \psi_{ij}\psi_{ij}, \psi_{ii} \rangle = 8\norm{\psi_{ii}}_\mathcal{H}^2\quad (i\neq j),
\end{equation}
\begin{equation}
    \langle \psi_{im}\psi_{i\ell}, \psi_{\ell m} \rangle = 8\norm{\psi_{ii}}_\mathcal{H}^2\quad (i \neq m \neq \ell \neq i),
\end{equation}
\begin{equation}
    \norm{\psi_{ii}}_\mathcal{H}^2 = \frac{1}{2}\norm{\psi_{pq}}_\mathcal{H}^2\quad (p\neq q).
\end{equation}
Combining the above facts and $\rho(\tau)=|\tau|^{\gamma}$, we infer
\begin{equation}\label{aode}
    \dot{\alpha}_{\ell m} = -\frac{4}{\sqrt{2(n-k)}}\sum_{i = 1}^k \alpha_{im}\alpha_{i\ell}  +O( |\tau|^{-\frac{\gamma}{2}}\abs{\vec{\alpha}}^2 + e^{-\eta|\tau|^{\gamma}/10}),
\end{equation}
which is equivalent to
\begin{equation}\label{A_ode}
    \dot A=- \beta_{n, k}^{-1} A^2+ E,
\end{equation}
with  $\beta_{n, k}=\frac{\sqrt{2(n-k)}}{4}$ and $|E|\leq C|\tau|^{-\frac{\gamma}{2}}|A|^2 + Ce^{-\eta|\tau|^{\gamma}/10}$. This completes the proof of the proposition.
\end{proof}

\bigskip

\subsection{Quantized asymptotics of the spectral ODE system}
We can now conclude the proof of Theorem \ref{spectral theorem} (spectral quantization theorem), which we restate here in a technically sharper way.
\begin{theorem}[spectral quantization theorem]\label{spectral theorem_restated}
For any ancient  mean curvature flow in $\mathbb{R}^{n+1}$ whose tangent flow at $-\infty$ is given by $\mathbb{R}^{k}\times S^{n-k}(\sqrt{2(n-k)|t|})$, the cylindrical function $u$, truncated at the graphical radius $\rho(\tau)=|\tau|^\gamma$, where $\gamma$ is the exponent from Proposition \ref{radius_lower_bound} (graphical radius), satisfies
 \begin{equation}\label{main_thm_ancient_H norm}
\lim_{\tau\to -\infty} \Big\|\,
|\tau| \hat{u}(y, \omega,\tau)- y^\top Qy +2\mathrm{tr}(Q)\, \Big\|_{\mathcal{H}} = 0,
\end{equation}
and
 \begin{equation}\label{main_thm_ancient}
\lim_{\tau\to -\infty} \Big\|\,
|\tau| u(y, \omega,\tau)- y^\top Qy +2\mathrm{tr}(Q)\, \Big\|_{C^{p}(B_R)} = 0,
\end{equation}
for all $R<\infty$ and all integers $p\geq 0$, where $Q$ is a constant symmetric $k\times k$-matrix whose eigenvalues are quantized to be either 0 or $-\frac{\sqrt{2(n-k)}}{4}$.
 \end{theorem}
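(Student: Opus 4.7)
\medskip

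\noindent\textbf{Proof proposal.} The plan is to reduce the statement to an analysis of the finite-dimensional spectral ODE system from Proposition \ref{odes-1} via diagonalization, following the Filippas-Liu style argument outlined in the introduction. Since $A(\tau)$ is a symmetric $k\times k$ matrix, by Kato's theory its eigenvalues $\lambda_1(\tau),\dots,\lambda_k(\tau)$ can be arranged to be $C^1$ in $\tau$, and conjugating $\dot A=-\beta_{n,k}^{-1}A^2+E$ by the instantaneous eigenvector frame yields
\begin{equation}
\dot{\lambda}_i=-\beta_{n,k}^{-1}\lambda_i^{2}+\tilde{E}_i,\qquad |\tilde{E}_i|\leq C|\tau|^{-\gamma/2}|A|^{2}+Ce^{-\eta|\tau|^\gamma/10},\qquad 1\leq i\leq k.
\end{equation}
The neutral-mode dominance assumption \eqref{neu_dom} together with $\mathrm{rk}(Q)\geq 1$ (once we identify $Q$) forces $|A|$ not to vanish identically, and we already know $|A|(\tau)\to 0$.

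The first substantive step is an \emph{almost non-upward quadratic bending} estimate: I would carry out a continuity argument showing that for any small $\varepsilon>0$ one has
\begin{equation}
\max_i \lambda_i(\tau)+\varepsilon\min_i\lambda_i(\tau)\leq 0 \qquad\text{for all } \tau\ll 0.
\end{equation}
The idea is that if at some $\tau_0$ the positive eigenvalue contribution dominated, then the leading ODE $\dot{\lambda}_i\approx-\beta_{n,k}^{-1}\lambda_i^{2}$ would make that positive eigenvalue decay only like $|\tau|^{-1}$ while driving the others, and the exponential error $e^{-\eta|\tau|^\gamma/10}$ is negligible compared to $|\tau|^{-2}$; feeding this into the graphical radius / admissibility estimates from Proposition \ref{prop_admiss} together with the known behavior of the solutions to the barrier ODE produces a contradiction with asymptotic cylindricity (one can also exploit that a genuinely positive neutral eigenvalue would violate the admissible graphical radius estimate $\rho(\tau)=|\tau|^\gamma$). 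Combining this sign control with the leading-order ODE one obtains the matching two-sided bound
\begin{equation}
\frac{c}{|\tau|}\leq \sum_{i=1}^{k}|\lambda_i(\tau)|\leq \frac{C}{|\tau|}.
\end{equation}

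Next, I would \emph{bootstrap} the error estimate. Substituting $|A(\tau)|\leq C|\tau|^{-1}$ into the bound for $\tilde{E}_i$ gives
\begin{equation}
\dot{\lambda}_i+\beta_{n,k}^{-1}\lambda_i^{2}=O\bigl(|\tau|^{-2-\gamma/2}\bigr),
\end{equation}
whose right-hand side is integrable. Writing $\mu_i(\tau):=|\tau|\lambda_i(\tau)$ one checks that $\mu_i$ satisfies a Riccati-type equation whose fixed points are exactly $0$ and $-\beta_{n,k}=-\sqrt{2(n-k)}/4$, and the integrability of the error forces $\mu_i(\tau)\to \mu_i^\infty\in\{0,-\beta_{n,k}\}$. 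This in turn yields $|\tau|A(\tau)\to Q$ with $Q$ symmetric and $\mathrm{Spec}(Q)\subset\{0,-\beta_{n,k}\}$.

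Finally, to conclude the stated convergence I expand $\hat u=\sum_{i\leq j}\alpha_{ij}\psi_{ij}+\hat w$ as in \eqref{expansion_123}. Since $\sum_m\alpha_{mm}(y_m^{2}-2)+\sum_{i<j}2\alpha_{ij}y_iy_j=y^\top A y-2\,\mathrm{tr}(A)$, multiplying by $|\tau|$ and using $|\tau|A\to Q$ gives the stated $\mathcal H$-norm convergence for the polynomial part, while Lemma \ref{rotationdecay} shows $\||\tau|\hat w\|_{\mathcal H}\to 0$ because $\|\hat w\|_{\mathcal H}^{2}\leq C\rho^{-1}|A|^{2}+Ce^{-\eta\rho}\ll |\tau|^{-2}$. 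This proves \eqref{main_thm_ancient_H norm}. The $C^p$-estimate \eqref{main_thm_ancient} on any $B_R$ then follows from standard interior parabolic regularity applied to the evolution equation of $u$, combined with the uniform $\mathcal H$-norm smallness and the admissibility bound $\|u\|_{C^4(B_{2\rho})}\leq \rho^{-2}$. I expect the main obstacle to be the continuity/sign argument in the first step, since it is precisely there that one must rule out escape to the positive branch of the Riccati dynamics without using convexity, and the bookkeeping of the exponential error against the polynomial decay must be performed with some care.
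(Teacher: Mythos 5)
Your proposal follows the same high-level route as the paper: diagonalize $A(\tau)$ via Kato/Rellich, establish the almost non-upward quadratic bending estimate by a continuity argument, derive the two-sided bound $c/|\tau|\le|A|\le C/|\tau|$, bootstrap the error to $O(|\tau|^{-2-\gamma/2})$, and identify the quantized eigenvalue limits. The handling of the remainder $\hat w$ via Lemma \ref{rotationdecay} and the conversion of $\mathcal H$-norm convergence to $C^p$-convergence by interior estimates are also correct.

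There is, however, a genuine gap at the step where you claim that $\mu_i(\tau)=|\tau|\lambda_i(\tau)\to\mu_i^\infty\in\{0,-\beta_{n,k}\}$ \emph{in turn yields} $|\tau|A(\tau)\to Q$. Convergence of the eigenvalues does not imply convergence of the matrix: one could have $A(\tau)=\frac{\beta_{n,k}}{\tau}\,R(\tau)\operatorname{diag}\{I_{k-r},0_r\}R(\tau)^{-1}+o(|\tau|^{-1})$ with $R(\tau)\in\mathrm{SO}(k)$ rotating without limit, in which case every eigenvalue converges to its quantized value but $|\tau|A(\tau)$ does not converge. When $1\le r\le k-1$ the eigenvalue clustering at $0$ makes the eigenframe ill-conditioned, so the integrable error $O(|\tau|^{-2-\gamma/2})$ controls the eigenvalues but by itself says nothing about the eigenvectors. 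This is precisely the difficulty the paper's proof is designed to overcome: after obtaining the eigenvalue dichotomy, it derives and solves a hierarchy of ODEs for the principal minors $A_{\gamma_\ell}(\tau)$ (starting from $\ell=k-1$ down to $\ell=1$) to pin down the full entries $a_{ij}$ via $a_{ij}^2=a_{ii}a_{jj}-A_{\{i,j\}}$, concluding that the rotation $R(\tau)$ can be replaced by a constant $R_0$. Without supplying an argument of this kind (or another device controlling the eigenframe), your proof does not establish the existence of a constant limit matrix $Q$, only that the eigenvalues of $A(\tau)$ have quantized asymptotics. A secondary, smaller issue: your justification of the almost non-upward quadratic bending estimate and of the negligibility of the exponential error is heuristic; the paper's version requires an a priori lower bound $e^{-\eta|\tau|^{\gamma}/20}\le\sum_i\lambda_i^2$, proved by a continuity argument on $\alpha(\sigma)=\tau^2\sum\lambda_i^2$ \emph{before} the two-sided bound is available, to avoid circularity.
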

Before the proof of this theorem, we recall that we have assumed that neutral mode is dominant in the whole section since the theorem holds if unstable mode is dominant, which is equivalent to $\textrm{rk}(Q)=0$. Then, we need the following lemma to prove this theorem.
\begin{lemma}\label{uul}
we have the estimates
\begin{equation}\label{utau-1}
    \frac{c}{|\tau|}\leq |A|\leq \frac{C}{|\tau|}.
\end{equation}
\end{lemma}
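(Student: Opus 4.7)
\smallskip

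\textbf{Proof plan for Lemma \ref{uul}.} The plan is to diagonalize the symmetric matrix $A(\tau)$ and analyze the resulting scalar ODEs one eigenvalue at a time. By Kato's perturbation theory (Chapter~2, Theorem~6.8), under a suitable ordering the eigenvalues $\lambda_1(\tau),\dots,\lambda_k(\tau)$ of $A(\tau)$ can be taken $C^1$, and Proposition~\ref{odes-1} yields
\begin{equation}\label{eigode_plan}
\dot\lambda_i(\tau)=-\beta_{n,k}^{-1}\lambda_i(\tau)^2+e_i(\tau),\qquad |e_i(\tau)|\leq C|\tau|^{-\gamma/2}|A|^2+Ce^{-\eta|\tau|^{\gamma}/10},
\end{equation}
obtained by computing $(S^\top \dot A S)_{ii}$ in an orthonormal eigenbasis $S$. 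Since the neutral mode is dominant and the flow is not the shrinking cylinder, we have $\hat{u}\not\equiv 0$ and the assumed rank condition \eqref{rankgeq 1} guarantees that $A(\tau)\not\to 0$ exponentially; the Merle--Zaag dominance \eqref{neu_dom} together with Lemma~\ref{rotationdecay} further shows that $\|\hat u\|_{\mathcal H}$ and $|A(\tau)|$ are comparable.

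The first step is the \emph{non-upward quadratic bending estimate}: for $\tau\ll 0$ and some small $\varepsilon>0$,
\begin{equation*}
\max_i\lambda_i(\tau)+\varepsilon\min_i\lambda_i(\tau)<0.
\end{equation*}
This is proved by continuity: supposing the inequality failed at some time, the comparison $\dot\lambda_i\leq -\beta_{n,k}^{-1}\lambda_i^2+|e_i|$ forces the largest eigenvalue to be monotone nonincreasing up to errors, hence bounded away from $0$ for all larger times, contradicting $\lambda_i(\tau)\to 0$. The errors in \eqref{eigode_plan} are absorbable because $|\tau|^{-\gamma/2}|A|^2$ is lower order than $|A|^2$ and the exponential term $e^{-\eta|\tau|^\gamma/10}$ decays faster than any polynomial; the admissible graphical radius $\rho(\tau)=|\tau|^\gamma$ is what guarantees this absorption. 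A key consequence is that $|A(\tau)|$ is comparable to $|\min_i\lambda_i(\tau)|$ and in particular to the magnitude of an eigenvalue that is negative.

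Having reduced matters to a single negative eigenvalue $\lambda(\tau)<0$ with $\lambda(\tau)\to 0^-$, equation \eqref{eigode_plan} becomes a Riccati ODE $\dot\lambda=-\beta_{n,k}^{-1}\lambda^2+O(|\tau|^{-\gamma/2}\lambda^2+e^{-\eta|\tau|^\gamma/10})$. Writing $\mu=1/\lambda<0$ and integrating gives $\dot\mu=\beta_{n,k}^{-1}+o(1)$, so $\mu(\tau)=\beta_{n,k}^{-1}\tau+o(|\tau|)$ and hence $\lambda(\tau)=\beta_{n,k}/\tau+o(1/|\tau|)$. This provides both the upper bound $|\lambda(\tau)|\leq C/|\tau|$ and the lower bound $|\lambda(\tau)|\geq c/|\tau|$ for at least one eigenvalue (ensured by $|A|\not\to 0$ exponentially from \eqref{neu_dom} and \eqref{rankgeq 1}), while the remaining eigenvalues are at most of the same order by the non-upward bending estimate. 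Combining these gives $c/|\tau|\leq |A(\tau)|\leq C/|\tau|$.

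The main obstacle is the non-upward quadratic bending step: the errors in \eqref{eigode_plan} are only controlled by $|A|^2$ itself, so a naive comparison is circular. The continuity/bootstrap argument has to be set up carefully, first using the crude exponential a~priori bound $e^{-\eta|\tau|^\gamma/10}$ as a seed (negligible compared to any polynomial decay of $|A|$) and then propagating the resulting preliminary polynomial bound on $|A|$ through \eqref{eigode_plan} to close the argument and exclude dominant positive eigenvalues.
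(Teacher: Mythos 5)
Your overall strategy mirrors the paper's proof: diagonalize $A(\tau)$ via Kato's perturbation theorem, derive a Riccati-type scalar ODE for each eigenvalue, establish a non-upward quadratic bending estimate $\max_i\lambda_i+\varepsilon\min_i\lambda_i<0$ by a continuity argument, and integrate. However there are two issues, one of ordering and one of substance.

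\textbf{Ordering.} You propose to prove the bending estimate first and then extract the decay rate. The paper does the reverse, and for good reason. The bending argument needs the error in the eigenvalue ODE to be $o\bigl(\sum_i\lambda_i^2\bigr)$, but what you actually have from Proposition~\ref{odes-1} is an error of the form $C|\tau|^{-\gamma/2}|A|^2+Ce^{-\eta|\tau|^\gamma/10}$. The exponential term is \emph{not} a priori negligible compared to $|A|^2$: if $|A|$ decayed superpolynomially it could dominate, and then the continuity argument for $\max_i\lambda_i+\varepsilon\min_i\lambda_i<0$ would not close, because at a potential boundary point of the set $J'$ you cannot control $\bar D^-\lambda_i$ by $-\lambda_i^2/(2\beta_{n,k})$ alone. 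The paper therefore first establishes the lower bound $e^{-\eta|\tau|^\gamma/20}\leq\sum_i\lambda_i^2$, via a separate continuity argument for $\alpha(\sigma)=\tau^2\sum_i\lambda_i^2$ with $\tau=-e^\sigma$, and only then rewrites the ODE as $\dot\lambda_i=-\beta_{n,k}^{-1}\lambda_i^2+o\bigl(\sum_i\lambda_i^2\bigr)$ and runs the bending argument.

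\textbf{The gap.} You appeal to $\mathrm{rk}(Q)\geq 1$ to assert that $|A(\tau)|$ does not decay exponentially, but this is circular at this stage: $Q$ is defined as the limit of $\tau A(\tau)$ and its existence is precisely what Lemma~\ref{uul} and Theorem~\ref{spectral theorem_restated} are in the middle of proving. The mechanism the paper actually uses to seed the bootstrap is specific and worth naming: if $\sum_i\lambda_i^2\leq C|\tau|^{-10}$ for all $\tau\ll 0$, then by \eqref{alpha_beta_equivalence}, \eqref{graphrho} the graphical radius $\rho$ grows like $\tau^2$, and the Merle--Zaag inequality \eqref{U_PNM_system} together with neutral-mode dominance forces $|\log U_0|$ to stay bounded, contradicting $U_0\to 0$. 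This yields $\limsup_{\tau\to-\infty}\tau^{10}\sum_i\lambda_i^2=\infty$, which supplies a sequence $\sigma_i\to\infty$ with $\alpha(\sigma_i)\geq e^{-10\sigma_i}$; that sequence is what lets the continuity argument for $J=\{\sigma\geq\sigma_1:\alpha^{3/2}(\sigma)\geq e^{-30\sigma}\}$ get started and propagate to all of $[\sigma_1,\infty)$. Without this step your bootstrap has no seed.

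\textbf{Minor.} After the bending estimate, the paper does not reduce to a single eigenvalue and a $\mu=1/\lambda$ substitution; it sums the ODEs to get $\frac{d}{d\tau}\sum_i\lambda_i\leq -(2k\beta_{n,k})^{-1}\bigl(\sum_i\lambda_i\bigr)^2<0$, deduces $\sum_i\lambda_i<0$ by monotonicity and integrates, then uses the comparability $\frac12\bigl|\sum_i\lambda_i\bigr|\leq\sum_i|\lambda_i|\leq C\bigl|\sum_i\lambda_i\bigr|$ (a consequence of the bending estimate) to transfer the bounds to $\sum_i|\lambda_i|$ and hence to $|A|$. Working with $\sum_i\lambda_i$ is cleaner than isolating one negative eigenvalue, particularly since the $C^1$ labeling of eigenvalues can permute under crossings and the bending estimate only controls the extremes.
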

\begin{proof}[proof of lemma]
Because the norm of symmetric matrices is determined by the norm of their eigenvalues, we only need to show  there are constants $0<c<C<\infty$ such that
\begin{equation}\label{cCinequality}
\frac{c}{|\tau|}\leq \sum_{i=1}^{k}|\lambda_{i}(\tau)|\leq \frac{C}{|\tau|}.
\end{equation}
Motivated by \cite{FilippasLiu}, we first compute the ODE system satisfied by these eigenvalues. By \cite[Sec 2, Thm 6.8]{kato} and continuously differentiable property of symmetric matrices $A(\tau)$, under suitable arrangement the eigenvalues $\lambda_{1}(\tau), \dots, \lambda_{k}(\tau)$ of $A(\tau)$ are continuously differentiable in time. Now let $P(\tau)$ be the projection to the eigenspaces spanned by the orthonormal basis $\{\psi^{(j)}_{i}(\tau): 1\leq i\leq h_{j}\}$ of eigenvalue $\lambda_{j}(\tau)$, which means
\begin{equation}\label{Aeigenvalues}
    A(\tau)\psi^{(j)}_{i}(\tau)=\lambda_{j}(\tau)\psi^{(j)}_{i}(\tau).
\end{equation}
By \cite[Thm 1 page 45]{rellich1969perturbation}, we have
\begin{equation}
    P(\tau)\dot{A}(\tau)\psi^{(j)}_{i}(\tau)=\dot{\lambda}_{j}(\tau)\psi^{(j)}_{i}(\tau).
\end{equation}
On the other hand, by \eqref{Aeigenvalues} we have
\begin{equation}\label{A2eigenvalues}
     P(\tau){A}^2(\tau)\psi^{(j)}_{i}(\tau)={\lambda}^{2}_{j}(\tau)\psi^{(j)}_{i}(\tau).
\end{equation}
The above two equations and Proposition \ref{odes-1} imply that
\begin{equation}\label{lambdai ode}
    \left|\frac{d}{d\tau}\lambda_{i}+\beta^{-1}_{n,k}\lambda_{i}^2\right|\leq \|P(\tau)E(\tau)\|_{\mathcal{H}}\leq \|E(\tau)\|_{\mathcal{H}}\leq C|\tau|^{-\gamma}\sum_{i=1}^{k}\lambda_{i}^2+e^{-\eta|\tau|^{\gamma}/10}.
\end{equation}
This implies
\begin{equation}\label{lambda__eq}
    \frac{d}{d\tau}\lambda_{i}=-\beta^{-1}_{n,k}\lambda_{i}^2+\tilde{E}_{i},
\end{equation}
where $\tilde{E}_{i}=o(\sum_{i=1}^{k}\lambda_{i}^2+e^{-\eta|\tau|^{\gamma}/20})$.
Then, we show that \eqref{cCinequality} holds.
We first notice that
\begin{equation}\label{tau10}
\limsup_{\tau\to- \infty}\tau^{10}\left(\sum_{i=1}^{k}\lambda^2_{i}\right)(\tau)=+\infty.
\end{equation}
Indeed, suppose towards a contradiction that $\sum_{i=1}^{k}\lambda^2_{i}(\tau)\leq C \tau^{-10}$ for all $\tau$ negative enough, by
\eqref{alpha_beta_equivalence}, \eqref{graphrho} and monotonicity of $\tau^{-10}$ we have $\rho(\tau)\geq c\tau^2$.  Then \eqref{U_PNM_system} and the fact that neutral mode is dominant imply that $|\log U_{0}|\leq C$ for $\tau$ negative enough. This contradicts  that $U_{0}$ converges to $0$.

Now, we define the following $C^{1}$ function
\begin{equation}
    \alpha(\sigma)=\tau^2 \left(\sum_{i=1}^{k}\lambda^2_{i}(\tau)\right)\quad \tau=-e^{\sigma}.
\end{equation}
Then, by \eqref{lambda__eq} we have
\begin{equation}
    \frac{d\alpha}{d\sigma}=2\alpha-2\beta^{-1}_{n,k}\left(\sum_{i=1}^{k}(\tau\lambda)^3_{i}\right)+2\tau^3\tilde{E}\sum_{i=1}^{k}\lambda_{i},
\end{equation}
where $\tilde{E}=o(\sum_{i=1}^{k}\lambda_{i}^2+e^{-\eta|\tau|^{\gamma}/20})$.

By \eqref{tau10} we can find a sequence of numbers $\sigma_{i}\to \infty$ large enough  such that
\begin{equation}\label{sequence_sigma}
    \alpha(\sigma_{i})\geq e^{-10\sigma_{i}}>0\quad i\geq 1.
\end{equation}

By $\sum_{i=1}^{k}|\lambda_{i}|\to 0$ and definition of $\tilde{E}$, we have
\begin{equation}\label{alpha differential inequality}
   \frac{d\alpha}{d\sigma}\geq 2\alpha-c'\alpha^{3/2}+\bar{E},
\end{equation}
where  $c'>0$ is a constant and the error term $|\bar{E}|=|\tau^3 e^{-\eta e^{\gamma\sigma}/20}o(1)|\leq c' e^{-30\sigma}$ for $\sigma\geq \sigma_{1}$ large enough.
Then, we define
\begin{equation}
    J=\{\sigma\geq \sigma_{1}: \alpha^{3/2}(\sigma)\geq e^{-30\sigma}\}.
\end{equation}
By \eqref{sequence_sigma}, $\sigma_{1}\in J\not=\emptyset$ and is closed. Now, if $\hat{\sigma}\in J$, we have $\alpha(\hat{\sigma})>0$ and
\begin{equation}
    \frac{d\alpha}{d\sigma}(\hat{\sigma})\geq 2\alpha(1-c'\alpha^{1/2})(\hat{\sigma}).
\end{equation}
In the first case if  $c'^2\alpha(\hat{\sigma})< 1$,      $\frac{d\alpha}{d\sigma}(\hat{\sigma})> 0$, then there is a $\delta>0$ such that $[\hat{\sigma}, \hat{\sigma}+\delta)\subset J$. In the second case if  $\alpha(\hat{\sigma})>\frac{1}{2c'^2}>e^{-10\hat{\sigma}}$, By continuity of $\alpha$, we can decrease $\delta>0$ such that $[\hat{\sigma}, \hat{\sigma}+\delta)\subset J$. Combining the two cases, we conclude

\begin{equation}\label{J=half interval}
     J=[\sigma_{1}, +\infty).
\end{equation}
In particular, we obtain that \begin{equation}\label{sum lower bound}
e^{-\eta |\tau|^{\gamma}/20}\leq \left(\sum_{i=1}^{k}\lambda^2_{i}\right).
\end{equation}
This and  \eqref{lambda__eq} imply that we have
\begin{equation}\label{lambda__eqo}
    \frac{d}{d\tau}\lambda_{i}=-\beta^{-1}_{n,k}\lambda_{i}^2+o(\sum_{i=1}^{k}\lambda_{i}^2).
\end{equation}
Then, by \eqref{sum lower bound} we have $\sum_{i=1}^{k}|\lambda_{i}|>0$.  Summing up the equations in \eqref{lambda__eqo} and using elementary estimates, we have
\begin{equation}\label{differential_sum<0}
    \frac{d}{d\tau}\sum^{k}_{i=1} \lambda_{i}\leq -(2\beta_{n,k})^{-1}\sum^{k}_{i=1} \lambda^2_{i}\leq  -(2k\beta_{n,k})^{-1}\left(\sum^{k}_{i=1} \lambda_{i}\right)^2<0.
\end{equation}
Then, we see that $\sum^{k}_{i=1} \lambda_{i}(\tau)$ is decreasing. This and $\lim_{\tau\to \infty}\sum^{k}_{i=1} \lambda_{i}(\tau)\to 0$ imply that
\begin{equation}\label{sum<0}
\sum^{k}_{i=1} \lambda_{i}(\tau)<0
\end{equation}
for all sufficiently negative times. Integrating the above differential inequality \eqref{sum<0} we have
\begin{equation}\label{|sum|upperbound}
    \left|\sum^{k}_{i=1} \lambda_{i}(\tau)\right|\leq \frac{C}{|\tau|}.
\end{equation}
Now, for any $\varepsilon > 0$ small enough, we can find  $\tau_0 < 0$ sufficiently negative such that
\begin{align}\label{quantitative error}
    |\tilde{E}_i| < \frac{\varepsilon^2}{4\beta_{n,k}k}\sum_{i=1}^k\lambda_i^2
\end{align}
for all $\tau \leq \tau_0$.
Let
\begin{equation}
   Q = \max\limits_{1\leq i\leq k}  \lambda_i + \varepsilon \min\limits_{1\leq i\leq k} \lambda_i
\end{equation}
and
\begin{equation}
    J' = \{\tau \leq \tau_0: Q(\tau)\geq 0\}.
\end{equation}
Then, we have the following claim.
\begin{claim}[almost non-upward  quadratic bending estimates]
 \begin{equation}
     J'=\emptyset.
 \end{equation}
\end{claim}
\begin{proof}[Proof of claim]
Suppose towards a contradiction that $J'\not=\emptyset$.
 Clearly,  $J'$ is closed by the continuity of $\max\limits_{1\leq i\leq k}  \lambda_i$ and $\min\limits_{1\leq i\leq k}  \lambda_i$.   For any element $\bar{\tau} \in J'$, we may assume without loss of generality that $\max\limits_{1\leq i\leq k}\lambda_i(\bar{\tau}) =\lambda_1(\bar{\tau})$ and $\min\limits_{1\leq i\leq k} \lambda_i(\bar{\tau})  = \lambda_k(\bar{\tau}) $.
Since $Q(\bar{\tau})\geq 0$, we have
\begin{align}
    \sum_{i=1}^{k}\lambda_i^2(\bar{\tau})\leq \frac{k}{\varepsilon^2}\lambda_1^2(\bar{\tau}).
\end{align}
Then we recall the definition of left upper derivative $\bar{D}^-$:
\begin{align}
    \bar{D}^-f(x) = \limsup\limits_{h\rightarrow 0^+}\frac{f(x)-f(x-h)}{h}.
\end{align}
By definition, (\ref{lambda__eq}) and (\ref{quantitative error}), it has the following property:
\begin{align} \label{derivative_Q_0}
    \bar{D}^{-}\min\limits_{1\leq i\leq k}\lambda_i (\bar{\tau})
    &= \limsup\limits_{h\rightarrow 0^+}\frac{ \lambda_k (\bar{\tau})-\min\limits_{1\leq i\leq k}\lambda_i (\bar{\tau}-h)}{h} \\ \notag
    &= \max\limits_{1\leq i\leq k}\limsup\limits_{h\rightarrow 0^+}\frac{ \lambda_k (\bar{\tau})-\lambda_i (\bar{\tau}-h)}{h}\\ \notag
    &\leq   \max\limits_{1\leq i\leq k}\limsup\limits_{h\rightarrow 0^+}\frac{ \lambda_i (\bar{\tau})-\lambda_i (\bar{\tau}-h)}{h} \\\nonumber
    &\leq  \max\limits_{1\leq i\leq k} \lambda_i'(\bar{\tau}) \\\nonumber
    &\leq \frac{\varepsilon^2}{4\beta_{n,k} k}\sum_{i=1}^k\lambda_i^2(\bar{\tau}).\nonumber
\end{align}
 From (\ref{lambda__eq}), (\ref{quantitative error})  and the definition of left upper derivative, we have
\begin{align} \label{derivative_Q_1}
    \bar{D}^-\max\limits_{1\leq i\leq k}\lambda_i (\bar{\tau}) &= \limsup\limits_{h\rightarrow 0^+}\frac{ \lambda_1 (\bar{\tau})-\max\limits_{1\leq i\leq k}\lambda_i (\bar{\tau}-h)}{h}\\\nonumber
    &\leq
    \lambda_1'(\bar{\tau})\\\nonumber
    &\leq -\beta^{-1}_{n,k}\lambda_1^2(\bar{\tau})+\frac{\varepsilon^2}{4\beta_{n,k} k}\sum_{i=1}^k\lambda_i^2(\bar{\tau}).
\end{align}

Putting  \eqref{derivative_Q_0} and \eqref{derivative_Q_1} together we get
\begin{equation} \label{derivative_Q_3}
    \bar{D}^- Q(\bar{\tau})    \leq -\beta^{-1}_{n,k}\lambda_1^2 + \frac{2\varepsilon^2}{4\beta_{n,k} k}\sum_{i=1}^k\lambda_i^2 \leq - \frac{\lambda_1^2}{2\beta_{n,k}} < 0.
\end{equation}
Hence, there is a small $\delta > 0$ such that $(\bar{\tau}-\delta,\bar{\tau}]\subset J'$.
Consequently, $J' = (-\infty,\tau_0]$. This and (\ref{derivative_Q_1}) imply that   $\max\limits_{1\leq i\leq k}\lambda_i(\tau)$ on $J'$ satisfies
\begin{align}
     \bar{D}^{-}\max\limits_{1\leq i\leq k}\lambda_i(\tau) \leq -\beta^{-1}_{n,k}(\max\limits_{1\leq i\leq k}\lambda_i)^2(\tau) + \frac{\varepsilon^2}{4\beta_{n,k} k}\sum_{i=1}^k\lambda_i^2(\tau) < 0,
\end{align}
which implies
\begin{align}
    \sup\limits_{(-\infty,\tau_0]}\max\limits_{1\leq i\leq k}\lambda_i \geq \max\limits_{1\leq i\leq k}\lambda_i(\tau_0) > 0.
\end{align}
However, this contradicts  the fact that $  \sum_{i=1}^{k}\lambda_i^2(\tau)\rightarrow 0$ as $\tau\rightarrow -\infty$. Hence $J'$ is an empty set.
\end{proof}
Then we have $\limsup\limits_{\tau\rightarrow-\infty} \frac{\lambda_i}{\sum_{i=1}^{k} |\lambda_i| } \leq 0$ for each $1\leq i\leq k$ and the strict inequality holds for some $1\leq i\leq k$ since the neutral mode is dominant. Hence there is a large constant $C$ such that for all sufficiently negative time $\tau$ the following estimates holds.
\begin{align}\label{|sum|equivalentsum||}
    \frac{1}{2}|\sum_{i=1}^{k} \lambda_i| \leq \sum_{i=1}^{k} |\lambda_i| \leq C|\sum_{i=1}^{k} \lambda_i|.
\end{align}
Then, this together with (\ref{|sum|upperbound}) gives the right side of (\ref{cCinequality}). Namely
 \begin{align}
     \sum_{i=1}^{k}|\lambda_i| \leq \frac{C}{|\tau|}.
 \end{align}
Finally, we plug (\ref{|sum|equivalentsum||}) into (\ref{lambda__eqo}) and get
\begin{align}
    \frac{d}{d\tau}\sum_{i=1}^{k}\lambda_i\geq -C(n, k)(\sum_{i=1}^{k}\lambda_i)^2,
\end{align}
where $0<C(n, k)<\infty$ is a constant depending on $n, k$. Integrating the differential inequality and using (\ref{|sum|equivalentsum||}) imply the left side of (\ref{cCinequality}). Namely
 \begin{align}
      \frac{c}{|\tau|}\leq  \frac{1}{2}|\sum_{i=1}^{k}\lambda_i|  \leq \sum_{i=1}^{k}|\lambda_i|.
 \end{align}
This completes the proof of estimates \eqref{cCinequality} and the lemma.
\end{proof}
Now, we prove Theorem \ref{spectral theorem_restated} (spectral quantization theorem).
\begin{proof}[Proof of the Theorem \ref{spectral theorem_restated} and Theorem \ref{spectral theorem}]
By  Lemma \ref{uul} and Proposition \ref{odes-1}  we have
\begin{equation}\label{A_ode2}
    \dot A=- \beta_{n, k}^{-1} A^2+ O(|\tau|^{-2-\frac{\gamma}{2}}),
\end{equation}
where  $\beta_{n, k}=\frac{\sqrt{2(n-k)}}{4}$  and $\gamma>0$ is from \eqref{gamma radius}.

Then by  \eqref{utau-1} and \eqref{lambdai ode}  in proof of Lemma \ref{uul} and the norm representation of symmetric matrix, we have
\begin{equation}\label{lambdai_ode}
    \frac{d}{d\tau}\lambda_{i}+\beta^{-1}_{n,k}\lambda_{i}^2=O(|\tau|^{-2-\frac{\gamma}{2}})\quad 1\leq i\leq k.
\end{equation}
Then, since the power in the error of above ODEs is $-2-\frac{\gamma}{2}<-2$ ,  we can integrate the ODEs and find a constant $0<\mu\ll \gamma/2$ small enough as in \cite[Lem 5.1]{FilippasLiu} such that either
\begin{equation}
    \lambda_{i}=\beta_{n, k}\tau^{-1}+O(\tau^{-1-\mu}),
\end{equation}
or
\begin{equation}
    \lambda_{i}=O(\tau^{-1-\mu}).
\end{equation}
By  diagonalization method and above dichotomy of asymptotics of eigenvalues $\lambda_{1}, \dots, \lambda_{k}$, we obtain that solutions of \eqref{odes0} satisfy the following quantized sharp asymptotics:
\begin{equation}\label{ARtau9}
    A(\tau)=\frac{\beta_{n,k}}{\tau}R(\tau)\text{diag}\{I_{k-r}, O_{r}\}R(\tau)^{-1}+o(\tau^{-1-\mu}),
\end{equation}
where $R(\tau)\in \mathrm{SO}(k)$ and $0\leq r\leq k$.\\\\
Then, we  need to show that $R(\tau)$ is a time independent constant matrix by the similar argument  in the proof of \cite[Prop 5.1]{FilippasLiu}. If $r=0$ or $r=k$, $R$ is already constant matrix and the conclusion is obviously true, so without loss of generality we may assume that $1\leq r\leq k-1$. Let $\gamma_{\ell}=\{i_{1},\dots, i_{\ell}\}\subset \{1,\dots, k\}$. We let
$A_{\gamma_{\ell}}$ be the determinant of submatrix of $A$ by selecting rows and columns according to $\gamma_{\ell}$. By the above quantized asymptotics of $\lambda_{i}$, we have
\begin{equation}\label{Agammaell estimate}
    |A_{\gamma_{\ell}}|\leq \frac{C}{|\tau|^\ell}\quad trA=\frac{(k-r)\beta_{n,k}}{\tau}+O(\tau^{-1-\mu})\quad detA=\delta^{0}_{r}\beta^{k}_{n,k}\tau^{-k}+O(\tau^{-k-\mu}).
\end{equation}
Then, we write \eqref{A_ode2} in the following equivalent form
\begin{equation}\label{aode2}
    \dot{\alpha}_{ij} = -\beta^{-1}_{n, k}\sum_{m = 1}^k \alpha_{mi}\alpha_{mj}  +O( |\tau|^{-2-\frac{\gamma}{2}})\quad 1\leq i, j \leq k.
\end{equation}
Using \eqref{aode2} and $0<\mu\ll \frac{\gamma}{2}$,  direct computation shows that $A_{\gamma_{\ell}}$ satisfies the ODE system
\begin{equation}
    \frac{d}{d\tau}A_{\gamma_{\ell}}=-\beta_{n,k}^{-1}\left[A_{\gamma_{\ell}}trA+\sum_{q\notin \gamma_{\ell}}A_{\gamma_{\ell}\cup\{q\}}+O(\tau^{-\ell-1-\mu})\right]\quad \ell=1,\dots, k-1.
\end{equation}
In particular, for $\ell=k-1$ using \eqref{Agammaell estimate} we have
\begin{equation}
      \frac{d}{d\tau}A_{\gamma_{k-1}}=\frac{(r-k)}{\tau}A_{\gamma_{k-1}}+O(\tau^{-k-\mu}).
\end{equation}
Solving this first order ODE (notice that $k\geq 1$ always holds), we obtain \begin{equation}
    A_{\gamma_{k-1}}=C_{\gamma_{k-1}}\tau^{r-k}+O(\tau^{-k+1-\mu}),
\end{equation}
where $C_{\gamma_{k-1}}$ is a constant. Inductively, we can find some constants $C_{\gamma_{\ell}}$ such that
\begin{equation}
    \begin{cases}
      &A_{\gamma_{\ell}}=O(\tau^{-\ell-\mu}),\quad \text{if $k-r+1\leq \ell \leq k$},\\
      &A_{\gamma_{\ell}}=C_{\gamma_{\ell}}\tau^{-\ell}+O(\tau^{-\ell-\mu}), \quad \text{if $1\leq \ell\leq k-r$.}
    \end{cases}
\end{equation}
By the asymptotics of $A_{\gamma_{1}}, A_{\gamma_{2}}$ and $a^2_{ij}=a_{ii}a_{jj}-A_{\{i,j\}}$, we obtain that \begin{equation}\label{ARtau}
    A(\tau)=\frac{\beta_{n,k}}{\tau}R_{0}\text{diag}\{I_{k-r}, O_{r}\}R_{0}^{-1}+O(\tau^{-1-\mu}),
\end{equation}
where $R_{0}\in \mathrm{SO}(k)$ is a constant matrix. Let
\begin{equation}\label{Q constant}
Q=-\beta_{n,k}R_{0}\text{diag}\{I_{k-r}, O_{r}\}R_{0}^{-1}.
\end{equation}
Then, we obtain
\begin{equation}
\lim_{\tau\to -\infty} \Big\|\,
|\tau| \hat{u}(y, \omega,\tau)- y^\top Qy +2\mathrm{tr}(Q)\, \Big\|_{\mathcal{H}} = 0,
\end{equation}
 where $Q$ is the constant symmetric $k\times k$-matrix in \eqref{Q constant} with  quantized eigenvalues $0$ or $-\frac{\sqrt{2(n-k)}}{4}$. Finally, by standard interior estimates \cite[Thm A.1]{CHH_wing}, we have
  \begin{equation}
\lim_{\tau\to -\infty} \Big\|\,
|\tau| u(y, \omega,\tau)- y^\top Qy +2\mathrm{tr}(Q)\, \Big\|_{C^{p}(B_R)} = 0
\end{equation}
holds for all $R<\infty$ and all integers $p\geq 0$. This completes the proof of Theorem \ref{spectral theorem_restated}.
\end{proof}
\bigskip
\section{Proof of the symmetry improvement theorem}\label{proof_symmetry_improvement}
 In this section, we prove Theorem \ref{Symmetry improvement intro} (symmetry improvement theorem) including cylindrical symmetry improvement theorem and cap improvement theorem. For  readers' convenience, we state the two symmetry improvement theorems separately and give proofs of them below.
\begin{theorem}[cylindrical symmetry improvement theorem]\label{Symmetry improvement I}
There exist constants $L_0<\infty$ large enough and $\varepsilon_0>0$ small enough depending only on dimension  $n$  and satisfying the following properties. Suppose that $\mathcal{M}=\{M_t\}$ is a mean curvature flow  and $(\bar{x}, \bar{t})\in \mathcal{M}$ is a space-time point. If every point in the parabolic neighborhood $\hat{\mathcal{P}}(\bar{x},\bar{t},L_0, L_0^2)$ is $\varepsilon_{0}$-close to a  cylinder $\mathbb{R}^{k}\times S^{n-k}$  and $(\varepsilon, n-k)$-symmetric, where $0< \varepsilon < \varepsilon_0$, then $(\bar{x}, \bar{t})$
is $(\frac{\varepsilon}{2}, n-k)$-symmetric.
\end{theorem}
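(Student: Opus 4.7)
The plan is to follow the Brendle--Choi / Zhu scheme \cite{BC1,BC2,Zhu,Zhu2}, applied to the general cylinder $\mathbb{R}^k \times S^{n-k}$. After parabolically rescaling so $H(\bar{x},\bar{t})=1$, the hypothesis places the flow within an $\varepsilon_0$-tube of $\Gamma := \mathbb{R}^k \times S^{n-k}(n-k)$ throughout $\hat{\mathcal{P}}(\bar{x},\bar{t}, L_0, L_0^2)$, and $M_t$ can be written as a graph over $\Gamma$. For each of the $N = \tfrac{(n-k+1)(n-k)}{2}$ given rotation vector fields $K_\alpha$, set $\phi_\alpha := \langle K_\alpha, \nu\rangle$; this scalar satisfies the Jacobi equation $(\partial_t - \Delta_{M_t} - |A|^2)\phi_\alpha = 0$, which in cylindrical coordinates reduces, modulo perturbations of size $O(\varepsilon_0)(|\phi_\alpha| + |\nabla\phi_\alpha|)$, to
\begin{equation}
\partial_t \phi_\alpha = \Delta_{\mathbb{R}^k}\phi_\alpha + \frac{1}{(n-k)^2}\Delta_{S^{n-k}}\phi_\alpha + \frac{1}{n-k}\phi_\alpha.
\end{equation}

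Expanding $\phi_\alpha$ in spherical harmonics on $S^{n-k}$ yields components $\phi_\alpha^{(\ell)}$ obeying parabolic equations on $\mathbb{R}^k$-slices with constant potential $\tfrac{1}{n-k} - \tfrac{\ell(\ell+n-k-1)}{(n-k)^2}$: positive for $\ell=0$, zero for $\ell=1$, strictly negative for $\ell\geq 2$. The unstable and neutral part of $\phi_\alpha$ corresponds precisely to the gauge freedom in the data $(S_\alpha, q_\alpha)$ defining $K_\alpha = S_\alpha J_\alpha S_\alpha^{-1}(x-q_\alpha)$: shifts of $q_\alpha$ change the $\ell=0$ component and the constant-in-$y$ part of the $\ell=1$ component, while infinitesimal rotations of $S_\alpha$ mixing $y_i$ and $\omega_j$ generate the linear-in-$y$ part of the $\ell=1$ component. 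The three main steps are then: \textbf{(i)} use the hypothesis $|\phi_\alpha|H\leq \varepsilon$ on $\hat{\mathcal{P}}(\bar{x},\bar{t},L_0,L_0^2)$ together with standard parabolic Schauder estimates to control $\phi_\alpha^{(0)}$ and the constant-plus-linear-in-$y$ part of $\phi_\alpha^{(1)}$ at $(\bar{x},\bar{t})$; \textbf{(ii)} solve for correction vectors $(\delta S_\alpha, \delta q_\alpha)$ of size $O(\varepsilon)$ that cancel these unstable and neutral Fourier coefficients, then invoke the linear-algebra lemma (Lemma \ref{Lin Alg Cylinder A}) to re-orthogonalize the adjusted vector fields into a genuinely normalized set $\tilde{\mathcal{K}}$; \textbf{(iii)} apply the parabolic maximum principle to the remainder $\tilde\phi_\alpha = \langle \tilde{K}_\alpha, \nu\rangle$, whose surviving Fourier modes all carry strictly negative potential, so that $\tilde\phi_\alpha$ decays from $\hat{\mathcal{P}}(\bar{x},\bar{t},L_0,L_0^2)$ to $\hat{\mathcal{P}}(\bar{x},\bar{t},100n^{5/2},100^2 n^2)$ by a factor $e^{-c(n,k)L_0^2}$. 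Choosing $L_0$ large and $\varepsilon_0$ small absorbs both the perturbation errors and the decay factor into $\tfrac{1}{2}$, yielding $|\tilde\phi_\alpha|H \leq \varepsilon/2$ as required.

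The principal obstacle, compared with the neck case $k=1$ in \cite{BC1,BC2}, is step \textbf{(ii)}: for $k\geq 2$ the unstable/neutral subspace attached to each generator $K_\alpha$ has dimension $1+(n-k+1)+k(n-k+1)$, and the $N$ correction data $\{(\delta S_\alpha,\delta q_\alpha)\}$ must be chosen simultaneously so as to (a) annihilate all the constant and linear-in-$y$ Fourier coefficients of the $\phi_\alpha$, and (b) be compatible with the rigidity requirement that $\{\tilde{K}_\alpha\}$ again form a normalized basis of an embedded copy of $\mathfrak{so}(n-k+1)\subset\mathfrak{so}(n+1)$. This is the role of Lemma \ref{Lin Alg Cylinder A}, which supplies quantitative rigidity for normalized rotation-vector-field sets on $\Gamma$ and permits the simultaneous adjustment without degrading the smallness bounds. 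Coupled with the sharpened spherical-harmonic projection estimates for higher $\ell$, this closes the argument and produces the factor-$\tfrac{1}{2}$ improvement.
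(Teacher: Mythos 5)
The overall skeleton you describe — write $\phi_\alpha=\langle K_\alpha,\nu\rangle$ as a solution of the linearized Jacobi equation over the cylinder, decompose into spherical harmonics, use gauge freedom $(S_\alpha,q_\alpha)$ to kill the unstable/neutral directions, and then exploit decay on the rest — is indeed the Brendle--Choi/Zhu framework that the paper follows (see also the role of Lemma A.1, which you correctly identified for re-normalizing the adjusted rotation fields). However, there is a genuine gap in your handling of the zero mode $\ell=0$.

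You claim that ``shifts of $q_\alpha$ change the $\ell=0$ component''; this is false. Since $J_\alpha\in so(n-k+1)$ acts only on the $\mathbb{R}^{n-k+1}$ directions and $\nu\approx(0,\omega)$ on the cylinder, the $q$-translation contributes $-\langle J_\alpha q,\omega\rangle$, a \emph{linear} function of $\omega$, i.e.\ an $\ell=1$ spherical harmonic; similarly, the off-diagonal rotation $\exp(P)$ produces terms of the form $z_i\omega_j$, again $\ell=1$. Thus the gauge freedom cancels exactly the $\ell=1$ modes and nothing else. The $\ell=0$ mode (the spherical average) has \emph{positive} potential, so it is unstable — it cannot be tamed by backward decay, and it cannot be gauged away. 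Without a separate argument showing the $\ell=0$ coefficient is automatically of size $O(\varepsilon\varepsilon_0)$, your step (iii) fails: after cancelling only the $\ell=1$ modes, the remainder still carries the $\ell=0$ piece, whose potential is not ``strictly negative,'' and the maximum principle gives no decay for it. The paper closes this gap using a structural identity: since $J_\alpha\omega$ is a divergence-free vector field on $S^{n-k}$, integrating $\langle K_\alpha,\nu\rangle$ over the fiber sphere and integrating by parts produces an exact cancellation at leading order, so $|v_0|\le C(L_0)\varepsilon_0\varepsilon$ directly from $\varepsilon_0$-closeness to the cylinder, no decay required. This observation is not present in your outline and is essential.

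Two smaller inaccuracies worth flagging. First, your decay factor $e^{-c(n,k)L_0^2}$ for the $\ell\ge 2$ modes is too optimistic: because the data on the outer boundary is only bounded by $\varepsilon$ and the potential is $t$-dependent on the shrinking cylinder, the paper obtains a \emph{polynomial} gain $L_0^{-1/(n-k)}$ via Dirichlet heat kernel (reflection-method) estimates rather than a naive maximum-principle barrier; the polynomial gain still suffices, but the mechanism is different and matters for the constant tracking. Second, the cylinder radius should be $\sqrt{2(n-k)}$, not $n-k$, which propagates to the constants in your linearized equation; this is cosmetic but should be corrected before the Fourier bookkeeping in step (i) is trusted.
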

\begin{proof}
\textbf{Step 1:} Without loss of generality, we may assume $\bar{t}=-1$. By  property of $\varepsilon_{0}$-close to a cylinder $\mathbb{R}^{k}\times S^{n-k}$ in the normalized parabolic neighborhood $\hat{\mathcal{P}}(\bar{x},-1,L_0, L_0^2)$, where we can assume $L_{0}>200n^3$. After appropriate scaling we can write $M_t$ as a radial graph over the shrinking cylinder $ \mathbb{R}^{k}\times S^{n-k}(\sqrt{2(n-k)|t|})$. Namely
\begin{align}\label{cylindrical_coordinates}
     \left\{(z, r\omega) \Big| r=r(z, \omega, t), z\in \mathbb{R}^{k}\cap B(0, \frac{L_0}{2}), \omega \in S^{n-k} \right\} \subset M_t.
\end{align}
Let $0=\lambda_0 < \lambda_1= \lambda_2=\dots \lambda_{n-k+1}=n-k<2(n-k+1)=\lambda_{n-k+1} \leq\lambda_{n-k+2}\leq ...$ be eigenvalues of $-\triangle_{S^{n-k}}$ and let $Y_i(\omega)$ be the eigenfunction corresponding to $\lambda_i$ such that $\{Y_m\}$ forms an  orthonormal basis of $L^{2}(S^{n-k})$. In addition for $1\leq j \leq n-k+1$, the  corresponding eigenfunction $Y_{j}$ of $\lambda_{j}$ is $(n-k+1)^{\frac{1}{2}}\text{Volume}(S^{n-k})^{-\frac{1}{2}}$ times the restriction of $(k+j)$-th coordinate  function of $\{0\}\times \mathbb{R}^{n-k+1}\subset \mathbb{R}^{n+1}$ on $S^{n-k}$.

\noindent \textbf{Step 2: }
For each point $(x_{0},t_{0})$ in the parabolic neighborhood $\hat{\mathcal{P}}(\bar{x},\bar{t},L_0,L_0^2)$, by $(\varepsilon, n-k)$-symmetric assumption we can find a  normalized set of rotation vector fields $\mathcal{K}^{(x_{0},t_{0})} = \{K^{(x_{0},t_{0})}_{\alpha}: 1\leq \alpha \leq \frac{(n-k)(n-k+1)}{2}\}$ such that
\begin{itemize}
    \item $\max_{\alpha}|\langle K_{\alpha}^{(x_{0},t_{0})}, \nu \rangle | H \leq \varepsilon$,
    \item $\max_{\alpha}|  K_{\alpha}^{(x_{0},t_{0})}| H \leq 5n$
\end{itemize}
hold in  $\hat{P}(x_{0},t_{0}, 100n^{5/2}, 100^2n^5)$.
In particular, we let $\bar{\mathcal{K}} = \mathcal{K}^{(\bar{x}, -1)}$ be a  normalized set of rotation vector fields corresponding to $(\bar{x}, -1)$ and by property of $\varepsilon_{0}$-close to a cylinder $\mathbb{R}^{k}\times S^{n-k}$ we also have
\begin{equation}\label{K0}
|K_{\alpha}^{(x_0,t_0)},\nu\rangle|\leq C\varepsilon (-t_{0})^{1/2}.    \end{equation}
By repeatedly applying Lemma \ref{Vector Field closeness on genearalized cylinder_appendix} we have
\begin{align}\label{K difference}
    \inf_{(\omega_{\alpha\beta})\in \mathrm{O}(\frac{(n-k+1)(n-k)}{2})}\sup_{B_{10nL_0 H(\bar{x})^{-1}}(\bar{x})}\max_{\alpha}|\bar{K}_{\alpha}- \sum_{\beta=1}^{\frac{(n-k+1)(n-k)}{2}}\omega_{\alpha\beta}K_{\beta}^{(x_0,t_0)}|\leq C(L_0)\varepsilon.
\end{align}

In particular, by property of $\varepsilon_{0}$-close to a cylinder $\mathbb{R}^{k}\times S^{n-k}$  and Lemma \ref{Vector Field closeness on genearalized cylinder_appendix}, the axes of rotation of $\bar{\mathcal{K}}$ differ from $\mathbb{R}^{k}$ spanned by standard basis  \{$e_{1}, ... ,e_{k}$\} by at most $C(L_0)\varepsilon_{0}$.
Rotating and translating the entire picture by at most $C(L_0)\varepsilon_{0}$, we may assume $\bar{K}_{\alpha} = J_{\alpha}x$ for each $\alpha = 1,2,...,\frac{(n-k+1) (n-k)}{2}$, where  \begin{align}\label{J, Jbar sec4}
        J_{\alpha}  =\begin{bmatrix}
   0 & 0\\
    0 &  \bar{J}_{\alpha}
    \end{bmatrix}\in so(n+1),
\end{align}
 $\{\bar{J}_{\alpha}: 1\leq\alpha\leq \frac{(n-k+1)(n-k)}{2}\}$ is an orthonormal basis of $so(n-k+1)\subset so(n+1)$. To simplify notations, for every fixed $1\leq \alpha \leq \frac{(n-k)(n-k+1)}{2}$ we let
\begin{equation}\label{def_u}
\bar{v}(z, \omega, t) = \langle \bar{K}_{\alpha}, \nu\rangle (z, \omega, t),
\end{equation}
where $\nu$ is the unit normal vector field on $M_{t}$.  By \eqref{K difference} we can find constants $a_{ij}$  depending on $(x_0, t_0)$, where $0\leq i \leq k$, $1\leq j \leq n-k+1$, such that $|a_{ij}|\leq C(L_0)\varepsilon$ and the left hand side of following estimates vanishes on cylinder and in particular by property of $\varepsilon_{0}$-close to a cylinder $\mathbb{R}^{k}\times S^{n-k}$
\begin{align}
    \left|\langle \bar{K}_{\alpha}-K_{\alpha}^{(x_0,t_0)},\nu\rangle - \sum_{j=1}^{n-k+1}\left(a_{0j} + \sum_{i=1}^{k} a_{ij}z_i\right)Y_j \right|\leq C(L_0)\varepsilon\varepsilon_0
\end{align}
holds in $\hat{\mathcal{P}}(x_0,t_0,100n^{\frac{5}{2}},100^2 n^5)$.

This and \eqref{K0} imply that
\begin{align}\label{PartB fine estimate for u}
    \left|\bar{v} - \sum_{j=1}^{n-k+1}\left(a_{0j} + \sum_{i=1}^{k} a_{ij}z_i\right)Y_j \right|\leq C(L_0)\varepsilon\varepsilon_0 + C(-t_{0})^{\frac{1}{2}}\varepsilon
\end{align}
holds in  $\hat{\mathcal{P}}(x_0,t_0,100n^{\frac{5}{2}},100^2 n^5)$.
Then, we aim to improve this error estimates by suitably adjusting these constants. To this end, we notice that the function $\bar{v}$ satisfies parabolic Jacobi equation
\begin{align}\label{PartB Jacobi Equation}
    \frac{\partial \bar{v}}{\partial t} = \Delta \bar{v} + |A|^2\bar{v}
\end{align}
and the rough estimate
\begin{align}
    |\bar{v}| \leq C(L_0) \varepsilon.
\end{align}
By this and the  standard interior estimate we have higher derivative estimates
\begin{align}
    |\nabla \bar{v}| + |\nabla^2 \bar{v}| \leq C(L_0) \varepsilon.
\end{align}
Hence we can write the parabolic Jacobi equation (\ref{PartB Jacobi Equation}) as its linearization plus controllable errors in the  cylindrical coordinates:
\begin{align}
    \left|\frac{\partial \bar{v}}{\partial t}-\sum_{i=1}^{k}\frac{\partial^2 \bar{v}}{\partial z_i^2} - \frac{\Delta_{S^{n-k}}\bar{v}}{-2(n-k)t} - \frac{1}{-2t}\bar{v}\right|\leq C(L_0)\varepsilon\varepsilon_0,
\end{align}
which holds for $|z|\leq \frac{L_0}{4}$ and $-\frac{L_0^2}{16}\leq t \leq -1$.
Then, for $\ell > 1$, we define
\begin{align}
    \Omega_{\ell}&(\bar{z}) = \{z=(z_1,...,z_k)\in \mathbb{R}^k: |z_i-\bar{z}_i|\leq \ell\}\quad   \Omega_{\ell}  = \Omega_{\ell}(0),\\
    \Gamma_{\ell}   &=\{(z, \omega, t)\in \Omega_{\ell} \times S^{n-k}\times [-{\ell}^2,-1] : \ t = -{\ell}^2 \text{ or } \max_{1\leq i\leq k}|z_i| = {\ell}  \}.
\end{align}
Let $\tilde{v}$ to be the solution to the linearized parabolic Jacobi equation:
\begin{align}
    \frac{\partial \tilde{v}}{\partial t} = \sum_{i=1}^{k}\frac{\partial^2 \tilde{v}}{\partial z_i^2} + \frac{\Delta_{S^{n-k}}\tilde{v}}{-2(n-k)t} + \frac{1}{-2t}\tilde{v}
\end{align}
in $\Omega_{\frac{L_0}{4\sqrt{n}}}\times S^{n-k}\times [-\frac{L_0^2}{16n},-1]$ with the boundary condition $\tilde{v} = \bar{v}$ on
$\Gamma_{\frac{L_0}{4\sqrt{n}}}$.

By the maximum principle,
\begin{align}\label{utildeu}
    |\bar{v}-\tilde{v}|\leq C(L_0)\varepsilon \varepsilon_0.
\end{align}
Next, we aim to improve estimates of $w$ in \eqref{PartB fine estimate for u}  by estimating Fourier coefficients of $\tilde{v}$:
\begin{align}
    v_m = \int_{S^{n-k}}\tilde{v}(z,\omega, t)Y_m(\omega) d\omega.
\end{align}

Direct computation shows that $v_m$ satisfies the following equation
\begin{align}\label{PartB Equation for Fourier coefficients}
    \frac{\partial v_m}{\partial t} = \sum_{i=1}^{k}\frac{\partial ^2 v_m}{\partial z_i^2} + \frac{n-k-\lambda_m}{-2(n-k)t}v_m.
    \end{align}
Let $\hat{v}_m = v_m (-t)^{\frac{n-k-\lambda_m}{2(n-k)}}$.
Then $\hat{v}_m$ satisfies the linear heat equation
\begin{align}
    \frac{\partial \hat{v}_m}{\partial t} = \sum_{i=1}^{k}\frac{\partial^2 \hat{v}_m}{\partial z_i^2}.
\end{align}
\textbf{Case 1:} $m\geq n-k+2$.

In this case, $\lambda_m\geq 2(n-k+1)$. Taking $L^2$ inner product with $Y_m$ on both sides of (\ref{PartB fine estimate for u}) and multiplying both sides by $(-t)^{\frac{n-k-\lambda_m}{2(n-k)}}$, we obtain that
\begin{align}\label{Fine estimate for higher modes}
    |\hat{v}_m| \leq (C(L_0)\varepsilon\varepsilon_0 + C\varepsilon)(-t)^{1-\frac{\lambda_m}{2(n-k)}}.
\end{align}
Using the  solution formula of the heat equation with initial and boundary conditions, we obtain
\begin{align}\label{Heat equation solution formula}
    \hat{v}_m (z,t) &\!\!=\!\! \int_{\Omega_{\frac{L_0}{4\sqrt{n}}}} \!\!\!\!K_{t+\frac{L_0^2}{16n}}(z,y)\hat{v}_m(y,\!-\frac{L_0^2}{16n})dy \\
    &-\!\!\int_{-\frac{L_0^2}{16n}}^{t}\int_{\partial\Omega_{\frac{L_0}{4\sqrt{n}}}}\!\!\!\!\!\!\partial_{\nu_y}K_{t-s}(z,y)\hat{v}_m(y, s)dy ds, \notag
\end{align}
where one can use infinite reflection method to show that the Dirichlet heat kernel (Green's function of heat equation with Dirichlet boundary condition) is given by
\begin{align}
    K_t(z,y) =\!\!\!\!\!\!\sum_{\delta\in \{1,-1\}^{k},l\in\mathbb{Z}^{k}}\!\! \frac{(-1)^{\frac{\delta_{1}+\dots+\delta_{k}-k}{2}}}{(4\pi t)^{\frac{k}{2}}}\prod_{i=1}^{k}
    \exp\left(-\frac{\left(z_i-\delta_iy_i-(4l_{i}+1-\delta_{i})\frac{L_0}{4\sqrt{n}}\right)^2}{4t}\right),
\end{align}
and has the similar heat kernel estimates as in \cite[Appendix 5.2]{Zhu}
\begin{equation}
   \int_{\partial\Omega_{\frac{L_0}{4\sqrt{n}}}}\left|\partial_{\nu_y} K_{t-s}(z,y)\right|dy\leq C(k)\frac{L^{k}_{0}}{(t-s)^{1+\frac{k}{2}}}e^{-\frac{L_0^2}{1000n(t-s)}}.
\end{equation}

Putting this into (\ref{Heat equation solution formula}) and using  heat kernel estimates \cite[Appendix 5.2]{Zhu2} and (\ref{Fine estimate for higher modes}), we have the following estimate  in 
$\Omega_{(100n)^{3}}\times [-(100n)^6,-1]$
\begin{align}\label{hat vm estimates}
    &\quad|\hat{v}_m  (z,t)|\leq  (C(L_0)\varepsilon\varepsilon_0 + C\varepsilon)\left(\frac{L_0^2}{16n}\right)^{1-\frac{\lambda_m}{2(n-k)}}\\
    &+(C(L_0)\varepsilon\varepsilon_0 + C\varepsilon)\int_{-\frac{L_0^2}{16n}}^t \frac{L_0^k}{(t-s)^{1+\frac{k}{2}}}e^{-\frac{L_0^2}{1000n(t-s)}}(-s)^{1-\frac{\lambda_m}{2(n-k)}}ds \notag\\
    &\leq (C(L_0)\varepsilon\varepsilon_0 + C\varepsilon)\Bigg[\left(\frac{L_0^2}{16n}\right)^{1-\frac{\lambda_m}{2(n-k)}} +\int_{-\frac{L_0^2}{16n}}^t L_0^{-2}e^{-\frac{L_0^2}{2000n(t-s)}}(-s)^{1-\frac{\lambda_m}{2(n-k)}}ds\Bigg] \notag\\
    &\leq (C(L_0)\varepsilon\varepsilon_0 + C\varepsilon)\Bigg[\left(\frac{L_0^2}{16n}\right)^{1-\frac{\lambda_m}{2(n-k)}} +\int^t_{(1+\frac{1}{\sqrt{\lambda_m}})t}L_0^{-2}e^{-\frac{L_0^2}{2000n(t-s)}}ds \notag\\
    &+L_0^{-\frac{1}{n-k}}\int_{-\frac{L_0^2}{16n}}^{(1+\frac{1}{\sqrt{\lambda_m}})t} (-s)^{\frac{1-\lambda_m}{2(n-k)}}ds\Bigg] \notag\\
    &\leq (C(L_0)\varepsilon\varepsilon_0 + C\varepsilon)\Bigg[\left(\frac{L^2_0}{16n}\right)^{1-\frac{\lambda_m}{2(n-k)}} +\frac{-t}{L_0^2\sqrt{\lambda_m}}e^{\frac{L_0^2\sqrt{\lambda_m}}{2000nt}}\notag\\
    &+ L_0^{-\frac{1}{n-k}}\left(1+\frac{1}{\sqrt{\lambda_m}}\right)^{\frac{2n-2k+1-\lambda_m}{2(n-k)}}\Bigg]. \notag
\end{align}

Note that all the eigenvalues of $-\triangle_{S^{n-k}}$ are in the form of $l(l+n-k-1)$ for integers $l\geq 1$ with multiplicity $N_l =
\tbinom{n+l-k}{n-k}-\tbinom{n+l-k-2}{n-k}$. Moreover, we know that for each eigenvalue $\lambda_{m}$, the corresponding eigenfunction $Y_{m}$ satisfies  $\sup_{S^{n-k}}|Y_m|\leq C\lambda_{m}^{n-k}$ and $\lambda_{m}\approx m^{\frac{2}{n-k}}$  as $m\to +\infty$. Together with the above estimates \eqref{hat vm estimates}, we  conclude that
\begin{align}
    \sup_{S^{n-k}}\sum_{m\geq n-k+2}|\hat{v}_m Y_m| \leq (C(L_0)\varepsilon\varepsilon_0 + C\varepsilon)L_0^{-\frac{1}{n-k}}
\end{align}
holds in $\Omega_{(100n)^{3}}\times [-(100n)^6,-1]$.

\textbf{Case 2:} $1\leq m \leq n-k+1$.

In this case, $\lambda_m = n-k$. Therefore the equation (\ref{PartB Equation for Fourier coefficients}) becomes
\begin{align}\label{PartB Heat Equation firt mode}
    \frac{\partial v_m}{\partial t} = \sum_{i=1}^{k}\frac{\partial^2 v_m}{\partial z_i^2}.
\end{align}

Taking $L^2$ inner product with $Y_m$ in (\ref{PartB fine estimate for u}), we obtain that
\begin{align}\label{rough vm}
    \left|v_m-\left(a_{0m} + \sum_{i=1}^{k} a_{im}z_i\right)\right|\leq (C(L_0)\varepsilon\varepsilon_0 + C\varepsilon)(-t)^{\frac{1}{2}}
\end{align}
holds in $\Omega_{(-t_0)^{\frac{1}{2}}}\times [2t_0,t_0]$.

Then interior gradient estimate implies
\begin{align}
    |\frac{\partial^2 v_m}{\partial z_i\partial z_j }|\leq (C(L_0))\varepsilon\varepsilon_0 + C\varepsilon)(-t)^{-\frac{1}{2}}\quad 1\leq i, j \leq k
\end{align}
holds in $\Omega_{\frac{L_0}{8\sqrt{n}}}\times [-\frac{L_0^2}{64n},-1]$.

Next, we observe that each second derivative $\frac{\partial^2 v_m}{\partial z_i\partial z_j }$ ($1\leq i, j\leq k$) satisfies the same heat equation
(\ref{PartB Heat Equation firt mode}). Hence, estimating solution formula similarly as above implies
\begin{align}\label{Heat equation solution formula b}
    |\frac{\partial^2 v_m}{\partial z_i\partial z_j}| &\leq \int_{\Omega_{\frac{L_0}{4\sqrt{n}}}} K_{t+\frac{L_0^2}{16n}}(z,y)\left|\frac{\partial^2 v_m}{\partial z_i\partial z_j}(y,-\frac{L_0^2}{16n})\right|dy \\
    - &\int_{-\frac{L_0^2}{16n}}^{t}\int_{\partial\Omega_{\frac{L_0}{4\sqrt{n}}}}|\partial_{\nu_y}K_{t-s}(z,y)|\left|\frac{\partial^2 v_m}{\partial z_i\partial z_j}(y,s)\right|dy ds \notag\\
    \leq & (C(L_0)\varepsilon\varepsilon_0 + C\varepsilon)\left[\left(\frac{L_0^2}{16n}\right)^{-\frac{1}{2}} +\int_{-\frac{L_0^2}{16n}}^t L_0^{-2}(-s)^{-\frac{1}{2}}ds\right] \notag\\
    \leq & (C(L_0)\varepsilon\varepsilon_0 + C\varepsilon)L_0^{-1} \notag
\end{align}
holds in $\Omega_{(100n)^{3}}\times [-(100n)^6,-1]$.

This means that for each $0\leq i \leq k$ and $1\leq m \leq n-k+1$. we can find constants $A_{im}$,  such that
\begin{align}
     \left|v_m -   \left(A_{0m} + \sum_{i=1}^{k}A_{im}z_i\right)  \right|\leq C(L_0)\varepsilon\varepsilon_0 + CL_0^{-1}\varepsilon
\end{align}
and
\begin{equation}
    |A_{im}|\leq
    C(L_0)\varepsilon\varepsilon_0 + CL_0^{-1}\varepsilon
\end{equation}
hold in $\Omega_{(100n)^{3}}\times [-(100n)^6,-1]$.

\textbf{Case 3:} $m=0$.

Under cylindrical coordinates defined in \eqref{cylindrical_coordinates}, the normal vector $\nu$ satisfies
\begin{align}
    \nu \sqrt{1+ \frac{|\nabla_{S^{n-k}}r|^2}{r^2} + |D_{\mathbb{R}^k}r|^2} = (\omega, -D_{\mathbb{R}^k}r) + (-\frac{\nabla_{S^{n-k}}r}{r}, 0).
\end{align}

Hence for the position vector  $x= (r\omega,z)$,
\begin{align}
    \bar{v}\sqrt{1+ \frac{|\nabla_{S^{n-k}}r|^2}{r^2} + |D_{\mathbb{R}^k}r|^2} = \langle J_{\alpha}x, (\omega-\frac{\nabla_{S^{n-k}}r}{r}, -D_{\mathbb{R}^k}r)  \rangle.
\end{align}
Then we have
\begin{align}\label{uidentity}
    \bar{v}\sqrt{1+ \frac{|\nabla_{S^{n-k}}r|^2}{r^2} + |D_{\mathbb{R}^k}r|^2} &= (r\omega, z) J_{\alpha}(\omega-\frac{\nabla_{S^{n-k}}r}{r}, -D_{\mathbb{R}^k}r)^{T} \\\nonumber
    &= -\langle \bar{J}_{\alpha}\omega, \nabla_{S^{n-k}}r\rangle.
\end{align}
Integrating the identity over the unit sphere $S^{n-k}$, we obtain
\begin{align}
    \int_{S^{n-k}}\bar{v}\sqrt{1+ \frac{|\nabla_{S^{n-k}}r|^2}{r^2} + |D_{\mathbb{R}^k}r|^2}d\omega  &= - \int_{S^{n-k}} \langle \bar{J}_{\alpha}\omega, \nabla_{S^{n-k}}r\rangle d\omega \\\nonumber
    &= \int_{S^{n-k}}r \text{div}_{S^{n-k}}(\bar{J}_{\alpha}\omega)d\omega =0,
\end{align}
where we used the fact that $J_{\alpha}\omega$ is a divergence free vector field on $S^{n-k}$.

Since $|\bar{v}|\leq C(L_0)\varepsilon$ and $|\frac{\nabla_{S^{n-k}}r}{r}|+|D_{\mathbb{R}^k}r| \leq C(L_0)\varepsilon_0$, we
have
\begin{align}
    \left|\int_{S^{n-k}}\bar{v}(z, \omega ,t)d\omega \right|\leq C(L_0)\varepsilon\varepsilon_0
\end{align}
in $\Omega_{(100n)^{3}}\times [-(100n)^6,-1]$. This gives $|v_0|\leq C(L_0)\varepsilon_0\varepsilon$ and completes the 0-mode analysis.

\textbf{Step 3: } Adjusting the axis. From the previous steps and \eqref{utildeu}, we can find constants  $A_{\alpha, ij}$ ($1\leq \alpha\leq \frac{(n-k)(n-k+1)}{2}$, $0\leq i \leq k$ and $1\leq j \leq n-k+1$) such that
\begin{align}\label{Adjust u_alpha}
    \left|\langle \bar{K}_{\alpha},\nu\rangle - \sum_{j=1}^{n-k+1}\left(A_{\alpha, 0j} + \sum_{i=1}^{k} A_{\alpha, ij}z_i\right)Y_j \right|\leq C(L_0)\varepsilon\varepsilon_0 + CL_0^{-\frac{1}{n-k}}\varepsilon
\end{align}
and
\begin{equation}\label{Asmall}
    |A_{\alpha, ij}|\leq C(L_0)\varepsilon.
\end{equation}

Let us define
\begin{align}
    F_j(z) = \int_{S^{n-k}}r(z, \omega, -1)Y_j(\omega)d\omega\quad j=1,\dots, n-k+1.
\end{align}
We aim to use these functions to construct anti-symmetric matrices and translation.
We first  extract information about $A_{\alpha,ij}$ from \eqref{Adjust u_alpha}. Therefore, for $1\leq j \leq n-k+1$ we project \eqref{Adjust u_alpha} to each spherical eigenfunction $Y_j$ and obtain
\begin{align}\label{A_alpha,ij estimate}
    \left|\int_{S^{n-k}}\langle\bar{K}_{\alpha},\nu\rangle Y_j d\omega -A_{\alpha,0j} - \sum_{i=1}^{k}A_{\alpha,ij}z_i\right|\leq C(L_0)\varepsilon\varepsilon_0 + CL_0^{-\frac{1}{n-k}}\varepsilon.
\end{align}
By \eqref{def_u} and \eqref{uidentity} we have
\begin{align}
    \int_{S^{n-k}}\langle\bar{K}_{\alpha},\nu\rangle Y_j \sqrt{1+ \frac{|\nabla_{S^{n-k}}r|^2}{r^2} + |D_{\mathbb{R}^k}r|^2} + \langle \bar{J}_{\alpha}\omega,\nabla_{S^{n-k}}r\rangle Y_j d\omega = 0.
\end{align}
Arguing as in the previous step and using  \eqref{uidentity}, $|\bar{v}|\leq C(L_0)\varepsilon$ and $|\frac{\nabla_{S^{n-k}}r}{r}|+|D_{\mathbb{R}^k}\bar{v}| \leq C(L_0)\varepsilon_0$ we have
\begin{align}\label{divergence formula 1}
    \left|\int_{S^{n-k}}\langle\bar{K}_{\alpha},\nu\rangle Y_j + \text{div}_{S^{n-k}}(rJ_{\alpha}\omega)  Y_j d\omega\right| \leq C(L_0)\varepsilon_0\varepsilon.
\end{align}
Direct computation shows
\begin{align}\label{Integrate by part 1}
    \int_{S^{n-k}}\text{div}_{S^{n-k}}(rJ_{\alpha}\omega)  Y_j(\omega) d\omega&=
     -\int_{S^{n-k}} \langle r\bar{J}_{\alpha}\omega, \nabla_{S^{n-k}}Y_j(\omega)\rangle d\omega\\
     &=-\sum_{i=1}^{n-k+1}\int_{S^{n-k}}\bar{J}_{\alpha,ji} r Y_i(\omega) d\omega \notag\\
     &=-\sum_{i=1}^{n-k+1}\bar{J}_{\alpha,ji}F_i. \notag
\end{align}
Combining (\ref{A_alpha,ij estimate}) (\ref{divergence formula 1}) and (\ref{Integrate by part 1}) we have
\begin{align}\label{A_alpha,ij estimate 2}
    |A_{\alpha,0j} + \sum_{i=1}^{k}A_{\alpha,ij}z_i - \sum_{i=1}^{n-k+1}\bar{J}_{\alpha,ji}F_i|\leq C(L_0)\varepsilon\varepsilon_0 + CL_0^{-\frac{1}{n-k}}\varepsilon.
\end{align}

Then, we define $b=(0,\dots,0, b_{1},\dots, b_{n-k+1})\in \mathbb{R}^{k}\times \mathbb{R}^{n-k+1}$ and $P\perp so(k)\oplus so(n-k+1)\subset so(n+1)$ as follows:
\begin{align}
    b_j &= F_j(0)\quad 1\leq j\leq n-k+1,\\
    P_{k+j,i} &= \frac{1}{2}(F_j(e_i)-F_j(-e_i))\quad   1\leq i\leq k, 1\leq j\leq n-k+1,
\end{align}
where $e_i = (0,...,0,1,0,...,0)$ is the unit vector in $\mathbb{R}^k$ with the only nonzero factor in the $i$-th coordinate. Then by \eqref{uidentity}, $|\bar{v}|\leq C(L_0)\varepsilon$ and $|\frac{\nabla_{S^{n-k}}r}{r}|+|D_{\mathbb{R}^k}\bar{v}| \leq C(L_0)\varepsilon_0$, we have  $|\nabla_{S^{n-k}}r|\leq C(L_{0})\varepsilon$ in $\Omega_{\frac{L_{0}}{4\sqrt{n}}}\times [-1-\frac{L_{0}^2}{16n}, -1]$. By  the construction, the above estimates and $ \int_{S^{n-k}}r(\omega_{0},z, -1)Y_j(\omega)d\omega=0$ for any fixed $\omega_{0}\in S^{n-k}$, we have
\begin{align}\label{[P,J] computation 1}
    [P,J_{\alpha}]_{k+j,i} = -\sum_{l=1}^{n-k+1}\frac{1}{2}\bar{J}_{\alpha,jl}(F_l(e_i)-F_l(-e_i))
\end{align}
and
\begin{equation}\label{Pbestimates}
    |P|+|b|\leq C(L_0)\varepsilon.
\end{equation}
Combining (\ref{A_alpha,ij estimate 2}) and \eqref{[P,J] computation 1} we obtain
\begin{align}
    |A_{\alpha,0j}-(J_{\alpha} b)_{k+j}| \leq & C(L_0)\varepsilon\varepsilon_0 + CL_0^{-\frac{1}{n-k}}\varepsilon
\end{align}
and
\begin{align}\label{approximate A0} |A_{\alpha,ij}+[P,J_{\alpha}]_{k+j,i}|\leq &  C(L_0)\varepsilon\varepsilon_0 + CL_0^{-\frac{1}{n-k}}\varepsilon.
\end{align}

Now we let $S = \exp(P)$ and
\begin{align}
    \tilde{K}_{\alpha}(x) = SJ_{\alpha}S^{-1}(x-b).
\end{align}
Using estimates \eqref{Adjust u_alpha}, \eqref{Pbestimates}-\eqref{approximate A0} and  property of $\varepsilon_{0}$-close to a cylinder $\mathbb{R}^{k}\times S^{n-k}$  we have that
\begin{align}
    \left|\langle\bar{K}_{\alpha}-\tilde{K}_{\alpha},\nu\rangle - \sum_{j=1}^{n-k+1}\left(A_{\alpha, 0j} + \sum_{i=1}^{k} A_{\alpha, ij}z_i\right)Y_j \right|\leq C(L_0)\varepsilon\varepsilon_0 + CL_0^{-\frac{1}{n-k}}\varepsilon
\end{align}
holds in $\hat{\mathcal{P}}(x_0,t_0,100n^{\frac{5}{2}},100^2n^5)$.
This and \eqref{Adjust u_alpha}   imply that
\begin{align}
    \max_{\alpha}|\langle \tilde{K}_{\alpha},\nu\rangle|H \leq C(L_0)\varepsilon\varepsilon_0 + CL_0^{-\frac{1}{n-k}}\varepsilon
\end{align}
holds in $\hat{\mathcal{P}}(x_0,t_0,100n^{\frac{5}{2}},100^2n^5)$.

Finally, we take $L_0$ large enough and subsequently $\varepsilon_0$ small enough,  Theorem \ref{Symmetry improvement I} then follows.
\end{proof}
Then we discuss cap improvement theorem as in \cite[Thm 3.12]{Zhu2}. Let $L_{0}, \varepsilon_{0}$ be constant from Theorem \ref{Symmetry improvement I} (cylindrical symmetry improvement theorem).
\begin{theorem}[cap improvement theorem]\label{cap improvement theorem}
There exist constants $L_1 > L_0$ and $\varepsilon_1<\varepsilon_0$ satisfying the following property: for a mean curvature flow  $\mathcal{M}=\{M_t\}$ and a space-time point $(\bar{x},\bar{t})$, if $\hat{\mathcal{P}}(\bar{x},\bar{t},L_1,L_1^2)$ is $\varepsilon_1$-close to a piece of $\mathbb{R}^{k-1}\times \Sigma$  after rescaling such that $H(\bar{x},\bar{t})=1$, where $\Sigma$ is the unique $(n-k+1)$ dimensional round bowl, and each point in $\hat{\mathcal{P}}(\bar{x},\bar{t},L_1,L_1^2)$ is $(\varepsilon, n-k)$-symmetric, then $(\bar{x},\bar{t})$ is $(\frac{\varepsilon}{2}, n-k)$-symmetric.
\end{theorem}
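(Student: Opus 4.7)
The plan is to reduce Theorem \ref{cap improvement theorem} to a combination of the cylindrical symmetry improvement (Theorem \ref{Symmetry improvement I}) applied in the asymptotic trunk of the bowl together with a parabolic maximum-principle argument on the cap. Because the round bowl $\Sigma$ is strictly convex and $\mathrm{O}(n-k+1)$-symmetric, and its asymptotic end is a cylinder $\mathbb{R}\times S^{n-k}$, the neighborhood $\hat{\mathcal{P}}(\bar x,\bar t,L_1,L_1^2)$ naturally decomposes (once $L_1$ is taken much larger than $L_0$ and $\varepsilon_1$ much smaller than $\varepsilon_0$) into a bounded cap region near the tip of $\mathbb{R}^{k-1}\times\Sigma$ and a trunk region in which every space-time point is $C\varepsilon_1$-close to a cylinder $\mathbb{R}^k\times S^{n-k}$. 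Following the outline of \cite[Thm 3.12]{Zhu2} and \cite{BC1,BC2}, the two steps will be: (i) use Theorem \ref{Symmetry improvement I} iteratively on a cover of the trunk to produce a single normalized set of rotation vector fields $\tilde{\mathcal K}=\{\tilde K_\alpha\}$ that is $(\varepsilon/2,n-k)$-symmetric throughout the trunk, and (ii) propagate this improved estimate into the cap by a barrier argument for the Jacobi equation $(\partial_t-\Delta-|A|^2)\langle\tilde K_\alpha,\nu\rangle=0$.

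For step (i), I would cover the trunk portion of $\hat{\mathcal{P}}(\bar x,\bar t,L_1,L_1^2)$ by finitely many sub-neighborhoods of the form $\hat{\mathcal{P}}(y_j,s_j,L_0,L_0^2)$, apply Theorem \ref{Symmetry improvement I} at each, and then use the cylindrical axis-matching estimate (Lemma \ref{Vector Field closeness on genearalized cylinder_appendix}, together with the linear-algebra lemma \ref{Lin Alg Cylinder A}) to glue the local normalized sets into one global $\tilde{\mathcal K}$ with $|\langle\tilde K_\alpha,\nu\rangle|H\le\varepsilon/2$ throughout the trunk; on the cap we only inherit the rough bound $|\langle\tilde K_\alpha,\nu\rangle|H\le C(L_1)\varepsilon$ from the $(\varepsilon,n-k)$-symmetry hypothesis. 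For step (ii), I would set $\tilde v_\alpha:=\langle\tilde K_\alpha,\nu\rangle$ and treat it as a solution of the parabolic Jacobi equation on the full $\hat{\mathcal{P}}(\bar x,\bar t,L_1,L_1^2)$ with Dirichlet-type boundary data of size $\varepsilon/2$ on the trunk--cap interface and at the initial time. A separation-of-variables computation along the $\mathbb{R}^{k-1}$-factor, parallel to Cases~1--2 in the proof of Theorem \ref{Symmetry improvement I}, combined with the spectral gap of the Jacobi operator $-\Delta_{\Sigma}-|A_{\Sigma}|^2$ on the bowl cap after the rotation-kernel is projected out, will produce a supersolution $\Phi$ with $\Phi(\bar x,\bar t)\le\varepsilon/2$. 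Absorbing the projection to the kernel by adjusting the translation $q\in\mathbb{R}^{n+1}$ and the conjugating rotation $S\in\mathrm{O}(n+1)$ in the definition of $\tilde K_\alpha$, exactly as in Step 3 of the proof of Theorem \ref{Symmetry improvement I}, then yields the desired $(\varepsilon/2,n-k)$-symmetry at $(\bar x,\bar t)$.

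The main obstacle is step (ii). Unlike the cylindrical case, where Fourier decomposition in $S^{n-k}$ separates the modes cleanly and the modes $\lambda_m=n-k$ are exactly linear in $z$, on the bowl cap the Jacobi operator $-\Delta_\Sigma-|A_\Sigma|^2$ has a genuine $\tfrac{(n-k)(n-k+1)}{2}$-dimensional kernel spanned by its own rotation vector fields, and there is no explicit basis. One therefore needs a cap-specific Poincar\'e-type inequality with a positive spectral gap on the orthogonal complement of this kernel, together with a bowl analogue of the cylindrical axis-matching lemma (Lemma \ref{VF close Bowl times lines}) that quantifies how closely an approximately Jacobi $\tilde v_\alpha$ can be matched by an exact rotation vector field on $\mathbb{R}^{k-1}\times\Sigma$ by adjusting $q$ and $S$. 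Once these two ingredients are assembled, the extra $\mathbb{R}^{k-1}$ factor contributes only the standard heat-kernel decay handled by the Dirichlet-kernel estimates already recorded in the proof of Theorem \ref{Symmetry improvement I}, and taking $L_1\gg L_0$ (to suppress heat-kernel tails and boundary contributions) and $\varepsilon_1\ll\varepsilon_0$ (to make the bowl model sharp) closes the iteration and completes the proof.
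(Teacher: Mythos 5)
Your outline matches the paper's strategy up to the cap step: you correctly decompose $\hat{\mathcal{P}}(\bar x,\bar t,L_1,L_1^2)$ into a trunk where Theorem~\ref{Symmetry improvement I} applies directly and a bounded cap, you correctly identify that the rotation axes must be matched across neighborhoods via Lemma~\ref{Vector Field closeness on genearalized cylinder_appendix} and a bowl analogue (Lemma~\ref{VF close Bowl times lines}), and you correctly plan a parabolic maximum-principle argument to carry the improvement across the cap. But the cap step as you describe it diverges from the paper in a way that leaves a real gap.

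Two issues. First, on the trunk you only extract the bound $|\langle\tilde K_\alpha,\nu\rangle|H\le\varepsilon/2$ from a single application of Theorem~\ref{Symmetry improvement I}; the paper instead iterates cylindrical improvement on nested interior regions to produce \emph{geometric decay} $2^{-j}\varepsilon$ at distance $2^{j/100}\Lambda_1$ from the tip (so the boundary term on the far trunk boundary $\partial^1\Omega_j$ is $\le C(W_j+D_j+T_j)^2 2^{-j}\varepsilon$, which after polynomial loss is still $\ll\varepsilon$). A flat $\varepsilon/2$ trunk bound is not a strong enough boundary condition to survive the maximum-principle estimate over a cap region of size $\sim L_1$, where the interior a priori bound is the much larger $C(L_1)\varepsilon$: you need the exponential margin to absorb the polynomial geometry factors. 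Second, your propagation mechanism is a spectral-gap/Poincar\'e inequality for the bowl Jacobi operator $-\Delta_\Sigma-|A_\Sigma|^2$ with the rotation kernel projected out. This is an unproven and nontrivial ingredient (the bowl is noncompact, the kernel is non-explicit, and a quantitative gap adapted to the relevant scale and weighted norm would require a separate argument). The paper sidesteps it entirely: instead of working with $\langle K_\alpha,\nu\rangle$ and the Jacobi operator, it applies the maximum principle to $f=e^{-\Phi(x)+\lambda(t-\bar t)}\tfrac{\langle K_\alpha,\nu\rangle}{H-\mu}$ with $\Phi(x)=\sum_{l<k}\phi(x_l)$, $\phi(s)=\tfrac{c_n^2}{n}D_j^{-1}\log\cosh(s)$. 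Dividing by $H-\mu$ manufactures the strong dissipative term $-\mu|A|^2/(H-\mu)$ in equation~\eqref{EQNforf} (using convexity, $|A|^2\ge H^2/n$), and the $\log\cosh$ weight controls the transverse $\mathbb{R}^{k-1}$ directions — no eigenvalue information about the cap is needed. This $H-\mu$ trick is the Brendle--Choi/Zhu device for the cap and is the piece you would need to reconstruct. Without it or a proved cap spectral gap, step (ii) of your proposal does not close.
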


\begin{proof}
Throughout the proof, $L_1$ is assumed to be large enough depending only on $\varepsilon_0, L_0, n$ and $\varepsilon_1$ is assumed
to be small enough depending only on $L_0, \varepsilon_0, n, L_1$,  and $j$ is a  fixed large integer depending only on $L_0, \varepsilon_0,n$

We assume that $\Sigma$ is the $(n-k+1)$ dimensional round bowl soliton in the subspace $\{0\}\times \mathbb{R}^{n-k+2}\subset \mathbb{R}^{n+1}$ with tip $\Sigma$  at the origin and translates towards positive direction of $x_{k}$ axis with unit speed. Let $e_{k}$ be the unit
vector in the positive $x_{k}$ direction that coincide with the inward normal vector of $\Sigma$ at the origin. We write $\Sigma_t=\Sigma + te_{k}$. Then $\Sigma_t$ is the translating mean curvature flow  and $\Sigma = \Sigma_0$.

After rescaling we may assume that $H(\bar{x},\bar{t}) = 1$ at $\bar{t}=0$. By assumption, there exists a scaling factor $\kappa>0$
such that the parabolic neighborhood $\hat{\mathcal{P}}(\bar{x},0,L_1,L_1^2)$ can be written a graph over the translating
$\mathbb{R}^{k-1}\times\kappa^{-1}\Sigma_t$ with graph norm $\varepsilon_1$ small in $C^{10}$.
By the above setup, the maximal mean curvature of $\kappa^{-1}\Sigma_t$ is $\kappa$. Let $p_t \in \kappa^{-1}\Sigma_t$ be the tip. Let the affine plane $P_t = \mathbb{R}^{k-1}\times \{p_t\}$. The splitting directions  are assumed to be $x_{1},...,x_{k-1}$ axes.
By our normalization, $H(\bar{x},0) = 1$. The maximality of $\kappa$ together with the approximation ensures that $\kappa\geq 1-C\varepsilon_1$.

We define $d(\bar{x},P_t) = \min_{a\in P_t}|\bar{x}-a|$ to be the Euclidean distance between $\bar{x}$ and $P_t$.

\textbf{Step 1} By the structure of the bowl soliton and approximation, there exists $\Lambda_{*}<\infty$ depending only on $\varepsilon_0, L_0,n$ such that
if $d(\bar{x},P_0)\geq \Lambda_{*}$, then for every space-time point $(y,s)\in \hat{\mathcal{P}}(\bar{x},0,L_0,L_0^2)$ we have
\begin{align}
    \hat{\mathcal{P}}(y,s,100n^\frac{5}{2}, 100^2n^5)\subset \hat{\mathcal{P}}(\bar{x},0,L_1,L_1^2),
\end{align}
and $(y,s)$ is $\varepsilon_0$-close  to a cylinder $\mathbb{R}^{k}\times S^{n-k}$. Then we can apply Theorem \ref{Symmetry improvement I} to
conclude that $(\bar{x},\bar{t})$ is $(\frac{\varepsilon}{2}, n-k)$-symmetric and we are done.

\textbf{Step 2} We assume that $d(\bar{x},l_0)\leq \Lambda_{*}$.

By the structure of the bowl soliton, we get
\begin{align}
    \frac{1}{2} < \kappa < C.
\end{align}
We define a series of set $Int_{j}(\hat{\mathcal{P}}(\bar{x},0,L_1,L_1^2))$ inductively by
\begin{align}
    Int_{0}(\hat{\mathcal{P}}(\bar{x},0,L_1,L_1^2))=  \hat{\mathcal{P}}(\bar{x},0,L_1,L_1^2),
\end{align}
and
\begin{align}
    &(y,s)\in Int_j(\hat{\mathcal{P}}(\bar{x},0,L_1,L_1^2)) \\
    &\Longleftrightarrow \hat{\mathcal{P}}(y,s,10^6n^2L_0,10^{12}n^4L_0^2) \subset Int_{j-1}(\hat{\mathcal{P}}(\bar{x},0,L_1,L_1^2)).
\end{align}

We abbreviate $Int_j(\hat{\mathcal{P}}(\bar{x},0,L_1,L_1^2))$ as $Int(j)$, and we notice that when
\begin{equation}
10^{-8j-8}n^{-2j-2}L_0^{-j-1}L_1\geq \Lambda_1,
\end{equation}
we have
\begin{align}
    \hat{\mathcal{P}}(\bar{x},0,10^{-8j}n^{-2j}L_0^{-j}L_1, 10^{-16j}n^{-4j}L_0^{-2j}L_1^2)\subset Int(j).
\end{align}
Then arguing as in \cite[Thm 3.12]{Zhu2}, we can show that there exists $\Lambda_1\gg \Lambda_{*}$ depending only on $\varepsilon_0, L_0$ such that for any  $(x,t)\in Int_j(\hat{\mathcal{P}}(\bar{x},0,L_1,L_1^2))$, if $d(x,P_t)\geq 2^{\frac{j}{100}}\Lambda_1$, then $(x,t)$ is $(2^{-j}\varepsilon, n-k)$-symmetric, where $j\in \mathbb{N}_{+}$.

\textbf{Step 3:}
Then, we define some  regions as follows.
    \begin{align*}
      \Omega_j^t=&\{x\in M_t \ \big|  \max\{|x_{1}|,\dots, |x_{k-1}|\}\leq W_j, d(x,P_t)\kappa\leq D_j\},\\
      \Omega_j=&\bigcup\limits_{t\in[-1-T_j,-1]} \Omega_j^t,\\
      \partial^1\Omega_j=&\{(x,s)\in\partial\Omega_j|d(x,P_s)\kappa= D_j\},\\
      \partial^2\Omega_j=&\{(x,s)\in\partial\Omega_j \ \big| \max\{|x_{1}|,\dots, |x_{k-1}|\}\leq W_j\},
    \end{align*}
    where $D_j=2^{\frac{j}{100}}\Lambda_1$, $W_j=T_j=2^{\frac{j}{50}}\Lambda_1^2$.

    By repeatedly applying Lemma \ref{Vector Field closeness on genearalized cylinder_appendix} and
    Lemma \ref{VF close Bowl times lines}, we obtain a  normalized set of rotation vector fields $\mathcal{K}^{(j)} =\{K_{\alpha}^{(j)}, 1\leq \alpha\leq\frac{(n-k+1)(n-k)}{2}\}$ such that
    \begin{align}
      &\max_{\alpha}|\left<K^j_{\alpha},\nu\right>|H\leq C(W_j+D_j+T_j)^2 2^{-j}\varepsilon  \text{ on }\partial^1\Omega_j^t \label{eq1},\\
      &\max_{\alpha}|\left<K^j_{\alpha},\nu\right>|H \leq C(W_j+D_j+T_j)^2 \varepsilon \text{ on }\Omega_j \label{eq2},\\
      &\max_{\alpha}|K^j_{\alpha}|H \leq 2n\text{ on }\Omega_j \label{eq3}.
    \end{align}

    Now  for each fixed large integer $j$ depending only on $L_0, \varepsilon_0, n$, we define single variable function
    \begin{equation}
        \phi(s)=\frac{c_n^2}{n}D_j^{-1}\log(\cosh(s)).
    \end{equation}
    Then $\phi'$ and $\phi''$ satisfy the following $L^{\infty}$-norm estimates:
    \begin{align}
        |\phi'|_{\infty}\leq \frac{c_n}{n}D_j^{-\frac{1}{2}} \quad         |\phi''|_{\infty}\leq \frac{c_n^2}{n}D_j^{-1}.    \end{align}

    Then, we let $\displaystyle\Phi(x) = \sum_{l=1}^{k-1}\phi(x_l)$.
        For each $K^j_{\alpha}$ ($1\leq \alpha\leq\frac{(n-k+1)(n-k)}{2}$), we define the following function
      \begin{equation}
           f(x,t)=e^{-\Phi(x)+\lambda(t-\bar{t})}\frac{\langle K_{\alpha}^j,\nu\rangle}{H-\mu},
      \end{equation}
     where $\lambda,\mu$ will be determined as follows in \eqref{def lambda mu}. By the asymptotics of the bowl soliton, we can find $c_n\in (0,1)$ such that
    $H\geq 2c_n D_j^{-1/2}$ in $\Omega_j$ and we let
    \begin{equation}\label{def lambda mu}
        \lambda=\frac{c_n^2}{n}D_j^{-1}, \mu=k c_n D_j^{-1/2}.
    \end{equation}

     Then the evolution equation for  $f$ is
     \begin{align}\label{EQNforf}
       (\partial_t -\Delta )f      =&\left(\lambda-\frac{\mu|A|^2}{H-\mu}-\partial_t\Phi+\Delta \Phi +|\nabla\Phi|^2+2\frac{\left<\nabla \Phi,\nabla H\right>}{H-\mu}\right)f\\
       &+2\left<\nabla f,\nabla\Phi+\frac{\nabla H}{H-\mu}\right>. \notag
     \end{align}

    Using \eqref{def lambda mu} and computing similarly as in \cite[Thm 3.12]{Zhu2} we have
   \begin{align}
        &\lambda-\frac{\mu|A|^2}{H-\mu}-\partial_t\Phi+\Delta \Phi +|\nabla\Phi|^2+2\frac{\left<\nabla \Phi,\nabla H\right>}{H-\mu} \\\nonumber
        \leq& \lambda - \frac{4\mu^2}{n} + kC(L_1)\varepsilon_1|\phi'|_{\infty} + k|\phi''|_{\infty} + k|\phi'|_{\infty}^2 + kC(L_1)\varepsilon_1|\phi'|_{\infty} \mu^{-1} < 0
    \end{align}
     and
     \begin{equation}
    |f|\leq C2^{-\frac{j}{5}}\varepsilon \quad \text{on $\partial\Omega_j$},
     \end{equation}
 where $C$ depends only on $n$. Then maximum principle gives
     \begin{align}\label{maxf}
        \sup\limits_{\Omega_j}|f|\leq &\sup\limits_{\partial\Omega_j}|f|\leq  2^{-\frac{j}{5}}C\varepsilon.
     \end{align}
    Next, we have $|\langle K_{\alpha}^j,\nu\rangle|H\leq 2^{-j/10}C_{1}\varepsilon$ holds in $\hat{\mathcal{P}}(\bar{x}, 0, 100n^{5/2}, 100^2n^5)$. Then, arguing again as in \cite[Thm 3.12]{Zhu2}, we first fix a large $j$ such that $\Omega_j$ contains the parabolic neighborhood $\hat{\mathcal{P}}(\bar{x},\bar{t},100n^{\frac{5}{2}}, 100^2n^5)$ and $2^{-\frac{j}{10}}C_{1} <\frac{1}{2}$. Next, we find $L_1$ large enough depending only on $n,L_0,\varepsilon_0,j$,  finally we find $\varepsilon_1$ small enough depending on all the above constants. We will obtain that
     $(\bar{x},\bar{t}) = (\bar{x},0)$ is $(\frac{\varepsilon}{2}, n-k)$-symmetric.




\end{proof}

\section{Asymptotics and symmetry of non-degenerate noncollapsed solutions}\label{rkksection}
In this section, we prove Theorem \ref{compact_symmetry_oval} (asymptotics, compactness and symmetry of $k$-ovals in $\mathbb{R}^{n+1}$).  To this end, we need the to first establish  the sharp asymptotic for $k$-ovals in $\mathbb{R}^{n+1}$. In this section we assume that $\mathcal{M}=\{M_t\}$  is ancient noncollapsed solution whose tangent flow  at $-\infty$ is given by $\mathbb{R}^{k}\times S^{n-k}(\sqrt{2(n-k)|t|})$. By \cite[Lem 3.5]{DH_ovals} $M_{t}$ is uniformly $(k+1)$-convex. Then we have the following theorem.
\begin{theorem}\label{Rk=k asymptotics}
If $\mathrm{rk}(Q)=k$, then the ancient noncollapsed flow $\mathcal{M}=\{M_t\}$  is compact and satisfies the following sharp asymptotics under suitable coordinates:
\begin{itemize}
    \item Parabolic region: Given any $R>0$, the cylindrical profile function $u$ for $\tau\to -\infty$ satisfies
    \begin{equation*}
     u(y,\omega,\tau)=\frac{\sqrt{2(n-k)}}{4}\frac{|y|^2-2k}{\tau} +o(|\tau|^{-1})
    \end{equation*}
        uniformly for $|y|\leq R$.
      \item Intermediate region: Let $\bar{u}(z,\omega, \tau)=u(|\tau|^{\frac{1}{2}}z,\omega, \tau)+\sqrt{2(n-k)}$, we have
\begin{equation*}
    \lim_{\tau\rightarrow -\infty}\bar{u}(z,\omega, \tau)=\sqrt{(n-k)(2-|z|^2)}
\end{equation*}
    uniformly on every compact subset of $\{(z, \omega): |z|<\sqrt{2}, \omega \in S^{n-k}\}$.
    \item Tip region:  Setting $\lambda(s)=\sqrt{|s|^{-1}\log |s|}$, and given any direction $\vartheta\in S^{k-1}\cap (\mathbb{R}^{r}\times \{0\})$ letting $p_{s}\in M_s$ be the point that maximizes $\langle p, \vartheta\rangle$ among all $p\in M_s$, as $s\to -\infty$ the rescaled flows
    \begin{equation*}
        {\widetilde{M}}^{s}_{t}=\lambda(s)\cdot(M_{s+\lambda(s)^{-2}t}-p_{s})
    \end{equation*}
    converge to $\mathbb{R}^{k-1}\times N_{t}$, where $N_{t}$ is the $(n+1-k)$-dimensional round translating bowl in $\mathbb{R}^{n+1-k}$ with speed $1/\sqrt{2}$.
\end{itemize}
\end{theorem}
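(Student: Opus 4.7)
The plan is to break the proof into three stages corresponding to the three asymptotic regions, and along the way extract compactness as a corollary of the intermediate-region behavior. Throughout I will exploit the full rank hypothesis $\mathrm{rk}(Q)=k$ in Theorem \ref{spectral theorem}, which forces all $k$ eigenvalues of $Q$ to equal $-\tfrac{\sqrt{2(n-k)}}{4}$, hence $Q=-\tfrac{\sqrt{2(n-k)}}{4}I_k$.

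\textbf{Parabolic region.} This is essentially immediate from Theorem \ref{spectral theorem_restated}. With $Q=-\tfrac{\sqrt{2(n-k)}}{4}I_k$, one computes $y^\top Qy - 2\mathrm{tr}(Q) = -\tfrac{\sqrt{2(n-k)}}{4}(|y|^2-2k)$, so the spectral quantization asymptotics rewrites as
\begin{equation*}
u(y,\omega,\tau)= \tfrac{\sqrt{2(n-k)}}{4}\tfrac{|y|^2-2k}{\tau}+o(|\tau|^{-1})
\end{equation*}
uniformly on $|y|\leq R$, as claimed.

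\textbf{Intermediate region and compactness.} I would adapt the barrier-plus-characteristics strategy from \cite{DH_ovals, DH_hearing_shape} to the present setting, where only almost-rotational symmetry along the $S^{n-k}$ factor is available. First, Proposition \ref{utheta} (almost spherical symmetry) yields that $|\nabla_{S^{n-k}}u|$ decays super-polynomially on $|y|\leq \rho(\tau)$, so on the graphical region the profile is essentially a function of $(|y|,\tau)$ up to exponentially small corrections. Starting from the parabolic asymptotics as inner boundary data at $|y|=R$, I would propagate the asymptotics outward by (i) sandwiching $\bar M_\tau$ between suitable shifted ADS-shrinkers $\Gamma^\eta_a$ (inner barriers) and KM-shrinkers $\tilde\Gamma^\eta_b$ (outer barriers) of appropriate parameters, adjusted so that their averaged profile functions match $\sqrt{(n-k)(2-|z|^2)}$ up to $o(1)$, and (ii) running the method of characteristics on the (almost) rotationally symmetric reduced evolution equation for $\bar u(z,\omega,\tau)= u(|\tau|^{1/2}z,\omega,\tau)+\sqrt{2(n-k)}$. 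Uniform convergence on compact subsets of $\{|z|<\sqrt{2}\}$ then follows from parabolic regularity and uniform $(k+1)$-convexity (via \cite[Lem 3.5]{DH_ovals}). Compactness falls out immediately: the intermediate asymptotics forces $\bar u\to 0$ as $|z|\to \sqrt{2}$, so for $\tau\ll 0$ every $\bar M_\tau$ is bounded in $\mathbb R^{k}\times\mathbb R^{n-k+1}$; hence $M_t$ is compact for large $|t|$ and therefore for all $t$ in the ancient flow.

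\textbf{Tip region.} Here I would fix a direction $\vartheta$ and consider the rescaled flows $\widetilde M^s_t$ at the tip point $p_s$. By uniform $(k+1)$-convexity and standard noncollapsed compactness, any subsequential blowup limit is an ancient noncollapsed strictly convex flow that splits off a line in every $\mathbb{R}$ direction orthogonal to the gradient of support function in the limit. The key step (and main obstacle) is to show that such a limit must split off exactly $k-1$ lines, so that it is of the form $\mathbb R^{k-1}\times N_t$ with $N_t$ a $(n-k+1)$-dimensional strictly convex noncollapsed flow. For this I would imitate the cone-containment argument sketched in the introduction: establish that the $\mathrm{O}(k-1)$-rotationally symmetric convex cone $\mathcal C^{\infty}_{k}=\{x_1^2\geq x_2^2+\cdots+x_k^2,\, x_1\leq 0\}\times\{0\}$ is contained in the tip blowup limit, using the intermediate-region asymptotics to locate the cone's aperture. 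Then an inductive application of the fine cylindrical theorem \cite[Thm 6.4]{DH_blowdown} and the blowdown classification \cite[Thm 1.4]{DH_blowdown} force the limit to split off exactly $k-1$ lines and identifies $N_t$ as the round translating bowl. The speed $1/\sqrt{2}$ and scaling factor $\lambda(s)=\sqrt{|s|^{-1}\log |s|}$ are then pinned down by matching with the intermediate-region asymptotics at the overlap scale, exactly as in the rotationally symmetric model case \cite[Thm 1.4]{DH_ovals}.

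The main obstacle I expect is the tip step: without a priori rotational symmetry along $\mathbb{R}^k$, one must rule out limits that split off only $j<k-1$ lines. This is precisely what the cone-containment argument accomplishes, but executing it requires careful use of the almost-symmetry estimate from Proposition \ref{utheta} together with the $(k+1)$-convexity to transfer the parabolic-region full rank of $Q$ into a genuine geometric constraint on blowups at the tip.
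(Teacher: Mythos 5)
Your overall architecture (parabolic via spectral quantization, intermediate via barriers and characteristics, tip via cone-containment and induction on \cite{DH_blowdown}) matches the paper's, and the parabolic and tip sections are essentially in the right shape. However, your intermediate-region plan has a concrete error: you propose to sandwich $\bar M_\tau$ between ADS shrinkers and KM-shrinkers $\tilde\Gamma^\eta_b$ as outer barriers, "adjusted so that their averaged profile functions match $\sqrt{(n-k)(2-|z|^2)}$ up to $o(1)$." This cannot work. The KM profile satisfies $\tilde u_b(r)^2 \geq 2(n-k) + e^{-1}b^2 r^2 > 2(n-k)$ for all $r$, so every $\tilde\Gamma^\eta_b$ lies strictly \emph{outside} the round cylinder $\Gamma$, whereas the target intermediate profile $\sqrt{(n-k)(2-|z|^2)}$ lies strictly \emph{inside}. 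No choice of $b,\eta$ brings the KM profile anywhere near the intermediate asymptotic shape, so KM shrinkers give only the trivial upper bound $\bar u \lesssim \sqrt{2(n-k)}$ and are of no help for the sharp upper bound. The paper instead obtains the upper bound from convexity of $K_t$, which yields the pointwise sign
$\triangle_{\mathbb{R}^{k}} v-\tfrac{\nabla^2_{\mathbb{R}^{k}}v(\nabla_{\mathbb{R}^{k}}v, \nabla_{\mathbb{R}^{k}}v )}{1+|\nabla_{\mathbb{R}^{k}}v|^2}\leq 0$
for $v=\sqrt{2(n-k)}+u$; combined with the almost-spherical-symmetry estimate this reduces the evolution to a transport inequality $w_\tau\leq w-\tfrac12 y\cdot\nabla_{\mathbb{R}^k} w+Ce^{-\eta|\tau|^{1/2}}$ for $w=v^2-2(n-k)$, and integrating along characteristics outward from the parabolic region gives the sharp upper bound. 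The "method of characteristics" you mention is indeed the right engine, but its driving input must be convexity, not KM barriers. (KM shrinkers do appear in the paper, but only earlier, in the graphical-radius argument; they play no role in the intermediate region.)

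Two smaller points. Compactness: you derive it from the intermediate asymptotics, whereas the paper gets it immediately from the parabolic quadratic bending plus convexity, \emph{before} proving the intermediate upper bound; since the upper bound in turn relies on the almost-symmetry estimate, the paper's ordering is both cleaner and avoids any appearance of circularity. Tip region: you assert that a tip blowup "splits off a line in every $\mathbb{R}$ direction orthogonal to the gradient of the support function," but this is the conclusion, not a given. What the paper actually does is show that the tip limit contains the $\mathrm{O}(k-1)$-symmetric convex cone $\mathcal{C}^\infty_k$ (inherited from the intermediate asymptotics and convexity), which together with uniform $(k+1)$-convexity allows \cite[Thm 1.4]{DH_blowdown} to be applied iteratively, peeling off one line at each step, until \cite{BC2} identifies the residual factor as the round bowl. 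Your high-level plan is compatible with this, but the inductive splitting needs to be established, not presupposed.
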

\begin{proof}
 The sharp asymptotics in parabolic region follows from Theorem \ref{spectral theorem_restated} (spectral quantization theorem) in full rank case and standard interior estimates.  Moreover, there exist $\tau_\ast>-\infty$ and an increasing function $\delta:(-\infty,\tau_\ast)\to (0,1/100)$ with $\lim_{\tau\to -\infty}\delta(\tau)=0$ such that for $\tau\leq \tau_\ast$ we have

    \begin{equation}\label{inw_quad22}
\sup_{|y|\leq \delta(\tau)^{-1}} \left|    u(y,\omega,\tau) - \frac{\sqrt{2(n-k)}}{4}\frac{|y|^2-2k}{\tau} \right|\leq \frac{\delta(\tau)}{|\tau|}\, .
    \end{equation}
By convexity and the   quadratic bending sharp asymptotics in parabolic region, we have that $M_t$ is compact.

Then we discuss the sharp asymptotics in intermediate region. We first show the lower bound. For any compact subset $K\subset \{ |z|^2<2\}$, we have
\begin{equation}\label{lower_bound}
    \liminf_{\tau\rightarrow-\infty} \inf_{z\in K, \omega\in S^{n-k}} \left(\bar{u}(z, \omega, \tau)-\sqrt{(n-k)(2-|z|^2)}\right)\geq 0.
\end{equation}
By \cite[Lemma 4.4]{ADS1} there exists an increasing positive function $M(a)$ with $\lim_{a\to \infty}M(a)= \infty$, such that the profile function $u_a$ of the ADS-barrier $\Sigma_a$ defined in \eqref{ads_shrinker} for $0\leq r\leq M(a)$ satisfies
\begin{equation}
    u_{a}(r)\leq \sqrt{2(n-k)}(1-\frac{r^2-3}{2a^2})\, .
\end{equation}
We fix $\tau_\ast$ negative enough, and for $\tau\leq\tau_\ast$ we set
\begin{equation}
    L(\tau)=\min\{\delta(\tau)^{-1}, M(|\tau|^{\frac{1}{2}}), |\tau|^{\frac{1}{2}-\frac{1}{100}}\} \, ,\quad \hat{a}(\tau)=\sqrt{\frac{2|\tau|}{1+L(\tau)^{-1}}}\, ,
\end{equation}
where $\delta(\tau)$ is the function from sharp asymptotics \eqref{inw_quad22} in parabolic region.
Then, for $\hat{\tau}\leq\tau_\ast$, we get
\begin{align}\label{b2}
    u_{\hat{a}(\hat{\tau})}\left(L(\hat{\tau})+3L(\hat{\tau}\right)^{-1})
    \leq \sqrt{2(n-k)} -\frac{\sqrt{2(n-k)}L(\hat{\tau})^2}{4|\hat{\tau}|}\, .
\end{align}
On the other hand, by  parabolic region asymptotics we have
\begin{equation}\label{b1}
       \sqrt{2(n-k)}+ u(y,\omega, \tau)\geq \sqrt{2(n-k)} -\frac{\sqrt{2(n-k)}L(\hat{\tau})^2}{4|\hat{\tau}|}\, ,
    \end{equation}
whenever $|y|=L(\hat{\tau})$ and $\tau\leq \hat{\tau}$.
Hence, if we consider the shifted and rotated hypersurfaces
\begin{equation}\label{rotated_barrier_rest}
\Gamma^{\eta}_a=\{(y, y'')\in  \mathbb{R}^{k}\times \mathbb{R}^{n+1-k}:  (|y|-\eta, y'') \in {\Sigma}_a \},
\end{equation}
with $\eta=\hat{\eta}(\hat{\tau})=3L(\hat{\tau})^{-1}$ and $a=\hat{a}(\hat{\tau})$ as above, then applying the inner barrier principle from Corollary \ref{emma_inner_barrier}, we infer that
\begin{equation}
   \sqrt{2(n-k)}+u(y,\omega,\tau) \geq    u_{\hat{a}(\hat{\tau})}\left(|y|+\hat{\eta}(\hat{\tau})\right) \, ,
\end{equation}
whenever  $ |y|\geq L(\hat{\tau})$ and  $\tau\leq \hat{\tau}$. Moreover, by \cite[Lemma 4.3]{ADS1}, we have
\begin{equation}
u_a(r)=\sqrt{2(n-k)(1-\frac{r^2}{a^2})}+o(1)
\end{equation}
uniformly in $r$ as $a\to \infty$. Together with convexity we conclude that
\begin{equation}
    \liminf_{\tau\rightarrow-\infty} \inf_{z\in K, \omega\in S^{n-k}} \left(\bar{u}(z,\omega, \tau)-\sqrt{(n-k)(2-|z|^2)}\right)\geq 0.
\end{equation}
This proves \eqref{lower_bound}.

Observe that by the lower bound, for any angle $\vartheta\in S^{k-1}$ we have
 \begin{equation}
 \max_{p\in M_t}\langle p, \vartheta \rangle \geq  \sqrt{(2-o(1))|t|\log|t|}\, .
\end{equation}
In particular, the cylindrical profile function $u(y,\omega,\tau)$ is well-defined whenever $|y|\leq \sqrt{(2-o(1))|\tau|}$.
To proceed, we prove almost spherical symmetry estimates away from the tip region.
\begin{claim}[almost spherical symmetry estimates]\label{lemma_almost_symm_further}
For every $\delta>0$, there exist constants $\eta>0$ and $\tau_\ast>-\infty$, such  that for all $\tau\leq\tau_\ast$ we have
\begin{equation}
 \sup_{|y|\leq \sqrt{(2-\delta)|\tau|}}    |\nabla_{S^{n-k}}u(y,\omega,\tau)|\leq e^{-\eta |\tau|^{1/2}}\, .
\end{equation}
\end{claim}

\begin{proof}[Proof of claim] We first claim that given any $\eps_0>0$ we can find $T_0>-\infty$ so that all points $X=(p,t)\in \mathcal{M}$ with $|p|\leq \sqrt{(2-\delta/2)|t|\log|t|}$ and $t\leq T_0$ lie on the center of an $\eps_0$-cylinder. Indeed, if there exists a sequence $t_i\to -\infty$ and points $p_i\in M_{t_i}$ that do not lie on the center of an $\eps_0$-cylinder, then by the global convergence theorem \cite[Theorem 1.12]{HaslhoferKleiner_meanconvex}, the above lower bound estimates \eqref{lower_bound} in intermediate region, inward quadratic bending asymptotics in parabolic region and convexity, after passing to a subsequence
\begin{equation}
\widetilde{M}^i_t:=H(p_i,t_i)\cdot(M_{t_i+H(p_i,t_i)^{-2}t}-p_{i})
\end{equation}
would converge to a limit $M^{\infty}_{t}$ that splits off $k$ lines.  Hence by  these properties,  uniformly $k+1$ convexity of $M_{t}$  and \cite[Lemma 3.14]{HaslhoferKleiner_meanconvex}, $M^{\infty}_{t}$ is a round shrinking $\mathbb{R}^{k}\times S^{n-k}$. This is a contradiction and thus establishes the claim. Using the property that every point under consideration is $\eps_0$-close to a cylinder $\mathbb{R}^{k}\times S^{n-k}$, and arguing similarly as in the proof of Proposition \ref{utheta} (almost spherical symmetry) the assertion follows.
\end{proof}

We can now establish a matching upper bound. For any compact subset $K\subset \{ |z|^2<2\}$ we have
\begin{equation}\label{upper_bound}
    \limsup_{\tau\rightarrow-\infty} \sup_{z\in K} \left(\bar{u}(z,\omega, \tau)-\sqrt{(n-k)(2-|z|^2)}\right)\leq 0.
\end{equation}
By convexity and negative definite quadratic form estimates, we have
\begin{equation}\label{negative definite}
    \triangle_{\mathbb{R}^{k}} v-\frac{\nabla^2_{\mathbb{R}^{k}}v(\nabla_{\mathbb{R}^{k}}v, \nabla_{\mathbb{R}^{k}}v )}{1+|\nabla_{\mathbb{R}^{k}}v|^2}\leq 0.
\end{equation}
By the evolution equation \eqref{v_evolution}, \eqref{negative definite} and Claim \ref{lemma_almost_symm_further} (almost spherical symmetry estimates) the graphical function $v=\sqrt{2(n-k)}+u$ satisfies
\begin{align}\label{v_evolution_inequ}
    v_\tau\leq
     -\frac{n-k}{v}+\frac{1}{2}\left(v-  y\cdot \nabla_{\mathbb{R}^{k}}v\right)+ C e^{-\eta |\tau|^{1/2}} \,
\end{align}
for $\tau\ll0$ and $|y|\leq \sqrt{(2-\delta)|\tau|}$, where $C=C(\delta)<\infty$. Now, given any  $\vartheta\in S^{k-1}$ and $\omega\in S^{n-k}$, considering the function
\begin{equation}
w(\rho,\tau):=v(\rho\vartheta, \omega, \tau)^2-2(n-k)\, ,
\end{equation}
we infer that
\begin{equation}
   w_\tau\leq w-\tfrac12 y\cdot \nabla_{\mathbb{R}^{k}}w + Ce^{-\eta|\tau|^{1/2}}\, .
\end{equation}
Hence, for every $\rho_{0}>0$ we have
\begin{equation}
    \frac{d}{d\tau}(e^{-\tau}w(\rho_{0} e^{\frac{\tau}{2}}, \tau))\leq Ce^{-\tau-\eta|\tau|^{1/2}}
\end{equation}
for $\tau\ll 0$. Integrating this inequality, we obtain for every $\lambda\in (0, 1]$ that
\begin{equation}\label{ODEv}
    w(\rho, \tau)\leq \lambda^{-2}w(\lambda \rho, \tau+2\log\lambda)+o(|\tau|^{-1}).
\end{equation}
On the other hand, by  sharp asymptotics \eqref{inw_quad22} in parabolic region, given any $A<\infty$, the inequality
 \begin{equation}\label{limitv}
        w(\rho, \tau)\leq |\tau|^{-1}(n-k)({2k-\rho^2})+o(|\tau|^{-1})
    \end{equation}
    holds for $\rho\leq A$.
Thus, for $\rho\geq A$  we take $\lambda = A\rho^{-1}$ in \eqref{ODEv}, $\rho  = A$ and use \eqref{limitv} we obtain
\begin{equation}
    w(\rho,\tau)\leq -\frac{(n-k)(1-2kA^{-2})\rho^2}{|\tau|+2 \log(\rho/A)}+o(|\tau|^{-1}).
\end{equation}
This implies the upper bound \eqref{upper_bound} and completes the proof of sharp asymptotics in intermediate region.

Then we prove the sharp asymptotics in tip region. For any directional vector $\vartheta\in S^{k-1}\cap (\mathbb{R}^{k}\times \{0\})$, we let $p_{t}\in M_t$ be the point that maximizes $\langle p, \vartheta\rangle$ among all $p\in M_t$. After rotating coordinates we can assume without loss of generality that $\vartheta=(1, 0,\dots, 0)\in S^{k-1}$. Consider the distance of $p_t$  from the origin, namely
\begin{equation}
d(t):=|p_t|\, .
\end{equation}
Using the above sharp asympototics in  intermediate region and convexity, we see that
\begin{equation}\label{diameter_asymptotics}
  d(t)=\sqrt{2|t| \log|t|}(1+o(1)).
\end{equation}
Moreover, by Hamilton's Harnack inequality \cite{Hamilton_Harnack} we have
\begin{equation}
    \frac{d}{dt}H(p_t)\geq 0.
\end{equation}
Together with the mean curvature flow equation this yields
\begin{equation}\label{tip curvature}
  \frac{H(p_t)}{\sqrt{|t|^{-1}\log|t|}}=\frac{1}{\sqrt{2}}+o(1).
\end{equation}
Next, for any unit vector $\varphi$ on the sphere $ S^{k-2}$ in $\mathbb{R}^{k-1}$ spanned by $e_{2},\dots, e_{k}$, which are standard basis of $x_{2}, \dots, x_{k}$ coordinates, we consider the  point $q_{t}(\varphi) \in K_t$  that attains
\begin{equation}
 \max_{q\in M_t}\langle q, \varphi\rangle.
\end{equation}
By convexity, $K_t$  contains a $k$ dimensional   convex cone  $\mathcal{C}_{k}^t$   in $\mathbb{R}^{k}$ whose boundary connects tip point $p_t$ and every $q_{t}(\varphi)$ with $\varphi \in S^{k-2}$.

By the global convergence theorem \cite[Theorem 1.12]{HaslhoferKleiner_meanconvex} and \eqref{diameter_asymptotics}, the sequence of flows shifted by $p_{s_{i}}$ and parabolically rescaled by $\lambda(s_{i})=\sqrt{|s_{i}|^{-1}\log |s_{i}|}$
\begin{equation}
\widetilde{K}^{s_i}_t:=\lambda(s_i)\cdot(M_{s_i+\lambda(s_i)^{-2}t}-p_{s_i})
\end{equation}
 converges subsequentially to a noncompact ancient noncollapsed limit flow  $K^\infty_t$. Using  sharp asymptotics in intermediate region and construction of $\mathcal{C}_{k}^{t}$ in above,  the flow $K^\infty_t$ also  contains the following   $k$ dimensional $O(k-1)$ symmetric noncompact  convex cone
 \begin{equation}\label{cone C infty}
 \mathcal{C}^{\infty}_{k}= \{(x, 0)\in \mathbb{R}^{k}\times\mathbb{R}^{n-k+1}: \{x_{1}^2\geq x^2_{2}+,\dots+x^2_{k}, x_{1}\leq 0\}.
 \end{equation}
By \cite[Thm 1.4]{DH_blowdown}, \eqref{bubble-sheet_tangent_intro}, compactness and uniform $(k+1)$-convexity of  $K^\infty_t$ from uniform $(k+1)$-convexity of  $M_{t}$, $K^\infty_t\subset \{x_1\leq 0\}$   must split  at least  one line $P^{1}$ in the space spanned by $e_{2},\dots e_{k}$.  Let us write $K^\infty_t=P^{1}\times K^{\infty, 1}_t$. By construction, $K^{\infty, 1}_t$ contains a ray along $-e_{1}$ direction and is still a noncompact ancient noncollapsed  and contains a $(k-1)$ dimensional $O(k-2)$ symmetric convex cone. Then again by \cite[Thm 1.4]{DH_blowdown} and uniformly $k$-convexity of $K^{\infty, 1}_t$, the flow $K^{\infty, 1}_t$ must split a line.  By  construction, we can inductively apply \cite[Thm 1.4]{DH_blowdown} to conclude that   $M^{\infty}_t=\partial K^\infty_t=P^{k-1}\times N_{t}$, where $P^{k-1}$ is a $(k-1)$ dimensional subspace  spanned by $e_2, \dots, e_{k}$ and $N_{t}$ is two-convex noncompact ancient noncollapsed flow containing $-e_{1}$ axis. By \cite{BC2} and \eqref{tip curvature}, $N_{t}$ must be the unique  $(n+1-k)$-dimensional round  bowl soliton  translating  along   $-e_1$ direction with speed $1/\sqrt{2}$. Finally, by \cite{BC2} round bowl soliton is the unique uniformly two-convex noncompact ancient noncollapsed solution, the subsequential convergence entails full convergence. This concludes the proof of the Proposition \ref{Rk=k asymptotics}.
 \end{proof}
Now, we give the proof of $\mathrm{O}(n-k+1)$ symmetry of the $k$-ovals in $\mathbb{R}^{n+1}$ in Theorem \ref{compact_symmetry_oval}.
\begin{proof}[Proof of Theorem \ref{compact_symmetry_oval}]
The sharp asymptotics and compactness in Theorem \ref{compact_symmetry_oval} have been verified in above Proposition \ref{Rk=k asymptotics}. Namely $k$-ovals in $\mathbb{R}^{n+1}$ are compact and  have the same unique sharp asymptotics as the $\mathrm{SO}(k)\times \mathrm{SO}(n-k+1)$ symmetric ovals  \cite[Thm 1.4]{DH_ovals} with $\mathbb{R}^{k}\times S^{n-k}(\sqrt{2(n-k)|t|})$ as tangent flow at $-\infty$.

Then, we similarly as in proof of  \cite[Prop 6.7]{DH_hearing_shape} to show the $\mathrm{O}(n-k+1)$ symmetry. Let $\varepsilon <\varepsilon_{1}<\varepsilon_{0}$ be the constants from   Theorem \ref{Symmetry improvement I} (cylindrical improvement theorem) and   Theorem \ref{cap improvement theorem} (cap improvement theorem). Then, for any $0<\eps'\ll \varepsilon$ small enough, by the sharp asymptotics in Proposition \ref{Rk=k asymptotics}, there exists $t_\ast>-\infty$, such that for $t\leq t_\ast$ every $(p,t)\in\mathcal{M}$ is $\eps'$-close either to a cylinder $\mathbb{R}^{k}\times S^{n-k}$ or to a piece of $\mathbb{R}^{k-1}$ times $(n-k+1)$ dimensional round bowl. For otherwise we blow up the flow $\mathcal{M}$ along a sequence of points $(p_{i}, t_{i})$ by mean curvature rescaling at this sequence of points. Depending on $H(p_i,t_i)\textrm{dist}(p_i,T_{t_i})$ the sequence of rescaled flows either converges to infinity or to a finite constant, where $T_{t_{i}}$ is the tip set of $M_{t_{i}}$, and by convexity and tip asymptotics in Proposition \ref{Rk=k asymptotics} the limit is either a cylinder $\mathbb{R}^{k}\times S^{n-k}$ or  a piece of  $\mathbb{R}^{k-1}$ times $(n-k+1)$ dimensional round bowl respectively, which gives a contradiction.
In particular, by our choice of constants any such point $(p, t)$ is $(\eps, n-k)$-symmetric. Hence, by cylindrical improvement and cap improvement for $t\leq t_\ast$ every $(p,t)\in\mathcal{M}$  is $(\eps/2, n-k)$-symmetric. Iterating this, we infer that given any positive integer $j$, for $t\leq t_\ast$ every $(p,t)\in\mathcal{M}$  is $(\eps/2^j, n-k)$-symmetric. Since $j$ is arbitrary, this implies that $M_t$ is $\mathrm{O}(n-k+1)$-symmetric for $t\leq t_\ast$. Finally, by uniqueness of closed smooth solutions of the mean curvature flow the $\mathrm{O}(n-k+1)$-symmetry is preserved forwards in time. This concludes the proof of the $\mathrm{O}(n-k+1)$ symmetry of $k$-ovals in $\mathbb{R}^{n+1}$ in Theorem \ref{compact_symmetry_oval}.
\end{proof}
In the end of this section,
 we give the proof of Corollary \ref{symmetr classification}.
\begin{proof}[Proof of Corollary \ref{symmetr classification}]
Let $M_{t}$ be any ancient noncollapsed  with    the cylindrical flow $\mathbb{R}^{k}\times S^{n-k}(\sqrt{2(n-k)|t|})$ as its tangent flow at $-\infty$ for some $1\leq k\leq n-1$, and we assume $M_{t}$ is not cylindrical. By the  $\mathrm{SO}(k)$ symmetry  on $\mathbb{R}^{k}$, we have the corresponding cylindrical matrix $Q$ has $\textrm{rk}(Q)=0$ or $\textrm{rk}(Q)=k$. If $\textrm{rk}(Q)=0$, this is equivalent to that the flow has dominant unstable mode by discussion in beginning of Section \ref{sec_quant_thm}. Hence there is a unique nonvanishing fine cylindrical vector  associated to the flow by \cite[Thm 6.4]{DH_blowdown}, which contradicts the $\mathrm{SO}(k)$ symmetry on $\mathbb{R}^{k}$. If $\textrm{rk}(Q)=k$, by Theorem \ref{compact_symmetry_oval} and $\mathrm{SO}(k)$ symmetry  on $\mathbb{R}^{k}$ we know that $M_{t}$ is
$\mathrm{SO}(k)\times
\mathrm{SO}(n-k+1)$ symmetric and has  $\mathbb{R}^{k}\times S^{n-k}(\sqrt{2(n-k)|t|})$ as its tangent flow at $-\infty$. By \cite[Thm 1.1]{DH_ovals}, up to time shift and parabolically dilation, $M_{t}$ is the unique $
\mathrm{O}(k)\times
\mathrm{O}(n-k+1)$ symmetric oval, which has $\mathbb{R}^{k}\times S^{n-k}(\sqrt{2(n-k)|t|})$ as its tangent flow at $-\infty$ and was constructed by White \cite{White_nature} and Haslhofer-Hershkovits \cite{HaslhoferHershkovits_ancient}. This completes the proof of the corollary.
\end{proof}

\section{Classification of fully-degenerate noncollapsed solutions}\label{rk0section}
In this section, we will give a proof of  Theorem \ref{Rk=0} (fully-degenerate case). Recall that  $\textrm{rk}(Q)=0$  is equivalent to that the unstable mode is dominant. To prove this result, we first introduce the definition of cylindrical scale which will be used in later proof.
\begin{definition}[$\eps$-cylindrical with scale $r$]\label{eps_cyl}
We say that $\mathcal M$ is \emph{$\varepsilon$-cylindrical around $X$ at scale $r$}, if the flow $\mathcal{M}_{X,r}$, which is obtained from $\mathcal M$ by translating $X$ to the space-time origin and parabolically rescaling by $1/r$,  is ($C^{\lfloor1/\varepsilon \rfloor}$ sense) $\varepsilon$-close in $B(0,1/\varepsilon)\times (-2,-1]$ to the evolution of a round shrinking cylinder $\mathbb{R}^k\times S^{n-k}(\sqrt{2(n-k)|t|})$ with axes through the origin.
\end{definition}
We fix a small enough parameter $\varepsilon>0$ quantifying the quality of the cylinders. Given $X=(x,t)\in\mathcal{M}$, we analyze the solution around $X$ at the diadic scales $r_j=2^j$, where $j\in \mathbb{Z}$. Using Huisken's monotonicity formula \cite{Huisken_monotonicity} and quantitative differentiation (see e.g. \cite{CHN_stratification}), for every $X\in \mathcal{M}$, we can find an integer $J(X)\in\mathbb{Z}$ such that
\begin{equation}\label{eq_thm_quant1}
\textrm{$\mathcal M$ is not $\varepsilon$-cylindrical around $X$ at scale $r_j$ for all $j<J(X)$},
\end{equation}
and
\begin{equation}\label{eq_thm_quant2}
\textrm{$\mathcal M$ is $\tfrac{\varepsilon}{2}$-cylindrical around $X$ at scale $r_j$ for all $j\geq J(X)+N$}.
\end{equation}
\begin{definition}[cylindrical scale]\label{def_Z(X)}
The \emph{cylindrical scale} of $X\in\mathcal{M}$ is defined by
\begin{equation}
Z(X)=2^{J(X)}.
\end{equation}
\end{definition}
Then, we start proving Theorem \ref{Rk=0} (fully-degenerate case).
\begin{proof}[Proof of Theorem \ref{Rk=0}]
We assume throughout the proof that the flow $\mathcal{M}$ is not a round shrinking $\mathbb{R}^k\times S^{n-k}$. As we have discussed at the  beginning of Section \ref{sec_quant_thm} our assumption $\mathrm{rk}(Q)=0$ is then equivalent to the assumption that in Proposition \ref{mz.ode.fine.bubble-sheet} (Merle-Zaag alternative) the unstable mode is dominant.
 Then by \cite[Thm 6.4]{DH_blowdown}(fine cylindrical theorem) there is a universal nonvanishing fine cylindrical vector $(a_{1},\dots, a_{k})\not=0$ associated to our flow $\mathcal{M}$, such that for any space-time point $X$ after suitable recentering in the $x_{k+1},\dots x_{n+1}$-subspace the profile function $u^X$ of the renormalized flow $\bar{M}_\tau^X$ centered at $X$ satisfies
\begin{equation}
u^X = e^{\tau/2}(a_1y_1+\dots, a_k y_k) + o(e^{\tau/2})
\end{equation}
for all $\tau\leq \tau_\ast(Z(X))$, depending only on an upper bound for the cylindrical scale $Z(X)$ as defined in Definition \ref{def_Z(X)}.\\

We will prove the Theorem \ref{Rk=0} by induction on the number of $\mathbb{R}$ factors in its tangent flow at $-\infty$. By the classification result in \cite[Thm 8.3]{CHHW} or \cite{BC2}, Theorem \ref{Rk=0} holds for all ancient noncollapsed solutions with neck  $\mathbb{R}\times {S}^{n-1}$ as tangent flow at $-\infty$ and dominant unstable mode. Then, we make the induction assumption that for all ancient noncollapsed solutions with $\mathbb{R}^{m}\times S^{n-m}$ as tangent flow at $-\infty$ and with dominant unstable mode, where $1\leq m\leq k-1$, Theorem \ref{Rk=0} holds and the solutions are $\mathbb{R}^{m-1}$ times $(n-m+1)$ dimensional round bowl whose translating direction is orthogonal to $\mathbb{R}^{m-1}$ factor. We aim to show that Theorem \ref{Rk=0} holds in the case $m=k$ to conclude the proof. Let $\mathcal{M}=\{M_t\}$ be any  ancient noncollapsed mean curvature flow in $\mathbb{R}^{n+1}$ whose tangent flow at $-\infty$ is given by $\mathbb{R}^{k}\times {S}^{n-k}$  and for which in the Merle-Zaag alternative the unstable mode is dominant. We first claim:
\begin{claim}\label{unstable noncompact max splitting}
  Under the induction assumption, all noncompact ancient noncollapsed solutions with dominant unstable mode and  $\mathbb{R}^{k}\times {S}^{n-k}$ as tangent flow at $-\infty$ must be $\mathbb{R}^{k-1}$ times $(n-k+1)$ dimensional round bowl soliton whose translating direction is orthogonal to $\mathbb{R}^{k-1}$ factor.
\end{claim}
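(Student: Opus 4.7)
The plan is to argue by contradiction, assuming $\mathcal{M}$ is noncompact but not of the form $\mathbb{R}^{k-1}\times N$ with $N$ an $(n{-}k{+}1)$-dimensional round translating bowl whose axis is orthogonal to $\mathbb{R}^{k-1}$. First I would exploit the fine cylindrical theorem \cite[Thm 6.4]{DH_blowdown} to fix a universal nonzero fine cylindrical vector $(a_1,\ldots,a_k)$ attached to $\mathcal{M}$, and after a rotation in the $\mathbb{R}^k$-factor assume it equals $(0,\ldots,0,a_k)$ with $a_k>0$. By Proposition \ref{slopecor} (asymptotic slope), any divergent sequence of space-time points in $\mathcal{M}$ must escape along the $\mathbb{R}^k$ cylindrical factor, so noncompactness together with the failure of the desired bowl structure will let me select two sequences $X_i^{(\alpha)}=(p_i^{(\alpha)},t_i)\in\mathcal{M}$ ($\alpha=1,2$) whose normalized $\mathbb{R}^k$-projections tend to distinct directions $v_1,v_2\in S^{k-1}$ with at least one of $v_1,v_2$ not parallel to $e_k$.

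Next I would blow up at each $X_i^{(\alpha)}$ by the mean curvature scale $1/H(X_i^{(\alpha)})$ and pass to a subsequential limit. Uniform $(k{+}1)$-convexity of $M_t$ from \cite[Lem 3.5]{DH_ovals} plus the global convergence theorem \cite[Thm 1.12]{HaslhoferKleiner_meanconvex} produces ancient noncollapsed limits $\mathcal{M}^{(\alpha)}$. A standard line-splitting argument, using that $|p_i^{(\alpha)}|H(X_i^{(\alpha)})\to\infty$ along the cylindrical factor, shows $\mathcal{M}^{(\alpha)}$ splits off at least one line in the $v_\alpha$-direction, so $\mathcal{M}^{(\alpha)}=\mathbb{R}\times \mathcal{M}^{(\alpha)\prime}$ where the quotient $\mathcal{M}^{(\alpha)\prime}$ has tangent flow $\mathbb{R}^{k-1}\times S^{n-k}$ at $-\infty$. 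To apply the induction hypothesis I must check $\mathcal{M}^{(\alpha)\prime}$ is also in the unstable-dominant regime; this is where I would use the fine cylindrical theorem again, observing that the orthogonal projection of the ambient vector $(0,\ldots,0,a_k)$ onto $\{v_\alpha\}^\perp\cap\mathbb{R}^k$ remains nonzero as long as $v_\alpha\neq\pm e_k$, and that this projected vector shows up as the fine cylindrical vector of $\mathcal{M}^{(\alpha)\prime}$. The induction hypothesis then forces $\mathcal{M}^{(\alpha)\prime}=\mathbb{R}^{k-2}\times N$, so $\mathcal{M}^{(\alpha)}=\mathbb{R}^{k-1}\times N$ with the bowl axis uniquely determined by the projected fine cylindrical vector, hence parallel to $e_k$.

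The contradiction now comes from comparing the two limits. Each $\mathcal{M}^{(\alpha)}$ is $\mathbb{R}^{k-1}$ times a round translating bowl, with the $\mathbb{R}^{k-1}$-factor containing $v_\alpha$ (since this was the splitting direction) but orthogonal to $e_k$ (since the bowl translates along $e_k$). Choosing the sequence $v_1$ with a nonzero component along $e_k$, which is possible unless all ends of $\mathcal{M}$ already lie in the hyperplane $\{e_k\}^\perp\cap\mathbb{R}^k$, yields an incompatibility. The remaining degenerate situation, in which every divergent sequence projects into $\{e_k\}^\perp\cap\mathbb{R}^k$, means the whole noncompactness is transverse to $e_k$, and combining the $\mathbb{R}^{k-1}$ structure of the blowup limits with the cap improvement theorem (Theorem \ref{cap improvement theorem}) and the Brendle-Choi classification of translating bowls \cite{BC2} will force the ambient $\mathcal{M}$ itself to split off $\mathbb{R}^{k-1}$ perpendicular to $e_k$ with bowl profile, contradicting our standing assumption.

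I expect the main obstacle to be the bookkeeping in the second step: namely, showing that the fine cylindrical vector of the blowup limit is genuinely the projection of the ambient one (rather than some a priori unrelated new vector), and ensuring the two blowup directions $v_1,v_2$ can be chosen to produce substantively different and thus incompatible limit structures. Handling the degenerate case where the natural sequence of blowup centers has projection parallel to $e_k$ will require a separate argument, propagating local $\mathrm{O}(n-k+1)$-symmetry from the cap region via Theorem \ref{Symmetry improvement I} and Theorem \ref{cap improvement theorem}, before finally excluding the compact case via a convexity and asymptotic-translator analysis.
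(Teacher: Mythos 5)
Your proposal captures the high-level strategy---use the fine cylindrical theorem to fix a universal nonvanishing cylindrical vector, take blowup limits at sequences escaping along the cylindrical factor, apply the induction hypothesis to recognize those limits as $\mathbb{R}^{k-1}$ times a bowl, and derive a contradiction---but there are gaps in the middle that the paper's argument is specifically designed to close, and the version you wrote down does not close them.

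First, you propose rescaling by $1/H(X_i^{(\alpha)})$. The paper deliberately only \emph{translates}: the fine cylindrical vector is a quantity attached to the unrescaled flow, and the whole point of the cylindrical scale bound $\sup_h Z(x_h^{\pm}) \leq C$ is to guarantee that pure translation, without any rescaling, already yields smooth subsequential limits to which \cite[Thm 6.4]{DH_blowdown} applies with \emph{exactly the same} fine cylindrical vector $(a_1,\dots,a_k)$ as $\mathcal{M}$. Rescaling introduces exactly the bookkeeping headache you yourself flag (``showing that the fine cylindrical vector of the blowup limit is genuinely the projection of the ambient one'') and this headache is avoidable.

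Second, and more seriously, you say ``select two sequences $X_i^{(\alpha)}$ whose normalized $\mathbb{R}^k$-projections tend to distinct directions $v_1,v_2\in S^{k-1}$ with at least one not parallel to $e_k$.'' Merely having the projection converge to a direction does not ensure the blowup limit splits off a line in that direction; you need the sequence to track a \emph{boundary ray} of the blowdown cone $\check K = \lim_{\lambda\to 0}\lambda K_{t_0}$. The paper's construction is engineered to produce exactly this: fix the plane $P=\mathbb{R} e_1\oplus\mathbb{R} e_2$ (with $e_1$ in the interior of $\check{K}$), and at each cross-section $\{x_1 = h\}$ take the points $x_h^{\pm}$ that maximize/minimize $x_2$. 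By convexity these lie asymptotically on $\partial\check K\cap P$, and the unrescaled translates then contain a line. Your proposal has no mechanism that forces line-splitting.

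Third, your contradiction (``$v_1$ must lie in the $\mathbb{R}^{k-1}$-factor of the limit, which is orthogonal to $e_k$, but $v_1$ has a nonzero $e_k$-component'') is not available in the case you call ``degenerate,'' and that degenerate case is in fact where the real work lies. The paper \emph{derives} (from the structure of the limit bowls) that the cylindrical vector must be perpendicular to both $L^{\pm}$, so $a_1=0$; that is, the blowdown direction is automatically transverse to $(a_1,\dots,a_k)$. Consequently a boundary ray of $\check{K}$ with nontrivial $e_k$-component is not something you can just choose, and the case you defer is the case you must actually handle. The paper resolves it differently: it shows that $\mathcal{M}^{\infty,+}=\mathcal{M}^{\infty,-}$ (same fine cylindrical vector, tip at the origin), while the max/min choice of $x_{h_i}^{\pm}$ forces $x_2\le 0$ on $M_0^{\infty,+}\cap\{x_1=0\}$ and $x_2\ge 0$ on $M_0^{\infty,-}\cap\{x_1=0\}$. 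Since the two limits coincide, the cross-section is forced into $\{x_1=x_2=0\}$, which is impossible for a $(k-2)$-plane times a strictly convex bowl. Your invocation of the cap improvement theorem and Brendle--Choi classification to resolve the degenerate case is a placeholder, not an argument, and it is precisely here that the paper's $\pm$ two-sided construction does the heavy lifting.

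In short: your outline identifies the correct ingredients (fine cylindrical vector, line-splitting blowups, induction) but omits the crucial unrescaled-translation device, omits the max/min cross-section construction that actually produces the line-split, and defers the hardest case (which turns out to be the generic one) to an unspecified appeal to symmetry improvement. Studying the paper's proof of the Assertion inside Claim~\ref{unstable noncompact max splitting}, particularly the role of the two rays $R^{\pm}$ and the equality $\mathcal{M}^{\infty,+}=\mathcal{M}^{\infty,-}$, would fill these gaps.
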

\begin{proof}[Proof of Claim \ref{unstable noncompact max splitting}]
We first make the following assertion below.  Claim \ref{unstable noncompact max splitting} follows immediately from the assertion and the induction assumption.

\textbf{Assertion:} Under the induction assumption, all noncompact ancient noncollapsed solutions with   $\mathbb{R}^{k}\times{S}^{n-k}$ as tangent flow at $-\infty$ and dominant unstable mode must split off a line.

\noindent Suppose towards a contradiction that the assertion does not hold. Namely there is a noncompact  ancient noncollapsed solution $M_{t}=\partial K_{t}$ with $\mathbb{R}^{k}\times {S}^{n-k}$ as tangent flow at $-\infty$ and with dominant unstable mode, but it does not split off a line. Suppose that its  blowdown $\check{K}=\lim_{\lambda \to 0}\lambda K_{t_{0}}$ is a $\ell$ dimensional strictly convex cone, where $1\leq \ell\leq k$. Choosing suitable coordinates, we may assume that $\check{K}\setminus\{0\}$ is contained in $(\mathbb{R}^{\ell}\times \{0 \})\cap \{x_1> 0\}$ and contains the positive $x_1$-axis.
Let $P$ be the $x_1x_2$-plane contained in $\mathbb{R}^{\ell}$. Observe that $\partial \check K\cap P$  consists of two rays $R^\pm$ with unit directional vectors $e_{\pm}$ satisfying $e_{+}\cdot e_{2}\geq 0$ and $e_{-}\cdot e_{2}\leq 0$, and  $R^{+}$ may possibly be identical to $R^{-}$.

By a space-time translation, we may also assume that $t_0=0$ and that $0\in M_0$ is the point in $M_0$ with smallest $x_1$-value. Now, for every $h>0$, let $x_h^{\pm}\in M_0\cap \{x_1=h\}\cap P$ be a point which maximizes/minimizes the value of $x_2$ in $K_0\cap \{x_1=h\}\cap P$. Then, we have
\begin{equation}\label{boundedZ}
\sup_h Z(x^{\pm}_h)\leq C<\infty.
\end{equation}
For otherwise if $Z(x^{\pm}_h,0)\to \infty$, arguing as in \cite[Claim 4.2]{DH_hearing_shape}, we can rescale the flow $\mathcal{M}$ by  $Z(x^{\pm}_h,0)^{-1}$ and shift $X_i^\pm$ to the origin  and pass through a subsequential limit flow.  By  \cite[Theorem 1.14]{HaslhoferKleiner_meanconvex}  the limit flow is an ancient noncollapsed flow with dominant unstable mode. The limit flow is not a cylinder by construction and definition of cylindrical scale. However,  $Z(x^{\pm}_h,0)^{-1}\to 0$ implies that the limit flow has vanishing fine cylindrical vector. This contradicts  the nonvanishing property of unique  fine cylindrical vector \cite[Thm 6.4]{DH_blowdown}.

Take $h_i\to \infty$ and consider the sequence $\mathcal{M}^{i,\pm}:=\mathcal{M}-(x_{h_i}^{\pm},0)$.
By definiton, we have $x_2\geq 0$ on $M_0^{i,-}\cap \{x_1=0\}$ and $x_2\leq 0$ on $M_0^{i,+}\cap \{x_1=0\}$.
By \eqref{boundedZ} and \cite[Theorem 1.14]{HaslhoferKleiner_meanconvex}, any subsequential limit $\mathcal{M}^{\infty,\pm}=\{ M_t^{\infty,\pm}\}$ is an ancient noncollapsed flow with $\mathbb{R}^k \times S^{n-k}$ as tangent flow at $-\infty$.
Moreover, since $\mathcal{M}^{i, \pm}$ is not rescaled and has dominant unstable mode, we see that $\mathcal{M}^{\infty,\pm}$ has  dominant unstable mode, with the same fine cylindrical vector $(a_1,\ldots , a_k)$ as our original flow $\mathcal{M}$. Namely,
\begin{equation}\label{a+-=}
    (a^{\infty, \pm}_{1}, \dots, a^{\infty, \pm}_{k})=(a_{1}, \dots, a_{k})\not=0.
\end{equation}
We observe that since $
{x_{h_i}^\pm }/  {\| x_{h_i}^\pm\|} \to e_{\pm} \in R^\pm$, the hypersurfaces
$M_0^{\infty,\pm}$  contain a line $L^{\pm}$ in direction $e_\pm$ respectively ($L_+$ and $L_{-}$ may both equal to $x_{1}$ axis when $\check{K}$ is a half line). Namely
\begin{equation}
    M_t^{\infty,\pm}=L^{\pm}\times \bar{N}^{\infty, \pm}_{t}.
\end{equation}
where $\bar{N}^{\infty \pm}_{t}$ is an ancient noncollapsed flow with $\mathbb{R}^{k-1} \times S^{n-k}$ as tangent flow at $-\infty$ and with dominant unstable mode.

By induction hypothesis and \eqref{a+-=}, $\bar{N}^{\infty \pm}_{t}$ is a $(k-2)$ dimensional subspace $P^{\pm, k-2}$ times a $(n-k+1)$ dimensional round translating bowl soliton  $\Sigma^{n-k+1}_{\pm, t}$. Therefore, $M_t^{\infty,\pm}$ is  a $(k-1)$ dimensional subspace $P^{\pm, k-1}$ times a $(n-k+1)$ dimensional round translating bowl soliton  $\Sigma^{n-k+1}_{\pm, t}$ with the same translating direction and fine cylindrical vector $(a_{1}, \dots, a_{k})$, which is orthogonal to the $P^{\pm, k-1}$ factor. This and the fact that $M^{\infty,\pm}_{0}$ has tip at space-time origin imply
\begin{align}\label{Minfty+=Minfty-}
  M_{t}^{\infty,+} = M_{t}^{\infty,-},
\end{align}
 $\Sigma^{n-k+1}_{\pm, t}=  \Sigma^{n-k+1}_{t}$ and $P^{\pm, k-1}= P^{k-1}$, where $\Sigma^{n-k+1}_{t}$ is  a $(n-k+1)$ dimensional round translating bowl soliton flow along $(a_{1},\dots, a_{k})$ direction  and  $P^{k-1}$ is a $(k-1)$ dimensional subspace. Moreover, $(a_1,...,a_k)$ must be perpendicular to both $L^{+}\subset P^{+, k-1}$ and $L^{-}\subset P^{-, k-1}$, which are lines in $x_1 x_2$ plane and may both equal to $x_{1}$ axis  when $\check{K}$ is a half line. This implies that $(a_1,...,a_k)$ must be perpendicular to the $x_1$ axis, i.e. $a_1=0$. Consequently, we have transversal intersection
\begin{align}\label{structure of x1 cross section}
   M_{t}^{\infty,+} \cap \{x_1=0\}=  M_{t}^{\infty,-}\cap \{x_1=0\} = P^{k-2}\times \Sigma^{n-k+1}_{t}.
\end{align}

On the other hand, the choice of $x_{h_i}^{\pm}$ implies $x_2 \geq 0$ on $M_{t}^{\infty,-} \cap \{x_1=0\}$ and  $x_2 \leq 0$ on $M_{t}^{\infty,-} \cap \{x_1=0\}$.
This and \eqref{Minfty+=Minfty-} imply that the $(n-1)$ dimensional strictly convex translating flow $M_{t}^{\infty,+} \cap \{x_1=0\}= M_{t}^{\infty,-}\cap \{x_1=0\}$ is contained in the $(n-1)$ dimensional subspace $\{x_1=x_2=0\}$, which is a contradiction to (\ref{structure of x1 cross section}). This completes the proof of the assertion and hence Claim \ref{unstable noncompact max splitting}.
\end{proof}
To finish the induction, we also need to rule out the compact solutions by the following claim.
\begin{claim}\label{unstabletononcompact}
  Under the induction assumption, for  all ancient noncollapsed solutions with dominant unstable mode and  $\mathbb{R}^{k}\times S^{n-k}$ as tangent flow at $-\infty$, if unstable mode above is dominant, then the solution is noncompact.
\end{claim}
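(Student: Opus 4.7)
The plan is to argue by contradiction. Suppose $\mathcal{M}=\{M_t\}$ is a compact ancient noncollapsed solution with tangent flow $\mathbb{R}^k\times S^{n-k}(\sqrt{2(n-k)|t|})$ at $-\infty$ and with dominant unstable mode. By the fine cylindrical theorem \cite[Thm 6.4]{DH_blowdown}, there is a nonvanishing fine cylindrical vector $(a_1,\dots,a_k)\neq 0$ associated to $\mathcal{M}$, independent of the chosen base space-time point. Set $a=(a_1,\dots,a_k,0,\dots,0)\in\mathbb{R}^{n+1}$, and for a sequence $t_i\to-\infty$ pick $p_i\in M_{t_i}$ achieving $\max_{x\in M_{t_i}}\langle x,a\rangle$. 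Consider the space-time translated flows $\mathcal{M}^i:=\mathcal{M}-(p_i,t_i)$.

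Exactly as in the proof of Claim \ref{unstable noncompact max splitting}, I would first establish the uniform bound $Z(p_i,t_i)\leq C$. Otherwise, rescaling by $Z(p_i,t_i)^{-1}$ and invoking \cite[Claim 4.2]{DH_hearing_shape} produces an ancient noncollapsed limit whose fine cylindrical vector is forced to vanish (being the original vector divided by $Z\to\infty$), contradicting the non-vanishing property guaranteed by \cite[Thm 6.4]{DH_blowdown}. Granting the $Z$-bound, \cite[Thm 1.14]{HaslhoferKleiner_meanconvex} yields, along a subsequence, $\mathcal{M}^i\to\mathcal{M}^\infty$, where $\mathcal{M}^\infty$ is ancient noncollapsed. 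Since no rescaling was performed, $\mathcal{M}^\infty$ has the same tangent flow $\mathbb{R}^k\times S^{n-k}$ at $-\infty$, dominant unstable mode, and unchanged fine cylindrical vector $(a_1,\dots,a_k)$.

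To see that $\mathcal{M}^\infty$ is noncompact, observe that the tangent flow of $\mathcal{M}$ at $-\infty$ being the full infinite cylinder forces $\bar M_\tau$ to have extent going to infinity in every nonzero $\mathbb{R}^k$-direction, in particular in direction $a$. Hence $M_{t_i}-p_i$ contains points with $\langle x,a\rangle\to-\infty$, and the limit $M^\infty_0$ is unbounded. Applying Claim \ref{unstable noncompact max splitting} to $\mathcal{M}^\infty$, we conclude $\mathcal{M}^\infty=P^{k-1}\times N_t$, where $N_t$ is a round $(n-k+1)$-dimensional bowl soliton translating in the direction $(a_1,\dots,a_k)$ orthogonal to the $(k-1)$-dimensional subspace $P^{k-1}$.

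On the other hand, by our choice of $p_i$, the origin $0\in M^\infty_0$ is the maximum of $\langle x,a\rangle$ on $M^\infty_0$, so $M^\infty_0\subset\{\langle x,a\rangle\leq 0\}$. Since the body of a round bowl extends from its tip into its translation direction, the translation direction of $N_t$ is forced to be $-a$, contradicting its identification with $a$ established above (as $a\neq 0$). This contradiction proves the claim. The main obstacle is the uniform $Z$-bound at the extremal points $(p_i,t_i)$ together with the identification of the fine cylindrical vector of a $\mathbb{R}^{k-1}\times$(bowl) translator with its translation direction; both follow from machinery already deployed in the proof of Claim \ref{unstable noncompact max splitting} and in \cite[Thm 6.4]{DH_blowdown}.
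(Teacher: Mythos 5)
Your proof is correct and structurally parallel to the paper's: establish a uniform bound on the cylindrical scale at extremal points, pass to a space-time translated limit, identify the limit as $\mathbb{R}^{k-1}$ times a bowl translator via Claim \ref{unstable noncompact max splitting}, and contradict the half-space containment forced by extremality. The one genuine difference is in how the contradiction is extracted. The paper blows up at \emph{two} extremal points, the max and the min of $x_1$, and obtains the contradiction from the two limits having the \emph{same} translating direction (since both are determined by the same fine cylindrical vector) while being contained in opposite half-spaces $\{x_1\le 0\}$ and $\{x_1\ge 0\}$; this way one never needs the \emph{sign} of the relation between the fine cylindrical vector and the translating direction, only that the former determines the latter. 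Your version instead blows up at a single extremal point in the direction $a$ itself, and the contradiction is that a bowl translating in the direction of a positive multiple of $a$ (as Claim \ref{unstable noncompact max splitting}'s proof identifies the translating direction with $a$) cannot have $\langle\cdot,a\rangle$ bounded above. This makes essential use of the positive proportionality between $a$ and the translating direction, which is indeed asserted in the proof of Claim \ref{unstable noncompact max splitting} and is readily checked by direct computation for $\mathbb{R}^{k-1}\times$bowl. Your route is slightly more economical (one blow-up instead of two), at the mild cost of having to invoke the sign of that identification explicitly; both arguments rest on the same inputs: the fine cylindrical theorem \cite[Thm 6.4]{DH_blowdown}, the cylindrical scale bound, \cite[Thm 1.14]{HaslhoferKleiner_meanconvex} for the compactness of the translated sequence and for $|p_i|\to\infty$, and Claim \ref{unstable noncompact max splitting} for the structure of the limit.
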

\begin{proof}[Proof of Claim \ref{unstabletononcompact}]
Suppose towards a contraction $\mathcal{M}=\{M_t\}$ is a compact ancient noncollapsed mean curvature flow. Now, considering any sequence $t_i\to -\infty$, by compactness we can find points $p_{t_i}^\pm\in M_{t_i}$ such that
\begin{equation}\label{choice_min_max}
x_1(p_{t_i}^-)=\min_{p\in M_{t_i}} x_1(p)\, ,\qquad x_1(p_{t_i}^+)=\max_{p\in M_{t_i}} x_1(p)\, .
\end{equation}
Then, by the same discussion of \eqref{boundedZ} as before, we have
\begin{equation}\label{cyl_scale_bd}
\sup_i Z(p^{\pm}_i,t_i)< \infty.
\end{equation}
Then, we consider the following sequence of flows
\begin{equation}
    \mathcal{M}^{\pm,i}=\mathcal{M}-(p^{\pm}_i,t_i) \, ,
\end{equation}
which is obtained by shifting in space-time without rescaling. By \cite[Theorem 1.14]{HaslhoferKleiner_meanconvex} we can pass to subsequential limits $\mathcal{M}^\pm$, which are ancient noncollapsed flows that are weakly convex and smooth until they become extinct.  By \cite[Thm 1.14 (2)]{HaslhoferKleiner_meanconvex} we have $|x_1(p_{t_i}^\pm)| \to \infty$ and  by \eqref{cyl_scale_bd}, we infer that $\mathcal{M}^\pm$
are noncompact and have $\mathbb{R}^{k}\times S^{n-k}$ as tangent flow at $-\infty$. Moreover, since $\mathcal{M}^{\pm, i}$ are not rescaled and have dominant unstable mode, $\mathcal{M}^\pm$ have dominant unstable mode with the same fine cylindrical vector $(a_1, \dots,a_k)$ as $\mathcal{M}$.

By Claim \ref{unstable noncompact max splitting}, $\mathcal{M}^{+}$ and $\mathcal{M}^{-}$
are both $\mathbb{R}^{k-1}$ times $(n-k+1)$ dimensional round bowl soliton. Moreover, they have the same translating direction and fine cylindrical vector $(a_1, \dots,a_k)$, which is perpendicular to the $(k-1)$ splitted lines of $\mathcal{M}^{\pm}$. In addition, the choice of $p^{\pm}_{t_i}$ gives $x_1\leq 0$ on  $M_{0}^{+}$ and $x_1\geq 0$ on  $M_{0}^{-}$. This implies $\mathcal{M}^{\pm}$ split off $(k-1)$ lines in the $x_{2},\dots, x_{k}$ coordinates subspace, so $\mathcal{M}^{\pm}$ translate in both positive $x_{1}$ direction and negative $x_{1}$ direction. This is a contradiction and thus concludes the proof of the Claim \ref{unstabletononcompact}.
\end{proof}

By Claim \ref{unstable noncompact max splitting} and Claim \ref{unstabletononcompact} we proved the conclusion of the Theorem \ref{Rk=0} for the case where the number of $\mathbb{R}$ factor in its cylindrical tangent flow at $-\infty$ is $k$ and finish the induction. This completes the proof of Theorem \ref{Rk=0}.
\end{proof}

Finally,   we give the proof of Corollary \ref{blowdown in unstable mode}.
\begin{proof}[Proof of Corollary \ref{blowdown in unstable mode}]
By Theorem \ref{Rk=0}, we know that all strictly convex ancient noncollapsed  solutions are round translating bowl. In particular their  tangent flow at $-\infty$ is $\mathbb{R}\times S^{n-1}(\sqrt{2(n-1)|t|})$ and their  blowdown is a  half line which  is independent of time.
\end{proof}
\bigskip
\begin{appendix}
\section{Some lemmas for symmetry improvement theorem}
\begin{lemma}\label{Lin Alg Cylinder A}
  Let $\mathcal{K}=\{K_{\alpha}, 1\leq\alpha\leq \frac{(n-k+1)(n-k)}{2}\}$
  be  a  normalized set of rotation vector fields in the form of
  \begin{align}
    K_{\alpha}=SJ_{\alpha}S^{-1}(x-q),
  \end{align}
  where
  $\{J_{\alpha}, 1\leq\alpha\leq \frac{(n-k+1)(n-k)}{2}\}$ is an
  orthonormal basis of $so(n-k+1)\subset so(n+1)$ under the natural identification as in \eqref{J, Jbar} and $S\in \mathrm{O}(n+1), q\in\mathbb{R}^{n+1}$.

  Suppose that either one of the following cases happens:
  \begin{enumerate}
    \item on the cylinder $\mathbb{R}^k\times S^{n-k}\subset \mathbb{R}^{n+1}$, we have:
  \begin{itemize}
     \item $\left<K_{\alpha},\nu\right> = 0$ in $B_{g}(p, 1)$ for each $\alpha$,
     \item $\max_{\alpha}|K_{\alpha}| H \leq 5n$ in
      $B_{g}(p, 10n^{2})$,
  \end{itemize}
    \item on $\mathbb{R}^{k-1}\times \Sigma\subset\mathbb{R}^{n+1}$, where $\Sigma$ is the unique $(n-k+1)$ dimensional round bowl, we have:
  \begin{itemize}
     \item $\left<K_{\alpha},\nu\right> = 0$ in $B_{g}(p, 1)$ for each $\alpha$,
     \item $\max_{\alpha}|K_{\alpha}| H \leq 5n$ at $p$,
  \end{itemize}
  \end{enumerate}
  where $\nu$ is the normal vector, $g$ is the induced metric and
  $p$ is arbitrary point on the above  hypersurfaces.   Then $S,q$ can be chosen such that
  \begin{itemize}
    \item $S\in
\mathrm{O}(n-k+1)\subset \mathrm{O}(n+1)$,
    \item $q=0$.
  \end{itemize}
  In particular, for each orthonormal basis $\{J'_{\alpha}, 1\leq\alpha\leq \frac{(n-k+1)(n-k)}{2}\}$ of $so(n-k+1)$ there is a basis transform matrix
  $(\omega_{\alpha\beta})\in \mathrm{O}(\frac{(n-k+1)(n-k)}{2})$ such that
  $K_{\alpha}  = \sum_{\beta=1}^{\frac{(n-k+1)(n-k)}{2}}\omega_{\alpha\beta} J'_{\beta}x$.
\end{lemma}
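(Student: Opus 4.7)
The plan is to analyze the block decomposition of $S \in \mathrm{O}(n+1)$ relative to the splitting $\mathbb{R}^{n+1} = \mathbb{R}^k \oplus \mathbb{R}^{n-k+1}$ induced by the form of $J_\alpha$. Writing $S = \begin{pmatrix} A & B \\ C & D\end{pmatrix}$, a direct computation using $S^{-1} = S^T$ gives
\[
M_\alpha := SJ_\alpha S^{-1} = \begin{pmatrix} B\bar J_\alpha B^T & B\bar J_\alpha D^T \\ D\bar J_\alpha B^T & D\bar J_\alpha D^T \end{pmatrix}.
\]
In case (1), parametrize the cylinder as $x = (z, \sqrt{2(n-k)}\,\omega)$ with unit normal $\nu = (0,\omega)$. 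The antisymmetry of the diagonal blocks (so that $\omega^T D\bar J_\alpha D^T \omega = 0$) reduces $\langle M_\alpha(x-q),\nu\rangle = 0$ to
\[
\omega^T D\bar J_\alpha B^T (z-q^{(1)}) \;+\; \omega^T D\bar J_\alpha D^T q^{(2)} \;=\; 0
\]
for all $(z,\omega)$ in an open subset and for all $\alpha$. Isolating the linear-in-$z$ part forces $B\bar J_\alpha D^T = 0$ for every $\alpha$; the residual equation then forces $\omega^T D\bar J_\alpha D^T q^{(2)} = 0$ for every $\omega$ in an open subset of $S^{n-k}$ and for every $\alpha$.

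The heart of the proof is the linear-algebra step: $B\bar J_\alpha D^T = 0$ for every element of an orthonormal basis of $so(n-k+1)$ implies $B = 0$ or $D = 0$. Specializing $\bar J_\alpha$ to the elementary antisymmetric generators $e_i e_j^T - e_j e_i^T$ yields $b_i d_j^T = b_j d_i^T$ for the column vectors $b_i$ of $B$ and $d_j$ of $D$. A rank argument splits into either a degenerate case in which both $B$ and $D$ simultaneously collapse to rank one with aligned column profiles—ruled out by $B^T B + D^T D = I_{n-k+1}$ having rank $n-k+1 \geq 2$—or a branch in which one of $B,D$ vanishes. The case $D = 0$ is then eliminated using the norm bound: $M_\alpha$ would reduce to $\mathrm{diag}(B\bar J_\alpha B^T, 0)$, so $K_\alpha(x) = (B\bar J_\alpha B^T(z-q^{(1)}),0)$ grows linearly in $|z-q^{(1)}|$ along the $\mathbb{R}^k$-factor; but on the intrinsic ball $B_g(p, 10n^2)$ the coordinate $z$ sweeps a Euclidean disk of radius $\sim 10n^2$, which together with $|K_\alpha| H \leq 5n$ and $H = \sqrt{(n-k)/2}$ forces a uniform bound on $|z - q^{(1)}|$ incompatible with that range. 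Hence $B = 0$, and orthogonality of $S$ then forces $C = 0$ and $A \in \mathrm{O}(k), D \in \mathrm{O}(n-k+1)$.

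To reduce to the stated canonical form, replace $S$ by $S\,(A^{-1}\oplus I_{n-k+1})$: this leaves every $M_\alpha$ invariant since $A^{-1}\oplus I$ commutes with each $J_\alpha$ (whose top-left block is zero), so without loss of generality $S = I_k \oplus D \in \mathrm{O}(n-k+1) \subset \mathrm{O}(n+1)$. For $q$: since $\{\bar J_\alpha\}$ has trivial common kernel on $\mathbb{R}^{n-k+1}$ (for $n-k+1 \geq 2$), the condition $D\bar J_\alpha D^T q^{(2)} = 0$ for all $\alpha$ combined with invertibility of $D$ gives $q^{(2)} = 0$; and since $B = C = 0$ makes $M_\alpha q$ independent of $q^{(1)}$, we may set $q^{(1)} = 0$ as well. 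The basis-transformation statement then follows immediately: for any other orthonormal basis $\{\bar J'_\beta\}$ of $so(n-k+1)$, the matrix $\omega_{\alpha\beta} = \langle D\bar J_\alpha D^T, \bar J'_\beta\rangle$ is orthogonal because conjugation by $D \in \mathrm{O}(n-k+1)$ is an isometry of the trace inner product on $so(n-k+1)$, and yields $K_\alpha(x) = \sum_\beta \omega_{\alpha\beta} J'_\beta x$.

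Case (2) runs along the same lines after noting that on $\mathbb{R}^{k-1}\times \Sigma$ near $p$ the normal $\nu = (0_{k-1}, \nu^\Sigma)$ sweeps an open subset of the unit sphere of $\mathbb{R}^{n-k+2}$ (the $\mathrm{O}(n-k+1)$-symmetry of the bowl combined with nondegenerate axial curvature supplies the angular variation needed to reproduce the $z$-$\omega$ argument from case (1)). The $D = 0$ branch is again excluded: if $D = 0$ then $K_\alpha$ is confined to the first $k$ coordinates, but the normal to the bowl has a persistent component along the bowl axis (one of those coordinates) near $p$, so $\langle K_\alpha,\nu\rangle = 0$ throughout $B_g(p,1)$ together with $|K_\alpha|H \leq 5n$ at $p$ and variation of $z$ in the ball forces a contradiction. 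The main obstacle is the rank/orthogonality dichotomy in the central linear-algebra step, together with verifying in case (2) that the image of $\nu^\Sigma$ over $B_g(p,1)$ really is open in $S^{n-k+1}$, which is where the precise geometry of the round bowl soliton enters.
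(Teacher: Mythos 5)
Your proposal is correct and follows essentially the same route as the paper's proof of Lemma~\ref{Lin Alg Cylinder A}: compute the block form of $SJ_\alpha S^{-1}$, use $\langle K_\alpha,\nu\rangle=0$ on an open set of the cylinder to kill the off-diagonal block, invoke the norm bound $|K_\alpha|H\leq 5n$ on $B_g(p,10n^2)$ to exclude the branch where the $\mathbb{R}^k$-block is the nonzero one, deduce $S\in\mathrm{O}(k)\times\mathrm{O}(n-k+1)$ and absorb the $\mathrm{O}(k)$ factor, then get $q=0$; the final basis-transform claim follows since conjugation by $D\in\mathrm{O}(n-k+1)$ is an isometry of $so(n-k+1)$. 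The two main cosmetic differences are worth flagging. (i) The paper reduces WLOG to elementary generators $J_\alpha$, so each $SJ_\alpha S^{-1}$ has rank $2$ and the block-diagonal form \emph{per $\alpha$} forces one of the two diagonal blocks to vanish outright; you instead derive the off-diagonal vanishing $B\bar J D^T=0$ for \emph{all} $\bar J\in so(n-k+1)$ and run a global rank-one dichotomy that must then be closed using $B^TB+D^TD=I_{n-k+1}$. Your version is correct (the case where $B,D$ both collapse to rank one with common kernel forces $B^TB+D^TD$ to have rank $\leq 1<n-k+1$), but it is an extra step the paper avoids. (ii) Your exclusion of $D=0$ via ``a uniform bound on $|z-q^{(1)}|$'' should be phrased more carefully: $B\bar J_\alpha B^T$ has a nontrivial kernel when $k>2$, and the location of $q^{(1)}$ is a priori unknown, so the bound on $|K_\alpha|$ only controls the projection of $z-q^{(1)}$ onto $(\ker B\bar J_\alpha B^T)^\perp$. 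The paper sidesteps both issues cleanly by noting that
\begin{equation*}
  K_\alpha(p+x)-K_\alpha(p-x)=2\,B\bar J_\alpha B^T x
\end{equation*}
is manifestly $q$-independent, so choosing $x$ in the $\mathbb{R}^k$-factor with $|x|=10n^2$ along a top singular direction of $B\bar J_\alpha B^T$ immediately violates $|K_\alpha|H\leq 5n$; you should adopt that symmetrized-difference trick. Your treatment of $q$ and your sketch of case~(2) match the paper's.
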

\begin{proof}\textbf{Case (1)}:  We use the coordinate $(x_1,...x_{n+1})$ in $\mathbb{R}^{n+1}$ and let the cylinder $\mathbb{R}^k\times S^{n-k}$  be
 represented by $\{x_{k+1}^2+...+x_{n+1}^2=1\}$.

 Now $\langle K_{\alpha},\nu\rangle =0$ is equivalent to:
 \begin{align}\label{Lin Alg 1}
   \nu^{T}SJ_{\alpha}S^{-1}(x-q)=0.
 \end{align}

 Without loss of generality, we may assume that each $J_{\alpha}$ only has two nonzero elements $(J_{\alpha})_{ij} = 1$, $(J_{\alpha})_{ji} = -1$ for some $k+1\leq j<i\leq n+1$.
Then, we choose $q$ such that
 \begin{align}\label{LinAlg Condition for q}
   q\perp \bigcap_{\alpha}\ker (SJ_{\alpha}S^{-1}).
 \end{align}

 Note that $SJ_{\alpha}S^{-1}$ is anti-symmetric and $\nu^{T}=(0,...,0, x_{k+1},...,x_{n+1})$
 on $\mathbb{R}^k\times S^{n-k}$ , (\ref{Lin Alg 1}) is equivalent to
 \begin{align}\label{LinAlg2}
   \sum_{k+1\leq i\leq n+1} x_i \left(\sum_{l=1}^{k}(SJ_{\alpha}S^{-1})_{il}x_l - (SJ_{\alpha}S^{-1}q)_{i}\right) =  0.
 \end{align}
 Since this holds on an open set of the cylinder, we obtain that
 \begin{align}
   (SJ_{\alpha}S^{-1})_{il} =&0, \text{ \  } i=k+1,...,n+1,\ l=1,...,k \label{LinAlg3}\\
   (SJ_{\alpha}S^{-1}q)_{i} =&0, \text{ \  } i=k+1,...,n+1  \label{LinAlg4}
 \end{align}
 hold for each $1\leq\alpha \leq\frac{(n-k+1)(n-k)}{2} $.
 In other words, for each $\alpha$ we have
\begin{align}\label{block decompostion}
       SJ_{\alpha}S^{-1} =\begin{bmatrix}
    B_{\alpha} & 0\\
     0 & A_{\alpha}
    \end{bmatrix}
\end{align}
for some anti-symmetric $(n-k+1)\times (n-k+1)$ matrix $A_{\alpha} $ and  anti-symmetric $ k\times k$ matrix $B_{\alpha}$. Since rank($ SJ_{\alpha}S^{-1}$) = 2, we must have either $A_{\alpha} = 0$ or $B_{\alpha} = 0$. We claim that $A_{\alpha}\not =0$ for all $1\leq\alpha \leq\frac{(n-k+1)(n-k)}{2} $. Indeed, if  $A_{\alpha} = 0$ for some $\alpha$, then rank($B_{\alpha}$) = 2 and we have
\begin{align}
    \sup\limits_{B_{g}(p,10n^2)}|K_{\alpha}| &\geq  \sup\limits_{B_{g}(p,10n^2)\cap \mathbb{R}^{k}} \frac{1}{2} |K_{\alpha}(p+x)-K_{\alpha}(p-x)|\\\nonumber
    &= \sup\limits_{B^k(0,10n^2)}|B_{\alpha}x|=10n^2,
\end{align}
where $B^k(0,10n^2)$ denotes the ball in $\mathbb{R}^k$ with center at the origin and  radius $10n^2$, and the last equality uses the fact that $B_{\alpha}$ has rank $2$ and  norm  upper bound $1$. However, this contradicts our assumption on $K_{\alpha}$.
Hence, we have $B_{\alpha} =0$ for all $1\leq\alpha \leq\frac{(n-k+1)(n-k)}{2} $. This gives
\begin{align}
   (SJ_{\alpha})_{il} =&0, \text{ \  } i=1, \dots, k
\end{align}
for all $1\leq\alpha \leq\frac{(n-k+1)(n-k)}{2} $, which implies
\begin{align}
    S_{il} =&0, \text{ \  } i=1, \dots, k,  \quad  l= k+1, \dots,n+1.
\end{align}
Since $S\in \mathrm{O}(n+1)$, we have $S \in
\mathrm{O}(k)\times
\mathrm{O}(n-k+1) \subset \mathrm{O}(n+1)$. Namely
\begin{align}
       S  =\begin{bmatrix}
    S_1 & 0\\
     0 & S_2
    \end{bmatrix}
\end{align}
where $S_1 \in
\mathrm{O}(k)$ and $S_2\in
\mathrm{O}(n-k+1)$.

Hence,
\begin{align}
    SJ_{\alpha}S^{-1} =\begin{bmatrix}
    Id & 0\\
     0 & S_2
    \end{bmatrix}J_{\alpha}\begin{bmatrix}
    Id & 0\\
     0 &   S_2^{-1}
    \end{bmatrix},
\end{align}
which means $S$ can be chosen to be in $
\mathrm{O}(n-k+1)\subset
\mathrm{O}(n+1)$. Namely,
\begin{align}
       S  =\begin{bmatrix}
   Id & 0\\
     0 & S_2
    \end{bmatrix}
\end{align}
This, \eqref{LinAlg4} and \eqref{LinAlg Condition for q} imply that $q=0$.

Finally, $\{SJ_{\alpha}S^{-1}, 1\leq \alpha \leq \frac{(n-k+1)(n-k)}{2}\}$ forms an orthonormal basis of $so(n-k+1)$. The last claim follows immediately.

\textbf{Case (2):}  Arguing similarly as in \cite{Zhu2}, we obtain that
 \begin{align}\label{Lin Alg II.6}
      (SJ_{\alpha}S^{-1})_{il}=&0 \\
      (SJ_{\alpha}S^{-1}q)_i=&0
 \end{align}
 for all $k\leq i\leq n+1$ and $1\leq l \leq k$.

 Consequently,  we can argue similarly as in the case (1), the only difference is that
 the last row and the last column of $B_{\alpha}$ in the block decomposition (\ref{block decompostion})  vanishes identically. However, this is even stronger than what we concluded, so we can follow the argument exactly as in the case (1) to obtain the result.
\end{proof}

Then, we  have the following lemma.
\begin{lemma}\label{Lin Alg Cylinder B}
  Let  $\{J_{\alpha}, 1\leq\alpha\leq \frac{(n-k+1)(n-k)}{2}\}$ be an
  orthonormal basis of $so(n-k+1)\subset so(n+1)$, vector $b\perp \bigcap_{\alpha} ker(J_{\alpha})$ and $A\in so(n+1)$ such that
  $A\perp so(k)\oplus so(n-k+1)$.
 Suppose that on the cylinder $\mathbb{R}^k\times S^{n-k}\subset \mathbb{R}^{n+1}$  we have
  \begin{align}\label{[AJb]}
      \left<[A,J_{\alpha}]x+J_{\alpha}b,\nu\right> = 0
  \end{align}
in $B_{g}(p, 1)$  for each $\alpha$, where  $g$ is the induced metric on cylinder. Then $A=0$ and $b=0$.
\end{lemma}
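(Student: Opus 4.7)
My plan is to reduce the vanishing condition to an elementary identity by exploiting the block structure of $A$ and $J_\alpha$ together with the explicit form of the normal on the cylinder. Writing coordinates as $x=(z,y)\in\mathbb{R}^{k}\times\mathbb{R}^{n-k+1}$ (with the cylinder given by $|y|^{2}$ equal to a constant), the outward unit normal is proportional to $(0,y)$, and the constraint $A\perp so(k)\oplus so(n-k+1)$ forces $A$ to have the off-diagonal block form
\begin{equation*}
A=\begin{pmatrix} 0 & B\\ -B^{T} & 0\end{pmatrix},\qquad B\in\mathbb{R}^{k\times(n-k+1)}.
\end{equation*}
Combined with $J_{\alpha}=\mathrm{diag}(0,\bar{J}_{\alpha})$, a direct matrix computation gives
\begin{equation*}
[A,J_{\alpha}]x+J_{\alpha}b=\begin{pmatrix} B\bar{J}_{\alpha}y\\ \bar{J}_{\alpha}B^{T}z+\bar{J}_{\alpha}b''\end{pmatrix},
\end{equation*}
where $b=(b',b'')$.

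Next I would pair this vector with $\nu\propto(0,y)$: the first block drops out, leaving the polynomial identity
\begin{equation*}
y^{T}\bar{J}_{\alpha}B^{T}z+y^{T}\bar{J}_{\alpha}b''=0
\end{equation*}
on an open subset of the cylinder. Since this holds on an open set of $(z,y)$, and the expression is affine in $z$ for each fixed $y$, the two summands must vanish separately. By analytic extension in $y$ across the sphere $S^{n-k}$, this yields two algebraic identities valid for all $y\in\mathbb{R}^{n-k+1}$ and every $\alpha$:
\begin{equation*}
B\bar{J}_{\alpha}y=0\quad\text{and}\quad y^{T}\bar{J}_{\alpha}b''=0.
\end{equation*}

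Finally I would conclude as follows. Because $\{\bar{J}_{\alpha}\}$ is a basis of $so(n-k+1)$ with $n-k+1\geq 2$, the images $\bar{J}_{\alpha}y$ collectively span $\mathbb{R}^{n-k+1}$, so $B=0$ and hence $A=0$. The second identity says $\bar{J}_{\alpha}b''=0$ for all $\alpha$, forcing $b''\in\bigcap_{\alpha}\ker(\bar{J}_{\alpha})=\{0\}$; and the hypothesis $b\perp\bigcap_{\alpha}\ker(J_{\alpha})=\mathbb{R}^{k}\oplus\{0\}$ forces $b'=0$. Thus $b=0$ as required. The computation is almost entirely bookkeeping once the block decomposition is set up; the only step requiring care is the separation of the two summands in the polynomial identity, which is transparent after freezing $y$ and then using the open $z$-ball. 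No step poses a real obstacle.
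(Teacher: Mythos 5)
Your proof is correct and follows essentially the same route as the paper: both rely on the block decomposition of $A$ and $J_\alpha$, the form of the normal $\nu\propto(0,y)$ on the cylinder, and the observation that vanishing of the resulting affine-in-$z$, linear-in-$y$ polynomial on an open patch forces $\bar J_\alpha B^T=0$ and $\bar J_\alpha b''=0$ (equivalently $[A,J_\alpha]=0$ and $J_\alpha b=0$), from which $A=0$ and $b=0$ follow by spanning/kernel considerations. The only cosmetic difference is that you extract $B=0$ directly from $B\bar J_\alpha y=0$, whereas the paper records the intermediate conclusion $[A,J_\alpha]=0$ first; just note that for a fixed $y$ the vectors $\bar J_\alpha y$ span only $y^\perp$, so the span of the whole family requires letting $y$ vary (which you effectively do after extending the identity to all $y$).
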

\begin{proof}
Since the quadratic function in \eqref{[AJb]} vanishes on open domain of cylinder, we have
\begin{equation}
    [A, J_{\alpha}]=0\quad J_{\alpha}b=0,
\end{equation}
where $b\perp\bigcap_{\alpha} ker(J_{\alpha})$, $A\in so(n+1)$ such that $A\perp so(k)\oplus so(n-k+1)$ and  $1\leq\alpha\leq \frac{(n-k+1)(n-k)}{2}$.

Since $b\perp\bigcap_{\alpha} ker(J_{\alpha})$ and $J_{\alpha}b=0$ for $1\leq\alpha\leq \frac{(n-k+1)(n-k)}{2}$, we have $b=0$. By assumption, $J$ and $A$ can be written a the following form:
\begin{align}\label{[A,J]=0}
        J  =\begin{bmatrix}
    0 & 0\\
     0 &  \tilde{J}
    \end{bmatrix}\in so(n+1)
    \quad
     A  =\begin{bmatrix}
    0 & -\tilde{A}^{t}\\
     \tilde{A}^{t} & 0
    \end{bmatrix}\in so(n+1),
\end{align}
where $\tilde{J} \in so(n-k+1)$. Then by $[A, J]=0$, we obtain $\tilde{J}A^{t}=0$ holds for all $\tilde{J} \in so(n-k+1)$. This implies $A=0$.
\end{proof}
 \begin{lemma}\label{Vector Field closeness on genearalized cylinder_appendix}
  There exist constants $0 < \varepsilon_{c} \ll 1$ and $C>1$ depending only on dimension $n$
  with the following properties. Let $M$ be a hypersurface in $\mathbb{R}^{n+1}$
  which is $\varepsilon_c$-close (in $C^3$ norm) to a geodesic ball  of radius  $20n^{5/2}\sqrt{\frac{2}{n-k}}$ in
  $\mathbb{R}^k\times S^{n-k}(\sqrt{2(n-k)})$
     and
  $\bar{x}\in M $ be a point that is $\varepsilon_c$-close to  $\mathbb{R}^k\times S^{n-k}(\sqrt{2(n-k)})$.
  Suppose that $\varepsilon \leq \varepsilon_c$ and
  $\mathcal{K}^{(1)} = \{K^{(1)}_{\alpha}: 1 \leq \alpha \leq \frac{(n-k+1)(n-k)}{2}\}$,
  $\mathcal{K}^{(2)} = \{K^{(2)}_{\alpha}: 1 \leq \alpha \leq \frac{(n-k+1)(n-k)}{2}\}$, are two    normalized sets of rotation vector fields such that:
  \begin{itemize}
    \item $\max_{\alpha}|\left<K_{\alpha}^{(i)}, \nu\right>|H \leq \varepsilon$ in
    $B_g(\bar{x}, H^{-1}(\bar{x}))\subset M$,
    \item $\max_{\alpha}|K_{\alpha}^{(i)}|H \leq 5n$ in $B_{g}(\bar{x}, 10n^{5/2}H^{-1}(\bar{x}))\subset M$,
  \end{itemize}
  for $i = 1 , 2$, where $g$ denotes the induced metric on $M$ by embedding.
  Then for any $L > 1$, we have
  \begin{align}
    \inf\limits_{(\omega_{\alpha\beta})\in \mathrm{O}(\frac{(n-k+1)(n-k)}{2})}\sup\limits_{B_{LH(\bar{x})^{-1}}(\bar{x})}\max_{\alpha}
    |K_{\alpha}^{(1)}-\sum_{\beta = 1}^{\frac{(n-k+1)(n-k)}{2}}\omega_{\alpha\beta}K^{(2)}_{\beta}|H(\bar{x})
    \leq CL\varepsilon.
  \end{align}
\end{lemma}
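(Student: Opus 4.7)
The plan is to prove the lemma by a compactness and contradiction argument combined with Lemma \ref{Lin Alg Cylinder A} and its infinitesimal analogue Lemma \ref{Lin Alg Cylinder B}. First, I rescale so that $H(\bar{x})=1$, and reduce to the case $L=1$: each difference $D_\alpha(\omega)(x):=K_\alpha^{(1)}(x)-\sum_\beta \omega_{\alpha\beta}K_\beta^{(2)}(x)$ is an affine map $\mathbb{R}^{n+1}\to\mathbb{R}^{n+1}$, and a $C^0$-bound $\delta$ on $B_1(\bar{x})$ forces its linear part to have operator norm $\le 2\delta$ by difference quotients, hence yields a bound $3L\delta$ on $B_L(\bar{x})$ for all $L\ge 1$. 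Thus it suffices to find $\omega$ with $\sup_{B_1(\bar{x})}\max_\alpha|D_\alpha(\omega)|\le C\varepsilon$.

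Suppose this fails. Then there exist sequences $\varepsilon_c^j\to 0$, hypersurfaces $M_j$ that are $\varepsilon_c^j$-close in $C^3$ to the cylinder $\Gamma:=\mathbb{R}^k\times S^{n-k}(\sqrt{2(n-k)})$, points $\bar{x}_j\in M_j$, parameters $\varepsilon_j\le\varepsilon_c^j$, and pairs $\mathcal{K}^{(i),j}=\{S^{(i),j}J_\alpha(S^{(i),j})^{-1}(x-q^{(i),j})\}$ satisfying the hypotheses, with $\delta_j:=\min_\omega \sup_{B_1(\bar{x}_j)}\max_\alpha|D_\alpha^j(\omega)|>j\varepsilon_j$. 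After a harmless reparameterization of each $q^{(i),j}$ along the approximate cylindrical axis, compactness of $\mathrm{O}(n+1)$ and $\mathrm{O}(\tfrac{(n-k+1)(n-k)}{2})$ together with the pointwise bounds allow me to extract subsequential limits $S^{(i),\infty}$, $q^{(i),\infty}$, and $\omega^j\to\omega^\infty$. The limit vector fields $K_\alpha^{(i),\infty}$ satisfy $\langle K_\alpha^{(i),\infty},\nu_\Gamma\rangle\equiv 0$ on $B_g(\bar{x}_\infty,1)$, so Lemma \ref{Lin Alg Cylinder A} lets me further reparameterize so that $S^{(i),\infty}\in \mathrm{O}(n-k+1)$ in block form and $q^{(i),\infty}=0$. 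Both $\mathcal{K}^{(i),\infty}$ then become orthonormal bases of $so(n-k+1)$ rotations, and $\omega^\infty$ satisfies $K_\alpha^{(1),\infty}=\sum_\beta \omega^\infty_{\alpha\beta}K_\beta^{(2),\infty}$ identically; in particular $\delta_j\to 0$.

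For the contradiction, I perform a blow-up analysis: let $\hat D_\alpha^j:=D_\alpha^j(\omega^j)/\delta_j$ with $\omega^j$ a minimizer. These affine vector fields satisfy $\sup_{B_1(\bar{x}_j)}\max_\alpha|\hat D_\alpha^j|=1$ with linear parts of operator norm at most $2$, so a subsequential limit $\hat D_\alpha^\infty$ exists, is not identically zero, and satisfies $\langle \hat D_\alpha^\infty,\nu_\Gamma\rangle=0$ on $B_g(\bar{x}_\infty,1)$ (since $|\langle \hat D_\alpha^j,\nu_j\rangle|\le C\varepsilon_j/\delta_j\to 0$). A first-order expansion of $D_\alpha^j(\omega^j)$ in the parameters around the limit yields
\[
\hat D_\alpha^\infty(x) \;=\; [\hat A, J_\alpha^\infty]\,x \;-\; J_\alpha^\infty \hat b \;+\; \sum_\beta \hat \omega_{\alpha\beta}\, K_\beta^{(2),\infty}(x),
\]
where $J_\alpha^\infty:=S^{(2),\infty}J_\alpha(S^{(2),\infty})^{-1}$, $\hat A\in so(n+1)$ and $\hat b\in\mathbb{R}^{n+1}$ are the rescaled limits of the parameter differences, and $\hat\omega\in so(\tfrac{(n-k+1)(n-k)}{2})$ comes from $(\omega^j-\omega^\infty)/\delta_j$; after absorbing trivial reparameterizations, $\hat A$ can be taken perpendicular to $so(k)\oplus so(n-k+1)$ and $\hat b$ perpendicular to $\bigcap_\alpha\ker J_\alpha^\infty$. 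Lemma \ref{Lin Alg Cylinder B} then forces $\hat A=0$ and $\hat b=0$, while the remaining term $\sum_\beta \hat\omega_{\alpha\beta}K_\beta^{(2),\infty}$ must also vanish because otherwise replacing $\omega^j$ by $\omega^j\exp(-\delta_j\hat\omega)$ would decrease $\delta_j$ to leading order, contradicting the minimality of $\omega^j$. Hence $\hat D_\alpha^\infty\equiv 0$ for all $\alpha$, contradicting the normalization. The main obstacle is bookkeeping the first-variation expansion precisely enough to recognize the form to which Lemma \ref{Lin Alg Cylinder B} applies, and then leveraging the optimality of $\omega^j$ to eliminate the infinitesimal orthogonal transformation component in the limit.
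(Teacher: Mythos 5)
Your strategy --- contradiction, compactness, Lemma~\ref{Lin Alg Cylinder A} to characterize the limit, Lemma~\ref{Lin Alg Cylinder B} to kill the rescaled difference --- is the same as the paper's, but the execution differs. The paper does not take a minimizing $\omega^j$: after normalizing so that $S_{(i,j)}\to\mathrm{Id}$, it writes $S_{(1,j)}^{-1}S_{(2,j)}=\exp(A_j)S_j$ with $S_j$ preserving the $\mathbb{R}^k\oplus\mathbb{R}^{n-k+1}$ splitting and $A_j\perp so(k)\oplus so(n-k+1)$, and then \emph{constructs} $\omega^{(j)}$ as the transition matrix with $S_j^{-1}J^{(1,j)}_\alpha S_j=\sum_\beta\omega^{(j)}_{\alpha\beta}J^{(2,j)}_\beta$. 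This choice makes $K^{(1,j)}_\alpha-\sum_\beta\omega^{(j)}_{\alpha\beta}K^{(2,j)}_\beta = P^j_\alpha(x-b_{(2,j)})+c^j_\alpha$ with $P^j_\alpha$ (to leading order $-[A_j,J^{(1,j)}_\alpha]$) landing in the perpendicular complement of $so(k)\oplus so(n-k+1)$, so no $\hat\omega$--term arises at all, and the rescaled limit is already in the exact form Lemma~\ref{Lin Alg Cylinder B} needs. Your minimizer--plus--optimality variant is a legitimate alternative route; the extra $\sum_\beta\hat\omega_{\alpha\beta}K^{(2),\infty}_\beta$ term is tangential to the cylinder (so invisible to Lemma~\ref{Lin Alg Cylinder B}), and first-order optimality of $\omega^j$ is a clean way to remove it.

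The genuine gap is that you \emph{assert} the existence of the rescaled limits $\hat A,\hat b,\hat\omega$ without showing the normalized parameter differences are $O(\delta_j)$. Control on $\sup_{B_1}\max_\alpha|\hat D^j_\alpha|=1$ does not by itself bound, say, the perpendicular part of $S_{(1,j)}^{-1}S_{(2,j)}$ by $C\delta_j$: one must first read off the $so(k)\oplus so(n-k+1)$ and perpendicular components of the linear part of $D^j_\alpha(\omega^j)$ and invoke the injectivity of $A\mapsto\bigl([A,J_\alpha]\bigr)_\alpha$ on the perpendicular complement, absorbing the $o(|A_j|)$ remainder since $A_j\to 0$. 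This is precisely the paper's estimate $|A_j|\le CQ_j$, and without an analogue of it the first-order Taylor expansion producing
\begin{equation*}
\hat D^\infty_\alpha(x) = [\hat A,J^\infty_\alpha]x - J^\infty_\alpha\hat b + \sum_{\beta}\hat\omega_{\alpha\beta}K^{(2),\infty}_\beta(x)
\end{equation*}
is not justified. Once that boundedness is proved, your argument goes through; the optimality step, while a bit informal as written (one needs to perturb within $\mathrm{O}\bigl(\tfrac{(n-k+1)(n-k)}{2}\bigr)$ using $\omega^j\exp(-\delta_j\hat\omega')$ for the skew part $\hat\omega'$), is sound in spirit.
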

\begin{proof}
Since the vector fields are affine functions, it suffices to prove the result for  $L=10n$. Throughout the proof below, the constant $C$ depends only on $n$.

Suppose towards a contradiction that the conclusion does not hold. Set $\bar{p}_0 = (0,..,0.\sqrt{2(n-k)}, 0,...,0)$, where the only nonzero element  $\sqrt{2(n-k)}$ sits in the $(k+1)$-th component. Then there exists a sequence of pointed hypersurfaces $(M_j, p_{j})$ that are $\frac{1}{j}$-close to a geodesic ball of radius $20n^{5/2}H_0^{-1}$ inside $\mathbb{R}^k\times S^{n-k}(\sqrt{2(n-k)})$ with $p_j\in M_{j}$ and $|p_j - \bar{p}_{0}| \leq \frac{1}{j}$,
   where $H_0 = \sqrt{\frac{n - k}{2}}$ is the mean curvature of $\mathbb{R}^k\times S^{n-k}(\sqrt{2(n-k)})$
   and $g_j$ is the metric on $M_j$ induced by embedding.
   Moreover, for $i = 1,2$ there are two sequences of  normalized sets of rotation vector fields
   $\mathcal{K}^{(i,j)} = \{K^{(i,j)}_{\alpha}: 1\leq\alpha\leq \frac{(n-k+1)(n-k)}{2}\}$  and $\varepsilon_j\leq\frac{1}{j}$ such that
   \begin{itemize}
     \item $\max_{\alpha}|\left<K^{(i,j)}_{\alpha}, \nu\right>|H\leq\varepsilon_j$
      in $B_{g_j}(p_j, H(p_j)^{-1})$,
     \item $\max_{\alpha}|K_{\alpha}^{(i,j)}|H\leq 5n$ in
      $B_{g_j}(p_j, 10n^{5/2}H(p_j)^{-1})$,
   \end{itemize}
   but
\begin{equation}\label{contra}
     \inf\limits_{(\omega_{\alpha\beta})\in \mathrm{O}(\frac{(n-k+1)(n-k)}{2})}\sup\limits_{B_{10nH(\bar{x})^{-1}}(\bar{x})}\max_{\alpha}
    |K_{\alpha}^{(1,j)}-\sum_{\beta = 1}^{\frac{(n-k+1)(n-k)}{2}}\omega_{\alpha\beta}K^{(2,j)}_{\beta}|H(\bar{x})
    \geq j\varepsilon_j.
\end{equation}
   Hence, $M_j$ converges to $M_{\infty} = B_{g_{\infty}}(p_{\infty}, 20n^{5/2}H_0^{-1})\subset \mathbb{R}^k\times S^{n-k}(\sqrt{2(n-k)})$
   and $H(p_j)\rightarrow \sqrt{\frac{n-k}{2}}$, where $p_{\infty}=\bar{p}_{0}=(0,...,0, \sqrt{2(n-k)}, 0,\dots, 0)$
   and $g_{\infty}$ is the induced metric on
   $\mathbb{R}^k\times S^{n-k}(\sqrt{2(n-k)})$ .

   Suppose that for each $i=1,2$ and $j\geq 1$,
   $K^{(i,j)}_{\alpha}(x) = S_{(i,j)}J^{(i,j)}_{\alpha}S_{(i,j)}^{-1}(x-b_{(i,j)})$
   where $S_{(i,j)}\in \mathrm{O}(n+1)$ and $b_{(i,j)}\in\mathbb{R}^{n+1}$,
   $\{J^{(i,j)}_{\alpha}: 1\leq\alpha \leq\frac{(n-k+1)(n-k)}{2}\}$ is
   an orthonormal basis of
   $so(n-k+1)\subset so(n+1)$ under the natural identification as in \eqref{J, Jbar}. Without loss of generality we may assume that
   $\displaystyle b_{(i,j)}\perp \bigcap_{\alpha}\ker(S_{(i,j)}J^{(i,j)}_{\alpha}S_{(i,j)}^{-1})$.
This and the expression of $K^{(i,j)}_{\alpha}(x)$ imply
   \begin{align}
     |b_{(i,j)}| \leq C\sum_{\alpha}|S_{(i,j)}J^{(i,j)}_{\alpha}S_{(i,j)}^{-1} x-K^{(i,j)}_{\alpha}(x)| \leq C(n).
   \end{align}
   Therefore we can pass to a subsequence such that $S_{(i,j)}\rightarrow S_{(i,\infty)}$, $J_{\alpha}^{(i,j)}\rightarrow J_{\alpha}^{(i,\infty)}$
   and $b_{(i,j)}\rightarrow b_{(i,\infty)}$ for each $i,\alpha$.
   Consequently for $i=1,2$ as $j\to \infty$, $K^{(i,j)}\rightarrow K^{(i,\infty)}_{\alpha} = S_{(i,\infty)}J_{\alpha}^{(i,\infty)}S_{(i,\infty)}^{-1}(x-b_{(i,\infty)})$.

   The convergence implies that
   \begin{itemize}
     \item $\left<K^{(i,\infty)}_{\alpha},\nu\right> = 0$ in $B_{g_{\infty}}(p_{\infty,}, H_0^{-1})$ for each $\alpha$,
     \item $\max_{\alpha}|K^{(i,\infty)}_{\alpha}| H \leq 5n$ in,
      $B_{g_{\infty}}(p_{\infty,}, 10n^{5/2}H_0^{-1})$.
   \end{itemize}
   Let's fix an orthonormal basis $\{J_{\alpha}: 1\leq\alpha \leq\frac{(n-k+1)(n-k)}{2}\}$  of $so(n-k+1)\subset so(n+1)$ under natural identification as in \eqref{J, Jbar}. By Lemma \ref{Lin Alg Cylinder A}   for each $ 1\leq\alpha \leq\frac{(n-k+1)(n-k)}{2}$ we have
    \begin{flalign}
      K_{\alpha}^{(i,\infty)}(x) = \sum_{\beta=1}^{\frac{(n-k+1)(n-k)}{2}}\omega^{(i,\infty)}_{\alpha\beta} J_{\beta}x
    \end{flalign}
    for some $(\omega_{\alpha\beta}^{(i,\infty)})\in \mathrm{O}(\frac{(n-k+1)(n-k)}{2})$.
    In particular, by
    \begin{equation}
        b_{(i,\infty)}\perp\bigcap_{\alpha}\ker(S_{(i,\infty)}J^{(i,\infty)}_{\alpha}S^{-1}_{(i,\infty)})
    \end{equation}
    and  Lemma \ref{Lin Alg Cylinder A} we have
    \begin{align}\label{sjs-1=omega J}
      S_{(i,\infty)}J^{(i,\infty)}_{\alpha}S_{(i,\infty)}^{-1} = \sum_{\beta=1}^{\frac{(n-k+1)(n-k)}{2}}\omega^{(i,\infty)}_{\alpha\beta} J_{\beta}
    \end{align}
    and $b_{(i,\infty)}=0$ for $i=1,2$.

    Now because $\{J^{(i,\infty)}_{\alpha}, 1\leq\alpha\leq \frac{(n-k+1)(n-k)}{2}\}$ is an orthonormal basis of $so(n-k+1)\subset so(n+1)$, by  Lemma \ref{Lin Alg Cylinder A} we can find $(\eta^{(i,j)}_{\alpha\beta})\in \mathrm{O}(\frac{(n-k+1)(n-k)}{2})$ such that
    $J_{\alpha}^{(i,j)}=\sum_{\beta}\eta^{(i,j)}_{\alpha\beta}J_{\beta}^{(i,\infty)}$. Then we have
    \begin{align}\label{sjs-1=omega J 2}
      S_{(i,\infty)}J^{(i,j)}_{\alpha}S_{(i,\infty)}^{-1}
      =&\sum_{\beta}\eta^{(i,j)}_{\alpha\beta}S_{(i,\infty)}J_{\beta}^{(i,\infty)}S_{(i,\infty)}^{-1}  \\\nonumber
      =& \sum_{\beta,\gamma}\eta^{(i,j)}_{\alpha\beta}\omega^{(i.\infty)}_{\beta\gamma}J_{\gamma}.
    \end{align}
     Now (\ref{sjs-1=omega J 2}) means that $\{S_{(i,\infty)}J^{(i,j)}_{\alpha}S_{(i,\infty)}^{-1}, 1\leq\alpha\leq \frac{(n-k+1)(n-k)}{2}\}$ is an orthonormal basis of $so(n-k+1)\subset so(n+1)$.

    Without loss of generality, we may assume that
    $S_{(1,\infty)}=S_{(2,\infty)} =Id$ and $\omega^{(1,\infty)}_{\alpha\beta} = \delta_{\alpha\beta}$, otherwise we replace $S_{(i,j)}$
    by $S_{(i,j)}S_{(i,\infty)}^{-1}$,  $J_{\alpha}^{(i,j)}$ by $S_{(i,\infty)}J_{\alpha}^{(i,j)}S_{(i,\infty)}^{-1}$ for $i=1,2$
    and $J_{\alpha}$ by
    $\sum_{\beta=1}^{\frac{(n-k+1)(n-k)}{2}}\omega^{(1,\infty)}_{\alpha\beta} J_{\beta}$.

    Therefore, $S_{(1,j)}^{-1}S_{(2,j)}$ is close to Id for large $j$. Then we can find $S_j\in \mathrm{O}(n+1)$
    and $A_j\in so(n+1)$ such that
    \begin{itemize}
     \item $S_{(1,j)}^{-1}S_{(2,j)} = \exp(A_j)S_j $,
      \item $S_j$ preserves the direct sum decomposition $\mathbb{R}^{k}\oplus\mathbb{R}^{n-k+1}$,
      \item $A_j\perp so(k)\oplus so(n-k+1)$,
      \item $S_j\rightarrow Id$ and $A_j\rightarrow 0$.
    \end{itemize}

   Since $S_j$ preserves the direct sum decomposition $\mathbb{R}^{k}\oplus\mathbb{R}^{n-k+1}$,
   by  Lemma \ref{Lin Alg Cylinder A} we can  find basis transform matrix $({\omega}^{(j)}_{\alpha\beta})\in \mathrm{O}(\frac{(n-k+1)(n-k)}{2})$ such that
   \begin{align}
     S_j^{-1} J^{(1,j)}_{\alpha}S_{j} = \sum_{\beta=1}^{\frac{(n-k+1)(n-k)}{2}}{\omega}^{(j)}_{\alpha\beta} J_{\beta}^{(2,j)}
   \end{align}
   for every $j$ and $1\leq\alpha\leq \frac{(n-k+1)(n-k)}{2}$. Equivalently, we have
   \begin{align}
      J^{(1,j)}_{\alpha} = \sum_{\beta=1}^{\frac{(n-k+1)(n-k)}{2}}{\omega}^{(j)}_{\alpha\beta} S_{j}J_{\beta}^{(2,j)}S_j^{-1}.
   \end{align}
   Then, we construct the following vector fields.
   \begin{align}
     &\sum_{\beta=1}^{\frac{(n-k+1)(n-k)}{2}} \omega^{(j)}_{\alpha\beta}K_{\beta}^{(2,j)}(x)\\\nonumber
     =& \sum_{\beta} \omega^{(j)}_{\alpha\beta}S_{(2,j)}J^{(2,j)}_{\beta}S_{(2,j)}^{-1}(x-b_{(2,j)})\\\nonumber
     =& \sum_{\beta}\omega^{(j)}_{\alpha\beta} S_{(1,j)}\exp(A_j) S_j J_{\beta}^{(2,j)}S_j^{-1}\exp(-A_j)S_{(1,j)}^{-1} (x-b_{(2,j)}) \\\nonumber
     =&  S_{(1,j)}\exp(A_j)J^{(1,j)}_{\alpha}\exp(-A_j)S_{(1,j)}^{-1}(x-b_{(2,j)}).
   \end{align}

Now, for each $1\leq\alpha\leq \frac{(n-k+1)(n-k)}{2}$ we define
     \begin{align}
       W_{\alpha}^j  = \frac{K_{\alpha}^{(1,j)} -\sum_{\beta} \omega^{(j)}_{\alpha\beta}K_{\beta}^{(2,j)} }{\sup\limits_{B_{10nH(p_j)^{-1}}(p_j)}\max_{\alpha}|K_{\alpha}^{(1,j)} -\sum_{\beta} \omega^{(j)}_{\alpha\beta}K_{\beta}^{(2,j)} |}.
     \end{align}
Moreover,  we have the following representation
\begin{align}
    W^{j}_{\alpha}= Q_{j}[P^j_{\alpha}(x-b_{(2,j)}) + c^j_{\alpha}],
\end{align}
where
     \begin{itemize}
       \item $P_{\alpha}^j = S_{(1,j)}[J^{(1,j)}_{\alpha}-\exp(A_j)J^{(1,j)}_{\alpha}\exp(-A_j)]S_{(1,j)}^{-1}$,
       \item $c_{\alpha}^j = S_{(1,j)}J^{(1,j)}_{\alpha}S_{(1,j)}^{-1}(b_{(2,j)}-b_{(1,j)})$,
       \item $Q_j$ = $\sup\limits_{B_{10nH(p_j)^{-1}}(p_j)}\max_{\alpha}|K_{\alpha}^{(1,j)} -\sum_{\beta} \omega^{(j)}_{\alpha\beta}K_{\beta}^{(2,j)} |$.
     \end{itemize}
     By definition of $Q_j$ we have $|P^j_{\alpha}| + |c^j_{\alpha}| \leq CQ_j$. Consequently, for sufficiently large $j$, we have
     \begin{align}
       & |P_{\alpha}^j| = |[A_j, J^{(1,j)}_{\alpha}] + o(|A_j|)| \leq CQ_j \\\nonumber
   \Rightarrow & |A_j|\leq C\max_{\alpha}|[A_j,J^{(1,j)}_{\alpha}]| \leq CQ_j+o(|A_j|)\\\nonumber
      \Rightarrow & |A_j|\leq CQ_j,
     \end{align}
     where the second inequality follows from the fact that $A_j \perp so(k)\oplus so(n-k+1)$ and similar computation in \eqref{[A,J]=0}.

     Note that $J^{(1,j)}_{\alpha}\rightarrow J^{(1, \infty)}_{\alpha}$ by the previous discussion. Now we can pass to a subsequence such that $\frac{P_{\alpha}^j}{Q_j}\rightarrow [A,J^{(1, \infty)}_{\alpha}]$ and
     $\frac{c^j_{\alpha}}{Q_j}\rightarrow c^{\infty}_{\alpha}\in Im(J^{(1, \infty)}_{\alpha})=(ker(J^{(1, \infty)}_{\alpha})^{\perp}$.  Consequently, we have
     \begin{align}
       W^j_{\alpha} \rightarrow W_{\alpha}^{\infty} = [A,J^{(1, \infty)}_{\alpha}]x +c^{\infty}_{\alpha}
     \end{align}
and
\begin{align}
\sup\limits_{B_{10H(p_j)^{-1}}(p_j)}\max_{\alpha}|W^{\infty}_{\alpha}|=1.
\end{align}
  On the other hand, by our assumption we have
     \begin{itemize}
       \item $\max_{\alpha}|\left<K^{(1,j)}_{\alpha} - \sum_{\beta}\omega^{(j)}_{\alpha\beta}K_{\beta}^{(2,j)}, \nu\right>|\leq 2H^{-1}\varepsilon_j$ \text{in $B_{g_{j}}(p_{j}, H(p_{j})^{-1})$},
       \item ${\sup\limits_{B_{10nH(p_j)^{-1}}(p_j)}\max_{\alpha}|K_{\alpha}^{(1,j)} -\sum_{\beta} \omega^{(j)}_{\alpha\beta}K_{\beta}^{(2,j)} |\geq jH^{-1}}(p_{j})\varepsilon_j$.
     \end{itemize}
This implies
\begin{align}
    \sup_{ B_{g_{\infty}}(p_{\infty},1)}\max_{\alpha}|\left<\nu, W_{\alpha}^{\infty}\right>|=0.
\end{align}
 By Lemma \ref{Lin Alg Cylinder B}, we have  $W_{\alpha}^{\infty}\equiv 0$ for all $1\leq\alpha\leq \frac{(n-k+1)(n-k)}{2}$, which is a contradiction.

\end{proof}

 \begin{lemma}\label{VF close Bowl times lines}
Given any $\delta>0$,  there exist constants $0 < \varepsilon_{b} \ll 1$ and $C>1$ depending only on  dimension $n$ and $\delta$  with the following properties. Let $M$ be a hypersurface in $\mathbb{R}^{n+1}$ with induced metric $g$ and $\Sigma$ be the unique $(n-k+1)$ dimensional round bowl soliton with maximal mean curvature $1$.
 Suppose that $q \in \mathbb{R}^{k-1}\times \Sigma$, and $M$ is a graph over the geodesic ball $B_{g}(q, 2H(q)^{-1})$ inside  $\mathbb{R}^{k-1}\times \Sigma$ with graphical $C^{3}$
norm  no more than $\varepsilon_{b}$ after rescaling by $H(q)^{-1}$. Let  $\bar{x}\in M$ be a point that has rescaled distance to $q$ no more than $\varepsilon_{b}$.  Suppose that $\varepsilon \leq \varepsilon_b$ and
  $\mathcal{K}^{(1)} = \{K^{(1)}_{\alpha}: 1 \leq \alpha \leq \frac{(n-k+1)(n-k)}{2}\}$,
  $\mathcal{K}^{(2)} = \{K^{(2)}_{\alpha}: 1 \leq \alpha \leq \frac{(n-k+1)(n-k)}{2}\}$, are two     normalized sets of rotation vector fields, and we assume that:
  \begin{itemize}
    \item $\displaystyle \sum_{i=1}^{k}\lambda_i\geq \delta H$ in $B_{g}(\bar{x}, H(\bar{x})^{-1})\subset M$, where $\lambda_1 \leq \lambda_2 \leq ... \leq \lambda_n$ are   principal curvatures,
    \item $\max_{\alpha}|\left<K_{\alpha}^{(i)}, \nu\right>|H \leq \varepsilon$ in
    $B_g(\bar{x}, H^{-1}(\bar{x}))\subset M$,
    \item $\max_{\alpha}|K_{\alpha}^{(i)}|H \leq 5n$ in $B_{g}(\bar{x}, 10n^{5/2}H^{-1}(\bar{x}))\subset M$,
  \end{itemize}
  for $i = 1 , 2$.  Then for any $L > 1$, we have
  \begin{align}
    \inf\limits_{(\omega_{\alpha\beta})\in \mathrm{O}(\frac{(n-k+1)(n-k)}{2})}\sup\limits_{B_{LH(\bar{x})^{-1}}(\bar{x})}\max_{\alpha}
    |K_{\alpha}^{(1)}-\sum_{\beta = 1}^{\frac{(n-k+1)(n-k)}{2}}\omega_{\alpha\beta}K^{(2)}_{\beta}|H(\bar{x})
    \leq CL\varepsilon.
  \end{align}
\end{lemma}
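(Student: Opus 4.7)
The plan is to mirror the contradiction/blow-up argument used in the proof of Lemma \ref{Vector Field closeness on genearalized cylinder_appendix}, but with the cylinder model replaced by the bowl-soliton product $\mathbb{R}^{k-1}\times\Sigma$. Since the vector fields $K^{(i)}_\alpha(x) = S_{(i)}J^{(i)}_\alpha S_{(i)}^{-1}(x-b_{(i)})$ are affine, it suffices to establish the bound for $L=10n$. Suppose towards a contradiction that this fails: we extract sequences $(M_j, p_j)$ which are graphical over geodesic balls of rescaled radius $2$ in $\mathbb{R}^{k-1}\times \Sigma$ with graph norm $\to 0$, base points $\bar x_j$ that converge to a fixed point $\bar x_\infty$ in the cap region where $\sum_{i=1}^k\lambda_i\geq \delta H$, numbers $\varepsilon_j\to 0$, and two sequences of normalized rotation vector fields $\mathcal{K}^{(1,j)}, \mathcal{K}^{(2,j)}$ satisfying the hypotheses but violating the conclusion by a factor $\geq j\varepsilon_j$.

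By the uniform bound $\max_\alpha |K_\alpha^{(i,j)}|\leq 5n$ on a ball of definite rescaled radius together with $b_{(i,j)}\perp\bigcap_\alpha \ker(S_{(i,j)}J^{(i,j)}_\alpha S_{(i,j)}^{-1})$, the parameters $(S_{(i,j)}, J^{(i,j)}_\alpha, b_{(i,j)})$ stay bounded. After passing to a subsequence, $K^{(i,j)}_\alpha\to K^{(i,\infty)}_\alpha$ smoothly on $M_\infty\subset\mathbb{R}^{k-1}\times\Sigma$, and the limits satisfy $\langle K^{(i,\infty)}_\alpha,\nu\rangle=0$ on a geodesic ball around $\bar x_\infty$ with $\max_\alpha|K^{(i,\infty)}_\alpha|H\leq 5n$ there. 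Applying case (2) of Lemma \ref{Lin Alg Cylinder A}, we obtain $S_{(i,\infty)}\in \mathrm{O}(n-k+1)\subset \mathrm{O}(n+1)$, $b_{(i,\infty)}=0$, and bases $\omega^{(i,\infty)}_{\alpha\beta}\in\mathrm{O}(\tfrac{(n-k+1)(n-k)}{2})$ such that $S_{(i,\infty)}J^{(i,\infty)}_\alpha S_{(i,\infty)}^{-1}=\sum_\beta \omega^{(i,\infty)}_{\alpha\beta}J_\beta$. After normalizing we may assume $S_{(1,\infty)}=S_{(2,\infty)}=\mathrm{Id}$ and $\omega^{(1,\infty)}_{\alpha\beta}=\delta_{\alpha\beta}$, and decompose $S_{(1,j)}^{-1}S_{(2,j)}=\exp(A_j)\,S_j$ with $S_j\to\mathrm{Id}$ preserving the splitting $\mathbb{R}^k\oplus\mathbb{R}^{n-k+1}$ and $A_j\perp so(k)\oplus so(n-k+1)$, $A_j\to 0$.

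Now we build, for each $\alpha$, the optimal orthogonal matrix $\omega^{(j)}_{\alpha\beta}$ coming from $S_j$ via Lemma \ref{Lin Alg Cylinder A}, and form the renormalized difference
\begin{equation*}
W^j_\alpha=\frac{K^{(1,j)}_\alpha-\sum_\beta\omega^{(j)}_{\alpha\beta}K^{(2,j)}_\beta}{Q_j},\qquad Q_j:=\sup_{B_{10n/H(p_j)}(p_j)}\max_\alpha\Big|K^{(1,j)}_\alpha-\sum_\beta\omega^{(j)}_{\alpha\beta}K^{(2,j)}_\beta\Big|.
\end{equation*}
Expanding as in the cylinder proof, $W^j_\alpha = Q_j^{-1}S_{(1,j)}\bigl[(J^{(1,j)}_\alpha-\exp(A_j)J^{(1,j)}_\alpha\exp(-A_j))(x-b_{(2,j)}) + J^{(1,j)}_\alpha(b_{(2,j)}-b_{(1,j)})\bigr]S_{(1,j)}^{-1}$, and using $A_j\perp so(k)\oplus so(n-k+1)$ together with $[A_j, J^{(1,j)}_\alpha]\leq CQ_j$ one gets $|A_j|\leq CQ_j$. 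Passing to the limit and using the normalization, we obtain $W_\alpha^\infty=[A,J^{(1,\infty)}_\alpha]x+c^\infty_\alpha$ with $A\perp so(k)\oplus so(n-k+1)$ and $c^\infty_\alpha\in\mathrm{Im}(J^{(1,\infty)}_\alpha)$, while simultaneously $\max_\alpha|W^\infty_\alpha|=1$ on the unit geodesic ball around $\bar x_\infty$ but $\langle W^\infty_\alpha,\nu\rangle\equiv 0$ there.

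The main obstacle is to conclude $A=0$ and $c^\infty_\alpha=0$ from this last pair of conditions on $\mathbb{R}^{k-1}\times\Sigma$; this is where the positivity assumption $\sum_{i=1}^k\lambda_i\geq \delta H$ enters and plays the role that was free on the round cylinder in Lemma \ref{Lin Alg Cylinder B}. In fact $[A,J^{(1,\infty)}_\alpha]$ is a fixed anti-symmetric matrix and $c^\infty_\alpha$ a fixed vector, so the vanishing of $\langle W^\infty_\alpha,\nu\rangle$ on an open piece of $M_\infty$ that contains a neighborhood of a cap point (where the second fundamental form has at least $n-k+1$ strictly positive principal curvatures transversal to the $\mathbb{R}^{k-1}$-factor) produces sufficiently many linearly independent quadratic/linear relations in the Euclidean coordinates to force $[A,J^{(1,\infty)}_\alpha]=0$ and $J^{(1,\infty)}_\alpha c^\infty_\alpha\cdot\nu\equiv 0$, exactly as in Lemma \ref{Lin Alg Cylinder B}. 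Since $c^\infty_\alpha\in\mathrm{Im}(J^{(1,\infty)}_\alpha)$ and $A\perp so(k)\oplus so(n-k+1)$, this yields $c^\infty_\alpha=0$ and $A=0$, hence $W^\infty_\alpha\equiv 0$, contradicting $\max_\alpha|W^\infty_\alpha|=1$ and completing the proof.
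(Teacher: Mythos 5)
Your strategy mirrors the paper's: a contradiction/blow-up argument, a compactness step, an invocation of case~(2) of Lemma~\ref{Lin Alg Cylinder A}, the same normalization $S_{(1,\infty)}=S_{(2,\infty)}=\mathrm{Id}$, the construction of the rescaled difference $W^{j}_{\alpha}$, and a Liouville-type final step. However, you have a gap in the compactness step, and, relatedly, you misidentify where the hypothesis $\sum_{i=1}^{k}\lambda_i\geq\delta H$ is actually used.

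The point you gloss over is that after normalizing so that $H(q_j)=1$, the model hypersurface is $\mathbb{R}^{k-1}\times\kappa_j^{-1}\Sigma$ with an a priori uncontrolled scale parameter $\kappa_j$, and the base point $q_j$ could escape towards the cylindrical end of the bowl. If $\kappa_j\to 0$ or $|q_j|\to\infty$ there is no non-degenerate limit model, and your assertion that ``base points $\bar{x}_j$ converge to a fixed point $\bar{x}_\infty$'' is exactly what needs to be justified. The paper uses the pinching here: by approximation the model $\mathbb{R}^{k-1}\times\kappa_j^{-1}\Sigma$ still satisfies $\sum_{i=1}^{k}\lambda_i\geq\tfrac{\delta}{2}H$ near $q_j$, and the asymptotics of the bowl soliton then force $\tfrac12<\kappa_j/H(q_j)<C(\delta)$ and $|q_j|<C(\delta)$, so after a subsequence the models converge to $\mathbb{R}^{k-1}\times\kappa_\infty^{-1}\Sigma$ with $\kappa_\infty\in[\tfrac12,C(\delta)]$ and $q_j\to q_\infty$ bounded. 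Once this compactness is in place, the final step that $W^\infty_\alpha\equiv 0$ proceeds verbatim ``as in the proof of Lemma~\ref{Vector Field closeness on genearalized cylinder_appendix}'' and relies only on the strict convexity of the limit bowl, which holds at every point of $\Sigma$ and does not require $\delta$. In short, the pinching's job is to prevent degeneration of the model in the compactness step, not, as you claim, to power the final algebraic conclusion; your proposal leaves a hole precisely where $\delta$ is actually deployed.
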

\begin{proof}
With the help of Lemma A.1, the proof is analogous to \cite[Lemma 3.8]{Zhu2}. For the readers' convenience, we include the details here. Since the vector fields are affine functions, it suffices to prove the result for  $L=10n$.

    Let's make the convention that
    the tip of $\Sigma$ is the origin, the rotation axis is $x_{k}$, and
    $\Sigma$ encloses the positive part of $x_{k}$ axis.
    Argue by contradiction, if the assertion is not true, then there exists
    a sequence of points $q_j\in \mathbb{R}^{k-1}\times \kappa_j^{-1}\Sigma$ with
    $H(q_j)=1$ and a sequence of pointed hypersurfaces $(M_j, p_j)$ that are
    $1/j$ close to a geodesic ball $B_{\tilde{g}_j}(q_j,2)$ in
    $\mathbb{R}^{k-1}\times \kappa_j^{-1}\Sigma$, where $\tilde{g}_{j}$ is the induced
    metric on $\mathbb{R}^{k-1}\times \kappa_j^{-1}\Sigma$. Suppose that $|p_j-q_j|\leq 1/j$.

  Without loss of generality we may assume that
     $\left<q_j, e_{l}\right>=0$ for $l=1, \dots, k-1$,
    where $\{e_{1}, ..., e_{k-1}\}$ is the standard orthonormal basis of $\mathbb{R}^{k-1}$.

  Furthermore, for $i=1,2$ and each positive integer $j$,  there exists normalized set of rotation vector fields $\mathcal{K}^{(i,j)}$=$\{K^{(i,j)}_{\alpha}$, $1\leq \alpha\leq \frac{(n-k+1)(n-k)}{2}\}$    and
  $\varepsilon_j<1/j$  such that
  \begin{itemize}
    \item $\max_{\alpha}|\left<K^{(i,j)}_{\alpha},\nu\right>|H\leq \varepsilon_j$ in $B_{g_j}(p_j,H(p_j)^{-1})\subset M_j$,
    \item $\max_{\alpha}|K^{(i,j)}_{\alpha}|H\leq 5$ at $p_j$,
  \end{itemize}
 but

\begin{equation}\label{contra2}
     \inf\limits_{(\omega_{\alpha\beta})\in \mathrm{O}(\frac{(n-k+1)(n-k)}{2})}\sup\limits_{B_{10nH(p_{j})^{-1}}(\bar{x})}\max_{\alpha}
    |K_{\alpha}^{(1,j)}-\sum_{\beta = 1}^{\frac{(n-k+1)(n-k)}{2}}\omega_{\alpha\beta}K^{(2,j)}_{\beta}|H({p_{j}})
    \geq j\varepsilon_j.
\end{equation}

  Now the maximal mean curvature of $\mathbb{R}^{k-1}\times \kappa_j^{-1}\Sigma$ is $\kappa_j$.
  For every $j>2C/\delta$, by the first condition and approximation we know that $\displaystyle \sum_{i=1}^{k}\lambda_i \geq \frac{\delta}{2} H$ around $q_j\in \mathbb{R}^{k-1}\times \kappa_j^{-1}\Sigma$.
  The asymptotic behaviour of the bowl soliton indicates that $\frac{1}{2} < \frac{\kappa_j}{H(q_j)}<C(\delta)$  and $\displaystyle |q_j-\sum_{l=1}^{k-1}\left<q_j,e_{l}\right>e_{l}|\kappa_j<C(\delta)$, thus
  $\kappa_j<C(\delta)$ and $\displaystyle |q_j|=|q_j-\sum_{l=1}^{k-1}\left<q_j, e_{l}\right>e_{l}|<C(\delta)$.

  We can then pass to a subsequence such that $q_j\rightarrow q_{\infty}$ and $\kappa_j\rightarrow\kappa_{\infty} \in [\frac{1}{2}, C(\delta)]$.
  Consequently $\mathbb{R}^{k-1}\times\kappa_j^{-1}\Sigma\rightarrow \mathbb{R}^{k-1}\times\kappa_{\infty}^{-1}\Sigma$ and  $B_{\tilde{g}}(q_j,2)\rightarrow B_{\tilde{g}_{\infty}}(q_{\infty},2)$ smoothly,
  where $B_{\tilde{g}_{\infty}}(q_{\infty},2)$ is the geodesic ball in $\mathbb{R}^{k-1}\times\kappa_{\infty}^{-1}\Sigma$.

  Combing with the assumption that $(M_j,p_j)$ is $1/j$  close to $(B_{\tilde{g}_j}(q_j,2),q_j)$ and $H(q_j)=1$, we have $M_j\rightarrow B_{\tilde{g}_{\infty}}(q_{\infty},2)$ with $p_j\rightarrow q_{\infty}$
  and $H(q_{\infty})=1$.

    We can write $K_{\alpha}^{(i,j)}(x)=S_{(i,j)}J^{(i,j)}_{\alpha}S_{(i,j)}^{-1}(x-b_{(i,j)})$ for some orthonormal basis $\{J^{(i,j)}_{\alpha}, 1\leq \alpha\leq \frac{(n-k+1)(n-k)}{2}\}$ of
    $so(n-k+1)\subset so(n+1)$
    and assume that $(b_{(i,j)}-p_j)\perp \bigcap_{\alpha}\ker S_{(i,j)}J^{(i,j)}_{\alpha}S_{(i,j)}^{-1}$. Then
    $|b_j-p_j|=|S_{(i,j)}J^{(i,j)}_{\alpha}S_{(i,j)}^{-1}(p_j-b_j)|\leq C(n)$ for large $j$.

    Then, we can pass to a subsequence such that $S_{(i,j)}$ and $b_{(i,j)}$ converge to $S_{(i,\infty)}$ and $b_{(i,\infty)}$ respectively.
Hence $K^{(i,j)}\rightarrow K^{(i,\infty)}_{\alpha} = S_{(i,\infty)}J^{(i,\infty)}_{\alpha}S_{(i,\infty)}^{-1}(x-b_{(i,\infty)})$ for $i=1,2$.

  The convergence implies that
  \begin{itemize}
     \item $\left<K^{(i,\infty)}_{\alpha},\nu\right> = 0$ in $B_{g_{\infty}}(p_{\infty,}, H_0^{-1})$,
     \item $\max_{\alpha}|K^{(i,\infty)}_{\alpha}| H \leq 5n$ at
      $p_{\infty}$.
  \end{itemize}
    By Lemma \ref{Lin Alg Cylinder A}  we have
    \begin{flalign}
      K_{\alpha}^{(i,\infty)}(x) = \sum_{\beta=1}^{\frac{(n-k+1)(n-k)}{2}}\omega^{(i)}_{\alpha\beta} J_{\beta}x
    \end{flalign}
    for some fixed orthonormal basis $\{J_{\alpha}, 1\leq \alpha\leq \frac{(n-k+1)(n-k)}{2}\}$ of $so(n-k+1)\subset so(n+1)$ and $(\omega ^{(i)}_{\alpha\beta})\in \mathrm{O}(\frac{(n-k+1)(n-k)}{2})$.

    Finally, we can argue exactly as in the proof of Lemma \ref{Vector Field closeness on genearalized cylinder_appendix} to reach
    a contradiction. This completes the proof of lemma.
\end{proof}

\end{appendix}
\bigskip

\bibliography{hearing_shape}

\bibliographystyle{alpha}


{\sc Wenkui du, Department of Mathematics, University of Toronto, Ontario, Canada}

{\sc Jingze Zhu, Department of Mathematics, Massachusetts Institute of Technology,  Massachusetts, USA}

\emph{E-mail:} wenkui.du@mail.utoronto.ca, zhujz@mit.edu.

\end{document}